\documentclass{article}
\usepackage[utf8]{inputenc}
\usepackage{pdfpages}
\usepackage{url}
\usepackage{euler} 
\usepackage[toc,page]{appendix}
\usepackage{amssymb, amsmath, amsthm}
\usepackage[Conny]{fncychap}
\usepackage{graphicx}
\usepackage{adjustbox}
\usepackage{comment}

\usepackage{mathrsfs}

\usepackage[T1]{fontenc}
\usepackage{scalerel}
\usepackage{calligra}
\usepackage{dirtytalk}

\usepackage{mathtools}
\usepackage{bbm}

\usepackage[english]{babel}

\usepackage[scaled=0.85]{beramono}
\usepackage{tikz-cd}
\usepackage{float}
\usepackage{fancyhdr}

\bibliographystyle{alpha}

\usepackage[left=0cm,right=0cm,top=2cm,bottom=2cm,margin=3cm]{geometry}
\usepackage{amsthm}
\usepackage{bigints}
\usepackage{tikz}
\usetikzlibrary{matrix,arrows,decorations.pathmorphing,decorations.markings, cd, scopes, backgrounds}

\usepackage[linktoc=all]{hyperref}
\usepackage[capitalise]{cleveref}
\hypersetup{colorlinks=false,pdfborder={0 0 0}}
\usetikzlibrary{matrix,arrows,decorations.pathmorphing}
\usetikzlibrary{patterns,calc,angles,quotes}
\usepackage{tikz-cd}
\usepackage[utf8]{inputenc}
\tikzset{commutative diagrams/.cd}
\newtheorem{theorem}{Theorem}
\numberwithin{theorem}{section} 
\newtheorem{corollary}[theorem]{Corollary}

\newtheorem{lemma}[theorem]{Lemma}

\newtheorem{proposition}[theorem]{Proposition}

\theoremstyle{definition}
\newtheorem{definition}[theorem]{Definition}
 
\newtheorem{claim}[theorem]{Claim}

\newtheorem{example}[theorem]{Example}

\newtheorem{remark}[theorem]{Remark}

\newtheorem{construction}[theorem]{Construction}

\newtheorem{notation}[theorem]{Notation}

\numberwithin{doubts}{section} 
\newcommand{\E}{\mathcal{E}}
\newcommand{\D}{\mathcal{D}}

\newcommand{\K}{\mathcal{K}}

\renewcommand{\P}{\mathbf{P}}

\newcommand{\op}[1]{\operatorname{#1}}

\newcommand{\C}{\mathcal{C}}

\newcommand{\Schf}{\op{Sch}_{fd}}

\newcommand{\F}{\mathcal{F}}

\renewcommand{\H}{\mathcal{H}}

\renewcommand{\O}{\mathcal{O}}

\newcommand{\A}{\mathbb{A}}
\newcommand{\Ca}{\mathcal{C}}

\newcommand{\X}{\mathcal{X}}

\newcommand{\bb}{\bullet}
\newcommand{\Hext}{\mathcal{H}_{\op{ext}}}

\numberwithin{subsection}{section}
\newcommand{\Sp}{\op{Spec}}

\newcommand{\sset}{\op{Set}_{\Delta}}

\newcommand{\CrrCopEal}{\op{Corr}(\Ca)^{\otimes}_{\E,\op{all}}}
\newcommand{\CrrCpEal}{\op{Corr}(\Ca)_{\E,\op{all}}}
\newcommand{\CrrCovopEal}{\op{Corr}^{\op{\E-cart}}(\op{Cov}(\Ca))^{\otimes}_{\tilde{\E},\op{all}}}
\newcommand{\CrrCovEopEal}{\op{Corr}^{\op{all-cart}}(\op{Cov}_{\E}(\Ca))^{\otimes}_{\tilde{\E},\op{all}}}

\newcommand{\SHe}{\mathcal{SH}_{\op{ext}}}

\newcommand{\Shext}{{\mathcal{SH}}^{\otimes}_{\op{ext}}}

\newcommand{\Shcl}{\mathcal{SH}^{\otimes}_{\op{cl}}}
\newcommand{\Shc}{\mathcal{SH}_{\op{cl}}}
\newcommand{\Hcl}{\mathcal{H}_{\op{cl}}}
\newcommand{\Sho}{\mathcal{SH}^{\otimes}}
\newcommand{\SH}{\mathcal{SH}}
\newcommand{\Y}{\mathcal{Y}}
\newcommand{\Ccc}{(\Ca^{\op{op}})^{\coprod,\op{op}}}



\makeatletter
\tikzset{
	open/.code     = {\tikzset{right hook->, circled};},
	closed/.code   = {\tikzset{right hook->, slashed};},
	open'/.code    = {\tikzset{left hook->, circled};},
	closed'/.code  = {\tikzset{left hook->, slashed};},
	circled/.code  = {\tikzset{markwith = {\draw (0,0) circle (.375ex);}};},
	slashed/.code  = {\tikzset{markwith = {\draw[-] (-.4ex,-.4ex) -- (.4ex,.4ex);}};},
	markwith/.code ={
		\pgfutil@ifundefined%
		{tikz@library@decorations.markings@loaded}%
		{\pgfutil@packageerror{tikz}{You need to say %
				\string\usetikzlibrary{decorations.markings} to use arrows with markings}{}}{}%
		\pgfkeysalso{/tikz/postaction = {
				/tikz/decorate,
				/tikz/decoration={markings, mark = at position 0.5 with {#1}}}
		}
	},
}
\makeatother


\DeclareMathOperator{\spec}{Spec}

\newcommand{\sk}{\mathbbm{k}}

\newcommand{\sseq}{\subseteq}

\newcommand{\mr}{\mathrm}
\newcommand{\mc}{\mathcal}
\newcommand{\mb}{\mathbb}
\newcommand{\mbbm}{\mathbbm}
\newcommand{\mf}{\mathfrak}

\newcommand{\into}{\hookrightarrow}

\DeclareMathAlphabet{\mathpzc}{OT1}{pzc}{m}{en}

\DeclarePairedDelimiter{\abs}{\lvert}{\rvert}

\newcommand{\oocatname}[1]{\scaleobj{1.25}{\mathpzc{#1}}}

\newcommand{\set}[1]{ \left \{ #1 \right \} }

\newcommand{\bigslant}[2]{
	\mathchoice
	{
		{\raisebox{.2em}{$#1$}\left/\raisebox{-.2em}{$#2$}\right.}%
	}
	{
		#1\!\;\!/\!\,#2
	}
	{
		#1\!\;\!/\!\,#2
	}
	{
		#1\!\;\!/\!\,#2
	}
}

\newcommand{\Th}[2]{\mr{Th}_{#1}\left( #2 \right)}
\newcommand{\colim}{\op{colim}}

\newcommand{\epf}{{}_!}
\newcommand{\epfs}{{}_{\#}}
\renewcommand{\sseq}{\subseteq}
\newcommand{\AstNl}{\oocatname{ASt}^{\leq 1, NL} }

\newcommand{\iMap}{\underline{\mr{Map}}}

\newcommand{\restrict}[2]{{
		\left.\kern-\nulldelimiterspace 
		#1 
		\vphantom{\big|} 
		\right|_{#2} 
}}

\providecommand{\customgenericname}{}
\newcommand{\newcustomtheorem}[2]{%
	\newenvironment{#1}[1]
	{%
		\renewcommand\customgenericname{#2}%
		\renewcommand\theinnercustomgeneric{##1}%
		\innercustomgeneric
	}
	{\endinnercustomgeneric}
}

\newcustomtheorem{customthm}{Theorem}
\newcustomtheorem{customlemma}{Lemma}

\pagestyle{fancy}

\fancypagestyle{main}{\fancyhf{}
	\fancyhead[LE,RO]{\scriptsize Non-rep six-fun formalisms.}
	\fancyhead[RE,LO]{\scriptsize \rightmark }
	\fancyfoot[CE,CO]{\thepage}}

\pagestyle{main}

\title{Non-representable six-functor formalisms.}
\author{Chirantan Chowdhury \and Alessandro D'Angelo }
\date{}

\begin{document}
	
	\maketitle{}
	\begin{abstract}
	In this article, we study the properties of motivic homotopy category $\mathcal{SH}_{\op{ext}}(\X)$ developed by Chowdhury and Khan-Ravi for $\X$ a Nis-loc Stack. In particular, we compare the above construction with Voevodsky's original construction using NisLoc topology. Using the techniques developed by Liu-Zheng and Mann's notion of $\infty$-category of correspondences and abstract six-functor formalisms, we also extend the exceptional functors and extend properties like projection formula, base change and purity to the non-representable situation. 	
	\end{abstract}
	
	\tableofcontents
 \section{Introduction}

Since its birth more than twenty years ago, motivic homotopy theory has progressed tremendously and motivic techniques have found numerous applications in different and broad areas of geometry and arithmetics. Grothendieck vision for a framework of the "Six Operations" found fertile land in $\A^1$-homotopy and these six operations turned out to be one of the key strength features of motivic homotopy theory. The motivic six functors provide a robust framework to tackle homological and cohomological questions and are flexible enough to allow us to actually make some concrete computations. Thanks to the work of Ayoub, Cisinski, Déglise, Hoyois and many others, the theory of motivic six functors on schemes is now well established. A natural follow up question is: can we extend this formalism to a larger class of geometric objects like algebraic stacks? The program of extending and studying the motivic homotopy category for algebraic stacks has started a few years ago for example in \cite{Hoyois_Equiv_Six_Op}, \cite{Khan-Ravi_Generalised_Coh_Stacks} and \cite{Chowdhury}, but many interesting questions are still open. The mere existence of $\SH(\mc X)$, for $\mc X$ an algebraic stack,  introduces a wide range of new cohomology theories on stacks, such as motivic cohomology, Hermitian K-theory, and many others.  To explore these new invariants and address questions arising from the motivic homotopy theory of stacks, we aim to gain a deeper understanding of the \textit{Borel}\footnote{As opposed to the \textit{genuine} $\SH$ studied in \cite{Hoyois_Equiv_Six_Op} and in \cite{Khan-Ravi_Generalised_Coh_Stacks} for scalloped stacks.} category $\SH_{\op{ext}}(-)$ of \cite{Chowdhury} and \cite{Khan-Ravi_Generalised_Coh_Stacks}. This is the primary goal of this paper.\\

There are two (equivalent) approaches in the literature for defining and constructing the \textit{Borel} category $\SH_{\op{ext}}(-)$ for stacks: the lisse-extended theory and the NisLoc-extended theory (cf. \cite{Khan-Ravi_Generalised_Coh_Stacks, Chowdhury}). But in the presence of a topology like the NisLoc topology, one can emulate Voevodsky’s original construction by considering $\A^1$-invariant, NisLoc sheaves in spectra, forming a new category called $\SH_{cl}(-)$. Remarkably, this leads to the same construction:

	\begin{customthm}{1}[Comparison \Cref{Sec.5:_cl_and_ext_SH_pointwise_equivalence}]
	We have a natural map:
	\begin{equation*}
		\Shcl(\X) \xrightarrow{} \Shext(\X).
	\end{equation*}
	\noindent and this map is an equivalence.
\end{customthm}
\noindent This new, more \textit{classical} version of $\SH(\mc X)$ facilitates the adaptation of schematic arguments, making it easier to extend results known for schemes to algebraic stacks. Let us note that in a forthcoming paper \cite{Neeraj-Felix-SHcl}, a similar construction was independently carried out. As mentioned in \cref{NFvsACconstruction}, both constructions yield the same category.\\

As a final result, we will also prove ambidexterity and purity for (possibly non-representable) smooth maps of algebraic stacks in \Cref{Thm:_stacky_Ambidexterity} and \Cref{Sec.4:_Stacky_Purity}:

\begin{customthm}{2}[Non-Representable Purity]
	Let $f: \mc X \longrightarrow \mc Y$ be a smooth map between NL-stacks. Then:
	\[ \varphi_f: f\epfs \overset{}{\longrightarrow} f_!\Sigma^{\mb L_f} \]
	\noindent is an equivalence.
\end{customthm}

The plan of the paper is as follows: in \Cref{Preliminaries} we compare the NisLoc topology with the Smooth Nis topology of Pirisi, and we review the main features of $\SH_{ext}$ constructed in \cite{Chowdhury} by the first named author; in \Cref{Comp_Thm}  we prove the main comparison theorem between the \textit{classical} definition of $\SH$ and the extended one; in \Cref{Exc_Funct}, we revise and extend the construction of the exceptional functor from \cite{Chowdhury}; in \Cref{Amb_and_Purity} we deal with the ambidexterity and purity statements for non-representable maps of algebraic stacks; finally in \Cref{Applications} we provide the reader with some easy extensions and applications of our constructions.  \\

 To make the paper as self-contained as possible, we provide the reader with appendices containing the technical material needed to deal with the $\infty$-category of correspondences, that were heavily used in \Cref{Exc_Funct}. Although some results about correspondences are already present in the literature, we provide proofs of relevant lemmas and propositions for the sake of completeness. We preferred to be overzealous rather than omit important details in our constructions.\\
 
 Finally, let us point out that we restricted ourselves to classical algebraic 1-stacks just for simplicity of exposition. More details on how to extend the theory to higher derived stacks are provided in \Cref{Applications}.

 \subsection*{Acknowledgements:}
A major part of the project was carried while the first author was a PostDoc at University of Duisburg Essen. He would like to thank M.Levine, D. Aranha  for helpful discussions and conversations. \footnote{\thanks{C. Chowdhury was supported by the ERC through the project QUADAG.  This paper is part of a project that has received funding from the European Research Council (ERC) under the European Union's Horizon 2020 research and innovation programme (grant agreement No. 832833) \includegraphics[scale=0.08]{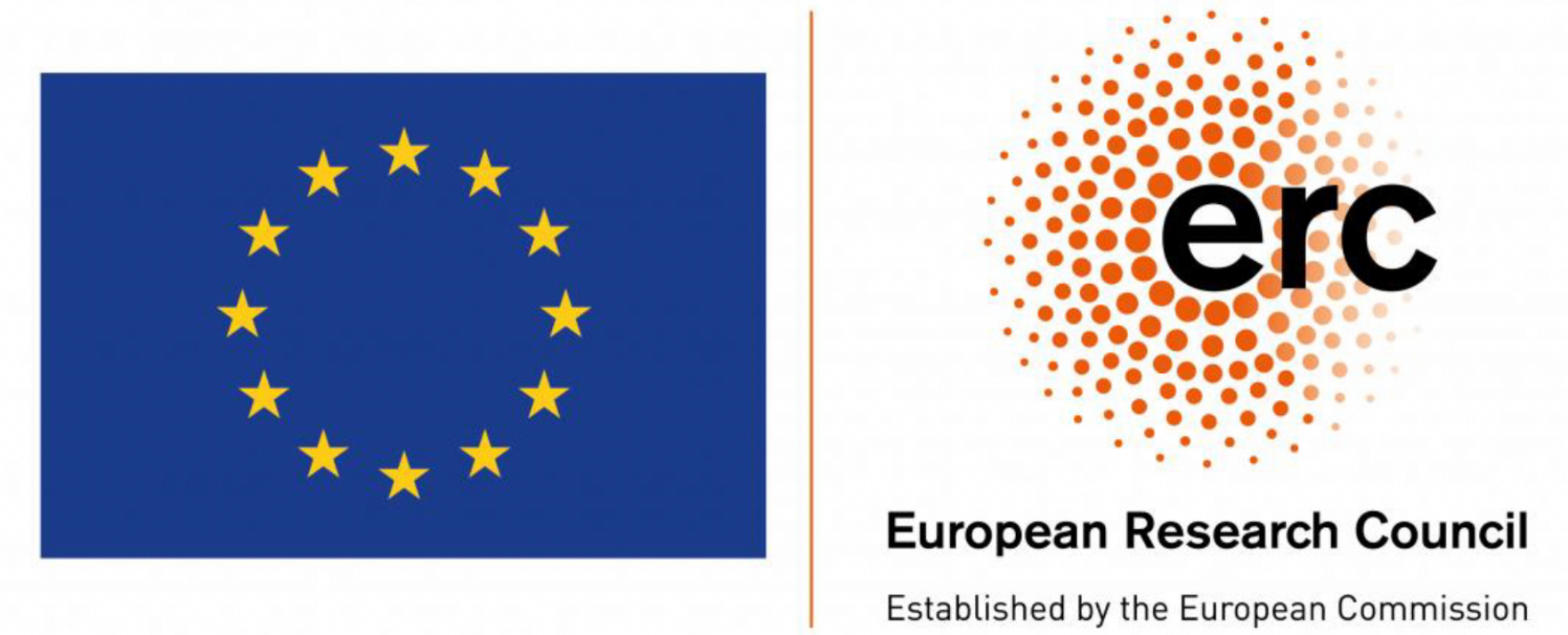}}}. C. Chowdhury also acknowledges support (through Timo Richarz) by the European Research Council (ERC) under Horizon Europe (grant agreement nº 101040935), by the Deutsche Forschungsgemeinschaft (DFG, German Research Foundation) TRR 326 \textit{Geometry and Arithmetic of Uniformized Structures}, project number 444845124 and the LOEWE professorship in Algebra, project number LOEWE/4b//519/05/01.002(0004)/87 \\
 The second named author was supported by the Göran Gustafsson Foundation for Research in Natural Sciences and Medicine. He would also like to thank J. Ayoub, M. Gallauer, J. Hekking, M. Levine, M. Pernice and D. Rydh for helpful conversations about the topics presented in this paper.\\
Both of the authors would also like to thank N. Deshmukh and F. Sefzig for sharing their results in \cite{Neeraj-Felix-SHcl}. 
	\subsection*{Conventions and Notations}
	We will use the acronyms \textit{qs, qcqs, lft, lci, gci} for \say{quasi-separated},\say{quasi-compact, quasi-separated}, \say{locally of finite type}, \say{locally of complete intersection}, \say{globally of complete intersection} respectively. Unless differently specified by \textit{(algebraic) stack} we mean a qcqs algebraic $ 1 $-stack, as opposed to higher or even derived stacks.  We will denote the category of (underived) 1-algebraic stacks as $\oocatname{ASt}^{\leq 1}$\\
     We will freely use the language of $\infty$-categories as presented in \cite{HTT,HA}, in particular this means that we will use quasi-categories as our model for $\infty$-categories.\\

\section{Preliminaries}\label{Preliminaries}	
	\subsection{Comparison of various notions of Nisnevich topologies on algebraic stacks.}
	
	In \cite{Pirisi}, Pirisi introduced the \textit{smooth Nisnevich} topology in order to study cohomological invariants of a large class of algebraic stacks. In some sense, that was already a first avatar of the existence of a motivic homotopy category for stacks. Indeed, a cohomological invariant of an algebraic stack is a functorial way to assign elements of a given Rost's cycle module to points on the stack itself (cf. \cite{Di_Lorenzo_Pirisi_Coh_Inv_Root}). On the other hand, D\'{e}glise proved in his thesis \cite{Deglise2003_Thesis} that Rost's cycle modules over a field $\sk$ give a presentation of the heart, with respect to the homotopy $t$-structure, of Voevodsky's mixed category of motives $\mr{DM}(\sk; \mb Z)$.
	
	In this subsection, we will compare  Pirisi's topology with the topology introduced in \cite{Chowdhury}. Let us remark that all the material present in this section is already present in the literature, but for completeness we will collect it here for the reader's convenience. Moreover, notice that we are going to slightly change the conventions in \cite{Chowdhury}: what was there denoted as \textit{Nisnevich} atlas, stack, etc.., here it will be denoted as \textit{NL}. Since we are setting all the new notation from the start, there should not be any cause of confusion.
	
	\begin{definition}\cite[Definition 2.1.4]{Chowdhury}
		\begin{enumerate}
			\item [$ (i) $]	Let $ \oocatname{ASt}^{\leq 1, NL} $ be the $\infty$-category of algebraic (1)-stacks (also known as Artin stacks) for which there exists an atlas which admits Nisnevich-local sections. In other words, the category consists of algebraic stacks $\X$ for which there exists an atlas $ x: X \to \X$ such that for any test scheme $T$ with a morphism $ t: T \to \X$, the base change morphism $x': X \times_{\X} T \to T$ admits sections Nisnevich-locally. 
			\item [$ (ii) $]			 A morphism of algebraic stacks $f : \X \to \Y$ in $\oocatname{ASt}^{\leq 1, NL}$ is said to admit \textit{Nisnevich-local sections} if there exists an atlas $ y : Y \to \Y$ admitting Nisnevich-local sections and a morphism $s : Y \to \X$ such $ f\circ s = y$. We will denote the set of these maps as $ \mr{NL} $.
			\item [$(iii)$] The topology generated by NL-maps will be denoted as $\tau_{NL}$ (or simply NL if it is clear from the context that we are talking about the topology)and it will be referred to as the \textit{Nisnevich Local} topology. Covers for $\tau_{NL}$ will be called NL-covers.
		\end{enumerate}

	\end{definition}
	
	\begin{remark}
		\begin{enumerate}
			\item At the level of schemes and quasi-separated algebraic spaces, the NL-topology agrees with the usual Nisnevich-topology.
			\item One important example of NL-stacks is given by local quotient stacks. In particular, for any affine algebraic group $G$ acting on a scheme $X$, the morphism \[ X':=X \times^G \op{GL}_n \to [X \times^G \op{GL}_n/ \op{GL}_n] \cong [X/G] \] is a $\op{GL}_n$-torsor which admits sections Zariski (hence Nisnevich)-locally. The algebraic space $X'$ is already a NL-atlas, but we can further reduce to a schematic atlas choosing a Nisnevich cover $X'' \to X'$.
			\item If $ f: \X \to \Y$, then $f$ admits Nisnevich-local sections if and only if for every $ T \to \Y$ where $T$ is an algebraic space, the morphism of algebraic spaces $f' : \X \times_{\Y} T \to T$ admits sections Nisnevich-locally. 
		\end{enumerate}
	\end{remark}
	
	We now recall the definition of \textit{smooth-Nisnevich} topology from \cite{Pirisi}.
	
	\begin{definition}\cite[Definition 3.2]{Pirisi}
		Let $f: \X \to \Y$ be a smooth representable morphism of algebraic stacks.
		\begin{enumerate}
			\item [$ (i) $]		 Let $ p : \Sp K \to \Y$ be a morphism where $K$ is a field. Then $f$ is \textit{smooth-Nisnevich neighbourhood}  of $p$ if there exists a representative $p' : \Sp K' \to \X$ such that $ f \circ p'= p$.
			
			\item [$ (ii) $]	 If for every $ p \in \abs{\mc Y} $, the morphism $f$ is a smooth-Nisnevich neighbourhood at $p$, then we say that $ f $ is a \textit{smooth-Nisnevich map}. The topology generated by smooth-Nisnevich maps will be called the smooth-Nisnevich topology.
			\item [$ (iii) $]	 We say that a family of smooth representable morphisms $ \set{u_{\alpha}: U_{\alpha} \rightarrow \mc Y} $ is a \textit{smooth-Nisnevich atlas} of $Y$ if for every field $K$ and for every $p : \Sp K \to \mc Y$, there exists $u_{\alpha}$ that is a smooth-Nisnevich neighbourhood of $p$.
		\end{enumerate}

	\end{definition}
	
	\noindent Before comparing the different notions of \textit{Nisnevich} maps, let us briefly recall an important property of \text{smooth-Nisnevich covers}:
	
	\begin{proposition}\label{smoothnisquassep}
		Let $\X$ be an algebraic stack. Then there exists a scheme $x: X \to \X$ where $X$ is a scheme such that $x$ is a \text{smooth-Nisnevich} atlas.
	\end{proposition}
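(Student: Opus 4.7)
The plan is to exhibit a smooth surjective atlas $x: X \to \X$ from a scheme and verify that it automatically satisfies the smooth-Nisnevich atlas condition. The existence of such a scheme $X$ follows from the definition of an algebraic 1-stack: first take a smooth surjective atlas $U \to \X$ from an algebraic space, then further take an étale surjective atlas $X \to U$ from a scheme; the composite $x: X \to \X$ is then smooth and surjective.

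To verify the smooth-Nisnevich condition, fix a point $p \in |\X|$ with a representative $\op{Spec} K \to \X$. The base change $V := X \times_{\X} \op{Spec} K$ is a smooth $K$-scheme, and is nonempty by surjectivity of $x$ on points. Picking any closed point $v \in V$, the residue field extension $K(v)/K$ is finite separable by smoothness of $V$ over $K$, and the composite $\op{Spec} K(v) \hookrightarrow V \to X$ provides a representative $p': \op{Spec} K(v) \to X$ whose image in $\X$ lifts $p$. Hence $x$ is a smooth-Nisnevich neighbourhood at every $p \in |\X|$, and therefore a smooth-Nisnevich atlas.

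The main subtle point concerns the interpretation of the word \emph{representative} in Pirisi's definition of smooth-Nisnevich neighbourhood: the argument above relies on allowing the finite separable residue field extension $K(v)/K$, which is natural in the stacky context where residual gerbes themselves involve such extensions. A stricter reading demanding literal equality of residue fields would require a genuine refinement, for instance by combining the étale-local standard form of smooth morphisms $V \to \op{Spec} K$ (as standard étale covers of $\mathbb{A}^n_K$) with the abundance of $K$-rational points on affine space, organised by a Noetherian induction on the stratification of $|\X|$ by residual gerbes. This is the main technical obstacle one would have to address under that stricter convention.
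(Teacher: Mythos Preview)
Your main argument has a genuine gap: the definition of smooth-Nisnevich neighbourhood in the paper (and in Pirisi) requires that for a given $p:\Sp K\to\X$ one can find $p':\Sp K\to X$ with $x\circ p'=p$, i.e.\ a lift over the \emph{same} field $K$. This is exactly what makes smooth-Nisnevich covers equivalent to covers admitting Nisnevich-local sections, and it is used in that form in the paper. Your construction only produces a lift over the finite separable extension $K(v)/K$; that is the notion of a smooth \emph{surjection}, not of a smooth-Nisnevich cover. So the first two paragraphs do not prove the proposition.

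You correctly flag this in the last paragraph, but the sketched workaround is not a proof. The suggestion of writing $V$ \'etale-locally as $\A^n_K$ and picking $K$-rational points on $\A^n_K$ does not work as stated: the \'etale neighbourhood of a closed point of $V$ with residue field $K(v)$ will itself only contain $K(v)$-points, not $K$-points, so the obstruction does not disappear. The paper's approach is substantively different and is precisely designed to absorb this degree-$d$ extension: given the degree-$d$ separable extension $K(v)/K$, one passes to the $d$-fold fibre product $(U/\X)^d$ modulo the $\mf S_d$-action, where the $K(v)$-point becomes a genuine $K$-point of $\mr{\Acute{E}T}^d(U/\X)$; then one uses a $GL_n$-torsor (with $\mf S_d\subset GL_n$) to replace this quotient stack by the algebraic space $V_d(U/\X)$, which is special and hence still admits a $K$-point. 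Taking the union over all $d$ yields the desired smooth-Nisnevich atlas. This is the content of the Alper/Laumon--Moret-Bailly lifting theorem that the paper proves first, and it is not bypassed by your argument.
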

	\begin{remark}
		\Cref{smoothnisquassep} was also independently proved by J. Hall (private communication). Let us point out that the result proved by J. Hall is much more refined and shows that with mild assumptions one can actually find an affine smooth-Nisnevich cover.
	\end{remark}

\Cref{smoothnisquassep} appeared for the first time in \cite[Proposition 3.6]{Pirisi} and \cite[Theorem 1.2]{deshmukh2023motivichomotopytypealgebraic}, where it was stated for algebraic stacks with a separated diagonal, because it relied on \cite[Th\'{e}or\`{e}me  6.3, Th\'{e}or\`{e}me  6.5]{Laumon2000} where the hypothesis was assumed. However, the hypothesis on the diagonal can actually be removed. We are grateful to M. Pernice for pointing out that our original argument, which relied on the diagonal being quasi-separated, is in fact applicable in greater generality. With minor modifications of the proof given in \cite[Theorem 4.2.15]{Alper_Book}, one can show:
	
	\begin{theorem}[Alper, {\cite[Thm 6.3]{Laumon2000}}]\label{S2:_Alper_LMB}
			Let $\mc X \in \oocatname{ASt}_{\bigslant{}{S}} $ be an algebraic stack over some base scheme $S$. Let $x \in \mc X(\sk)$ be a $\sk$-valued point, then there exists a smooth morphism $W \rightarrow \mc X$ from a scheme $W$ and a point $w\in W(\sk)$ over $x$.
		\end{theorem}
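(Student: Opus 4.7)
The plan is to follow the proof of \cite[Theorem 4.2.15]{Alper_Book}, modifying only the steps that originally relied on the quasi-separatedness of the diagonal---an adaptation suggested to us by M. Pernice.

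First, by the definition of an algebraic stack, one chooses a smooth surjective morphism $\pi: U \to \mc X$ from an algebraic space $U$, and refines $U$ to a scheme by passing to an \'etale cover (algebraic spaces being \'etale-locally schemes). The pullback $V := U \times_{\mc X, x} \Sp \sk$ is then a non-empty smooth algebraic space over $\sk$, by surjectivity and smoothness of $\pi$.

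The remaining task is to produce a scheme $W$, a smooth morphism $W \to \mc X$, and a $\sk$-rational point $w \in W(\sk)$ lying over $x$. In the favorable situation where $V$ itself admits a $\sk$-rational point $v$, one takes $W = U$ and $w = v$ viewed as an element of $U(\sk)$. In general, $V$ need only have points over finite separable extensions of $\sk$, and $W$ must be constructed differently from the given atlas. Here one invokes the local structure of smooth morphisms: near any point of $V$, the morphism $V \to \Sp \sk$ factors \'etale-locally through an affine-space projection $\A^n_\sk \to \Sp \sk$, and combining this factorization with a slicing argument applied to $U$, one assembles a scheme $W$ together with a smooth morphism $W \to \mc X$ whose fiber over $x$ contains a $\sk$-rational point. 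This yields the desired $w \in W(\sk)$ mapping to $x$.

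The main technical obstacle is to verify that this construction carries through for a general algebraic stack without the quasi-separated diagonal assumption. The original argument of \cite[Theorem 4.2.15]{Alper_Book} invokes this hypothesis only at a few explicit steps, essentially for the descent of certain \'etale neighborhoods. Pernice's observation is that these descent steps admit equivalent formulations in the smooth topology on $\mc X$ that do not require any condition on the diagonal; once this replacement is made, the remainder of the proof proceeds exactly as in \emph{loc.~cit.}
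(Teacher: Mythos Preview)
Your proposal has a genuine gap: what you describe is not actually the argument of \cite[Theorem 4.2.15]{Alper_Book}, and the sketch you give in its place is too vague at the crucial step.

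Alper's proof (and the paper's) does \emph{not} proceed via ``the local structure of smooth morphisms'' and a ``slicing argument''. The heart of the construction is the symmetric-power stack $\mr{\Acute{E}T}^d(U/\mc X) = [(U/\mc X)^d/\mf S_d]$: a closed point $u \in U_x$ with separable residue field $\kappa(u)/\sk$ of degree $d$ determines a $\sk$-point of $\mr{\Acute{E}T}^d(U/\mc X)$, because the $d$ Galois conjugates of $u$ assemble into a $\sk$-rational $\mf S_d$-orbit. One then chooses a faithful representation $\mf S_d \subset GL_n$, rewrites the quotient as $[V_d(U/\mc X)/GL_n]$ with $V_d(U/\mc X) := (U/\mc X)^d \times^{\mf S_d} GL_n$, and uses that $GL_n$ is special to lift the $\sk$-point to $V_d(U/\mc X)$. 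Your ``slicing'' idea, by contrast, would at best cut $U$ down to something \'etale over $\mc X$, whose fiber over $x$ is still only a finite separable $\sk$-algebra with no reason to contain a $\sk$-point; you have not explained how the residue-field obstruction disappears.

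Your account of where the quasi-separated diagonal enters, and how to remove it, is also off. The hypothesis is not used ``for the descent of certain \'etale neighbourhoods''; it is used to ensure that $(U/\mc X)^d$ is a quasi-separated algebraic space, so that Knutson's theorem \cite[Theorem 6.4]{Knutson_AlgSp} applies and $V_d(U/\mc X)$ is representable by a scheme. The paper's actual fix is structural: first prove the statement for algebraic spaces $\mc X$, where $\Delta_{\mc X}$ is a monomorphism and hence automatically quasi-separated (so the original argument goes through unchanged); then, for a general stack, the fiber $U_x$ is only an algebraic space, and one invokes the first step to replace it by a scheme before running the $\mr{\Acute{E}T}^d$ construction again. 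This two-stage bootstrap is the substance of Pernice's observation, not a reformulation in the smooth topology.
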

	\begin{proof}
			Let $p: U \rightarrow \mc X$ be a smooth atlas with $U$ a (disjoint) union of affine schemes. Consider:
				\begin{center}
					\begin{tikzpicture}[baseline={(0,0)}, scale=1.5]
							\node (a) at (0,1) {$ U_x $};
							\node (b) at (1, 1) {$  U $};
							\node (c)  at (0,0) {$ \spec(\sk) $};
							\node (d) at (1,0) {$\mc X$};
							
							\node (e) at (0.25,0.75) {$\ulcorner $};
							
							\path[font=\scriptsize,>= angle 90]
							
							(a) edge [->] node [above ] {$  $} (b)
							(a) edge [->] node [left] {$  $} (c)
							(b) edge [->] node [right] {$  $} (d)
							(c) edge [->] node [below] {$ x $} (d);
							
						\end{tikzpicture}
				\end{center}
		We will divide the proof in two: first we will prove the claim for $\X$ an algebraic space (giving a variant of \cite[6.3]{Knutson_AlgSp}); then we will generalise the claim to any algebraic stack.\\

		 Suppose $\X$ is an algebraic space and $p$ is étale. Then by \cite[\href{https://stacks.math.columbia.edu/tag/056U}{Tag 056U}]{stacks-project}, there exists a closed point $ u: \spec(\kappa(u))\into U_x $ such that $\bigslant{\kappa(u)}{\sk}$ is a finite separable extension of some degree d.\\
	
	       Now let us recall from \cite[\S 5]{Rydh_Etale_Devissage} the construction of the stack $ \mr{\Acute{E}T}^d\!\!\left(U/\mc X\right) $ (in the notation of \textit{loc. cit.} this would actually be $\mr{\Acute{E}T}^d_{\emptyset}\!\left(U/\mc X\right)$). Given a representable morphism $ U\rightarrow \mc X $, we denote by $ \left(U/\mc X\right)^d $ the \textit{dth} fiber product of $U$ over $\mc X$. The fiber product $\left(U/\mc X\right)^d$ comes equipped with an action of the symmetric group $\mf{S}_d$ permuting its factors, therefore we can consider:
	       \[ \mr{\Acute{E}T}^d\!\!\left(U/\mc X\right):=\left[\bigslant{\left(U/\mc X\right)^d}{\mf S_d}\right] \]
	        The stack $\mr{\Acute{E}T}^d\!\!\left(U/\mc X\right)$ classifies finite étale morphisms $ W\rightarrow \mc X $ of rank $d$, together with a map $ W\rightarrow U $ (cf. \cite[Remark 5.2]{Rydh_Etale_Devissage}).\\
	 
			\noindent Now the closed point $u\in U_x(\kappa(u))$, we found before, fits into a diagram:
			\begin{center}
					\begin{tikzpicture}[baseline={(0,0)}, scale=1.5]
							\node (a) at (0,1) {$ \kappa(u) $};
							\node (b) at (1, 1) {$  U $};
							\node (c)  at (0,0) {$ \spec(\sk) $};
							\node (d) at (1,0) {$\mc X$};
							
							\node (e) at (0.25,0.75) {$ $};
							
							\path[font=\scriptsize,>= angle 90]
							
							(a) edge [->] node [above ] {$  $} (b)
							(a) edge [->] node [left] {$  $} (c)
							(b) edge [->] node [right] {$  $} (d)
							(c) edge [->] node [below] {$ x $} (d);
							
						\end{tikzpicture}
				\end{center}
			
			\noindent that gives us a $\sk$-point $v \in \mr{\Acute{E}T}^d\!\!\left(U/\mc X\right)$. Choosing a faithful representation $\mf S_d \sseq GL_n$, we can rewrite $\mr{\Acute{E}T}^d\!\!\left(U/\mc X\right)\simeq \left[\bigslant{V_d(\bigslant{U}{\mc X})}{GL_n}\right]$, where $V_d(\bigslant{U}{\mc X}):= (\bigslant{U}{\mc X})^d\times^{\mf S_d} GL_n $. Since $GL_n$ is special, the point $v \in \mr{\Acute{E}T}^d\!\!\left(U/\mc X\right)$ admits a lifting to $V_d(\bigslant{U}{\mc X})$:
			\begin{center}
					\begin{tikzpicture}[baseline={(0,0)}, scale=1.5]
							\node (a) at (1,0.7) {$ V_d(\bigslant{U }{\mc X})$};
							\node (b) at (1, 0) {$  \mc X  $};
							\node (c)  at (0,0) {$\spec(\sk) $};

							\path[font=\scriptsize,>= angle 90]
							
							(a) edge [->] node [right ] {$  $} (b)
							(c) edge [->] node [below] {$ x $} (b)
							(c) edge [dashed, ->] node [left] {$  $} (a);
							
						\end{tikzpicture}
				\end{center}
					Since $\X$ is an algebraic space, its diagonal is a monomorphism and in particular it will be quasi-separated. The fact that $\Delta_{\X}$ is quasi-separated  and $U$ is a union of affines (hence it is quasi-separated over $S$), implies that $U \rightarrow \mc X$ is quasi-separated too by \cite[\href{https://stacks.math.columbia.edu/tag/050M}{Tag 050M}]{stacks-project}. Since $U\rightarrow \X$ and $U$ are quasi-separated, we get that $ (U/\X)^d $ is itself quasi-separated: this follows from the fact that $(U/\X)^d\rightarrow U^d$ is a pullback of $\X \rightarrow (\X/S)^d$. Then using \cite[Theorem 6.4]{Knutson_AlgSp}, without loss of generality, we can suppose that $W:=V_d(\bigslant{U}{\mc X})$ is represented by a scheme and we get our claim.\\
				
				For a general algebraic stack $\X$ the proof is quite similar. In this case, $p:U\rightarrow \X$ is just a smooth representable map, hence the fiber product $U_x$ will be an algebraic space, smooth over $\sk$. Using what we proved in the first part, we can suppose that $U_x$ is a smooth scheme over $\sk$. Again by \cite[\href{https://stacks.math.columbia.edu/tag/056U}{Tag 056U}]{stacks-project}, we can find a closed point $ u: \spec(\kappa(u))\into U_x $ such that $\bigslant{\kappa(u)}{\sk}$ is a finite separable extension of some degree d. By the same arguments used in the first part, from $\kappa(u)$ we get a map $\spec(\sk)\rightarrow V_d(U/\X)$ lifting the point $x$. Now $V_d(U/\X)$ is just an algebraic space, but using the claim from the first part of the proof, we can suppose that $V_d(U/\X)$ is actually a scheme and we are done.
				
		\end{proof}
	Now we can go back to the proof of \cref{smoothnisquassep}:
	\begin{proof}[Proof of Proposition \ref{smoothnisquassep}]
			Following the notations in the proof of \cref{S2:_Alper_LMB}, given a smooth atlas $ X\rightarrow \mc X  $, we can construct a smooth-Nisnevich cover considering $\bigcup_{n \in \mb N} V_n(X/\mc X)\rightarrow \mc X$.
		\end{proof}
	\begin{remark}
		With minor modifications of \cite[Theorem 4.3.1]{Alper_Book}, one can adapt our proof of \Cref{S2:_Alper_LMB} to actually get for Deligne-Mumford stacks an \'etale map that is also a smooth-Nisnevich cover, i.e. an \'etale cover such that we can lift all field-valued points. This was also independently proved by J. Hall (private communication) via similar, but different techniques.
	\end{remark}
	
		We would like to compare the notions of Nisnevich-local section and smooth-Nisnevich covers. Firstly, we need a result on smooth schemes over local Henselian rings.	

	\begin{proposition}\cite[Section 2.3,Proposition 5]{NeronModels}\label{henselizationsmoothlifts}
		Let $R$ be a local Henselian ring with residue field $k$. Let $X$ be a smooth $R$-scheme.  Then canonical map \[X(R) \to X(k) \] from the set of $R$-valued points to set of $k$-valued points is surjective.
	\end{proposition}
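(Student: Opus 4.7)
My plan is to reduce the statement to the étale case using the local structure theorem for smooth morphisms, and then invoke directly the Henselian property of $R$.

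Let $\bar x \in X(k)$ be a $k$-point, and let $x_0 \in X$ denote the image of the closed point of $\Sp k$; since $R$ is local and $X$ is smooth over $R$, the point $x_0$ maps to the closed point of $\Sp R$. First I would use the standard local structure of smooth morphisms (see e.g.\ \cite[\href{https://stacks.math.columbia.edu/tag/039P}{Tag 039P}]{stacks-project}): after shrinking $X$ to a suitable affine open neighborhood $U$ of $x_0$, the structural morphism $U \to \Sp R$ factors as
\[ U \xrightarrow{\;p\;} \A^n_R \xrightarrow{} \Sp R, \]
with $p$ étale. The composition $\Sp k \to U \to \A^n_R$ produces a $k$-valued point of $\A^n_R$, which we can lift to an $R$-valued point $\tilde a \in \A^n_R(R)$ simply by choosing, for each of the $n$ coordinates, an arbitrary lift of its residue in $k$ to $R$.

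Next I would pull back the étale morphism $p$ along $\tilde a$ to obtain an étale $R$-scheme
\[ U' := U \times_{\A^n_R, \tilde a} \Sp R \longrightarrow \Sp R, \]
together with a distinguished $k$-point $\bar x' \in U'(k)$ coming from the compatibility of $\bar x$ with $\tilde a$. Because $R$ is Henselian and $U' \to \Sp R$ is étale, the connected component of $\bar x'$ inside $U'$ is finite étale over $\Sp R$ and its residue field at $\bar x'$ equals $k$; by the very definition of a Henselian local ring, this component is isomorphic to $\Sp R$, which yields a section $\Sp R \to U'$ lifting $\bar x'$. Composing with $U' \to U \hookrightarrow X$ produces the desired $R$-point lifting $\bar x$.

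The main conceptual step is the appeal to the local structure theorem, which is what allows us to bootstrap from the smooth case to the étale case where the Henselian hypothesis on $R$ applies verbatim. No serious obstacle is expected: both ingredients are classical, and the argument is in fact the standard reformulation of \cite[\S 2.3, Prop.\ 5]{NeronModels}, which we could alternatively quote directly.
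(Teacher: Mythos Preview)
Your proposal is correct and follows essentially the same approach as the paper: the paper does not give a full proof but only a remark sketching the argument, namely reduce from smooth to \'etale (since smooth morphisms admit \'etale-local sections, which is your local structure theorem) and then apply the defining property of Henselian rings to the resulting \'etale morphism. Your write-up is simply a more detailed execution of that sketch, and indeed both you and the paper point back to \cite[\S 2.3]{NeronModels} for the standard statement.
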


	\begin{remark}
		The above proposition follows from the geometric characterization of Henselian rings, namely an scheme which is \'etale  over an henselian ring is a local isomorphism at all points over the ring with trivial residue field extension (\cite[Section 2.3, Definition 1]{NeronModels}). As smooth morphisms admit sections \'etale locally, we get the following proposition.
	\end{remark}

	The following was independently proved also in \cite{deshmukh2023motivichomotopytypealgebraic}:
	\begin{proposition}\label{sec2:_Sm_Nis_iff_LS_Nis}
		Let $f : \X \to \Y$ be a smooth representable morphism of algebraic stacks. Then $f$ admits sections Nisnevich-locally if and only if $f$ is smooth Nisnevich cover.
	\end{proposition}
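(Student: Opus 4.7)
For the direction ``NL-sections $\Longrightarrow$ smooth-Nisnevich cover'', I would reduce to the behaviour on field-valued points. Starting from the witnessing data --- an atlas $y : Y \to \Y$ admitting Nisnevich-local sections together with a morphism $s : Y \to \X$ with $f \circ s = y$ --- and a field point $p : \Sp K \to \Y$, base-changing $y$ along $p$ yields $Y_K \to \Sp K$, which still admits Nisnevich-local sections. Since every Nisnevich cover of $\Sp K$ contains an \'etale map with a $K$-rational point (no non-trivial residue field extension being possible over a field), this produces a lift $\Sp K \to Y_K \to Y$, whose composition with $s$ gives the required $K$-rational lift of $p$ to $\X$. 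As $p$ was arbitrary, $f$ is a smooth-Nisnevich neighbourhood at every $p$, hence a smooth-Nisnevich cover.

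For the converse, I would invoke \Cref{smoothnisquassep} to produce a smooth-Nisnevich atlas $y : Y \to \Y$ with $Y$ a scheme, and form the pullback $Z := \X \times_{\Y} Y \xrightarrow{f'} Y$, which is smooth and representable. The heart of the argument is to show that $f'$ admits Nisnevich-local sections on $Y$: for each point $y_0 \in Y$, the composite $\Sp \kappa(y_0) \to Y \to \Y$ lifts to $\X$ with trivial residue field extension by the smooth-Nisnevich hypothesis on $f$, producing a $\kappa(y_0)$-rational point of $Z$ over $y_0$. Then \Cref{henselizationsmoothlifts} applied to the smooth map $f'$ provides a section over $\Sp \O^h_{Y, y_0}$, i.e.\ a section over some Nisnevich neighbourhood of $y_0$. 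Choosing a Nisnevich cover $u : U \to Y$ with a section $\sigma : U \to Z$, the pair $(y \circ u,\ \pi_{\X} \circ \sigma)$ is then a candidate witness for $f$ admitting NL-sections.

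The main subtlety I expect to address is verifying that the composite atlas $y \circ u : U \to \Y$ admits Nisnevich-local sections in the stacky sense of the definition, namely that for any test scheme $T \to \Y$ the pullback $U \times_{\Y} T \to T$ admits sections Nisnevich-locally. This can be handled by rerunning the Henselian lifting argument uniformly at each point $t \in T$: the smooth-Nisnevich cover property of $f$ furnishes a $\kappa(t)$-rational lift to $\X$ which factors through $Z \times_Y T$, and \Cref{henselizationsmoothlifts} once more yields a section of $U \times_\Y T \to T$ over a Nisnevich neighbourhood of $t$. This reduces the claim to the standard stability of ``admitting NL-sections'' for smooth morphisms of algebraic spaces under composition with Nisnevich covers and arbitrary base change, which is what I expect to require the most care to write down cleanly.
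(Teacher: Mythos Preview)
Your first direction is essentially the paper's argument, just unwrapped via the witnessing atlas rather than via the equivalent characterization in the Remark following the definition (item 3: $f$ admits NL-sections iff for every algebraic space $T \to \Y$ the base change $\X \times_\Y T \to T$ admits Nisnevich-local sections).

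For the converse you take a different and more roundabout route than the paper, and your ``main subtlety'' paragraph contains a genuine gap. The paper does not construct a witness atlas: it invokes the same Remark to reduce to showing that, for every algebraic space $T \to \Y$, the base change $\X \times_\Y T \to T$ admits Nisnevich-local sections. Since $f$ is representable this is a smooth map of algebraic spaces, and the Henselian lifting argument via \Cref{henselizationsmoothlifts} applies directly. That is the entire reduction.

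Your approach instead produces an atlas $y$ via \Cref{smoothnisquassep}, then a Nisnevich cover $u$ with section $\sigma$, and then must verify that $y \circ u$ admits NL-sections. In that verification you invoke the smooth-Nisnevich property of $f$ to lift $t$ to $\X$, but what you need is a lift to $U$; moreover ``$Z \times_Y T$'' does not parse, as there is no map $T \to Y$. The correct fix is to observe that $y$ is smooth-Nisnevich by construction and $u$ is Nisnevich, so $y \circ u$ is itself a smooth representable smooth-Nisnevich map; verifying it admits NL-sections then amounts to rerunning the paper's Henselian argument for $y \circ u$ in place of $f$, and neither $\sigma$ nor the smooth-Nisnevich property of $f$ enters that step. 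So your detour through the explicit witness atlas works once repaired, but it duplicates the paper's argument and adds an extra layer that the equivalent characterization avoids.
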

	
	\begin{proof}
		Suppose $f$ admits Nisnevich-local sections. Let $p : \Sp K \to \Y$ be a $K$-point, then the fiber product $f': \X \times_{\Y} \Sp K \to \Sp K$ admits sections Nisnevich locally. As Nisnevich covers over a field are given by a disjoint union of separable field extensions of $k$ where at least one of them is the field itself, the morphism $f'$ admits a section. This gives us the morphism $p': \Sp K \to \X$ such that $ f \circ p'= p$. \\
		
		Suppose $f$ is a smooth-Nisnevich cover. We need to show $f$ admits Nisnevich-local sections. As $f$ is representable; this reduces down to showing the case that if $f$ is a smooth-Nisnevich morphism of schemes, then $f$ admits Nisnevich-local sections.  Let $ y \in Y$ and let $R =\O_{Y,y}^h$ be the Henselization of the local ring with the residue field $k$. Let $x$ be the closed point of the field $R$. Let $p : \Sp R \to Y$ be the morphism induced by the Henselization. \\
		As $f$ can lift $k$-points, \ref{henselizationsmoothlifts} implies that $f': X':= X \times_{Y} \Sp R \to \Sp R$ admits a section. This gives a morphism $p': \Sp R \to X$ such that $f \circ p'=p$. As $R$ is the local ring at point $y$ in the Nisnevich topology (cf. \cite[Theorem 4.5]{MilneEtaleCoh}), there exists a morphism $q_y:V_y \to Y$ which is completely decomposed at point $y$ and a morphism $q'_y:V_y \to X$ such that $f \circ q'_y = q_y$. Varying the argument over all points $y \in Y$, we get that a Nisnevich cover $q: V \to Y$ and a morphism $q': V \to X$ such that $f \circ q' =q$.
		\end{proof}
	
	\begin{corollary}\label{NL-ASt=ASt}
		Let $  \X \in \oocatname{ASt} $ be an algebraic stack. Then it has a smooth atlas admitting Nisnevich local sections, hence $  \X \in \oocatname{ASt}^{\leq 1, NL} $.
	\end{corollary}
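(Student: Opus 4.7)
The plan is to assemble the two non-trivial preliminary results already proved in the section: namely \Cref{smoothnisquassep} (every algebraic stack admits a schematic smooth-Nisnevich atlas) and \Cref{sec2:_Sm_Nis_iff_LS_Nis} (a smooth representable morphism is a smooth-Nisnevich cover if and only if it admits Nisnevich-local sections). Since the definition of $\oocatname{ASt}^{\leq 1, NL}$ requires the existence of an atlas whose pullback to any test scheme admits Nisnevich-local sections, the two ingredients above almost immediately give us what we want, modulo the stability of smooth-Nisnevich covers under base change.

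Concretely, given $\X \in \oocatname{ASt}$, I would first invoke \Cref{smoothnisquassep} to obtain a smooth representable atlas $x: X \to \X$, with $X$ a scheme, which is a smooth-Nisnevich cover. Then I would fix an arbitrary test scheme $t: T \to \X$ and consider the pullback
\[
x_T : X \times_{\X} T \longrightarrow T.
\]
The key observation is that the smooth-Nisnevich property is defined pointwise on field-valued points, and field-valued points pull back along $T \to \X$: every $p : \Sp K \to T$ defines a $K$-point $t \circ p$ of $\X$, which by assumption on $x$ lifts through some $p': \Sp K' \to X$; the universal property of the fiber product then produces the desired lift $\Sp K' \to X \times_\X T$. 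Hence $x_T$ is a smooth-Nisnevich cover of $T$.

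Now $x_T$ is a smooth representable morphism of schemes (in fact of an algebraic space to a scheme, but since $x$ was representable by schemes, $X \times_\X T$ is again a scheme), so \Cref{sec2:_Sm_Nis_iff_LS_Nis} applies and yields that $x_T$ admits sections Nisnevich-locally. Since $T \to \X$ was arbitrary, the atlas $x: X \to \X$ satisfies the defining condition of $\oocatname{ASt}^{\leq 1, NL}$, and we conclude $\X \in \oocatname{ASt}^{\leq 1, NL}$.

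The proof is essentially a bookkeeping argument gluing the two preceding propositions; the only substantive point is the base-change stability of smooth-Nisnevich covers, which is routine but worth stating explicitly since the two notions of ``Nisnevich'' being compared are defined in superficially different ways (one via lifts of geometric points, one via existence of local sections over a cover). No genuinely new geometric input is required beyond what has already been established.
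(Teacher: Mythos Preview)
Your proposal is correct and follows exactly the same route as the paper: the paper's proof reads ``Follows directly from the previous proposition and from Proposition \ref{smoothnisquassep},'' and you have simply unpacked that sentence, making explicit the base-change stability of smooth-Nisnevich covers needed to match the definition of $\oocatname{ASt}^{\leq 1, NL}$. No correction is needed.
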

	
	\begin{proof}
		Follows directly from the previous proposition and from Proposition \ref{smoothnisquassep}.
	\end{proof}

	We have a notion of smooth-Nisnevich maps only for representable morphisms, but we can always consider the following definition in the non-representable case:
	
	\begin{definition}
		A map $ f: \mc X \longrightarrow \mc Y $ admits smooth-Nisnevich local sections if there exists a smooth Nisnevich atlas $ y: Y \longrightarrow \mc Y $ and a section:
		\begin{center}
			\begin{tikzpicture}[baseline={(0,0)}, scale=1.5]
				\node (a) at (1,0.7) {$ \mc X $};
				\node (b) at (1, 0) {$  \mc Y $};
				\node (c)  at (0,0) {$ Y $};

				\path[font=\scriptsize,>= angle 90]
				
				(a) edge [->] node [right ] {$ f $} (b)
				(c) edge [->] node [below] {$ y $} (b)
				(c) edge [dashed, ->] node [left] {$ s $} (a);
				
			\end{tikzpicture}
		\end{center}
		\noindent such that $ f\circ s=y $. We will denote the set of maps admitting smooth-Nisnevich local section with $ \mr{LS}_{P} $, where the $ P $ stands for Pirisi.
	\end{definition}
	But we can just talk about $ \mr{LS}_{Nis} $ maps since we already implicitly proved the following:
	\begin{corollary}
		Let $ f: \mc X \longrightarrow \mc Y $ a map between algebraic stacks. Then $ f \in \mr {LS}_{P} $ if and only if $ f \in \mr{LS}_{Nis} $. In particular a map $ f: X \longrightarrow Y $ between algebraic spaces (or schemes) admits Nisnevich local sections if and only if it admits smooth Nisnevich sections, without any smoothness assumption on  $ f $ itself.
	\end{corollary}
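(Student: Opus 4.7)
The plan is to observe that both statements are essentially direct unwindings of \Cref{sec2:_Sm_Nis_iff_LS_Nis}, which already matches up ``smooth-Nisnevich cover'' with ``smooth representable morphism admitting Nisnevich-local sections''. No new geometric input is needed beyond what was already used in the proof of that proposition (in particular the Henselian lifting property \Cref{henselizationsmoothlifts}).

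For the first equivalence $\mr{LS}_P \Leftrightarrow \mr{LS}_{Nis}$, I would simply compare the two definitions side by side. A map $f: \mc X \to \mc Y$ belongs to $\mr{LS}_P$ precisely when there exists a smooth-Nisnevich atlas $y: Y \to \mc Y$ together with a section $s: Y \to \mc X$ with $f\circ s = y$, while $f \in \mr{LS}_{Nis}$ requires the same data but with $y$ asked to admit Nisnevich-local sections. In both cases $y$ is smooth and representable, so \Cref{sec2:_Sm_Nis_iff_LS_Nis} converts one condition on $y$ into the other, which yields the equivalence of the two notions on $f$.

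For the second claim — that for a map $f: X \to Y$ of algebraic spaces or schemes the two kinds of sections coincide \emph{without any smoothness assumption on $f$} — one direction is immediate, since Nisnevich covers are \'etale and in particular smooth-Nisnevich, so any Nisnevich-local section of $f$ is a smooth-Nisnevich section. For the converse, I would start from a smooth-Nisnevich cover $y: V \to Y$ with a section $s: V \to X$ satisfying $f\circ s = y$. Although $f$ is not assumed smooth, the map $y$ is smooth and representable, so I can apply \Cref{sec2:_Sm_Nis_iff_LS_Nis} to $y$ itself: there exists a Nisnevich cover $q: W \to Y$ and a lift $t: W \to V$ with $y \circ t = q$. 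Then $s\circ t: W \to X$ satisfies $f \circ (s\circ t) = q$, exhibiting the desired Nisnevich-local section of $f$.

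There is no real obstacle; the one conceptual point worth highlighting is that the smoothness hypothesis in \Cref{sec2:_Sm_Nis_iff_LS_Nis} is placed on the atlas/cover, not on $f$, so the Henselian-lifting argument used there transfers freely to arbitrary $f$ simply by composing with a Nisnevich refinement of the smooth-Nisnevich cover.
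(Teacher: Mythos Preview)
Your proposal is correct and follows essentially the same approach as the paper: both argue that the equivalence is immediate from the definitions together with \cref{sec2:_Sm_Nis_iff_LS_Nis}, since the smoothness hypothesis in that proposition applies to the atlas $y$ rather than to $f$. You have simply spelled out in more detail the refinement step (passing from a smooth-Nisnevich cover to a Nisnevich cover by lifting through $y$) that the paper leaves implicit.
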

	\begin{proof}
		It is straightforward from the definitions and the statement we proved in  \cref{sec2:_Sm_Nis_iff_LS_Nis} about smooth representable maps. 
	\end{proof}


	\subsection{Review of construction of $\Shext(-)$ and four functors.}
 
    In this subsection, we briefly recall the construction of $\Shext$ and mention the six functor formalism which has been constructed in \cite{Chowdhury}.
	
	In \cite{robalo2013noncommutative}, Robalo explains that the construction of motivic stable homotopy theory (\cite{Morel-Voevodsky}) can be viewed as a functor taking values in $\infty$-categories.\\
	
	Let $\Schf$ denote the category of Noetherian schemes with finite Krull dimension. Robalo uses the construction of $\SH$ to define a functor 
	\begin{equation}
		\Sho(-): \Schf^{op}\to \op{CAlg}(\op{Pr}^L_{stb}). ~~~ \cite[\text{Section 9.1}]{Robalothesis}
	\end{equation}
	Let us unravel the information contained in this functor. As the classical $\op{SH}$ admits a symmetric monoidal structure, the $\infty$-category $\Sho(S)$ is a symmetric monoidal $\infty$-category, i.e. it comes equipped with a coCartesian fibration (\cite[Definition 2.4.2.1]{HTT}) $p_S:\Sho(S) \to N(\op{Fin}_*)$  such that $\Sho(S)_{\langle n \rangle} \cong \Sho(S)^{\times n}_{\langle 1 \rangle}$. Denote $\SH(S):= \Sho(S)_{\langle 1 \rangle}$. The coCartesian fibration encodes the symmetric monoidal structure of the $\infty$- category $\SH(S)$ in a coherent way.\\
	
	The $\infty$-category $\SH(S)$ is also presentable as it arises from localization of presheaves of smooth schemes over $S$ (\cite[Theorem 5.5.1.1]{HTT}). It is also a stable $\infty$-category (\cite[Definition 1.1.1.9]{HA}) as it inherits the triangulated structure of $\op{SH}$ constructed in \cite{Morel-Voevodsky}. As pullback morphism for a morphism $f$ in $\Schf$ is colimit preserving, the $\infty$-category $\Sho(S)$ lands in the $\infty$-category of presentable stable symmetric monoidal $\infty$-categories which is denoted by $\op{CAlg}(\op{Pr}^L_{stb})$. The  $\infty$-category $\SH(S)$  is the called the stable motivic homotopy category of $S$. \\
	
	As explained in \cite[Remark 9.3.1]{Robalothesis}, the stable motivic homotopy theory can be extended to all schemes.
	The functor $\Sho(-)$ is a Nisnevich sheaf (\cite[Proposition 6.24]{Hoyois_Equiv_Six_Op}). 
	\begin{proposition}\label{Shstckdef}\cite[Theorem 2.2.1]{Chowdhury}
		The functor $\Sho(-)$ extends to an $\infty$-sheaf \[\Shext(-): {\oocatname{ASt}^{\leq 1, NL}}^{\op{op}} \to \op{CAlg}(\op{Pr}^L_{stb}). \] \\
		
		Moreover, for any algebraic stack $\X \in \oocatname{ASt}^{\leq 1, NL}$ that admits a schematic atlas \linebreak $x: X \to \X$, one has
		
		\begin{equation}
			\Shext(\X) \cong \op{lim}\Big(\begin{tikzcd}
				\Sho(X) \arrow[r, shift right =2] \arrow[r,shift left =2] & \Sho(X \times_{\X} X) \arrow[l,dotted] \arrow [r,shift right =4] \arrow[r] \arrow[r,shift left =4] & \cdots \arrow[l, shift left =2 ,dotted] \arrow[l, shift right =2, dotted]
			\end{tikzcd} \Big)
		\end{equation}
		where the limit is over the \v{C}ech nerve of $x$.

	\end{proposition}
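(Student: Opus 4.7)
The plan is to construct $\Shext(-)$ as a right Kan extension and then verify both the sheaf property and the Čech-nerve formula simultaneously. Concretely, consider the fully faithful inclusion $\iota: \Schf \hookrightarrow \oocatname{ASt}^{\leq 1, NL}$ and define
\[
\Shext := \op{Ran}_{\iota^{op}} \Sho : (\oocatname{ASt}^{\leq 1, NL})^{op} \longrightarrow \op{CAlg}(\op{Pr}^L_{stb}).
\]
By the pointwise formula for right Kan extensions (Lurie, HTT 4.3.2.15), this gives $\Shext(\X) \simeq \lim_{(T \to \X) \in (\Schf_{/\X})^{op}} \Sho(T)$. One should first check that this limit is well-defined in $\op{CAlg}(\op{Pr}^L_{stb})$; since $\Sho$ factors through the non-full subcategory with colimit-preserving symmetric monoidal functors as transitions, and $\op{CAlg}(\op{Pr}^L_{stb})$ admits all small limits, this is a routine verification.

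Next, I would reduce the large diagram $(\Schf_{/\X})^{op}$ to the Čech nerve $\check{C}(x)_\bullet$ of a fixed schematic atlas $x: X \to \X$ admitting Nisnevich-local sections. The strategy is to exhibit the functor $\Delta \to \Schf_{/\X}$ sending $[n] \mapsto X \times_\X \cdots \times_\X X$ as initial (i.e.\ cofinal in the opposite direction) for the purpose of computing the limit of $\Sho$, using that $\Sho$ is already a Nisnevich sheaf on $\Schf$ by Hoyois. Given any test $T \to \X$, the base change $X \times_\X T \to T$ admits Nisnevich-local sections by hypothesis, hence there is a Nisnevich cover $T' \to T$ and a lift $T' \to X$ over $\X$; applying the Nisnevich descent of $\Sho$ to $T' \to T$ lets us express $\Sho(T)$ in terms of schemes mapping to $X$. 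A standard cofinality argument (in the style of HTT 4.1.3) using these lifts and iterated Čech nerves then identifies the full limit with $\lim_{\Delta} \Sho(\check{C}(x)_\bullet)$.

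Finally, to show that $\Shext(-)$ is an $\infty$-sheaf for the NL-topology, let $p: \mc Y \to \X$ be an NL-cover in $\oocatname{ASt}^{\leq 1, NL}$; we want $\Shext(\X) \simeq \lim_{\Delta} \Shext(\check{C}(p)_\bullet)$. Pick schematic atlases $y: Y \to \mc Y$ and $x: X \to \X$ admitting Nisnevich-local sections, so that $y$ composed with $p$ refines $x$ up to Nisnevich-local sections; applying the previous step to each term of the Čech nerve of $p$ (whose base change against some schematic atlas will again admit Nisnevich-local sections by the stability of $\mr{LS}_{Nis}$ proven in \Cref{sec2:_Sm_Nis_iff_LS_Nis}), both sides become limits of $\Sho$ over refinements of a common Nisnevich hypercover on schemes. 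Nisnevich descent for $\Sho$ then gives the required equivalence.

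The main obstacle is the cofinality argument in the second paragraph: one must carefully justify that Nisnevich refinements of arbitrary test schemes $T \to \X$ interact well with iterated fiber products over $\X$, so that the simplicial Čech nerve of a single atlas captures the full Kan extension limit. This is where the hypothesis $\X \in \oocatname{ASt}^{\leq 1, NL}$, rather than a general algebraic stack, is essential — it guarantees precisely the Nisnevich-local factorization property that feeds into the descent of $\Sho$ at the scheme level. Once this is in place, the sheaf property in the third paragraph is essentially formal.
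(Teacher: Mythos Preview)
The paper does not give its own proof of this proposition: it is stated as a citation of \cite[Theorem 2.2.1]{Chowdhury} and simply recalled for later use. So there is no ``paper's proof'' to compare against directly.

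That said, your outline is essentially the right shape and would work, and indeed the paper itself points to the key tool a few lines later in the remark following the proposition: by \cite[Lemma C.3]{Hoyois_Quad_Lefschetz}, a presheaf on $\oocatname{ASt}^{\leq 1}$ is an NL-sheaf if and only if it is the right Kan extension of a Nisnevich sheaf on schemes. This single lemma collapses your second and third steps into one clean statement: since $\Sho$ is a Nisnevich sheaf on schemes (Hoyois), its right Kan extension $\Shext$ is automatically an NL-sheaf, and then the \v{C}ech-nerve formula is just the sheaf condition applied to the NL-cover $x:X\to\X$. Your ordering --- first the \v{C}ech-nerve formula via a cofinality-type argument, then the sheaf property --- is more laborious and the ``cofinality'' step you flag as the main obstacle is not literally a cofinality of diagrams (the functor $\Delta\to\Schf_{/\X}$ is not initial on the nose); what makes it work is precisely the Nisnevich descent of $\Sho$, which is exactly what Hoyois's Lemma C.3 packages. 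So your instinct is correct, but invoking that lemma would both shorten the argument and remove the part you identified as delicate.
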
 
	
	\begin{remark}
		In \cite[Section 12]{Khan-Ravi_Generalised_Coh_Stacks}, Khan and Ravi introduced the notion of limit-extended functor $\mathbf{SH}_{\triangleleft}(-)$ for derived algebraic stacks.
		For any $\X \in \op{Nis-locSt}$, let $\op{Lis_{\X}}$ be the $\infty$-category of pairs $(t,T)$ where $T$ is a scheme and $t: T \to \X$. The functor $\mathbf{SH}_{\triangleleft}(\X)$ ((\cite[Construction 12.1]{Khan-Ravi_Generalised_Coh_Stacks}) is defined as 
		\begin{equation}
			\mathbf{SH}_{\triangleleft}(\X):= \varprojlim_{(t,T) \in \op{Lis}_{\X}}\Sho(T).
		\end{equation}
		The canonical functor $\mathbf{SH}_{\triangleleft}(\X) \to \Shext(\X)$  is an equivalence when $\X \in \op{Nis-locSt}$ (\cite[Corollary 2.2.3]{Chowdhury}).
	\end{remark}
	\begin{remark}
		As already noticed in \cite{Chowdhury}, the NL topology is equivalent to the Nisnevich topology, when we restrict to (qcqs) schemes. Applying \cite[Lemma C.3]{Hoyois_Quad_Lefschetz} for $\mc C=Sch$ and $\mc D=\oocatname{ASt}^{\leq 1}$, equipped with the quasi-topologies given by Nisnevich and NL maps respectively, we get that a presheaf on $\oocatname{ASt}^{\leq 1}$ is a NL-sheaf if and only if it is a Kan extension of a Nisnevich sheaf on schemes.
	\end{remark}
	\begin{notation}
		\begin{enumerate}
			\item 	Let $f: \X \to \Y$ be a morphism in $\oocatname{ASt}^{\leq 1, NL}$. We denote the \textit{pullback functor} $\Shext(f): \Shext(\Y) \to \Shext(\X)$ by $f^{*\otimes} $. We shall also write $f^*: \SHe(\Y) \to 
			\SHe(\X)$ as the functor $f^{*\otimes}$ on the level of underlying $\infty$-categories. As $f^*$ is a colimit preserving functor, the adjoint functor theorem ( \cite[Corollay 5.5.2.9]{HTT}) says there exists a right adjoint  \[f_*: \SHe(X) \to \SHe(Y) \] which we call the \textit{pushforward functor}. 
			\item  	As $\Shext(\X)$ is a symmetric monoidal $\infty$-category, we shall denote the functor induced by the symmetric monoidal structure by \[ -\otimes -: \SHe(\X) \times \SHe(\X) \to \SHe(\X). \]
			\item As $\Shext(\X)$ is closed (\cite[Remark 1.5.3]{liu2017enhanced}), we can define the following:	for any objects $\mb E,\mb E' \in \SHe(\X)$, let $\iMap_{\SHe(\X)}(\mb E,\mb E')$  be the internal mapping space. 
		\end{enumerate}
	\end{notation}

      \section{Comparison Theorem and Exceptional Functors}	\label{Comp_Thm_and_Exc_Funct}
	\subsection{Construction of classical stable homotopy category and comparison with $\Shext(-)$.}\label{Comp_Thm}

	By \cite[Proposition 4.8]{Hoyois_Equiv_Six_Op}, we have a Nisnevich sheaf:
	\[ \mc H^{\otimes}:  \Schf^{op}\to \op{CAlg}(\op{Pr}^L_{}) \]
	\noindent that informally sends a scheme $X$ to $\mc H^{\otimes}(X)$ and sends $f: X \rightarrow Y$ to $f^*: \mc H^{\otimes}(Y) \rightarrow \mc H^{\otimes}(X) $. 
	
	Similarly to \cite[Theorem 2.2.1]{Chowdhury}, the functor $\mc H^{\otimes}$ can be extended into an $\infty$-sheaf of the form:
	 \[\mc H^{\otimes}_{ext}: \oocatname{ASt}^{\leq 1} \to \op{CAlg}(\op{Pr}^L_{}). \] 
	 \begin{definition}
	 	For $\mc X \in \oocatname{ASt}^{\leq 1}$, we define the (extended) unstable motivic category of $\mc X$ to be:
	 	\[ \mc H^{\otimes}_{ext}(\mc X) \]
	 \end{definition}
	\begin{remark}
			For $\mc X \in \oocatname{ASt}^{\leq 1}$, given any schematic atlas  $x: X \to \X$, one has
		
		\begin{equation}
			\Hext^{\otimes}(\X) \cong \op{lim}\Big(\begin{tikzcd}
				\mc H^{\otimes}(X) \arrow[r, shift right =2] \arrow[r,shift left =2] & 	\mc H^{\otimes}(X \times_{\X} X) \arrow[l,dotted] \arrow [r,shift right =4] \arrow[r] \arrow[r,shift left =4] & \cdots \arrow[l, shift left =2 ,dotted] \arrow[l, shift right =2, dotted]
			\end{tikzcd} \Big)
		\end{equation}
		where the limit is over the \v{C}ech nerve of $x$.
	\end{remark}

	But since we have at our disposal the $NL$-topology for our stacks, we can also think of mimicking the classical construction of motivic spaces and stable motivic  homotopy categories for $NL$-algebraic stacks. 

		 \begin{definition}\label{Sec.5:_stacky_def}
			\begin{enumerate}
				\item   Let $\X \in \oocatname{ASt}^{\leq 1} $ and let $\op{Lis}^{ft}_{\X}$ be the category of  smooth $ \X $-algebraic stacks of finite type. 
				\item A presheaf $\F \in \op{PSh}(\op{Lis}^{ft}_{\X})$ is \textit{$\A^1$-invariant} if for every object $X \in \op{Lis}^{ft}_{\X}$, the morphism $\F(X) \to \F(X \times \A^1)$ is an equivalence.
				\item For every $\X \in  \oocatname{ASt}^{\leq 1} $, the \textit{classical unstable homotopy category of $\X$} be the $\infty$-category of $\A^1$-invariant presheaves on $\op{Lis}^{ft}_{\X}$ which satisfy NL-descent. We will denote this category as:
				\[ \Hcl(\X):= \op{Sh}_{\op{NL,\mathbb{A}^1}}(\op{Lis}^{ft}_{\X}). \]
			\end{enumerate}
			\end{definition}
   
			\begin{notation}
				We will indicate by $ L_{\A^1} $ and $L_{NL}$ the localization functors with respect to $\A^1$-equivalences and $NL$-local equivalences. We will also denote by $L_{mot}$ the motivic localization induced by $L_{\A^1}$ and $L_{NL}$ together. We will say that a map $f$ in $\op{PSh}\left( \op{Lis}_{\bigslant{}{\X}} \right)  $ is a NL-, $\A^1$-, motivic equivalence if $ L_{NL}f, L_{\A^1}f $ or $L_{mot}f$ are equivalences respectively.
			\end{notation}
   
        \begin{remark}
            Notice that $\Hcl(\X)$ is an accessible localisation of the $\infty$-category of presheaves $\op{PSh}\left(\op{Lis}^{ft}_{\X}\right)$, in particular $\Hcl(\X)$ is presentable.
        \end{remark}
	Let us justify why the previous definition deserves to be \textit{classical} in some sense:
	\begin{proposition}\label{Sec.5:_classical_H_schemes}
		Let $ X $ be a scheme, then $\Hcl(X)$ is exactly the unstable motivic homotopy category of Morel-Voevodsky.
	\end{proposition}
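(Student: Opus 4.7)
The plan is to identify $\Hcl(X)$ with the Morel--Voevodsky category $\op{Sh}_{\op{Nis},\A^1}(\op{Sm}^{ft}_X)$ by restriction along the fully faithful inclusion $i: \op{Sm}^{ft}_X \hookrightarrow \op{Lis}^{ft}_X$. As noted in the Remark following \Cref{Shstckdef}, the NL-topology restricts to the Nisnevich topology on schemes, so restriction gives a well-defined functor $r: \Hcl(X) \to \op{Sh}_{\op{Nis},\A^1}(\op{Sm}^{ft}_X)$, and I would construct the inverse as the right Kan extension $Ri_*$.

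The main content is verifying that $Ri_*$ lands in $\Hcl(X)$. For NL-descent, I would invoke \Cref{smoothnisquassep}: every smooth $X$-stack of finite type admits a smooth-Nisnevich atlas by a scheme, so the same \v{C}ech-descent limit formula used to extend $\Sho$ from schemes to stacks in \Cref{Shstckdef} produces an NL-sheaf out of a Nisnevich sheaf, along the lines of \cite[Lemma C.3]{Hoyois_Quad_Lefschetz}. For $\A^1$-invariance, given $\mathcal{U} \in \op{Lis}^{ft}_X$ with smooth-Nisnevich atlas $U \to \mathcal{U}$ by a scheme, base change yields a smooth-Nisnevich atlas $U \times \A^1 \to \mathcal{U} \times \A^1$ whose \v{C}ech nerve is the product of that of $U \to \mathcal{U}$ with $\A^1$; combining this with the $\A^1$-invariance of the restricted sheaf and the descent formula computing $Ri_*\mathcal{F}$ at $\mathcal{U}$ gives $\A^1$-invariance at the stacky level.

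The unit and counit are then equivalences: $r \circ Ri_* \simeq \op{id}$ follows from full faithfulness of $i$, while $Ri_* \circ r \simeq \op{id}$ follows because any NL-sheaf on $\op{Lis}^{ft}_X$ is already determined by its restriction to $\op{Sm}^{ft}_X$ via descent along a schematic smooth-Nisnevich atlas, which exists by \Cref{smoothnisquassep}. The main obstacle I anticipate is the $\A^1$-invariance step: the Kan extension is computed by a limit, and one must check this interacts correctly with evaluation at $\A^1$-products. This ultimately reduces to the stability of smooth-Nisnevich atlases under $\A^1$-base change, and to the observation that the \v{C}ech nerve of the pulled-back atlas is obtained by $-\times \A^1$ levelwise from the original \v{C}ech nerve.
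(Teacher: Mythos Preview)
Your proposal is correct and uses the same key input as the paper (namely \cite[Lemma C.3]{Hoyois_Quad_Lefschetz} applied to the inclusion $\op{Sm}^{ft}_X \hookrightarrow \op{Lis}^{ft}_X$), but the paper organises the argument more efficiently. Rather than working directly with the motivic categories and verifying NL-descent and $\A^1$-invariance of $Ri_*$ separately, the paper first establishes the equivalence of sheaf topoi
\[
\op{Sh}_{NL}(\op{Lis}^{ft}_X)\simeq \op{Sh}_{Nis}(\op{Sm}^{ft}_X)
\]
via Lemma C.3 (after noting that the NL-topology on $\op{Lis}^{ft}_X$ is generated by smooth representable NL-maps, and that these restrict to Nisnevich maps on schemes). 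Since $\Hcl(X)$ and $\mc H(X)$ are by definition the full subcategories of $\A^1$-invariant objects inside these two equivalent topoi, the motivic equivalence follows for free, with no separate $\A^1$-invariance check needed. Your direct verification of $\A^1$-invariance via stability of atlases under $-\times\A^1$ is correct, but it is exactly the step that the paper's reduction to the sheaf level renders unnecessary.
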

	\begin{proof}
		Let $ \op{Sm}_{X}^{ft} $ be the category of smooth $X$-schemes of finite type. Since both $\Hcl$ and Morel-Voevodsky categories are defined as the subcategory of $\A^1$-invariant presheaves inside $ \op{Sh}_{NL}\left( \op{Lis}_{X}^{ft} \right) $ and $ \op{Sh}_{Nis}(\op{Sm}_{X}^{ft})$ respectively, for our claim it is enough to show that:
		\begin{equation}\label{Sec.5:_eq_topoi}
			 \op{Sh}_{NL}\left( \op{Lis}_{X}^{ft} \right)\simeq \op{Sh}_{Nis}(\op{Sm}_{X}^{ft})
		\end{equation}
		Notice that the topology on $\op{Lis}_{X} $ generated by smooth, representable NL-maps (that we will denote SRNL for short) coincides with the NL-topology by definition of NL-maps. So in \eqref{Sec.5:_eq_topoi}, we can replace the left hand side with $\op{Sh}_{SRNL}\left( \op{Lis}_{X}^{ft} \right)$. Now \eqref{Sec.5:_eq_topoi} follows from \cite[Lemma C.3]{Hoyois_Quad_Lefschetz}, where we choose $\mathcal{C}:= \op{Sm}^{ft}_{X} $ equipped with the quasi-topology (cf. \textit{loc. cit.}) induced by Nisnevich maps and $ \mc D:=\op{Lis}_{X}^{ft} $ equipped with the quasi-topology coming from smooth, representable NL-maps. The only non-trivial statement, we need to verify in order to apply  \cite[Lemma C.3]{Hoyois_Quad_Lefschetz}, is point (b) in \textit{loc. cit.}: this simply follows from the fact that the NL-topology on schemes coincides with the Nisnevich topology. And this finishes our proof.
	\end{proof}
	\begin{remark}\label{Sec.5:_classical_H_alg_sp}
		Now let $X$ be an algebraic space and denote by $\H^{MV}(X)$ the $\infty$-category given by $\A^1$-invariant, Nisnevich sheaves on the category $ \op{ASp}_{X}^{sm,ft} $ of smooth, finite type, $X$-algebraic spaces. By \cite[Chap.2, Theorem 6.3]{Knutson_AlgSp}, any (qs) algebraic space admits a Nisnevich cover given by a scheme, hence the Nisnevich topology coincide with the NL topology on $\op{ASp}_{X}^{sm,ft} $. The same argument of the proof of \cref{Sec.5:_classical_H_schemes}, replacing $\op{Sm}_X$ with $\op{ASp}_{X}^{sm,ft}$, tells us that $\Hcl(X) =\H^{MV}(X) $. 
	\end{remark}
		From now on, we will stick with the following notation:
	\begin{notation}
		For a scheme (resp. algebraic space) $X$, we will simply denote $\Hcl(X)$ (resp. $\H^{MV}(X)$) as $\H(X)$ since there is no cause of confusion by \cref{Sec.5:_classical_H_schemes} (resp. \cref{Sec.5:_classical_H_alg_sp}).
	\end{notation}
	
	Following the same arguments as in \cite[Theorem 2.30]{robalo2013noncommutative} (or similarly \cite[\S 7]{Drew-Gallauer}), it is possible to construct a functor $ \Hcl^{*}: \oocatname{ASt}^{\leq 1} \longrightarrow Pr^L$ informally given as:
	\[ \begin{array}{cccc}
		\Hcl^{*}: & \oocatname{ASt}^{\leq 1} & \longrightarrow & Pr^L\\
		& \X & \mapsto & \Hcl(X) \\
		& f: \X \rightarrow \Y & \mapsto & f^*_{cl}
	\end{array} \]
	\begin{remark}
		Notice that to check NL-descent for presheaves with values in any $\infty$-category $\oocatname{D}$ with finite products, it is enough to just check \v{C}ech descent by \cite[App. A, Corollary 2]{Pstragowski_Synth} (or \cite[Remark 2.3.3(3)]{AGV_Rigid6FF}).
	\end{remark}
	
	We will denote by $f_{*,cl}$ the right adjoint to $f_{cl}^*$. If $f: \mc X \rightarrow \mc Y$ is smooth, then it is not hard to see that $\Hcl^{\otimes, *}$ actually lands in $Pr^{LR}$, i.e. that $f_{cl}^*$ admits a left adjoint $f_{\#, cl}$ (cf. \cite[\S 4]{Hoyois_Equiv_Six_Op}). On representables, $f^*_{cl}$ sends $ \mc Z \rightarrow \mc Y \in \op{Lis}^{ft}_{\mc Y} $ to $\mc Z\times_{\mc Y} \mc X \rightarrow \mc X \in \op{Lis}^{ft}_{\mc X} $. Similarly, for $f$ smooth,  $f_{\#,cl}$ sends $\mc T \rightarrow \mc X $ to $ \mc T \rightarrow \mc Y $ under the forgetful map $ \op{Lis}_{\mc X}^{ft} \rightarrow \op{Lis}_{\mc Y}^{ft} $ induced by $f$.
	
	
	\begin{proposition}[Smooth Base Change]\label{Sec.5:_smooth_cl_BC}
		Suppose we are given the following cartesian square in $\oocatname{ASt}^{\leq 1}$:
			\begin{center}
			\begin{tikzpicture}[baseline={(0,0)}, scale=1.5]
				\node (a) at (0,1) {$ \mc W $};
				\node (b) at (1, 1) {$  \mc Z $};
				\node (c)  at (0,0) {$ \mc X $};
				\node (d) at (1,0) {$\mc Y$};
				
				\node (e) at (0.25,0.75) {$\ulcorner $};
				\node (f) at (0.5,0.5) {$\Delta $};
				
				\path[font=\scriptsize,>= angle 90]
				
				(a) edge [->] node [above ] {$ g $} (b)
				(a) edge [->] node [left] {$ q $} (c)
				(b) edge [->] node [right] {$ p $} (d)
				(c) edge [->] node [below] {$ f $} (d);
				
			\end{tikzpicture}
		\end{center}
		
		\noindent where $p,q$ are smooth. Then the natural exchange transformations:
		\[ Ex^*_{\#}(\Delta)_{cl}: q_{\#,cl}g^*_{cl} \stackrel{\eta^*_{\#}(p)}{\longrightarrow} q_{\#,cl}g^*_{cl}p^*_{cl}p_{\#,cl}= q_{\#,cl}q^*_{cl}f^*_{cl}p_{\#,cl}\stackrel{\varepsilon^*_{\#}(q)}{\longrightarrow} f^*_{cl}p_{\#,cl} \]
		\[ Ex^*_*(\Delta)_{cl}: p^*_{cl}f_{*,cl} \stackrel{\eta^*_*(q)}{\longrightarrow} p^*_{cl}f_{*,cl}q_{*,cl}q^*_{cl}=p^*_{cl}p_{*,cl}g_{*,cl}q^*_{cl} \stackrel{\varepsilon^*_*(p)}{\longrightarrow} g_{*,cl}q^*_{cl} \]
		\noindent are equivalences.
	\end{proposition}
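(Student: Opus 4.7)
The plan is first to use a mates argument to reduce to proving only one of the two equivalences. The right adjoint of $q_{\#,cl}g^*_{cl}$ is $g_{*,cl}q^*_{cl}$, and the right adjoint of $f^*_{cl}p_{\#,cl}$ is $p^*_{cl}f_{*,cl}$; a direct inspection of units and counits shows that $Ex^*_{*}(\Delta)_{cl}$ is precisely the mate of $Ex^*_{\#}(\Delta)_{cl}$ under these adjunctions. Since a natural transformation of left adjoints is an equivalence if and only if its mate is, it is enough to prove that $Ex^*_{\#}(\Delta)_{cl}$ is an equivalence.

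To do so, I would reduce to checking on representables. Both $q_{\#,cl}g^*_{cl}$ and $f^*_{cl}p_{\#,cl}$ are compositions of left adjoints, hence colimit-preserving; and $\Hcl(\mc Z)$, being an accessible localisation of $\op{PSh}(\op{Lis}^{ft}_{\mc Z})$, is generated under colimits by the images under $L_{mot}$ of representables $h\colon \mc Z' \to \mc Z$ in $\op{Lis}^{ft}_{\mc Z}$. Hence it is enough to verify $Ex^*_{\#}(\Delta)_{cl}$ on every such representable.

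Unwinding the definitions: $p_{\#,cl}$ sends $h\colon\mc Z' \to \mc Z$ to the composite $p\circ h\colon \mc Z' \to \mc Y$, and $f^*_{cl}$ turns this into $\mc Z'\times_{\mc Y}\mc X \to \mc X$. Symmetrically, $g^*_{cl}$ sends $h$ to $\mc Z'\times_{\mc Z}\mc W \to \mc W$, and $q_{\#,cl}$ composes with $q$ to yield $\mc Z'\times_{\mc Z}\mc W \to \mc X$. Cartesianness of $\Delta$ provides a canonical identification $\mc Z'\times_{\mc Z}\mc W \simeq \mc Z'\times_{\mc Y}\mc X$ in $\op{Lis}^{ft}_{\mc X}$, and tracing through the unit/counit factors entering the construction of $Ex^*_{\#}$ confirms that the induced map between the two representables is exactly this canonical identification.

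The main obstacle is compatibility with the various motivic localisations used to define $\Hcl(-)$ and the functors $p^*_{cl}, p_{\#,cl}, f^*_{cl}, \ldots$. For the smooth legs $p, q$ of the square, the fact that $p^*, p_{\#}, q^*, q_{\#}, g^*$ descend to the motivic localisation is standard since smooth base change preserves NL-covers and $\A^1$-homotopies. For the (possibly non-smooth) $f^*$, one uses that at the presheaf level it is the left Kan extension along the pullback functor $\op{Lis}^{ft}_{\mc Y} \to \op{Lis}^{ft}_{\mc X}$, hence colimit-preserving; checking that it sends motivic equivalences to motivic equivalences therefore reduces to verifying stability of $\A^1$-projections and NL-covers under arbitrary base change along $f$, which is essentially tautological. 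With this compatibility secured, the representable-level computation propagates to an equivalence in $\Hcl(\mc X)$ and closes the argument.
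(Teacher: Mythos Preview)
Your proposal is correct and follows essentially the same route as the paper: reduce $Ex^*_*$ to $Ex^*_\#$ by passing to mates, then check $Ex^*_\#$ on representables using the transitivity isomorphism of fiber products. The only organizational difference is that the paper lifts the exchange transformation to the presheaf level first and then observes that it descends under $L_{mot}$, whereas you work directly in $\Hcl$ and verify compatibility with the localisation afterwards; these amount to the same thing.
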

	\begin{proof}
		The two exchange transformations are obtained from one another passing to the respective adjoints under the equivalence $(Pr^L)^{op}\simeq Pr^R$ (cf. \cite[Corollary 5.5.3.4]{HTT}), therefore it is enough to show that $Ex^*_{\#}(\Delta)_{cl}$ is an equivalence. The natural transformation  $Ex^*_{\#}(\Delta)_{cl}$ is obtained by the analogous transformation at the level of presheaves under the motivic localisation $L_{mot}$. If $Ex^*_{\#}(\Delta)_{cl}$ is an equivalence on presheaves then we are done, but to check this we can reduce to check it on representable presheaves, where it becomes clear. Indeed, consider $\mc T \rightarrow \mc Z$ smooth over $\mc Z$, then:
		
		\[ q_{\#,cl}, g^*\left( \mc T\rightarrow \mc Z \right)= \mc T\times_{\mc X} \mc W \rightarrow \mc X \]
		\[ f^*_{cl} p_{\#,cl}\left( \mc T\rightarrow \mc Z \right)= \mc T\times_{\mc Y} \mc X \rightarrow \mc X \]
		\noindent The exchange transformation in this case is induced by the transitivity isomorphism of fiber products $  T \times_{\mc X} \mc W  = \mc T \times_{\mc X} \left(\mc X \times_{\mc Y} \mc Z\right) = \mc T\times_{\mc Y} \mc Z $, and this proves our claim by \cite[Corollary 2.2.2]{Land_introductionQC}.
	\end{proof}
	The following proposition is formal and already known (cf. \cite[Proposition 4.5]{Hoyois_Equiv_Six_Op}), but we include a proof here for completeness:
\begin{proposition}[NL Separation]\label{Sec.5:_NL_separation}
	Let $\mc X\in \oocatname{ASt}^{\leq 1}$ and let $ \set{f_i: U_i \rightarrow \mc X}_{i \in I} $ be a NL-cover. Then:
	\begin{enumerate}
		\item The family of functors $\set{f_{i,cl}^*:  PSh(\op{Lis}^{ft}_{\X})\rightarrow PSh(\op{Lis}^{ft}_{U_i})}$ detects NL and motivic equivalences.
		\item The family of functors $ \set{f_{i,cl}^*:\Hcl(\mc X) \rightarrow \Hcl(U_i) } $ is conservative.
	\end{enumerate}
\end{proposition}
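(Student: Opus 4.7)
The plan is to first establish (1) for NL-equivalences, upgrade to motivic equivalences using that the pullback functors also commute with the $\A^1$-localization, and then deduce (2) as an immediate corollary.

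The key observation is that each $f_{i,cl}^*$ at the level of presheaves is the (colimit-preserving) left Kan extension along the base change functor $b_i: \op{Lis}^{ft}_{\mc X} \to \op{Lis}^{ft}_{U_i}$, $V \mapsto V \times_{\mc X} U_i$. Since $b_i$ sends NL-covers to NL-covers (stability of NL-maps under base change) and commutes with $(-) \times \A^1$, it is a morphism of sites compatible with $\A^1$-invariance, so $f_{i,cl}^*$ commutes with $L_{NL}$, $L_{\A^1}$ and hence with $L_{\op{mot}}$. Therefore $\phi$ is an NL-local (resp.\ $\A^1$-, motivic) equivalence if and only if $L_{NL}\phi$ (resp.\ $L_{\A^1}\phi$, $L_{\op{mot}}\phi$) is an equivalence of sheaves, and setting $\psi := L_{NL}\phi$, the NL part of (1) reduces to showing that the family $\{f_{i,cl}^*: \op{Sh}_{NL}(\op{Lis}^{ft}_{\mc X}) \to \op{Sh}_{NL}(\op{Lis}^{ft}_{U_i})\}_{i\in I}$ is jointly conservative on sheaves; the motivic case then follows by the same reasoning applied to $L_{\op{mot}}\phi$.

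This conservativity is the content of NL-descent for the cover $\{f_i\}$. If $\psi: \mc F \to \mc G$ is a morphism of NL-sheaves whose image under each $f_{i,cl}^*$ is an equivalence, then for every $V \in \op{Lis}^{ft}_{\mc X}$ the pullback family $\{V \times_{\mc X} U_i \to V\}_{i\in I}$ is an NL-cover of $V$, and by \v{C}ech descent for $\mc F$ and $\mc G$ (sufficient by \cite[App. A, Corollary 2]{Pstragowski_Synth}) $\psi(V)$ is the limit of $\psi$ evaluated on the associated \v{C}ech nerve. Each iterated fiber product $V \times_{\mc X} U_{i_1} \times_{\mc X} \cdots \times_{\mc X} U_{i_n}$ lies in $\op{Lis}^{ft}_{U_{i_1}}$ (smoothness and finite type are stable under base change), and $\psi$ evaluated there agrees with $(f_{i_1,cl}^*\psi)$ applied to the appropriate base change, which is an equivalence by hypothesis; hence $\psi(V)$ is an equivalence, yielding the conservativity. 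The principal technical obstacle is the commutation of $f_{i,cl}^*$ with the localization functors, since it is precisely this compatibility that allows us to move freely between the presheaf and sheaf levels in the argument above. Finally, for (2), a morphism $\phi$ in $\Hcl(\mc X)$ is an equivalence if and only if it is a motivic equivalence as a map of underlying presheaves; by (1) this holds if and only if each $f_{i,cl}^*\phi$ is a motivic equivalence, i.e.\ an equivalence in $\Hcl(U_i)$.
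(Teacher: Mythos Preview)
Your proof is correct, but it proceeds differently from the paper's. The paper works entirely at the presheaf level: after refining to a single NL-atlas $x: X \to \mc X$, it forms the simplicial functor $\check{C}_n(x):=(x_{n,cl})_{\#}(x_{n,cl})^*$ and observes that the augmentation $\colim_{n\in\Delta^{\op{op}}} \check{C}_n(x)\to \mathrm{Id}$ is, objectwise, an NL-covering sieve and hence an NL-equivalence. Since each $(x_{n,cl})_{\#}$ preserves NL and motivic equivalences, if $x^*h$ is an NL (resp.\ motivic) equivalence then so is $\check{C}_\bullet(x)(h)$, and hence so is $h$. Your argument instead passes to the sheaf level, using that $f_{i,cl}^*$ commutes with the localization functors to reduce to conservativity of $\{f_i^*\}$ on NL-sheaves, which you then prove by evaluating at each $V$ and using \v{C}ech descent.

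Both arguments hinge on the same \v{C}ech nerve, but dually: the paper uses the colimit $\colim_n (x_n)_\#(x_n)^*\to\mathrm{Id}$ on presheaves, while you use the limit presentation of sheaf sections. The paper's approach is a bit more economical in that it never needs to verify the commutation $f_i^*L_{\mathrm{mot}}\simeq L_{\mathrm{mot}}f_i^*$; it only uses that the left adjoints $(x_n)_\#$ preserve the relevant local equivalences. Your approach, on the other hand, isolates the conceptually cleanest statement (conservativity at the sheaf level) and makes the role of descent more transparent. One small remark: when you write that ``$\psi$ evaluated there agrees with $(f_{i_1,cl}^*\psi)$ applied to the appropriate base change'', you are implicitly using that $f_{i,cl}^*$ is not only left Kan extension along $b_i$ but also agrees with precomposition along the forgetful functor $\op{Lis}^{ft}_{U_i}\to\op{Lis}^{ft}_{\mc X}$ (this follows from the adjunction $f_{i,\#}\dashv f_i^*$); it would be worth making this explicit, since it is exactly what justifies the pointwise formula you need.
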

\begin{proof}
	We only need to show (1), since (1) implies (2). Up to refinements, we may assume that our cover is just a NL-atlas $x: X \rightarrow \X $ with $X$ a scheme. Let $x_n: X^n_{\X}\rightarrow \X $ be the map on the $n$-fold fiber product induced by $x$. Denote by $\check{C}_{\bullet}(x)$ the \v{C}ech simplicial gadget given at each level $n$ by:
	\[ \check{C}_{n}(x):=(x_{n,cl})_{\#}(x_{n,cl})^* \]
	For each $n$, we get natural maps $\varepsilon_{\#}^*(x_n)_{cl}: (x_{n,cl})_{\#}(x_{n,cl})^*\rightarrow Id  $. Taking the colimit, we obtain a map:
	\[ \varepsilon_{\#}^*(x_{\infty})_{cl}: \colim_{n\in \Delta} (x_{n,cl})_{\#}(x_{n,cl})^*\rightarrow Id \]
	For each $ Z \in \op{Lis}_{\X}^{ft} $, $ \varepsilon_{\#}^*(x_{\infty})_{cl}(Z): \colim_{n\in \Delta^{op}} (x_{n,cl})_{\#}(x_{n,cl})^*Z\rightarrow Z $ is a NL-covering sieve; hence for each $\mc V \in \op{PSh}\left(\op{Lis}_{\X}^{ft}  \right)$, the map $  \varepsilon_{\#}^*(x_{\infty})_{cl}(\mc V) $ is a NL-equivalence. \\
	Let $h: \mc F \rightarrow \mc G$ be a morphism in $ \op{PSh}\left( \op{Lis}^{ft}_{\X} \right) $, such that $x_{cl}^*h$ is a NL (resp. motivic) equivalence. Notice that $x_{n,cl}^*h$ will be a Nisnevich (resp. motivic) equivalence too for any $n$. This implies that:
	\[ \check{C}_{\bullet}(x)(h): \check{C}_{\bullet}(x)(\mc F)\longrightarrow \check{C}_{\bullet}(x)(\mc G) \]
	is an equivalence by \cite[Corollary 2.2.2]{Land_introductionQC}, where we look at $\check{C}_{\bullet}(x)(\mc F), \check{C}_{\bullet}(x)(\mc G)$ as objects in $\op{Fun}\left( \Delta^{op}, \op{PSh}\left( \op{Lis}^{ft}_{\X} \right) \right)$. Therefore taking the colimit over $\Delta^{op} $ will also result in an equivalence:
	\[ \check{C}_{\infty}(x)(h):= \colim_{n \in \Delta^{op}} \check{C}_{n}(x)(h): \colim_{n \in \Delta^{op}} \check{C}_{n}(x)(\mc F) \stackrel{\sim}{\longrightarrow} \colim_{n \in \Delta^{op}}\check{C}_{n}(x)(\mc G)  \]
	The maps $\check{C}_{\infty}(x)(h), \varepsilon_{\#}^*(x_{\infty})_{cl}(\mc F)$ and $\varepsilon_{\#}^*(x_{\infty})_{cl}(\mc G)  $ fit into a commutative square:
	
				\begin{center}
		\begin{tikzpicture}[baseline={(0,0)}, scale=1.95]
			\node (a) at (0,1) {$ \colim_{n \in \Delta^{op}} \check{C}_{n}(x)(\mc F) $};
			\node (b) at (2.5, 1) {$ \colim_{n \in \Delta^{op}} \check{C}_{n}(x)(\mc G) $};
			\node (c)  at (0,0) {$ \mc F $};
			\node (d) at (2.5,0) {$\mc G$};
			
			\node (e) at (0.25,0.75) {$ $};
			\node (f) at (0.5,0.5) {$ $};
			
			\path[font=\scriptsize,>= angle 90]
			
			(a) edge [->] node [above ] {$\check{C}_{\infty}(x)(h) $} (b)
			(a) edge [->] node [left] {$ \varepsilon_{\#}^*(x_{\infty})_{cl}(\mc F) $} (c)
			(b) edge [->] node [right] {$ \varepsilon_{\#}^*(x_{\infty})_{cl}(\mc G) $} (d)
			(c) edge [->] node [below] {$ h $} (d);
			
		\end{tikzpicture}
	\end{center}
	Since the top arrow and the vertical arrows in the square are Nisnevich (resp. motivic) equivalences, it follows that $h$ is a Nisnevich )resp. motivic equivalence too as we wanted.
	
\end{proof}

As a special case of the \cref{Sec.5:_NL_separation} we get:
\begin{corollary}\label{Sec.5:_Classical_conservativity_of_pullbacks}
	Let  $\mc X \in \oocatname{ASt}^{\leq 1}$ and let $x: X \rightarrow \mc X $ be a NL-atlas. Then the pullback functor:
	\[ x_{cl}^*: \Hcl(\mc X) \longrightarrow \H(X) \]
	\noindent is  conservative.
\end{corollary}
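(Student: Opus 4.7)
The statement is indeed an immediate specialisation of \Cref{Sec.5:_NL_separation}, so the plan is essentially to unpack why a single NL-atlas qualifies as a NL-cover in the sense of the hypotheses of that proposition. First, I would note that by definition, the singleton family $\{x: X \to \mc X\}$ is itself a NL-cover of $\mc X$ (that is precisely the content of being a NL-atlas). Thus we are exactly in the setting of \Cref{Sec.5:_NL_separation} with index set $I = \{*\}$ and $f_* = x$.

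Next, I would invoke part (2) of \Cref{Sec.5:_NL_separation}: the family consisting of the single functor $x_{cl}^*: \Hcl(\mc X) \to \Hcl(X)$ is conservative. Since a family consisting of a single functor is conservative precisely when that functor itself is conservative, this gives the desired statement. The only remaining cosmetic point is to identify the target: since $X$ is a scheme, \Cref{Sec.5:_classical_H_schemes} tells us $\Hcl(X) = \H(X)$, which matches the notation in the statement of the corollary.

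If I wanted to be self-contained and not cite (2) of \Cref{Sec.5:_NL_separation} directly, the alternative route would be to recall the proof strategy: take a morphism $h: \mc F \to \mc G$ in $\Hcl(\mc X)$ with $x_{cl}^* h$ an equivalence, form the \v{C}ech simplicial object $\check{C}_{\bullet}(x)$ built from $(x_{n,cl})_\#(x_{n,cl})^*$, observe that the counit $\colim_{n \in \Delta^{op}} \check{C}_{n}(x)(-) \to \mathrm{id}$ is a NL-covering sieve and hence a NL-equivalence (which, after motivic localisation, becomes an equivalence in $\Hcl(\mc X)$), and then use that $\check{C}_\bullet(x)(h)$ is a levelwise equivalence to conclude that $h$ itself is an equivalence. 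But since this is precisely what \Cref{Sec.5:_NL_separation} already establishes, the one-line deduction seems cleanest.

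The only subtlety worth flagging is that one should make sure the conservativity statement is taken in the localised category $\Hcl$, not at the level of presheaves; this is the reason why (2) is phrased separately from (1) in \Cref{Sec.5:_NL_separation}, and why the reduction to representables in the proof of NL-separation is indispensable. There is no additional obstacle here beyond correctly matching the hypotheses.
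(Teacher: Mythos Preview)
Your proposal is correct and matches the paper's approach exactly: the paper presents this corollary with the preamble ``As a special case of the \cref{Sec.5:_NL_separation} we get,'' and your unpacking of why a single NL-atlas constitutes a singleton NL-cover, together with the identification $\Hcl(X)=\H(X)$ for $X$ a scheme via \Cref{Sec.5:_classical_H_schemes}, is precisely the intended deduction.
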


\begin{proposition}\label{Sec.5:_cl_and_ext_mot_spaces_pointwise}
	Let $\X \in \oocatname{ASt}^{\leq 1}$.  Then there exists a natural equivalence: 
%
	\begin{equation}\label{S2:_classical_unstable_equivalence}
		 \Hcl(\X) \xrightarrow{\simeq} \Hext(\X). 
	\end{equation} 
\end{proposition}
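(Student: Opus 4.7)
The strategy is to reduce both sides to their values on a schematic NL-atlas and appeal to the equivalences already established for schemes and algebraic spaces. Fix a schematic NL-atlas $x: X \to \X$ with \v{C}ech nerve $X_\bullet$; since $x$ is representable, each $X_n$ is an algebraic space, and \cref{Sec.5:_classical_H_schemes} together with \cref{Sec.5:_classical_H_alg_sp} yield levelwise equivalences $\Hcl(X_n) \simeq \mathcal{H}(X_n) = \Hext(X_n)$.

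First, I would construct the comparison functor. Restriction of presheaves along each $X_n \to \X$ preserves $\A^1$-invariant NL-sheaves, producing a compatible family $\Hcl(\X) \to \Hcl(X_n)$. Assembling these and composing with the levelwise identifications gives $\Hcl(\X) \to \lim_{n \in \Delta} \Hext(X_n) \simeq \Hext(\X)$, the right-hand equivalence being the defining descent property of $\Hext$ (cf.\ \cref{Shstckdef}). Naturality in $\X$ and independence from the atlas follow from passing to common refinements, since any two NL-atlases are dominated by their fiber product, which is again an NL-atlas.

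Second, I would show this map is an equivalence. Given the levelwise identifications $\Hcl(X_n) \simeq \Hext(X_n)$, it suffices to prove NL-descent for $\Hcl$, i.e.\ that $\Hcl(\X) \to \lim_n \Hcl(X_n)$ is itself an equivalence. Conservativity is already \cref{Sec.5:_Classical_conservativity_of_pullbacks}. For essential surjectivity, given compatible $\A^1$-invariant NL-sheaves $\F_n$ on each $X_n$, one glues them to a sheaf on the big site $\op{Lis}^{ft}_{\X}$ using the fact that any smooth finite-type $\X$-stack $T$ admits an NL-cover by its base change along $x$; this allows one to define $\F(T)$ as a limit of the $\F_n$-values on the pulled-back cover, and \cref{Sec.5:_NL_separation} guarantees the procedure is well-defined.

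The main obstacle I expect is verifying the effective NL-descent of $\Hcl$ with sufficient $\infty$-categorical coherence. The trickiest point is reconciling $\A^1$-localization with the gluing procedure: one has to check that the glued presheaf is automatically $\A^1$-invariant, and that $L_{mot}$ commutes with the limit over the \v{C}ech nerve. The cleanest conceptual route is to first show NL-hyperdescent for the presheaf $\X \mapsto \op{PSh}(\op{Lis}^{ft}_{\X})$, and then verify that both $L_{NL}$ and $L_{\A^1}$ are computed levelwise in the \v{C}ech limit; after this, the chain $\Hcl(\X) \simeq \lim_n \Hcl(X_n) \simeq \lim_n \Hext(X_n) \simeq \Hext(\X)$ closes out the argument.
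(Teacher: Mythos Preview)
Your approach is correct in spirit but takes a harder route than the paper. You aim to prove NL-descent for $\Hcl$ directly, by a gluing construction on sheaves and by showing that $L_{\A^1}$ and $L_{NL}$ commute with the \v{C}ech limit; as you yourself note, making this rigorous at the $\infty$-categorical level is the delicate part.

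The paper sidesteps this entirely. It writes down the augmented cosimplicial $\infty$-category
\[
\Hcl(\X) \to \H(X) \rightrightarrows \H(X^2_{\X}) \cdots
\]
and then simply invokes Lurie's comonadic descent criterion \cite[Corollary 4.7.5.3]{HA}. The two hypotheses of that corollary are (i) that the transition functors admit left adjoints and $\Hcl(\X)$ admits geometric realizations (immediate from presentability and the fact that pullbacks are colimit-preserving), and (ii) the Beck--Chevalley condition for the cosimplicial diagram, which is exactly the smooth base change already established in \cref{Sec.5:_smooth_cl_BC}. This gives the equivalence $\Hcl(\X)\simeq \lim_{\Delta}\H(X_\bullet)=\Hext(\X)$ in one stroke.

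The upshot: rather than proving effective descent for $\Hcl$ by hand, recognise that smooth base change \emph{is} the Beck--Chevalley condition, and let Lurie's abstract descent machinery do the work. Your conservativity observation (\cref{Sec.5:_Classical_conservativity_of_pullbacks}) is morally the same input, but packaged through \cite[Corollary 4.7.5.3]{HA} you avoid all the gluing and compatibility-with-localization issues you were worried about.
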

\begin{proof}
    Consider the cosimplicial diagram :
    \begin{equation}
        \Ca_{\bb} : N(\Delta_+) \to \op{Cat}_{\infty}    \end{equation}
        given by 
        \begin{equation}
           \begin{tikzcd}
               \Hcl(\X) \arrow[r] & \H(\X) \arrow[r,shift left =2] \arrow[r,shift right =2] & \H(X^2_{\X}) \arrow[l,dotted] \arrow [r, shift left =4] \arrow[r,shift right =4] & \cdots \arrow[l,shift left =2,dotted] \arrow[l,shift right =2,dotted]
            \end{tikzcd}
        \end{equation}
        We verify the conditions of \cite[Corollary 4.7.5.3]{HA}. The first condition follows from the fact that the pullback is a colimit preserving functor and $\Hcl(\X)$ is a presentable $\infty$-category hence admitting geometric realizations.\\
        The second condition follows from smooth base change for $\H(-)$ on schemes and smooth base change for $\Hcl(-)$(\cref{Sec.5:_smooth_cl_BC}). \\
        Thus applying \cite[Corollary 4.7.5.3]{HA}, we see that $\Hcl(\X) \cong \Hext(\X)$.
\end{proof}
\begin{corollary}\label{Sec.5:_cl_and_ext_mot_spaces}
	Let $\Hcl, \Hext: \oocatname{ASt}^{\leq 1}\rightarrow \oocatname{CAlg}\left( \op{Pr}^{L} \right) $ be the functors arising from the classical and extended construction of motivic spaces on NL-stacks. There exists a unique (up to contractible choice) natural transformation:
	\[\Psi: \Hcl \longrightarrow \Hext  \]
	\noindent that is an equivalence.
\end{corollary}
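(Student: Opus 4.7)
The plan is to promote the pointwise equivalence of \Cref{Sec.5:_cl_and_ext_mot_spaces_pointwise} to a natural equivalence by invoking the universal property of $\Hext$ as a NL-sheafification. The guiding observation, recorded in the remark following \Cref{Shstckdef}, is that a presheaf on $\oocatname{ASt}^{\leq 1}$ valued in a sufficiently complete $\infty$-category is a NL-sheaf if and only if it is the right Kan extension of its restriction to $\Sc_{fd}$, which must in turn be a Nisnevich sheaf. This characterises $\Hext$, up to contractible choice, as the NL-sheaf in $\oocatname{CAlg}(\op{Pr}^L)$ whose restriction to $\Sc_{fd}$ is the Morel--Voevodsky functor $\H$.

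First I would observe that \Cref{Sec.5:_cl_and_ext_mot_spaces_pointwise}, combined with \Cref{Sec.5:_classical_H_schemes}, already implies that $\Hcl$ is itself a NL-sheaf on $\oocatname{ASt}^{\leq 1}$ whose restriction to $\Sc_{fd}$ agrees with $\H$. Indeed, given any schematic NL-atlas $x:X\to \mc X$ (which exists by \Cref{smoothnisquassep}), that proposition exhibits $\Hcl(\mc X)$ as the \v{C}ech limit $\lim_{n\in\Delta}\H(X^n_{\mc X})$, which is precisely the NL-descent condition at the level of $\oocatname{CAlg}(\op{Pr}^L)$. Thus $\Hcl$ falls within the characterisation above.

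The universal property of right Kan extension then produces, essentially uniquely, a natural transformation $\Psi:\Hcl\to \Hext$ that restricts to the identity equivalence $\H\simeq \H$ on $\Sc_{fd}$. Since equivalences of $\oocatname{CAlg}(\op{Pr}^L)$-valued functors are detected pointwise, and \Cref{Sec.5:_cl_and_ext_mot_spaces_pointwise} already guarantees that each component $\Psi_{\mc X}$ is an equivalence, we conclude that $\Psi$ is a natural equivalence. The main conceptual point to be careful about is setting up the ``NL-sheaf $=$ right Kan extension'' equivalence at the level of $\oocatname{CAlg}(\op{Pr}^L)$-valued presheaves rather than space-valued ones; this is handled exactly as in the construction of $\Hext$, by applying \cite[Lemma C.3]{Hoyois_Quad_Lefschetz} with the appropriate presentable target.
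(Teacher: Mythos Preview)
Your proposal is correct and follows essentially the same approach as the paper: characterise $\Hext$ as the right Kan extension of $\H$ along the inclusion of schemes (via \cite[Lemma C.3]{Hoyois_Quad_Lefschetz}), invoke its universal property against $\Hcl$ using the identification $\Hcl|_{\op{Sch}} \simeq \H$ from \Cref{Sec.5:_classical_H_schemes}, and then conclude that $\Psi$ is an equivalence from the pointwise result of \Cref{Sec.5:_cl_and_ext_mot_spaces_pointwise}. Your intermediate observation that $\Hcl$ is itself a NL-sheaf is correct but not actually needed for the argument---the paper proceeds directly from the restriction identity to the universal property without it.
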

\begin{proof}
	By \cite[Lemma C.3]{Hoyois_Quad_Lefschetz}, taking the fully faithful functor given by the inclusion $u: \op{Sch}\into \oocatname{ASt}^{\leq 1} $, where the two categories are equipped with the Nisnevich and the NL topology respectively, we get that $ \Hext $ is the right Kan extension of $\H: \op{Sch}^{op}\rightarrow \oocatname{CAlg}(\op{Pr}^{L})  $. Since by \cref{Sec.5:_classical_H_schemes}, $\restrict{\Hcl}{\op{Sch}^{op}}= \H $, by the universal property of the right Kan extension we get a unique (up to contractible choice) functor:
	\[ \Psi: \Hcl \longrightarrow \Hext  \]
	We proved in \cref{Sec.5:_cl_and_ext_mot_spaces_pointwise} that we have an object-wise equivalence $\Hcl(\X)\simeq\Hext(\X)$ for any NL-algebraic stack $\X$. By \cite[Corollary 2.2.2]{Land_introductionQC}, this implies that $\Psi$ is indeed an equivalence as claimed.
\end{proof}

	
	\begin{notation}
		From now on due to \cref{Sec.5:_cl_and_ext_mot_spaces}, we will drop the distinction between $\Hcl$ and $\Hext$ if not necessary, and we will denote the underlying $\infty$-category of motivic spaces just as $\H$.
	\end{notation}

 As a first quick application of our comparison theorem between \textit{classical} and \textit{extended} motivic spaces categories, we have the following:

 \begin{proposition}[Smooth Projection Formula]\label{smooth_projection_formula}
     Let $ f: \X \rightarrow \Y $ be a smooth morphism in $\oocatname{ASt}^{\leq 1}$. Let $B\rightarrow A$ be a morphism in $ \H(\X) $. Then for every $C\in \H(\Y)_{/f^*A}$ and every $D \in \H(\X)_{/B}$, the canonical maps:
     \[ f\epfs(f^*B\otimes_{f^*A} C)\rightarrow B\otimes_{A}f\epfs C \]
     \[ f^*\iMap_{/A}(B,D) \rightarrow \iMap_{/f^*A}(f^*B, f^*D) \]
     \noindent are equivalences in $\H(\X)$ and $\H(\Y)$, respectively.
 \end{proposition}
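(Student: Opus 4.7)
The plan is to reduce both equivalences to tautologies at the presheaf level, exploiting that $\Hcl(\mc X)$ is an accessible symmetric monoidal localization of $\op{PSh}(\op{Lis}^{ft}_{\mc X})$ in which the Cartesian symmetric monoidal structure on representables is simply given by fiber product over the base stack. The internal-Hom equivalence is formally adjoint to the tensor-product equivalence: the $(f\epfs, f^*)$-adjunction together with Yoneda converts a projection formula for tensor products into one for internal Homs. We therefore concentrate on the first equivalence, and begin by verifying the absolute version (i.e.\ taking $A$ to be a terminal object).

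At the presheaf level, $f^*$ acts on representables by base change, sending $(W \to \mc Y)$ to $(W \times_{\mc Y} \mc X \to \mc X)$, while $f\epfs$ acts on smooth representables by post-composition with $f$, sending $(T \to \mc X)$ to $(T \to \mc Y)$ (which remains smooth of finite type since $f$ is smooth). Both functors preserve small colimits. On representables, the absolute projection formula reduces to the canonical chain of equivalences
\begin{equation*}
    f\epfs\bigl(h_{W \times_{\mc Y} \mc X} \times h_T\bigr) \simeq h_{W \times_{\mc Y} \mc X \times_{\mc X} T} \simeq h_{W \times_{\mc Y} T} \simeq h_W \times f\epfs h_T,
\end{equation*}
which is a direct consequence of the universal property of fiber products. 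Extending by colimit-preservation, the formula holds for arbitrary presheaves. Since the motivic localization $L_{mot}$ is symmetric monoidal and both $f^*$ and $f\epfs$ descend to it (the latter remaining left adjoint to the former), the presheaf-level equivalence descends to an equivalence in $\Hcl(\mc X)$, hence in $\H(\mc X) = \Hext(\mc X)$ by the comparison \Cref{Sec.5:_cl_and_ext_mot_spaces}.

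The relative version over $A$ is obtained by running the same argument in the slice categories $\H(\mc Y)_{/A}$ and $\H(\mc X)_{/f^*A}$, which are themselves presentable symmetric monoidal $\infty$-categories with units $A$ and $f^*A$ respectively. Pullback and $f\epfs$ relativize to these slices because slicing preserves both presentability and colimit-preserving adjunctions; moreover on representables the same universal-property argument goes through verbatim with all products replaced by products relative to $A$. The internal-Hom equivalence then follows by adjunction as indicated above.

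The main obstacle is not conceptual but bookkeeping-based: one must verify that the relative $f\epfs$ on $\H(-)_{/A}$ is indeed left adjoint to the relative $f^*$, and that the symmetric monoidal structures on the slices are compatible with the motivic localization. This is a formal consequence of the general theory of presentable $\infty$-categorical slices together with the fact, established in this section, that the presheaf-level $f\epfs$ is left adjoint to $f^*$ and that $L_{mot}$ is monoidal; nonetheless, some care is needed to make all the compatibilities fully coherent at the $\infty$-categorical level.
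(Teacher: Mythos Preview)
Your approach is essentially the same as the paper's: work in the classical presentation $\Hcl$, reduce to representable presheaves where the formula is a tautology about fiber products, and then descend through the motivic localization. The paper simply cites \cite[Proposition~4.3]{Hoyois_Equiv_Six_Op} and notes that the verification of \cite[Proposition~3.15]{Hoyois_Equiv_Six_Op} goes through in the stacky setting.

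One point deserves sharpening. For the \emph{absolute} projection formula it is enough that $L_{mot}$ preserve finite products (i.e.\ be symmetric monoidal for the Cartesian structure), and you invoke exactly this. But for the \emph{relative} version over $A$, the monoidal structure on the slice is given by fiber products over $A$, and ``$L_{mot}$ is symmetric monoidal'' does not by itself imply compatibility with these. What is needed is that $L_{mot}$ be \emph{locally cartesian} in the sense of Gepner--Kock, i.e.\ that it preserve pullbacks along maps to motivic spaces; this is precisely what the paper extracts from \cite[Proposition~3.15]{Hoyois_Equiv_Six_Op}. Your final paragraph correctly flags that some compatibility must be checked, but it is worth naming the property explicitly rather than leaving it as unspecified bookkeeping, since it is the one non-formal input.
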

 \begin{proof}
     If we consider the \textit{classical} incarnation $\H_{cl}(-)$ of the category of motivic spaces, we can follow the same exact proof as in \cite[Proposition 4.3]{Hoyois_Equiv_Six_Op}. Namely, \cite[Proposition 3.15]{Hoyois_Equiv_Six_Op} works also in our case and shows that $ L_{mot} $ is locally cartesian, in the sense of \cite{Gepner-Kock2017}, and preserves finite products. This means that we can reduce to the level of (representable) presheaves where the claim clearly holds.
 \end{proof}
	
	\begin{definition}
		As $\Hcl(\X)$ admits products and a final object, let $\Hcl(\X))_{*}$ be the $\infty$-category of pointed motivic objects. Following the notations of \cite{Robalothesis}, we can define the symmetric monoidal presentable $\infty$-category
		\begin{equation}
			\Hcl(\X)^{\otimes}_{*}:= (\Hcl(\X)^{\times})^{\otimes}_*
		\end{equation}
		whose underlying $\infty$-category is $\Hcl(\X)_*$.
	\end{definition}
	
	Now we would like to stabilise our category of pointed motivic spaces over $\mc X\in \oocatname{ASt}^{\leq 1}$ with respect to $\mb P^1_{\mc X}$. 
	
	\begin{proposition}\label{Sect_5:_3-symmetry_of_P1}
		Given $\mc X \in \oocatname{ASt}^{\leq 1}$, the cyclic permutation on $\left(\mb P^1_{\mc X}\right)^{\otimes 3}$ is homotopic to the identity in $  \Hcl(\mc X)  $, i.e. $\mb P^1_{\mc X}$ is 3-symmetric in the sense of \cite[Remark 2.20]{robalo2013noncommutative}.
	\end{proposition}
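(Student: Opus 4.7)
The plan is to reduce the claim to the classical $3$-symmetry of $\mb P^1$ over schemes, by pulling back along the structure map $p : \mc X \to \spec \mb Z$ and exploiting the functoriality and symmetric monoidality of the classical motivic homotopy category.

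First, I would observe that by construction of $\Hcl^*$ as a (pointed, symmetric monoidal) functor on $\oocatname{ASt}^{\leq 1}$, the structure map $p : \mc X \to \spec \mb Z$ induces a symmetric monoidal, colimit-preserving pullback
\[
p^* : \H(\spec \mb Z)_* \longrightarrow \Hcl(\mc X)_*.
\]
On representables, $p^*$ sends a smooth finite-type scheme $Y \to \spec \mb Z$ to $Y \times_{\spec \mb Z} \mc X \to \mc X$; in particular $p^* \mb P^1_{\spec \mb Z} \simeq \mb P^1_{\mc X}$, and by symmetric monoidality of $p^*$ the cyclic permutation $\sigma_{\mc X}$ on $(\mb P^1_{\mc X})^{\otimes 3}$ is canonically identified with $p^*(\sigma_{\mb Z})$, where $\sigma_{\mb Z}$ is the cyclic permutation on $(\mb P^1_{\spec \mb Z})^{\otimes 3}$.

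Then I would invoke the classical $3$-symmetry of $\mb P^1$ for schemes over any base, as in \cite{robalo2013noncommutative}: one has $\sigma_{\mb Z} \simeq \op{id}$ in $\H(\spec \mb Z)_*$. Since $p^*$ is a functor between $\infty$-categories it sends equivalent maps to equivalent maps, so applying $p^*$ produces the desired homotopy $\sigma_{\mc X} \simeq \op{id}$ in $\Hcl(\mc X)_*$, which is exactly the claim.

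The only real point to verify is that the classical statement is available in the required generality, namely over $\spec \mb Z$ (rather than, say, over a fixed field); this is however routine, since the homotopies used in the classical proof are built from explicit algebraic formulas, typically linear-fractional self-maps of $\mb P^1$, that are defined over $\spec \mb Z$ and pull back to any base. As a more robust alternative, one can use the comparison of \Cref{Sec.5:_cl_and_ext_mot_spaces}: mapping spaces in $\Hcl(\mc X)_*$ are then computed as limits, along the \v{C}ech nerve of a NL-atlas $x : X \to \mc X$, of the corresponding mapping spaces in $\H(X^{n+1}_{\mc X})_*$, and the naturality of the classical homotopies in the base makes them compatible with the simplicial face maps of the \v{C}ech nerve, so they assemble into a homotopy in the limit. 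The main (mild) obstacle is therefore merely bookkeeping around this naturality; once $p^*$ is in hand, it is essentially automatic.
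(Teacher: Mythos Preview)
Your argument is correct, but it takes a slightly different route from the paper. The paper simply says that the proof of \cite[Proposition~5.21]{Hoyois_Equiv_Six_Op} works verbatim: Hoyois' argument writes the cyclic permutation as a matrix in $GL_3$ that is a product of elementary matrices, each of which is $\A^1$-homotopic to the identity, and this computation can be rerun inside $\Hcl(\mc X)_*$ with no modification. Your approach instead transports the schematic statement along the symmetric monoidal pullback $p^* : \H(\spec \mb Z)_* \to \Hcl(\mc X)_*$. This is a clean repackaging of the same underlying content: the reason Hoyois' proof ``works verbatim'' is precisely that the explicit $\A^1$-homotopies are defined over $\spec \mb Z$ and are preserved by $p^*$. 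Your version has the advantage of making this functorial mechanism explicit and avoiding any need to inspect the homotopies themselves; the paper's version has the advantage of not needing to verify that $p^*$ is symmetric monoidal on the pointed categories (which is true, and implicit in the comparison with $\Hext$, but not spelled out at this point in the paper). Your alternative via \cref{Sec.5:_cl_and_ext_mot_spaces} and \v{C}ech descent is also valid but unnecessary once the direct pullback argument is in hand.
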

	\begin{proof}
		The proof given in \cite[Proposition 5.21]{Hoyois_Equiv_Six_Op} works verbatim for algebraic stacks too.
		
	\end{proof}

	\begin{definition}
		Let $\X\in \oocatname{ASt}^{\leq 1}$ be an algebraic stack admitting Nisnevich-local sections. The \textit{classical} stable motivic homotopy category over $\X$ is the underlying $\infty$-category of the stable presentable symmetric monoidal $\infty$-category $\Shcl(\X)$ defined as:
		\begin{equation}
			\Shcl(\X):= \Hcl^{\otimes}(\X)_*[{(\P^1,\infty)}^{-1}]
		\end{equation}
		It is denoted by $\Shc(\X)$. Here the inversion of $(\P^1,\infty)$ is in the sense of \cite[Chapter 4]{Robalothesis} (also cf. \cite[\textsection 1]{Annala_Iwasa}).
	\end{definition}
		\begin{remark}\label{Sec5:_P1_3-symmetric}
		It is well known (cf. \cite[Proposition 1.6.3]{Annala_Iwasa}) that given an $n$-symmetric object $c \in \oocatname{C}$ in an $\infty$-category $\oocatname{C}$, we can model $\oocatname{C}[c^{-1}]$ with the category of telescopic objects:
		\[ \oocatname{C}[c^{-1}]\simeq\mr{Tel}_c(\oocatname{C}):=\colim\left( \oocatname{C} \stackrel{c\otimes }{\rightarrow}\oocatname{C} \stackrel{c\otimes }{\rightarrow} \oocatname{C} \stackrel{c\otimes }{\rightarrow} \ldots  \right) \]
		In particular, by \cref{Sect_5:_3-symmetry_of_P1} we get that:
		$ \Hcl^{\otimes}(\X)_*[c^{-1}]\simeq\mr{Tel}_c(\Hcl^{\otimes}(\X)_*)$.
	\end{remark}
	
	Let us conclude this section with the final comparison theorem:
	
%
%
%

	\begin{theorem}[Comparison Theorem]\label{Sec.5:_cl_and_ext_SH_pointwise_equivalence}
				We have a natural map:
				\begin{equation*}
					\Shcl(\X) \xrightarrow{} \Shext(\X).
				\end{equation*}
				\noindent and this map is an equivalence.
		\end{theorem}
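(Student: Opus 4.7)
The plan mirrors the proof of \cref{Sec.5:_cl_and_ext_mot_spaces_pointwise} carried out at the stable level: first construct the natural map via the universal property of $\P^1$-stabilisation, then apply \cite[Corollary 4.7.5.3]{HA} to the \v{C}ech cosimplicial diagram of a schematic NL-atlas $x: X \to \X$ to verify it is an equivalence. For the construction, \cref{Sec.5:_cl_and_ext_mot_spaces} gives a natural equivalence $\Hcl^{\otimes} \simeq \Hext^{\otimes}$ in $\op{CAlg}(\op{Pr}^L)$. Pointifying and composing with the canonical functor $\Hext^{\otimes}(\X)_{*} \to \Shext(\X)$ obtained levelwise from $\H^{\otimes}(X^n_{\X})_{*} \to \Sho(X^n_{\X})$, we get a symmetric monoidal colimit-preserving functor $\Hcl^{\otimes}(\X)_{*} \to \Shext(\X)$. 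Since $(\P^1,\infty)$ is invertible in each $\Sho(X^n_{\X})$, it is invertible in the limit $\Shext(\X)$ as well, so by the universal property of $\Shcl(\X) = \Hcl^{\otimes}(\X)_{*}[(\P^1,\infty)^{-1}]$ the composite factors uniquely through $\Shcl(\X)$, giving the natural map of the statement.

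To show it is an equivalence, I would apply \cite[Corollary 4.7.5.3]{HA} to the augmented cosimplicial diagram
\[ \Shcl(\X) \to \Sho(X) \rightrightarrows \Sho(X \times_{\X} X) \rightrightarrows \cdots \]
Two hypotheses must be verified. The first, that $\Shcl(\X)$ admits geometric realisations and that the pullback to $\Sho(X)$ preserves them, is automatic: $\Shcl(\X)$ is presentable as an accessible localisation of the presentable $\Hcl^{\otimes}(\X)_{*}$, and pullbacks are left adjoints hence colimit-preserving. The second, smooth base change for $\Shcl(-)$, should be lifted from the unstable smooth base change (\cref{Sec.5:_smooth_cl_BC}) via the telescope model of \cref{Sec5:_P1_3-symmetric}, which presents $\Shcl(\X)$ as the filtered colimit $\mr{Tel}_{\P^1}(\Hcl^{\otimes}(\X)_{*})$ in $\op{Pr}^L$; since the stable pullback and smooth pushforward are obtained termwise from their unstable counterparts, the stable Beck--Chevalley exchange transformations assemble as filtered colimits of the unstable ones, and so remain equivalences.

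The main obstacle will be precisely this last step: verifying carefully that the exchange transformations really do lift to equivalences after $\P^1$-stabilisation, which requires tracking how the telescope construction interacts with adjunctions in $\op{Pr}^L$. Once this is settled, \cite[Corollary 4.7.5.3]{HA} identifies $\Shcl(\X)$ with $\lim_{[n] \in \Delta} \Sho(X^n_{\X}) = \Shext(\X)$, and a final check that this identification agrees with the natural map constructed in the first paragraph completes the proof.
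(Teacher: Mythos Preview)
Your approach is correct and parallels the unstable proof of \cref{Sec.5:_cl_and_ext_mot_spaces_pointwise}, but the paper takes a different route at the stable level. Rather than verifying the hypotheses of \cite[Corollary 4.7.5.3]{HA} directly by lifting smooth base change through the telescope (which, as you correctly flag, requires tracking the $(f_\#,f^*)$ adjunction and the exchange transformation through a filtered colimit in $\op{Pr}^L$, and implicitly uses the smooth projection formula of \cref{smooth_projection_formula} to see that $f_\#$ commutes with $\Sigma_{\P^1}$), the paper instead proves outright that $\Shcl(-)$ is an NL-sheaf. The argument is: write $\SH_{cl}(-)^*$ as the sequential colimit $\mr{Tel}_{\P^1}(\Hcl(-)_*)$; under $(Pr^L)^{op}\simeq Pr^R$ this becomes a limit along $\Omega_{\P^1}$; since for smooth $f$ the pullback $f^*$ is also a right adjoint (to $f_\#$) and hence commutes with $\Omega_{\P^1}$, by \cite[Corollary 4.7.4.18]{HA} this limit can be computed in the $*$-functoriality, so $\SH_{cl}(-)^*$ restricted to the smooth site is a limit of NL-sheaves and hence itself a sheaf. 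Once sheafiness is established, the comparison map becomes a map between two limits over the same \v{C}ech diagram that is pointwise the identity on schemes.

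Both approaches ultimately rest on the telescope model; yours pushes the $\Sigma_{\P^1}$-colimit side and must carry adjunctions and Beck--Chevalley squares through it, while the paper dualises to the $\Omega_{\P^1}$-limit side and uses only that $f^*$ is a right adjoint for smooth $f$. The paper's argument sidesteps precisely the bookkeeping you identify as the main obstacle, and it yields the stronger conclusion that $\Shcl$ is globally an NL-sheaf, which is immediately reused in the subsequent corollary \cref{Sec.5:_cl_and_ext_mot_spaces}'s stable analogue.
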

		\begin{proof}
		From \cref{Sec.5:_cl_and_ext_mot_spaces}, we get an equivalence on pointed categories:
		\[ \Hcl(\mc X)_*\stackrel{\sim}{\longrightarrow} \Hext(\mc X)_* \]
		By definition:
		\[ \Shcl(\X):= \Hcl^{\otimes}(\X)_*[{(\P^1,\infty)}^{-1}] \]
		\noindent From here on in this proof, we will drop the superscript $ (-)^{\otimes} $, but we will always work with monoidal categories and monoidal functors between them.\\
		  Since the NL-topology is equivalent to the Nis-topology on algebraic spaces, if $ Z \in \oocatname{ASp} $ we recover the classical definition of the stable motivic homotopy category $\SH_{cl}(Z)=\SH^{}(Z)$. On the other hand, by construction we already know that for $Z \in \oocatname{ASp}$ we have $\SH_{ext}(Z)=\SH^{}(Z)$, and thus $\SH_{cl}(Z)=\SH_{ext}(Z)$ too.
		\noindent Let $x: X \rightarrow \X$ be an NL-atlas. For each $n$, denote by $i_{X^n/\mc X}^k: X_{\mc X}^{n}\rightarrow \mc X$ the natural projection from $X_{\mc X}^{n}$ to its $k^{th}$-component, composed with the atlas map $x$. Then we have pullback maps:
		\[ (i_{X^n/\mc X}^k)^*: \SH_{cl}(\X) \longrightarrow \SH(X_{\mc X}^{n})=\H^{}(X_{\mc X}^n)_*[{(\P^1,\infty)}^{-1}] \]
		Since it does not't really matter which projection  we are choosing, we will simply denote this map as $(i_{X^n/\mc X})^*$.
		By the universal property of the limit, we get a map:
		\[ i_{X^{\bullet}/\mc X}^*:=\lim_{n \in \Delta} i_{X^n/\mc X}^*: \SH_{cl}(\mc X) \longrightarrow \SH_{ext}(\mc X) \]
		To show that this is an equivalence, it is enough to prove that $\Shcl(-)$ is a NL-sheaf. Indeed, if $\SH_{cl}$ is a NL-sheaf, given any NL-atlas $X \rightarrow \X$, we can write:
		\[ \SH_{cl}(\X)\simeq \lim_{n \in \Delta} \SH_{cl}(X^n_{\X}) \]
		\noindent and $ i_{X^{\bullet}/\mc X}^*: \SH_{cl}(\mc X) \longrightarrow \SH_{ext}(\mc X)$ will induce a map of $\Delta$-indexed diagrams, that is point-wise an equivalence. To show that $\SH_{cl}$ is a sheaf we will follow standard arguments, reducing ourselves to the unstable case as in \cite[Theorem 2.3.4]{AGV_Rigid6FF}. From the fact that $\Hext$ is a NL-sheaf (cf. \cite[Theorem 2.2.1]{Chowdhury}) and from \cref{Sec.5:_cl_and_ext_mot_spaces}, we get for free that $\Hcl$ is a NL-sheaf. Denote by:
		\[ \begin{array}{cccc}
			\Hcl(-)^*: & \left(\oocatname{ASt}^{\leq 1}\right)^{op}& \longrightarrow & Pr^L
		\end{array} \]
		\noindent the functor informally given by assigning to a NL-stack $\X$ the $\infty$-category $\Hcl(\X)$ and to any map $f$ the pullback functor $f^*$; we will use $\SH_{cl}(-)^*$ for the stable  version of the functor, while with $\Hcl(-)_*$ (and $ \SH_{cl}(-)_* $) we will denote the similar functors sending $f \mapsto f_*$. By \cref{Sec5:_P1_3-symmetric}, we have that the functor $\SH_{cl}(-)^*$ can be computed as a colimit:
		\begin{equation}\label{Sect5:_colim_suspension}
			\SH_{cl}(-)^*\simeq \colim_{\mb N} \left( \Hcl(-)^*\stackrel{\Sigma_{\mb P^1_{-}}}{\longrightarrow} \Hcl(-)^*\stackrel{\Sigma_{\mb P^1_{-}}}{\longrightarrow} \Hcl(-)^*\stackrel{ \Sigma_{\mb P^1_{-}}}{\longrightarrow}  \ldots   \right)
		\end{equation}
		\noindent where $\Sigma_{\mb P^1_{-}}:= \mb P^1_{-}\otimes\cdot$.
		By \cite[Corollary 5.5.3.4, Theorem 5.5.3.18]{HTT}, under the equivalence $(Pr^L)^{op}\simeq Pr^{R}$, the diagram \eqref{Sect5:_colim_suspension} tells us that we can compute $\SH_{cl}(-)_*$ as an $\mb N^{op}$-indexed limit:
		\begin{equation}\label{Sect5:_lim_desuspension}
			\SH_{cl}(-)_*\simeq \lim_{\mb N^{op}} \left(\ldots \stackrel{\Omega_{\mb P^1_{-}}}{\longrightarrow} \Hcl(-)_* \stackrel{\Omega_{\mb P^1_{-}}}{\longrightarrow} \Hcl(-)_* \stackrel{\Omega_{\mb P^1_{-}}}{\longrightarrow} \Hcl(-)_*  \right) 
		\end{equation}
		\noindent with $ \Omega_{\mb P^1_{-}}:= \iMap(\mb P^1_{-}, \cdot)  $ given by the internal mapping space. To check that $\Shcl(-)^*$ is a NL-sheaf, it is enough to check that for each $\X \in \oocatname{ASt}^{\leq 1}$, $ \SH_{cl}(-)^* $ satisfies the descent condition on the site $ \left( RSNL, \tau_{NL} \right) $ of representable, smooth, NL-algebraic stacks over $\X$ equipped with the NL-topology. But for $f$ smooth maps, (unstable) pullbacks $f^*$ admit a left adjoint and hence they commute with right adjoints. Therefore we have an equivalence $f^*\Omega_{\mb P^1_{-}}\simeq\Omega_{\mb P^1_{-}} f^*$ and \eqref{Sect5:_lim_desuspension} along with \cite[Corollary 4.7.4.18]{HA} is telling us that we can compute $\SH_{cl}(-)^*$ as:
		\[  	\SH_{cl}(-)^*\simeq \lim_{\mb N^{op}} \left(\ldots \stackrel{\Omega_{\mb P^1_{-}}}{\longrightarrow} \Hcl(-)^* \stackrel{\Omega_{\mb P^1_{-}}}{\longrightarrow} \Hcl(-)^* \stackrel{\Omega_{\mb P^1_{-}}}{\longrightarrow} \Hcl(-)^*  \right)  \]
		But each term in the limit is already a NL-sheaf and therefore $ \SH_{cl}(-)^*$ must be a sheaf too and we are done.
		\end{proof}

		\begin{corollary}
			Let $\Shcl, \Shext: \oocatname{ASt}^{\leq 1}\rightarrow \oocatname{CAlg}\left( \op{Pr}^{L}_{stb} \right) $ be the functors arising from the classical and extended construction of stable motivic categories on NL-stacks. There exists a unique (up to contractible choice) natural transformation:
			\[\overline{\Psi}: \Shcl \longrightarrow \Shext  \]
			\noindent that is an equivalence.
		\end{corollary}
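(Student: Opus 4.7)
The plan is to mimic the proof of \Cref{Sec.5:_cl_and_ext_mot_spaces}, upgrading the pointwise equivalence from \Cref{Sec.5:_cl_and_ext_SH_pointwise_equivalence} to a natural equivalence via a Kan extension argument. First, I would observe that $\Shext$ is, by its very construction in \cite{Chowdhury}, a NL-sheaf on $\oocatname{ASt}^{\leq 1}$ whose restriction along the fully-faithful inclusion $u: \op{Sch}\hookrightarrow\oocatname{ASt}^{\leq 1}$ recovers Robalo's functor $\Sho: \op{Sch}^{op}\to \oocatname{CAlg}(\op{Pr}^L_{stb})$. Applying \cite[Lemma C.3]{Hoyois_Quad_Lefschetz} to $u$, with the Nisnevich and NL quasi-topologies respectively on source and target, we conclude that $\Shext$ is the right Kan extension of $\Sho$ along $u$.

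Next, I would identify the restriction $\restrict{\Shcl}{\op{Sch}^{op}}$ with $\Sho$. On the unstable level, this is precisely the content of \Cref{Sec.5:_classical_H_schemes} (extended to the pointed setting via $(-)_*$). On the stable level, since for a scheme $X$ the NL-topology coincides with the Nisnevich topology and $\Shcl(X):=\Hcl^{\otimes}(X)_*[\mathbb{P}^{1,-1}]$ is defined by the same $\mathbb{P}^1$-inversion procedure as Robalo's $\SH$, one obtains $\restrict{\Shcl}{\op{Sch}^{op}}\simeq \Sho$ as symmetric monoidal functors to $\oocatname{CAlg}(\op{Pr}^L_{stb})$. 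By the universal property of the right Kan extension, the restriction map
\[
\op{Map}(\Shcl,\Shext)\xrightarrow{\ \sim\ } \op{Map}\bigl(\restrict{\Shcl}{\op{Sch}^{op}},\restrict{\Shext}{\op{Sch}^{op}}\bigr)\simeq \op{Map}(\Sho,\Sho)
\]
is an equivalence, and the identity on the right corresponds, up to contractible choice, to a unique natural transformation $\overline{\Psi}: \Shcl\to \Shext$.

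Finally, to verify that $\overline{\Psi}$ is an equivalence of functors, by \cite[Corollary 2.2.2]{Land_introductionQC} it suffices to check that $\overline{\Psi}(\mc X)$ is an equivalence for each $\mc X\in \oocatname{ASt}^{\leq 1}$; but this is exactly the statement of \Cref{Sec.5:_cl_and_ext_SH_pointwise_equivalence}. The main subtlety to keep in mind is that everything must be carried out in $\oocatname{CAlg}(\op{Pr}^L_{stb})$, so both the Kan extension and the pointwise equivalence should be treated as symmetric monoidal statements; however, this causes no trouble since the constructions of $\Shcl$ and $\Shext$ are symmetric monoidal by design and the comparison map in the proof of the Comparison Theorem is built out of monoidal pullbacks, so it automatically refines to a map in $\oocatname{CAlg}(\op{Pr}^L_{stb})$.
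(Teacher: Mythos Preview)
Your proposal is correct and follows essentially the same approach as the paper: the paper's proof simply says to apply the argument of \Cref{Sec.5:_cl_and_ext_mot_spaces} verbatim with the stable functors in place of the unstable ones, which is exactly the Kan extension argument you spell out (using \cite[Lemma C.3]{Hoyois_Quad_Lefschetz} to identify $\Shext$ as the right Kan extension of $\Sho$, invoking the universal property, and then applying \cite[Corollary 2.2.2]{Land_introductionQC} together with the pointwise equivalence of \Cref{Sec.5:_cl_and_ext_SH_pointwise_equivalence}). Your additional remarks on the symmetric monoidal compatibility are reasonable elaborations but do not depart from the paper's line of argument.
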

		\begin{proof}
			The same proof used in \cref{Sec.5:_cl_and_ext_mot_spaces} applies here too replacing the functors in \textit{loc.\!\!\ \  cit.\!} with their stable counterparts.
		\end{proof}
	As a consequence of the previous corollary, from here on, we are now allowed to exchange the underscripts of $\Shcl$ and $\Shext$ (and of $\SH_{cl}, \SH_{ext}$) when not really necessary. Moreover we will drop the superscripts completely if they are not necessary.
\begin{remark}\label{NFvsACconstruction}
    As mentioned in the introduction, a similar construction of $\Shcl(\X)$ has been carried out in \cite{Neeraj-Felix-SHcl}. In \textit{loc. cit.}, the authors define the stable homotopy category as the $\mathbb{P}^1$-stabilization of  $\mathbb{A}^1$-invariant sheaves on the Lisse-Nisnevich site $\op{Sm}_{\X}$, where objects are smooth $\X$-schemes, for $\X$ a NL-stack, and where coverings are given by Nisnevich coverings.  \\
    We have a natural inclusion of categories $\op{Sm}_{\X} \hookrightarrow \op{Lis}^{}_{\X}$ and, by \cite[Lemma C.3]{Hoyois_Quad_Lefschetz}, one can see that we have a natural equivalence of topoi $ \op{Sh}_{Nis}\left(\op{Sm}_{\X}\right)\simeq \op{Sh}_{NL}\left(\op{Lis}^{}_{\X}\right) $. Hence their construction agrees with ours already at the unstable level and therefore it agrees also after stabilisation.
\end{remark}	 
	\subsection{Extending Six-functor formalisms}\label{Exc_Funct}
In this section, we extend the exceptional functors on $\Shext(-)$ to non-representable locally of finite type morphism of $\op{NL}$-stacks. In \cite{Chowdhury}, the author only extended to representable separated of finite type morphisms. This section aims to extend the functors to non-representable situation. 

In order to extend this, we at first recall that exceptional functors already to extend to locally finite type morphisms in the level of schemes. We shall use Mann's formalism of abstract six-functor formalism using the category of correspondences to extend this functors. We recall the relevant definitions and provide proofs of relevant statements and propositions in the appendix.
\subsubsection{Extending $(-)_!$ to locally of finite type morphisms in the level of schemes.}
 Let $\Ca = \op{Sch}$ be the category of schemes and let $\E= \op{sep-ft}$ be the collection of morphisms which are separated and of finite type and $\E'=\op{lft}$ be the collection of morphisms which are locally of finite type. 

 It follows from \cite{Ayoub_6FF_vol1},\cite{Ayoub_6FF_vol2} and \cite{Robalothesis} that we have an abstract six-functor formalism of the form :

 \begin{equation}\label{Sec.3.2.1:_Schematic_SH*_!}
    \SH^*_!:=\SH_{(\Ca,\E)} : \op{Corr}\left( Sch \right)^{\otimes}_{sep\text{-}ft,all} \to \op{Cat}_{\infty}^{\otimes}
 \end{equation}
 \noindent that can be informally described as:
 \[ \begin{array}{cccc}
 	\SH^*_!: &  \op{Corr}\left( Sch \right)^{\otimes}_{sep\text{-}ft,all}  &\longrightarrow &\op{Cat}_{\infty}^{\otimes}\\
 	& & &\\
 	& X & \mapsto & \Sho(X)\\
 	& \begin{tikzcd}
 		& Z \arrow[ld, "f"'] \arrow[rd, "g"] &   \\
 		X &                                    & Y
 	\end{tikzcd} &\mapsto & g_!f^*
 \end{array} \]
which sends $X \mapsto \Sho(X)$ and morphisms, given by roofs where the right leg is in $sep$-$ft$, to the expected functors. It also encodes base change, projection formula as explained in the appendix.
The following proposition extends this functor to locally of finite type situation.
\begin{proposition}\label{CorrSchlocalfintype}
    The functor $\SH^*_!$ of \eqref{Sec.3.2.1:_Schematic_SH*_!} extends  to a functor:
    \begin{equation*}
        \SH^*_!:  \op{Corr}\left( Sch \right)^{\otimes}_{lft,all} \to \op{Cat}_{\infty}^{\otimes}
    \end{equation*}
\end{proposition}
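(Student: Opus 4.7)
The plan is to invoke an abstract extension criterion for six-functor formalisms, in the style of Liu--Zheng and Mann, recalled in the appendix. The informal idea is the following: any locally of finite type morphism is Zariski-locally on the source separated and of finite type, and the underlying functor $\Sho$ satisfies Zariski descent, so one can glue along a suitable \v{C}ech diagram to define $g_! f^*$ for an arbitrary correspondence $X \xleftarrow{f} Z \xrightarrow{g} Y$ with $g \in \op{lft}$, in a way that is forced by the already existing formalism on $\op{sep-ft}$.

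Concretely I would proceed as follows. First, observe that both $\op{sep-ft}$ and $\op{lft}$ are stable under composition and base change in $\Sc$, and $\op{sep-ft}\subseteq \op{lft}$; in particular $\op{Corr}(\Sc)^{\otimes}_{\op{lft},\op{all}}$ makes sense and comes equipped with a natural inclusion from $\op{Corr}(\Sc)^{\otimes}_{\op{sep-ft},\op{all}}$. Second, recall that $\Sho$ is a Nisnevich (hence Zariski) sheaf on schemes by Morel--Voevodsky and Robalo. Third, verify that every $g: Z \to Y$ in $\op{lft}$ admits a Zariski cover of the source $\{j_i : V_i \hookrightarrow Z\}$ by morphisms whose composition with $g$ lies in $\op{sep-ft}$: take any affine open cover of $Z$, so that each $V_i$ is separated over $Y$ and, being quasi-compact, of finite type. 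These are precisely the hypotheses required by the extension criterion, which outputs the desired functor
\[ \SH^*_!: \op{Corr}(\Sc)^{\otimes}_{\op{lft},\op{all}} \longrightarrow \op{Cat}_{\infty}^{\otimes}, \]
unique up to contractible choice, and extending the formalism on separated finite type correspondences. For a correspondence with $g \in \op{lft}$, one recovers $g_! f^*$ by descending along the \v{C}ech nerve of an affine open cover as above: on each $V_i$ the functor $(g\circ j_i)_! (f\circ j_i')^*$ is already defined, and the required descent data are produced by smooth (actually open) base change together with the projection formula for $\Sho$ applied to the open immersions $j_i$.

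The main obstacle is not the existence of the extension on the level of underlying $\infty$-categories, which is essentially a formal consequence of descent, but the compatibility with the symmetric monoidal structure: the extension must land in $\op{Cat}_{\infty}^{\otimes}$ rather than just $\op{Cat}_{\infty}$. This requires a symmetric monoidal enhancement of the extension criterion, ensuring that the Zariski covers and their \v{C}ech nerves interact properly with the Cartesian monoidal structure on $\Sc$ (and hence on $\op{Corr}(\Sc)^{\otimes}$). The necessary coherence is the core content of the appendix machinery on correspondences, and once it is in place the extended formalism automatically inherits projection formula, base change and compatibility with tensor products from the corresponding properties of $\Sho$ on schemes.
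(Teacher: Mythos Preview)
Your approach is essentially the one the paper takes: invoke the abstract extension criterion (their \cref{extendingsixfunctorEside}) with $S$ the class of Zariski covers, and check that locally of finite type morphisms admit affine refinements so that the restricted maps are separated of finite type. Two small technical points where the paper is more precise: the criterion requires \emph{codescent for $\D_!$} along $S$-hypercovers (i.e., that $\SH(-)_!$ is a Nisnevich, hence Zariski, cosheaf), not descent for $\D^{*\otimes}$; for open immersions $j$ one has $j_! = j_\#$ left adjoint to $j^*$, so sheaf and cosheaf conditions are equivalent, but this should be said. Second, the criterion asks for a morphism of $S$-hypercovers $f_\bullet: X_\bullet \to Y_\bullet$ with $f_n \in \op{sep\text{-}ft}$ for all $n\ge 0$, so one covers both source \emph{and} target by affines, not just the source; the paper does exactly this and iterates to get the full hypercover.
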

\begin{proof}
 We verify the conditions of \cref{extendingsixfunctorEside}. Let $S$ be the collection of Zariski covers.
 \begin{enumerate}
     \item  From \cite[Proposition 6.24]{Hoyois_Equiv_Six_Op}, we know that $\SH(-)_!$ is a Nisnevich cosheaf. In particular it is codescent along Zariski hypercovers.
     \item Let $f: X \to Y$ be a morphism locally of finite type. It follows from the definition of locally of finite type, that we have a commutative square :
     \begin{equation*}
         \begin{tikzcd}
             X_0 \arrow[r] \arrow[d,"f_0"] & X \arrow[d,"f"] \\
             Y_0 \arrow[r] & Y
         \end{tikzcd}
        \end{equation*}
        where $f_0$ is separated of finite type. This follows from $X_0$ and $Y_0$ to be affine coverings of $X$ and $Y$ respectively. Iterating this process, we get a morphism of Zariski-hypercovers where levelwise the morphism is separated and of finite type.
        
 \end{enumerate}
Thus it follows from \cref{extendingsixfunctorEside} that $\SH_{(\op{Sch},sep\text{-}ft)}$ extends to $\SH_{(\op{Sch},lft)}$.
\end{proof}

The following proposition not only extends the exceptional functors but also proves base change and projection formula.

\subsubsection{Extending exceptional functors to the non-representable case.}

Let $\Ca= \op{Sch}$ and $\Ca' =\oocatname{ASt}^{\leq 1, NL}$. Let $\E_1 \subset \E_2$ be subset of morphisms in $\Ca'$ where:
\begin{itemize}
    \item $\E_1$ is the collection of morphisms of schemes which are locally of finite type.
    \item $\E_2=lft$ is the collection of morphism of $\op{NL}$-stacks which are locally of finite type (not necessarily representable).
\end{itemize}
We have the following proposition:
\begin{proposition}\label{Sect.3.2:_SH*!_Corr_functor}
    The functor  $\SH^*_!$ constructed in \cref{CorrSchlocalfintype} extends to functor :
    \begin{equation*}
        \SH^*_!: \op{Corr}\left(\oocatname{ASt}^{\leq 1}\right)^{\otimes}_{lft,\op{all}} \to \op{Cat}_{\infty}^{\otimes}.
    \end{equation*}
\end{proposition}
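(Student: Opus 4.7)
The plan is to apply a source-category extension principle analogous to \Cref{extendingsixfunctorEside}, which was used in the proof of \Cref{CorrSchlocalfintype}, but this time along the NL-topology on $\oocatname{ASt}^{\leq 1}$ rather than along the Zariski topology on schemes. Concretely, I would extend $\SH^*_!$ from $\op{Corr}(\op{Sch})^{\otimes}_{lft,\op{all}}$ to $\op{Corr}(\oocatname{ASt}^{\leq 1})^{\otimes}_{lft,\op{all}}$ by right Kan extending along the fully faithful inclusion $\op{Sch} \hookrightarrow \oocatname{ASt}^{\leq 1}$ and then checking that the resulting functor is compatible with compositions of correspondences. The description of $\Shext(\X)$ in \Cref{Shstckdef} as a limit over the \v{C}ech nerve of an NL-atlas is exactly what computes this extension pointwise.

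First I would verify the cosheaf condition for $\SH^*_!$ along the NL-topology on schemes. Since the NL- and Nisnevich topologies agree on schemes, this reduces to the Nisnevich codescent of $\SH(-)_!$, which is the same property from \cite[Proposition 6.24]{Hoyois_Equiv_Six_Op} already exploited in \Cref{CorrSchlocalfintype}.

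Next I would verify the resolution condition: every lft morphism $f:\X \to \Y$ of NL-stacks can be refined, via NL-atlases, by an lft morphism of schemes. Choose a schematic NL-atlas $y: Y_0 \to \Y$ and then a schematic NL-atlas $x_0: X_0 \to \X\times_{\Y} Y_0$. Since $\X\times_{\Y}Y_0 \to Y_0$ is lft as a base change of $f$, and $x_0$ is smooth (hence lft), the composition $f_0: X_0 \to Y_0$ is lft between schemes, fitting in a commutative square
\begin{center}
\begin{tikzcd}
X_0 \arrow[r] \arrow[d, "f_0"'] & \X \arrow[d, "f"] \\
Y_0 \arrow[r, "y"'] & \Y
\end{tikzcd}
\end{center}
Iterating on the simultaneous \v{C}ech nerves of $y$ and $x_0$ produces a bisimplicial scheme-level resolution of the correspondence $\X \xleftarrow{\op{id}} \X \xrightarrow{f} \Y$ by correspondences of schemes whose right legs are all lft.

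The main obstacle, and what the appendix technology of Liu-Zheng and Mann is precisely designed for, is promoting this levelwise descent from the underlying $\infty$-category to the symmetric monoidal correspondence category, while coherently packaging compositions of correspondences, base change squares, and projection formulae. Once one checks that the formation of correspondence categories commutes with the relevant limits in $\op{Cat}_\infty^{\otimes}$, the $f_!$ functor for non-representable $f$ is defined as a limit of the schematic $(f_n)_!$, and base change and projection formula in the stacky setting follow from their schematic counterparts by pointwise application. Uniqueness of the extension is then automatic from the universal property of the right Kan extension, giving the desired functor on $\op{Corr}(\oocatname{ASt}^{\leq 1})^{\otimes}_{lft,\op{all}}$.
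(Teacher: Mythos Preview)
Your proposal conflates two distinct extension principles and tries to do in one step what the paper does in two. The tool you invoke, \Cref{extendingsixfunctorEside}, extends the \emph{edge class} $\E \subset \E'$ while keeping the source category $\Ca$ fixed (the definition of an exceptional pair is $(\Ca,\mathcal{S},\E)\subset(\Ca,\mathcal{S},\E')$). You cannot use it to pass from $\op{Sch}$ to $\oocatname{ASt}^{\leq 1}$; the codescent hypothesis you verify only makes sense once the functor is already defined on the larger category. Conversely, the source-category extension principle \Cref{extendingsixfunctorCside} requires that every $\E'$-morphism becomes an $\E$-morphism after pullback along an atlas; for a non-representable lft map $f:\X\to\Y$ and a schematic atlas $Y\to\Y$, the fiber product $\X\times_{\Y}Y$ is still a stack, so this condition fails if you try to jump straight to the full lft class.

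The paper's proof therefore splits into two steps. First, with $\E_1'=$ \emph{representable} lft morphisms of NL-stacks, one applies \Cref{extendingsixfunctorCside}: here the key input is descent of $\SH^{*\otimes}$ along NL-covers (not codescent of $\SH_!$), and the nice-geometric-pair condition holds precisely because representable lft maps pull back to schematic lft maps along schematic atlases. Only then, with the functor now defined on all of $\oocatname{ASt}^{\leq 1}$ with representable lft edges, does one apply \Cref{extendingsixfunctorEside} to enlarge the edge class to arbitrary lft, using codescent of $\SH_!$ along \emph{representable} NL-covers of stacks (which follows from the schematic case by base change). Your resolution $X_0\to Y_0$ of a non-representable $f$ is exactly the ingredient for this second step, but it presupposes the first. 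Also note that the $f_!$ so produced is a \emph{colimit} of the $(f_n)_!$, not a limit; the two extension principles use opposite Kan extensions, reflecting the sheaf/cosheaf duality between $*$ and $!$.
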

\begin{proof}
Let $\E_1'$ be the collection of morphisms in $\oocatname{ASt}^{\leq 1, NL}$ which are representable and locally of finite type. The extension follows by extending the functor in two steps.
\begin{enumerate}
    \item \textbf{Extending the functor from $\E_1$ to $\E_1'$:} We verify the conditions of \cref{extendingsixfunctorCside}. 
    \begin{itemize}
        \item By \cite[Proposition 6.24]{Hoyois_Equiv_Six_Op}, we see that $\SH^{\otimes*}(-)$ is a Nisnevich sheaf. Thus it satisfies descent for $S$-\v{C}ech covers where $S$ is the collection of $\op{NL}$ covers
        \item The second condition follows from the definition of $\oocatname{ASt}^{\leq 1, NL}$.
        \item The third condition follows from the definition of representable locally of finite  type.
    \end{itemize}
    Thus \cref{extendingsixfunctorCside} shows that the functor $\SH_({\Ca,\E_1)}$ extends to a functor $\SH_{(\Ca,\E_1')}$.
    \item \textbf{Extending the functor from $\E_1'$ to $\E_2$:} We verify the conditions of \cref{extendingsixfunctorEside}.
    \begin{itemize}
        \item By  \cite[Proposition 6.24]{Hoyois_Equiv_Six_Op}, $\SH_!(-)$ is a Nisnevich cosheaf on schemes, in particular it is also a $\op{NL}$-cosheaf (since on schemes the two topologies agree). As representable $\op{NL}$-covers in $\AstNl$ fit in a pullback square where the map between schemes is $\op{NL}$, it follows that $\SH(-)_!$ satisfies codescent along representable $\op{NL}$-covers. Let $S$ be the collection of morphisms which are representable $\op{NL}$-covers.
        \item Given a non representable map locally of finite type of $\op{NL}$-stacks $f: X \to Y$, we have a commutative square 
         \begin{equation*}
         \begin{tikzcd}
             X_0 \arrow[r] \arrow[d,"f_0"] & X \arrow[d,"f"] \\
             Y_0 \arrow[r] & Y
         \end{tikzcd}
        \end{equation*}
        where the horizontal rows are $\op{NL}$-atlases and $f_0$ is a locally of finite type map of schemes. Iterating this, we get a map of $\op{NL}$-hypercovers $f_{\bb} : X_{\bb} \to Y_{\bb}$ such that $f_n$ is locally of finite type.
    \end{itemize}
    Thus the conditions of \cref{extendingsixfunctorEside} are verified. Hence $\SH_{(\Ca,\E_1')}$ extends to a functor :
    \begin{equation*}
               \SH_{(\Ca',\E_2)} : \op{Corr}(\Ca')^{\otimes}_{\E_2,\op{all}} \to \op{Cat}_{\infty}^{\otimes}.
    \end{equation*}
\end{enumerate}
\end{proof}	

\begin{remark}
    It is not hard to show that with similar techniques to the ones used in the proposition above, it is possible to construct a functor:
    \[ \SH^*_{\#}: \op{Corr}\left(\oocatname{ASt}^{\leq 1}\right)^{\otimes}_{smooth,\op{all}} \to \op{Cat}_{\infty}^{\otimes} \]
    \noindent using \textit{smooth base change} proved in \cref{Sec.5:_smooth_cl_BC} and NL-descent along pullback functors. Similarly to what explained in Appendix \ref{App.A.2:_6FF}, this will give us the smooth projection formula one can deduce from the unstable one proved in \cref{smooth_projection_formula}.\\
    There is also a third way to get the smooth projection formula. Indeed given a smooth map $f:\X\rightarrow \Y$, from \cref{Sect.3.2:_SH*!_Corr_functor},  one obtains a projection formula of the form:
    \[ f_!(-\otimes f^*(-))\simeq -\otimes f_!(-) \]
    Using the purity equivalence (that we will prove) in \cref{Sec.4:_Stacky_Purity}, we get:
    \[ f\epfs(-\otimes f^*(-))\simeq -\otimes f\epfs(-) \]
    \noindent that is exactly what we were looking for.
    
\end{remark}


	\section{Non-Representable Relative Purity}\label{Amb_and_Purity}
	\subsection{Borel J-homomorphism}
	
		Let $ \mc X $ be an NL-stack and let $ \mc E $ be a locally free sheaf on $ \mc X $. Denote by $ p: E \rightarrow \mc X $ be a vector bundle associated to $ \mc E $, with zero section given by $ s: \mc X \rightarrow E $. Notice that both $ p $ and $ s $ are representable maps. Following \cite[\S 5.2]{Hoyois_Equiv_Six_Op}, we have adjoint functors:
	\[ p\epfs s_*: \mr{SH}(\mc X)\leftrightarrows \mr{SH}(\mc X): s^!p^* \]
	
	\begin{lemma}
		The adjoint functors:
		\[  p\epfs s_*: \mr{SH}(\mc X)\leftrightarrows \mr{SH}(\mc X): s^!p^* \]
		\noindent are equivalence of $ \infty $-categories, inverse to one another. 
	\end{lemma}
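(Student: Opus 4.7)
The plan is to reduce the claim to the well-established schematic analogue via descent along an NL-atlas. Since both $p$ and $s$ are representable maps, the four functors $p\epfs, p^*, s_*, s^!$ all live in the representable six-functor formalism on $\Shext(-)$ from \cite{Chowdhury} and satisfy the expected base change identities.

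First, I would pick an NL-atlas $x \colon X \to \X$ (\cref{NL-ASt=ASt}) and base change the picture to obtain a vector bundle $p_X \colon E_X \to X$ on the scheme $X$ with zero section $s_X \colon X \to E_X$. Smooth base change for the smooth map $p$ combined with proper base change for the representable closed immersion $s$ (both available in the representable formalism) produce canonical equivalences
\[ x^* \circ p\epfs s_* \simeq (p_X)\epfs (s_X)_* \circ x^*, \qquad x^* \circ s^! p^* \simeq (s_X)^! (p_X)^* \circ x^*. \]
The same base change identifications can then be promoted to every level of the \v{C}ech nerve $X^\bullet_\X$.

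On the schematic side the statement is classical: for a vector bundle over a scheme with its zero section, the Thom transformation $(p_X)\epfs(s_X)_*$ is an equivalence of $\SH(X)$ with inverse $(s_X)^!(p_X)^*$. This amounts to the $\otimes$-invertibility of Thom spaces in $\SH$, see for instance \cite[\S 5.2]{Hoyois_Equiv_Six_Op}. Hence on each term $\SH(X^n_\X)$ in the descent diagram the corresponding unit and counit are equivalences.

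Combining the two steps, the unit and counit of the adjunction $p\epfs s_* \dashv s^! p^*$ on $\Shext(\X)$ become equivalences after applying $x^*$. Using the limit presentation $\Shext(\X) \simeq \lim_n \Shext(X^n_\X)$ of \cref{Shstckdef} together with the conservativity of pullback along an NL-atlas (the stable counterpart of \cref{Sec.5:_Classical_conservativity_of_pullbacks}, which follows from the same limit presentation), this is enough to conclude that these natural transformations are equivalences on $\Shext(\X)$ itself. The main obstacle I anticipate is organising the base change equivalences coherently across the simplicial levels of $X^\bullet_\X$, rather than merely object-wise; but this is precisely the kind of bookkeeping for which the correspondences machinery recalled and extended in \cref{Exc_Funct} is designed, so it should pose no essential difficulty.
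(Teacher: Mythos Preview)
Your proposal is correct and follows essentially the same route as the paper: reduce via an NL-atlas $x\colon X\to\mc X$ and base change to the schematic Thom equivalence, then conclude by conservativity of $x^*$. Your anticipated obstacle about simplicial coherence is not actually needed---the paper simply applies $x^*$ to the unit and counit of the adjunction and invokes conservativity directly, with no bookkeeping across the \v{C}ech nerve required.
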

	
	\begin{proof}
		Choose a NL-atlas $ x: X \rightarrow \mc X $ and consider:
		
		\begin{center}
			\begin{tikzpicture}[baseline={(0,-1)}, scale=2]
				\node (a) at (0,1) {$ E_X $};
				\node (b) at (1, 1) {$ X$};
				\node (c)  at (0,0) {$  E $};
				\node (d) at (1,0) {$ \mc X $};
				\node (e) at (0.2,0.75) {$ \ulcorner $};
				\node (f) at (0.5,0.5) {$ \Delta $};

				\path[font=\scriptsize,>= angle 90]
				
				(a) edge [->] node [below ] {$ p_X $} (b)
				(a) edge [->] node [left] {$ y $} (c)
				(b) edge[->] node [right] {$ x $} (d)
				(c) edge [->] node [below] {$ p $} (d)
				(b) edge [bend right=30,->] node [below] {$ s_X $} (a)
				(d) edge [bend left=30, ->] node [below] {$ s $} (c);
			\end{tikzpicture}
		\end{center}
		Pulling back via $ x^* $ the unit and co-unit of the adjunction $ (p\epfs s_* \dashv s^!p^*) $, we get:
		\[ x^*\epsilon: x^* \longrightarrow x^*(s^!p^*)(p\epfs s_* )\simeq (s_X^!p_X^*)(p_X\epfs s_X{}_* )x^* \]
		\[ x^*\eta: x^*(p\epfs s_*)(s^!p^*)\simeq (p_X\epfs s_X{}_*)(s_X^!p_X^*)x^* \longrightarrow x^* \]
		\noindent and both are equivalences since the natural transformations $ Id_{\mr{SH}(X)}\stackrel{\sim}{\rightarrow } (p\epfs s_* )\simeq (s_X^!p_X^*)(p_X\epfs s_X{}_* ) $ and $ (p_X\epfs s_X{}_*)(s_X^!p_X^*) \stackrel{\sim}{\rightarrow} Id_{\mr{SH}(X)} $ are equivalences themselves. By conservativity of $ x^* $, we get that:
		\[ \epsilon: Id_{\mr{SH}(\mc X)} \longrightarrow (s^!p^*)(p\epfs s_* ) \]
		\[ \eta: (p\epfs s_*)(s^!p^*) \longrightarrow Id_{\mr{SH}(\mc X)}\]
		\noindent are equivalences and hence $ p\epfs s_* $ and $ s^!p^* $ are inverse to one another.
	\end{proof}

	\begin{definition}
		In the same situation as above, the adjoint functors:
		\[ \Sigma^{\mc E}:=p\epfs s_*: \mr{SH}(\mc X)\leftrightarrows \mr{SH}(\mc X): s^!p^*=:\Sigma^{-\mc E} \]
		\noindent are equivalences of $ \infty $-categories and are called \textit{Thom transformations}. In particular we have $ \Sigma^{\mc E}\simeq \Sigma^{\mc E}\mbbm 1_{\mc X}\otimes (-) $. Denote by $ \mr{Pic}(\mr{SH}(\mc X)) $ the $ \infty $-category of invertible objects in $ \mr{SH}(\mc X) $, and denote by:
		\[ \Th{\mc X}{E}:=\Sigma^{\mc E}\mbbm 1_{\mc X} \in \mr{Pic}(\mr{SH}(\mc X)) \]
		\noindent the \textit{Thom space} of $ E $, with inverse $ \Sigma^{-\mc E}\mbbm 1_{\mc X} $.
	\end{definition}
	
	We can make a further upgrade of the Thom space construction. Thanks to \cite[§ 16.2]{Bachmann_Hoyois_Norms_MHT}, we have a $ J $-homomorphism functor:
	\[ J: \mr K \longrightarrow \mr{Pic}(\mr{SH}) \]
	\noindent where $ K: {Sch}^{qcqs}\rightarrow  \oocatname{S}\sseq \oocatname{Cat}_{\infty} $ it the $ cdh $-sheaf assigning to each qcqs scheme $ X $ the Thomason-Trobaugh K-theory space and sending each map $ f $ to $ f^* $ (in K-theory), while $ \mr{Pic}(\mr{SH}) $ is the $ cdh $-sheaf that assigns to each scheme the space of invertible objects $ \mr{Pic}(\mr{SH}(X)) $ and sends a map $ f $ to $ f^* $. Since both are $ cdh $-sheaves, and in particular Nisnevich sheaves, we can Kan extend $ J $ via NL-sheafification to a natural transformation:
	
	\[ J^{\triangleleft}: \mr K^{\triangleleft}(-) \longrightarrow \mr{Pic}(SH^{\triangleleft}(-)) \]
	
	\noindent where $ \mr K^{\triangleleft} $ and $ \mr{Pic}(SH^{\triangleleft}) $ are the functors defined on $ \left( \oocatname{ASt}^{\leq 1} \right)^{op} $ via Kan extension (as similarly done in \cite[Theorem 2.2.1]{Chowdhury}). \\
	
	On the other hand, the genuine K-theory for an algebraic stack $ \mc X \in \oocatname{ASt} $ is defined as:
	\[ \mr K\left( \mc X \right):=\Omega^{\infty}\mr K\left(  \oocatname{Perf}(\mc X)\right) \]
	\noindent where the right hand side is the Thomason-Trobaugh K-theory space of the $ \infty $-category of perfect complexes (cf. \cite{Khan_K-Theory}). For any map of algebraic stacks $ f: \mc X \rightarrow \mc Y $, we have a pullback map $ f^*: 	\mr K(\mc Y) \rightarrow \mr K(\mc X) $ induced by the pullback at the level of perfect complexes. If $ x: X \rightarrow \mc X $ is a NL-atlas, then for any map $ f_n: X^n_{\mc X}:=X \times_{\mc X}\ldots \times_{\mc X} X \rightarrow \mc X $ in the \v{C}ech nerve of the atlas, we will get a pullback map $ f_n^*: \mr K(\mc X)\rightarrow \mr K(X^n_{\mc X}) $. For any NL-stack $ \mc X $, the space $ \mr K^{\triangleleft}(\mc X) $ is a limit over the \v{C}ech nerve of one of his NL-atlases (by construction) and this is functorial in $ \mc X $. Hence by the universal property of the limit we get a canonical map:
	
	\[ j: \mr K(-) \longrightarrow \mr K^{\triangleleft}(-) \]
	\noindent between presheaves $ \mr K, \mr K^{\triangleleft}: \left(\oocatname{ASt}^{\leq 1}\right)^{op}\rightarrow \oocatname{S} $ taking values in Kan complexes $\oocatname{S}$.
	\begin{remark}
		The map $j: \mr K \longrightarrow \mr K^{\triangleleft}$ is simply given by NL-sheafification.
	\end{remark}
	\begin{definition}
		We define the \textit{Borel J-homomorphism} as:
		\[ J_{Bor}:= J^{\triangleleft}\circ j: \mr K \longrightarrow \mr{Pic}(\mr{SH}^{\triangleleft}) \]
		For a given NL-stack $ \mc X $ and a given $ v \in \mr K_0(\mc X) $, we will denote the associated automorphism of $ \mr{SH}(\mc X) $ as $ \Sigma^{v} $, with inverse $ \Sigma^{-v} $. 
	\end{definition}

	\begin{remark}
		\begin{enumerate}
			\item	If $ \mc V $ is a locally free sheaf on a NL-stack $ \mc X $, with associated vector bundle $ V $ and with $ v:=[\mc V]\in \mr K_0(\mc X) $, then the $ J_{Bor} $ will send $ v $ to $ \Th{\mc X}{V}=\Sigma^{v}\mbbm 1_{\mc X} $ defined before: this is true at the level of schemes and the same claim follows for NL-stacks by uniqueness of Kan extensions. 
			\item Notice that a fiber sequence in $ \oocatname{Perf}(\mc X) $ of the form:
			\[ \mc E \rightarrow\mc F \rightarrow \mc G \]
			\noindent gives us a canonical path $ [\mc F]\simeq [\mc E]+[\mc G] $ in the space $ K(\mc X) $. Then, by construction of the $ J_{Bor} $ natural transformation, we get that:
			\[ \Sigma^{\mc F}\simeq \Sigma^{\mc E}\Sigma^{\mc G} \]
		\end{enumerate}
		
	\end{remark}
	
	\subsection{Pairs and Formal Thom Transformations}
	
	Consider the category of pairs $Pairs$ where objects are of the form:
	\begin{center}
		\begin{tikzpicture}[baseline={(0,0.5)}, scale=1.5]
			\node (a) at (0,1) {$ \mc X $};
			\node (b) at (1,1) {$ \mc Y $};
			\node (c) at (1,0) {$ S $};

			\path[font=\scriptsize,>= angle 90]
			
			(a) edge [->] node [above ] {$ f $} (b)
			(a) edge [->] node [left] {$q$} (c)
			(b) edge [->] node [right] {$p$} (c);
			
		\end{tikzpicture}
	\end{center}
	
	\noindent with $f$ a representable map and $ p,q $ smooth (possibly non representable) maps. Notice that since $p,q$ are smooth, the map $f$ is automatically locally complete intersection (\textit{lci} from  here on), since one can reduce to schemes where this holds by \cite[\href{https://stacks.math.columbia.edu/tag/02FV}{Tag 02FV}]{stacks-project} and \cite[\href{https://stacks.math.columbia.edu/tag/0E9K}{Tag 0E9K}]{stacks-project}. Morphisms in this category are given by commutative diagrams of the form:
	
	\begin{center}
		\begin{tikzpicture}[baseline={(0,0.5)}, scale=1.5]
			\node (a) at (0,1.25) {$ \mc X $};
			\node (b) at (1,0.75) {$ \mc Y $};
			\node (c) at (1,0) {$ S $};
			
			\node (a1) at (2,1.25) {$ \mc Z $};
			\node (b1) at (3,0.75) {$ \mc W $};
			\node (c1) at (3,0) {$ S' $};

			\path[font=\scriptsize,>= angle 90]
			
			(a) edge [->] node [above ] {$ f $} (b)
			(a) edge [->] node [left] {$q$} (c)
			(b) edge [->] node [right] {$p$} (c)
			
			(a1) edge [->] node [above ] {$ g $} (b1)
			(a1) edge [->] node [left] {$r$} (c1)
			(b1) edge [->] node [right] {$t$} (c1)
			
			(b) edge [-, line width=1mm, white] node [] {} (b1)
			
			(a) edge [->] node [above ] {$ \alpha_{\mc X} $} (a1)
			(b) edge [->] node [below] {$  \alpha_{\mc Y} $} (b1)
			(c) edge [->] node [below] {$ \alpha_S $} (c1);
			
		\end{tikzpicture}
	\end{center}
	\noindent where the top square is cartesian (i.e. $\mc X=\mc Y\times_{\mc W}\mc Z$). For short we will denote a morphism like the one above as:
	\[ (\mc X,\mc Y)_S\stackrel{\alpha}{\longrightarrow}(\mc Z,\mc W)_{S'} \]
	\begin{definition}
		Let:
		\begin{center}
			\begin{tikzpicture}[baseline={(0,0.5)}, scale=1.5]
				\node (a) at (0,1) {$ \mc X $};
				\node (b) at (1,1) {$ \mc Y $};
				\node (c) at (1,0) {$ S $};

				\path[font=\scriptsize,>= angle 90]
				
				(a) edge [->] node [above ] {$ f $} (b)
				(a) edge [->] node [left] {$q$} (c)
				(b) edge [->] node [right] {$p$} (c);
				
			\end{tikzpicture}
		\end{center}
		\noindent be an element of $ Pairs^{lft} $. We will define the associated \textit{(formal) Thom transformation} as:
		\[ \mr{Th}(f,p):= p_{\#}f_! \]
	\end{definition}
	
	Now consider a commutative square:
	
	\begin{center}
		\begin{tikzpicture}[baseline={(0,0.5)}, scale=1.5]
			\node (a) at (0,1) {$ \mc Y $};
			\node (b) at (1,1) {$ \mc W$};
			\node (c) at (0,0) {$ \mc S $};
			\node (d) at (1,0) {$ \mc S' $};
			\node (e) at (0.25,0.75) {$  $};

			\path[font=\scriptsize,>= angle 90]
			
			(a) edge [->] node [above ] {$ r $} (b)
			(a) edge [->] node [left] {$ v $} (c)
			(b) edge [->] node [right] {$ u $} (d)
			(c) edge [->] node [below] {$ s $} (d);
			
		\end{tikzpicture}
	\end{center}
	\noindent and assume that $ u,v $ are smooth maps. Like in the classical schematic case, we get a natural transformation:
	\[ Ex_{\#}^*: v_{\#}r^* \longrightarrow s^*u_{\#} \]
	\noindent given by the composite:
	\[ v_{\#}r^* \overset{\eta_{\#}^*(u)}{\longrightarrow} v_{\#}r^*u^*u_{\#} \simeq v_{\#}v^*s^*u_{\#} \overset{\varepsilon_{\#}^*(v)}{\longrightarrow} s^*u_{\#} \]
	
	Following standard constructions (cf. \cite[\S 3]{Hoyois_Equiv_Six_Op}), for a given a morphism of pairs $\alpha: (\mc X,\mc Y)_S \longrightarrow(\mc Z,\mc W)_{S'}  $:
	\begin{center}
		\begin{tikzpicture}[baseline={(0,0.5)}, scale=1.5]
			\node (a) at (0,1.25) {$ \mc X $};
			\node (b) at (1,0.75) {$ \mc Y $};
			\node (c) at (1,0) {$ S $};
			
			\node (a1) at (2,1.25) {$ \mc Z $};
			\node (b1) at (3,0.75) {$ \mc W $};
			\node (c1) at (3,0) {$ S' $};

			\path[font=\scriptsize,>= angle 90]
			
			(a) edge [->] node [above ] {$ f $} (b)
			(a) edge [->] node [left] {$q$} (c)
			(b) edge [->] node [right] {$p$} (c)
			
			(a1) edge [->] node [above ] {$ g $} (b1)
			(a1) edge [->] node [left] {$r$} (c1)
			(b1) edge [->] node [right] {$t$} (c1)
			
			(b) edge [-, line width=1mm, white] node [] {} (b1)
			
			(a) edge [->] node [above ] {$ \alpha_{\mc X} $} (a1)
			(b) edge [->] node [below] {$  \alpha_{\mc Y} $} (b1)
			(c) edge [->] node [below] {$ \alpha_S $} (c1);
			
		\end{tikzpicture}
	\end{center}
	\noindent we can associate to it a natural transformation:
	\begin{equation}\label{Sec.4:_formal_Thom_naturality_map}
		\psi(\alpha): \mr{ Th}(f,p)\alpha_{\mc X}^*:=p_{\#}f_!\alpha_{\mc X}^* \longrightarrow \alpha_S^*t_{\#}g_!=:\alpha_S^*\mr{Th}(g,t)
	\end{equation}
	
	\noindent defined as the composition:
	\[   p_{\#}f_!\alpha_{\mc X}^* \underset{}{\overset{Ex_!^*}{\simeq}} p_{\#}\alpha_{\mc Y}^*g_! \overset{Ex_{\#}^*}{\longrightarrow}  \alpha_S^*t_{\#}g_!  \]
	\noindent where $Ex^*_!$ is the equivalence of \eqref{App.:_Base_Change_Ex*_!} witnessed by the functor constructed in \cref{Sect.3.2:_SH*!_Corr_functor}.
	
	\subsection{Homotopy Purity Statement}
	By \cite{Aranha-Pstragowski}, we can functorially assign to any lft map $f: \mc X \rightarrow \mc Y$ a deformation space $\mr{Def}_{f}$, flat over $\A^1_{\mc Y}$. If $f$ is a closed immersion, then the deformation space is the usual Verdier's deformation space. In general, we have the following cartesian diagrams:\\
	\begin{minipage}{0.5\textwidth}
		\begin{center}
			\begin{tikzpicture}[baseline={(0,0.5)}, scale=1.5]
				\node (a) at (0,1) {$ \mf C_f $};
				\node (b) at (1,1) {$ \mr{Def}_f $};
				\node (c) at (0,0) {$ \set{0}\times \mc Y $};
				\node (d) at (1,0) {$ \A^1_{\mc Y} $};
				\node (e) at (0.25,0.75) {$ \ulcorner $};

				\path[font=\scriptsize,>= angle 90]
				
				(a) edge [->] node [above ] {$  $} (b)
				(a) edge [->] node [left] {$ $} (c)
				(b) edge [->] node [right] {$ $} (d)
				(c) edge [->] node [right] {$ $} (d);
				
			\end{tikzpicture}
		\end{center}
	\end{minipage}
	\hfill 
	\begin{minipage}{0.5\textwidth}
		\begin{center}
			\begin{tikzpicture}[baseline={(0,0.5)}, scale=1.5]
				\node (a) at (0,1) {$ \mb G_m\times \mc Y $};
				\node (b) at (1,1) {$ \mr{Def}_f $};
				\node (c) at (0,0) {$\mb G_m \times \mc Y $};
				\node (d) at (1,0) {$ \A^1_{\mc Y} $};
				\node (e) at (0.25,0.75) {$ \ulcorner $};

				\path[font=\scriptsize,>= angle 90]
				
				(a) edge [->] node [above ] {$  $} (b)
				(a) edge [->] node [left] {$ $} (c)
				(b) edge [->] node [right] {$ $} (d)
				(c) edge [->] node [right] {$ $} (d);
				
			\end{tikzpicture}
		\end{center}
	\end{minipage}\\
	\begin{remark}
		If $f$ is $n$-representable then $\mr{Def}_f$ is $(n+1)$-representable. The stackiness is contained in the \textit{intrinsic normal cone} $\mf C_f$. In particular, if $f$ is an lft map between schemes, Zarisky locally we can factor $f$ as a closed immersion $\iota:  X\supseteq  U \into M$ and a smooth surjection $\mu: M \rightarrow V \subseteq Y$ and we can describe $\mf C_f$ (Zariski locally of course) as the quotient stack $ \left[ \bigslant{\mf C_{\iota}}{\iota^*T_{\mu }} \right] $ where $\mf C_{\iota}$ is the classical normal cone of the closed immersion and $T_{\mu}$ is the tangent bundle of the smooth surjection.
	\end{remark}
	
	\begin{construction}
	Let:
	\[ r_{\_}: \A^1_{\_} \longrightarrow - \]
	\noindent be the projection map of $\A^1$ onto its base. Let $ f:  X\rightarrow  Y$  be a representable map between smooth $NL$-stacks over some base $S$. Since $f$ is representable, we have that $\mr{Def}_f$ and the normal cone $ \mf C_f $ of $f$ are both representable by algebraic $1$-stacks. Then the deformation space gives us the following (co-)roof of pairs:
		
		\begin{center}
			\begin{tikzpicture}[baseline={(0,0.5)}, scale=1.5]
				\node (a) at (0,2.25) {$  X $};
				\node (b) at (1,1.75) {$  Y $};
				\node (c) at (1,0) {$ S $};
				
				\node (a1) at (2,2.25) {$ \A^1_X $};
				\node (b1) at (3,1.75) {$ \mr{Def}_f $};
				\node (d1) at (3,1) {$ \A^1_S $};
				\node (c1) at (3,0) {$ S $};
				
				\node (a2) at (4,2.25) {$ X $};
				\node (b2) at (5,1.75) {$ \mf C_f $};
				\node (d2) at (5,1) {$ X $};
				\node (c2) at (5,0) {$ S $};
				
				\path[font=\scriptsize,>= angle 90]
				(a1) edge [->] node [left] {$ \hat{q} $} (c1)
				(a2) edge [->] node [left] {$ q $} (c2)
				
				(a) edge [->] node [above=1mm,right=-1mm ] {$ f $} (b)
				(a) edge [->] node [left] {$q$} (c)
				(b) edge [->] node [right] {$p$} (c)
				
				(a) edge [->] node [above ] {$ \alpha_{ X} $} (a1)
				(b) edge [-, line width=1mm, white] node [] {} (b1)
				(b) edge [->] node [below] {$  \alpha_{Y} $} (b1)
				(c) edge [double equal sign distance] node [right] {$  $} (c1)
				
				(a1) edge [->] node [above=1mm,right=-1mm ] {$ \hat{f} $} (b1)
				
				(b1) edge [->] node [right] {$ \hat{p} $} (d1)
				(d1) edge [->] node [right] {$ r_S $} (c1)
				
				(a2) edge [->] node [above ] {$ \beta_{ X} $} (a1)
				(b2) edge [-, line width=1mm, white] node [] {} (b1)
				(b2) edge [->] node [below] {$  \beta_{ \mf C_f} $} (b1)
				(c2) edge [double equal sign distance] node [right] {$  $} (c1)
				
				(a2) edge [->] node [above=1mm,right=-1mm ]  {$ f_0 $} (b2)
				
				(b2) edge [->] node [left] {$ q_0 $} (d2)
				(d2) edge [->] node [left] {$ q $} (c2)
				(b2) edge [bend left=30,->] node [right] {$ p_0 $} (c2);
				
			\end{tikzpicture}
		\end{center}
		\noindent where $\alpha$ and $\beta$ are induced by the inclusion of fibers $\iota_1: \set{1}\times Y \into \A^1_Y$ and $\iota_0: \set{0}\times Y \into\A^1_Y$ respectively.	From $\alpha$ and $\beta$, applying the construction of $\psi(-)$ from \eqref{Sec.4:_formal_Thom_naturality_map}, we get a (co-)roof:
		
		\[ \mr{Th}(f,p) \alpha_{ X}^* \overset{\psi(\alpha)}{\rightarrow} (r_S)\epfs \mr{Th}(\hat{f},\hat{p}) \overset{\psi(\beta)}{\leftarrow} q_{\#}\mr{Th}(f_0,q_0)\beta_{ X}^* \]
		
		Since both $\alpha_X$ and $\beta_X$ are inclusions of fibers over $1$ and $0$ in $\A^1_X$, this means that $ r_X\circ \alpha_X=Id $ and $r_X\circ \beta_X=Id$. Hence, precomposing $\psi(\alpha)$ and $\psi(\beta)$ with $r_X^*$ we get:
		
		\begin{equation}\label{eq:_Homotopy_Purity_Roof}
			\mr{Th}(f,p) \overset{}{\longrightarrow} (r_S)\epfs \mr{Th}(\hat{f},\hat{p})r_X^* \overset{}{\longleftarrow} q_{\#}\mr{Th}(f_0,\pi_0)
		\end{equation}
		\begin{notation}\label{Notation:_HP}
			From now on we will use the following notation:
			\[ \Pi_1(f,p):=\psi(\alpha)r_X^* \]
			\[ \Pi_0(f,p):=\psi(\beta)r_X^* \]
			Moreover for a given pair $(X,Y)_S\in Pairs$, we will always denote by $\hat{f}, \hat{p}, r_{-}, f_0,p_0 $ the maps:
			\[ \hat{f}: \A^1_X \rightarrow \mr{Def}_f\ \ \ \ \ \ \ \ \ \ \ \ \ \ \ \   f_0: X \rightarrow \mf C_f \]
			\[ \hat{p}: \mr{Def}_f \rightarrow \A^1_S \ \ \ \ \ \ \ \ \ \ \ \ \ \ \ \   p_0:=q\circ q_0:  \mf C_f \overset{q_0}{\rightarrow}X\overset{q}{\rightarrow} S  \]
			\[ r_{-}: \A^1_{-}\rightarrow - \]
		\end{notation}
		
	\end{construction}
	
		\begin{definition}\label{sec.6:_definition_HP}
			Given $f:X\rightarrow Y$ a smooth, representable map between smooth $S$-NL-stacks, we say that $f$ satisfies \textit{homotopy purity} if $\Pi_0(f,p)$ and $\Pi_1(f,p)$ are both equivalences.
		\end{definition}
		
		\subsection{Ambidexterity Statement}
	
Let us recall how to construct the natural exchange transformation $Ex_{\#!}$. 	Consider a cartesian square:
\begin{center}
	\begin{tikzpicture}[baseline={(0,0.5)}, scale=1.5]
		\node (a) at (0,1) {$ \mc Y $};
		\node (b) at (1,1) {$ \mc W$};
		\node (c) at (0,0) {$ \mc S $};
		\node (d) at (1,0) {$ \mc S' $};
		\node (e) at (0.2,0.75) {$ \ulcorner $};

		\path[font=\scriptsize,>= angle 90]
		
		(a) edge [->] node [above ] {$ q $} (b)
		(a) edge [->] node [left] {$ g $} (c)
		(b) edge [->] node [right] {$ f $} (d)
		(c) edge [->] node [below] {$ p $} (d);
		
	\end{tikzpicture}
\end{center}
\noindent where $ f,g $ are smooth maps and $ q,p $ are lft. Like in the classical schematic case, the natural transformation:
\[ Ex_{\#!}: f\epfs q_! \longrightarrow p_!g\epfs \]
\noindent is given by the composition:
\[ f_{\#}q_! \overset{\eta_{\#}^*(g)}{\longrightarrow} f_{\#}q_!g^*g_{\#} \overset{Ex_!^*}{\simeq} f_{\#}f^*p_!g_{\#} \overset{\varepsilon_{\#}^*(f)}{\longrightarrow} p_!g_{\#} \]	
\begin{remark}\label{Sec.4.4:_Ex-sharp-!_equiv_schematic_case}
	Notice that if $\mc Y,\mc W, \mc S,\mc S'$ are reprensented by schemes, then $Ex_{\#!}$ is actually an equivalence. If we assume that $f,g$ are (smooth and) separated, this follows dualising the equivalence exchange transformation $ Ex^{*!} $ of \cite[p.272]{Hoyois_Equiv_Six_Op}. In general, one can use Zariski descent to reduce the lft case to the separated one (for a completely analogous statement in the rigid context cf. \cite[Theorem 4.4.29]{AGV_Rigid6FF}).
\end{remark}
	
	Now, given $f: \mc X \longrightarrow \mc Y$ a smooth map in $\oocatname{ASt}^{\leq 1}$, we can consider the following diagram:
	
	\begin{center}
		\begin{tikzpicture}[baseline={(0,-1)}, scale=2]

			\node (a) at (0,1) {$ \mc X\times_{\mc Y} \mc X $};
			\node (b) at (1, 1) {$ \mc X $};
			\node (c)  at (0,0) {$  \mc X $};
			\node (d) at (1,0) {$ \mc Y $};
			\node (e) at (0.2,0.75) {$ \ulcorner $};
			\node (f) at (-0.75,1.5) {$ \mc X $};
			\node (g) at (0.5,0.5) {$ \Delta $};

			\path[font=\scriptsize,>= angle 90]

			(a) edge [->] node [above ] {$ p_2 $} (b)
			(a) edge [->] node [left] {$ p_1 $} (c)
			(b) edge[->] node [right] {$ f $} (d)
			(c) edge [->] node [below] {$ f $} (d)
			(f) edge [bend right=-30,double equal sign distance] node [below] {$  $} (b)
			(f) edge [bend left=-30, double equal sign distance] node [below] {$  $} (c)
			(f) edge [dashed,  ->] node [above] {$ \Delta_f $} (a);
		\end{tikzpicture}
	\end{center}
	\noindent where $p_i$ are the standard projection onto the $i^{th}$ component of the double product.
	Using the exchange transformation $Ex_!^*$ in \eqref{App.:_Base_Change_Ex*_!}, we have:
	\[ p_1\epf p_2^*\stackrel{ Ex_!^*(\Delta)}{\simeq} f^* f_! \]
	This give rise to a natural transformation $\gamma_f $ given by the following composition:
	\[ \gamma_f: \ \ \ f\epfs p_1\epf \stackrel{\epsilon\epfs^*(p_2)}{\longrightarrow} f\epfs p_1\epf p_2^*p_2\epfs \stackrel{Ex_!^*(\Delta)}{\longrightarrow} f\epfs f^* f_! p_2\epfs \stackrel{\eta\epfs^*(f)}{\longrightarrow} f_!p_2\epfs \]
	\noindent and this is exactly the definition of:
	\[ Ex_{\#!}: f\epfs p_1\epf  \longrightarrow f_!p_2\epfs \]
	\noindent as we have seen before.

	\noindent Applying $\gamma_f$ to $\Delta_f\epf $, we get a new natural transformation $\varphi_f$:
	\[ \varphi_f: \ \ \  f\epfs \simeq f\epfs p_1\epf \Delta_f\epf \longrightarrow f_! p_2\epfs \Delta_f\epf \]
	\noindent where we used the fact that  $p_1\circ \Delta_f=Id$.

	\begin{definition}
		With the same notation as above, given a smooth map $f: \mc X \rightarrow \mc Y$, we will denote by $\Sigma_f$ the \textit{formal suspension twist} associated to $f$ given by:
		\[ \Sigma_f:=p_2\epfs \Delta_f\epf \]
		This functor has a left adjoint that we will denote by $\Omega_f:= \Delta_f^!p_2^* $.
	\end{definition}
	
	\begin{definition}
		Given $f: \mc X \longrightarrow \mc Y $ smooth between NL-stacks, we say that $f$ satisfies \textit{ambidexterity} if:
		\[ \varphi_f: f\epfs \longrightarrow f\epf\Sigma_f \]
		\noindent is an equivalence. If moreover $\Delta_f$ satisfies homotopy purity, we say that\textit{(relative) purity} holds for $f$.
	\end{definition}
	
	Let us define some important notation for later use.
	
	\begin{definition}\label{sec6:_definition_pullback_formal_twist}
		Let $f: \mc X \rightarrow \mc Y$ and $g: \mc Z \rightarrow \mc X$ be a smooth map of NL stacks. Let $ \Delta_{g^*f} $ and $ \bar{p}_2 $ be the pullback map in the following diagram:
		
		\begin{center}
			\begin{tikzpicture}[baseline={(0,0.5)}, scale=2]
				
				\node (a) at (1,1) {$  \mc Z $};
				\node (b) at (2,1) {$ \mc Z \times_{\mc X} \mc X^2_{\mc Y} $};
				\node (c) at (3,1) {$ \mc Z $};

				\node (d) at (1,0) {$ \mc X $};
				\node (e) at (2,0) {$ \mc X^2_{\mc Y} $};
				\node (f) at (3,0) {$ \mc X $};

				\node (g) at (1.25,0.75) {$ \ulcorner $};
				\node (h) at (2.25,0.75) {$ \ulcorner $};

				\path[font=\scriptsize,>= angle 90]

				(a) edge [->] node [above] {$ \Delta_{g^*f} $} (b)
				(b) edge [->] node [above] {$ \bar{p}_2  $} (c)
				(d) edge [->] node [below] {$ \Delta_f $} (e)
				(e) edge [->] node [below] {$  p_2 $} (f)
				
				(a) edge [->] node [left] {$ g  $} (d)
				(b) edge [->] node [left] {$ G $} (e)
				(c) edge [->] node [left] {$ g $} (f);
				
			\end{tikzpicture}
		\end{center}
		We define the formal twist $\Sigma_{g^*f}$ as follows:
		\[ \Sigma_{g^*f}:= \bar{p}_2\epfs (\Delta_{g^*f})\epf  \]
		\noindent and similarly:
		\[ \Omega_{g^*f}:= \Delta_{g^*f}^!\bar{p}_2^* \]
	\end{definition}

\subsection{Formal Thom twist functoriality: after Ayoub}

Let us start with a preparatory lemma:

\begin{lemma}\label{lemma_Ex-sharp-shriek_mixed_repr_non_repr_case}
	Consider the following cartesian square of NL-stacks:
	\begin{center}
		\begin{tikzpicture}[baseline={(0,0.5)}, scale=1.5]
			\node (a) at (0,1) {$  \mc R $};
			\node (b) at (1,1) {$ \mc X$};
			\node (c) at (0,0) {$ \mc T $};
			\node (d) at (1,0) {$ \mc Y $};
			\node (e) at (0.25,0.75) {$ \ulcorner $};
			\node (e) at (0.5,0.5) {$ \Delta $};

			\path[font=\scriptsize,>= angle 90]
			
			(a) edge [->] node [above ] {$ q $} (b)
			(a) edge [->] node [left] {$ g $} (c)
			(b) edge [->] node [right] {$ f $} (d)
			(c) edge [->] node [below] {$ p $} (d);
			
		\end{tikzpicture}
	\end{center}
	\noindent Assume either that $f$ is representable lft and that $p$ is smooth (possibly non representable) or that $f$ is (possibly non representable) lft and $p$ is a smooth representable map. Then the natural transformation:
	\[ Ex_{\#!}(\Delta): p\epfs g_! \longrightarrow f_!q\epfs \]
	\noindent is an equivalence.
\end{lemma}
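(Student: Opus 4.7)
The plan is to reduce both cases to the known schematic equivalence (\Cref{Sec.4.4:_Ex-sharp-!_equiv_schematic_case}) by a two-stage atlas argument. Since in both hypotheses one of the legs is representable, after pulling back $\mathcal{Y}$ to a scheme we will always land in a situation where one pair of opposite vertices in the square is schematic, and the remaining two stacky vertices can be resolved by a single further NL-atlas.

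\textit{Step 1: Reducing to $\mathcal{Y}$ being a scheme.} Pick a NL-atlas $y:Y\to\mathcal{Y}$ with $Y$ a scheme. The pullback $y^{*}$ is conservative on $\SH(\mathcal{Y})$ by \Cref{Sec.5:_Classical_conservativity_of_pullbacks} (in its stable incarnation), so it suffices to show $y^{*}(Ex_{\#!}(\Delta))$ is an equivalence. Using smooth base change for $(-)\epfs$ (stable analogue of \Cref{Sec.5:_smooth_cl_BC}) and base change for $(-)_!$, packaged into the correspondence functor of \Cref{Sect.3.2:_SH*!_Corr_functor}, this pullback is identified with the exchange transformation of the base-changed cartesian square over $Y$. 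In Case 1, representability of $f$ forces $X\times_{\mathcal{Y}}Y$ to be a scheme; in Case 2, representability of $p$ forces $\mathcal{T}\times_{\mathcal{Y}}Y$ to be a scheme. So we may assume $\mathcal{Y}=Y$ is a scheme and that one of $X,\mathcal{T}$ is already a scheme.

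\textit{Step 2: Resolving the remaining stack via an atlas and Cech descent.} Treat Case 1; Case 2 is entirely symmetric. Choose an NL-atlas $t:T\to\mathcal{T}$ with $T$ a scheme, and set $s:R\to\mathcal{R}$ to be the base-changed atlas, with $R=T\times_{Y}X$. The fact that $f$ is representable ensures $R^n_{\mathcal{R}}\cong T^n_{\mathcal{T}}\times_{Y}X$ is a scheme at every simplicial level, so that the induced cartesian square
\begin{equation*}
\begin{tikzcd}
R^n_{\mathcal{R}} \arrow[r,"q_n"] \arrow[d,"g_n"'] & X \arrow[d,"f"] \\
T^n_{\mathcal{T}} \arrow[r,"p_n"'] & Y
\end{tikzcd}
\end{equation*}
lives entirely in schemes, with $p_n=p\circ t_n$ smooth and $f$ lft. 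By \Cref{Sec.4.4:_Ex-sharp-!_equiv_schematic_case} the level-wise exchange $(p_n)\epfs(g_n)_!\simeq f_!(q_n)\epfs$ is an equivalence. By NL-descent any $F\in\SH(\mathcal{R})$ is $\colim_{[n]\in\Delta^{op}}(s_n)\epfs s_n^{*}F$, and since $p\epfs,g_!,f_!,q\epfs$ are all left adjoints, both sides of $Ex_{\#!}(\Delta)(F)$ are the geometric realisation of the corresponding simplicial object obtained by evaluating at each $(s_n)\epfs s_n^{*}F$. Combining these term-by-term equivalences yields the desired global equivalence.

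\textit{Main obstacle.} The genuine technical point is the coherence of the reduction: one must check that $Ex_{\#!}(\Delta)$ restricts along the Cech nerve to the schematic exchanges of the subsquares, i.e.\ that the smooth $Ex^*_\#$-transformations and the $Ex_!^*$-transformations assemble into a morphism of simplicial diagrams whose level-wise components are the schematic exchange morphisms. This coherence is built into the formalism: the $Ex_!^*$-compatibilities come from the correspondence functor of \Cref{Sect.3.2:_SH*!_Corr_functor}, while the $Ex^*_\#$-compatibilities follow from (stabilised) smooth base change, and the two interact through the standard calculus of mates in $\op{Pr}^L$. Once this coherence is in place, the conclusion is purely formal: a simplicial colimit of equivalences is an equivalence.
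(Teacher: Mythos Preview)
Your approach is essentially the paper's: both reduce to the schematic exchange of \Cref{Sec.4.4:_Ex-sharp-!_equiv_schematic_case} via NL-atlases and the pasting law for $Ex_{\#!}$. The paper builds one commutative cube of atlases over $\Delta$ in a single step and shows it suffices to check $Ex_{\#!}(\Delta)\circ r_\#$ (using that the image of $r_\#$ generates $\SH(\mathcal R)$ under colimits, which is equivalent to conservativity of $r^*$), whereas you split the reduction into two stages and run a \v{C}ech resolution in Step~2. These are cosmetic variants of the same manoeuvre.

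Your ``Main obstacle'' paragraph correctly locates the only real subtlety, but the resolution you offer is not quite enough. By the pasting law,
\[
Ex_{\#!}(\Delta_n)\;=\;\bigl(Ex_{\#!}(\Delta)\circ (s_n)_\#\bigr)\circ\bigl(p_\#\,Ex_{\#!}(\Box_n)\bigr),
\]
where $\Box_n$ is the cartesian square with corners $R^n_{\mathcal R},\,T^n_{\mathcal T},\,\mathcal R,\,\mathcal T$. So deducing that $Ex_{\#!}(\Delta)\circ(s_n)_\#$ is an equivalence from the schematic $Ex_{\#!}(\Delta_n)$ requires first knowing that $Ex_{\#!}(\Box_n)$ is one---and this is itself an instance of the lemma (with two stacky corners), not a formal consequence of $Ex_!^*$ and $Ex_\#^*$ separately; being a mate of an equivalence is not enough. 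The paper runs into the identical issue: in its cube argument it silently marks the analogous transformations $Ex_{\#!}(\Delta_{RT})$ and $Ex_{\#!}(\Delta_{XY})$ as equivalences. The fix is short in either setup: in each of these auxiliary squares all four edges are representable (the smooth legs are atlas maps from schemes, hence representable), so one further application of conservativity of the atlas pullback $t^*$, together with $Ex_\#^*$ and $Ex_!^*$, produces a square whose four corners lie in schemes, and then \Cref{Sec.4.4:_Ex-sharp-!_equiv_schematic_case} applies directly.
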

\begin{proof}
	Let us start with assuming that $f$ is representable lft and that $p$ is smooth (possibly non representable).
	By hypothesis we can find the following commutative cube:
	\begin{center}
		\begin{tikzpicture}[baseline={(0,1.5)}, scale=1.25]

			\node (v1) at (0,0) {$ \mc R $};
			\node (v2) at (2,0) {$ \mc X $};
			\node (v3) at (1,-1) {$ \mc T $};
			\node (v4) at (3,-1) {$ \mc Y $};
			
			\node (v5) at (0,2) {$ R $};
			\node (v6) at (2,2) {$  X $};
			\node (v7) at (1,1) {$ T $};
			\node (v8) at (3,1) {$ Y $};

			\path[font=\scriptsize,>= angle 90]
			
			(v1) edge [->] node [ right=2mm, below=0.1mm ] {$ q $} (v2)
			(v2) edge [->] node [ right=0.5mm, above=0.5mm ] {$ f $} (v4)
			
			(v6) edge [->] node [right=2mm, below=1mm] {$ x $} (v2)
			
			(v5) edge [->] node [ above ] {$ \bar{q} $} (v6)
			(v6) edge [->] node [ right ] {$ \bar{f} $} (v8)
			
			(v7) edge [-,line width=1mm, white] node [] {} (v3)
			(v7) edge [-, line width=1mm, white] node [] {} (v8)

			(v5) edge [->] node [ left ] {$ r $} (v1)
			(v7) edge [->] node [ left=2mm, above=1mm ] {$ t $} (v3)
			(v8) edge [->] node [ left ] {$ y $} (v4)
			
			(v5) edge [->] node [ left, below ] {$ \bar{g} $} (v7)
			(v7) edge [->] node [ below=2mm, left=0.01mm ] {$ \bar{p} $} (v8)
			
			(v1) edge [->] node [ left=0.5mm ] {$ g $} (v3)
			(v3) edge [->] node [ below ] {$ p $} (v4);

		\end{tikzpicture}
	\end{center}
	\noindent where $r,t,x,y$ are NL-atlases and such that the following are cartesian squares:\\
	\begin{minipage}{0.3\textwidth}
		\begin{center}
			\begin{tikzpicture}[baseline={(0,0.5)}, scale=1.5]
				\node (a) at (0,1) {$   R $};
				\node (b) at (1,1) {$  X$};
				\node (c) at (0,0) {$  T $};
				\node (d) at (1,0) {$  Y $};
				\node (e) at (0.25,0.75) {$ \ulcorner $};
				\node (e) at (0.5,0.5) {$ \Delta' $};

				\path[font=\scriptsize,>= angle 90]
				
				(a) edge [->] node [above ] {$ \bar q $} (b)
				(a) edge [->] node [left] {$ \bar g $} (c)
				(b) edge [->] node [right] {$ \bar f $} (d)
				(c) edge [->] node [below] {$ 	\bar p $} (d);
				
			\end{tikzpicture}
		\end{center}
	\end{minipage}
	\hfill 
	\begin{minipage}{0.3\textwidth}
		\begin{center}
			\begin{tikzpicture}[baseline={(0,0.5)}, scale=1.5]
				\node (a) at (0,1) {$   X $};
				\node (b) at (1,1) {$  Y $};
				\node (c) at (0,0) {$ \mc X$};
				\node (d) at (1,0) {$ \mc Y $};
				\node (e) at (0.25,0.75) {$ \ulcorner $};
				\node (e) at (0.65,0.5) {$ \Delta_{XY} $};

				\path[font=\scriptsize,>= angle 90]
				
				(a) edge [->] node [above ] {$ \bar{f} $} (b)
				(a) edge [->] node [left] {$ x $} (c)
				(b) edge [->] node [right] {$ y $} (d)
				(c) edge [->] node [below] {$ f $} (d);
				
			\end{tikzpicture}
		\end{center}
	\end{minipage}
	\hfill
	\begin{minipage}{0.3\textwidth}
		\begin{center}
			\begin{tikzpicture}[baseline={(0,0.5)}, scale=1.5]
				\node (a) at (0,1) {$   R $};
				\node (b) at (1,1) {$  T $};
				\node (c) at (0,0) {$ \mc R $};
				\node (d) at (1,0) {$ \mc T $};
				\node (e) at (0.25,0.75) {$ \ulcorner $};
				\node (e) at (0.65,0.5) {$ \Delta_{RT} $};

				\path[font=\scriptsize,>= angle 90]
				
				(a) edge [->] node [above ] {$ \bar{g} $} (b)
				(a) edge [->] node [left] {$ r $} (c)
				(b) edge [->] node [right] {$ t $} (d)
				(c) edge [->] node [below] {$ g $} (d);
				
			\end{tikzpicture}
		\end{center}
	\end{minipage}\\
	
	\noindent From the cube above, for $D(-)=SH(-)$, we get another cube that we can decompose as:
	
	\begin{center}
		\begin{tikzpicture}[baseline={(0,0.5)}, scale=2]
			\node (v1) at (0,1) {$ D(X) $};
			\node (v2) at (1,1) {$ D(R) $};
			\node (v3) at (2,1) {$ D(\mc R) $};
			\node (v4) at (2,2) {$ D(R) $};
			\node (v5) at (3,2) {$ D(X) $};
			\node (v6) at (3,1) {$ D(\mc X) $};
			\node (v7) at (4,1) {$ D(X) $};
			\node (v8) at (4,0) {$ D(Y) $};
			\node (v9) at (3,0) {$ D(\mc Y) $};
			\node (v10) at (3,-1) {$ D(Y) $};
			\node (v11) at (2,-1) {$ D(T) $};
			\node (v12) at (2,0) {$ D(\mc T) $};
			\node (v13) at (1,0) {$ D(T) $};
			\node (v14) at (0,0) {$ D(Y) $};
			
			\node (c1) at (0.5,0.5) { \rotatebox{-45}{$ \overset{\scriptsize Ex_{\#!}(\Delta')}{\Leftarrow} $} };
			\node (c2) at (1.5,0.5) {\rotatebox{45}{$ \overset{\scriptsize Ex_{\#!}(\Delta_{RT})}{\Rightarrow} $}};
			\node (c3) at (2.5,0.5) {\rotatebox{45}{$ \overset{\scriptsize Ex_{\#!}(\Delta)}{\Rightarrow} $}};
			\node (c4) at (3.5,0.5) {\rotatebox{-45}{$ \overset{\scriptsize Ex_{\#!}(\Delta_{XY})}{\Leftarrow} $}};
			
			\node (c5) at (2.5,1.5) { $ \circlearrowleft $};
			\node (c6) at (2.5,-0.5) { $ \circlearrowleft $};

			\path[font=\scriptsize,>= angle 90]
			(v4) edge [->] node [above ] {$ \bar{q}\epfs $} (v5)
			
			(v1) edge [<-] node [above ] {$ \bar{q}\epfs $} (v2)
			(v2) edge [->] node [above ] {$r\epfs $} (v3)
			(v3) edge [->] node [above ] {$ q\epfs $} (v6)
			(v6) edge [<-] node [above ] {$ x\epfs $} (v7)
			
			(v14) edge [<-] node [below ] {$ \bar{p}\epfs $} (v13)
			(v13) edge [->] node [below ] {$ t\epfs $} (v12)
			(v12) edge [->] node [below ] {$ p\epfs $} (v9)
			(v9) edge [<-] node [below ] {$ y\epfs $} (v8)
			
			(v11) edge [->] node [below ] {$ \bar{p}\epfs $} (v10)
			
			
			(v4) edge [->] node [left ] {$ r\epfs $} (v3)
			(v5) edge [->] node [right ] {$ x\epfs $} (v6)
			
			(v1) edge [->] node [left ] {$ \bar{f}\epf  $} (v14)
			(v2) edge [->] node [left ] {$ \bar{g}\epf $} (v13)
			(v3) edge [->] node [left ] {$ g_! $} (v12)
			(v6) edge [->] node [left ] {$ f_! $} (v9)
			(v7) edge [->] node [right ] {$ \bar{f}\epf  $} (v8)
			
			(v12) edge [<-] node [left ] {$ t\epfs $} (v11)
			(v9) edge [<-] node [right ] {$ y\epfs $} (v10);
			
		\end{tikzpicture}
	\end{center}
	Using \cite[5.5.3.4]{HTT} and the conservativity of $r^*$, it is not hard to see that:
	\[ Ex_{\#!}(\Delta): p\epfs g_! \longrightarrow f_!q\epfs  \]
	\noindent is an equivalence if and only if:
	\[ Ex_{\#!}(\Delta)r\epfs: p\epfs g_!r\epfs \longrightarrow f_!q\epfs r\epfs  \]
	\noindent is an equivalence. Let us check that $Ex_{\#!}(\Delta)r\epfs$ is indeed an equivalence. Denote by $\Delta^{RX\mc X}_{TY\mc Y}$ the cartesian square obtained joining $\Delta'$ and $\Delta_{XY}$, and similarly denote as $\Delta^{R\mc R\mc X}_{T\mc T \mc Y}$ the cartesian square obtained from $\Delta_{RT}$ and $\Delta$. Then we have the following commutative diagram:
	
	\begin{center}
		\begin{tikzpicture}[baseline={(0,0.5)}, scale=2.25]
			\node (a1) at (0,1.75) {$   p\epfs g_! r\epfs $};
			\node (a2) at (0,0.5) {$  p\epfs t\epfs \bar{g}_! $};
			\node (a3) at (0,0) {$ y\epfs \bar{p}\epfs \bar{g}\epf  $};
			\node (b1) at (2,1.75) {$ f_! q\epfs r\epfs $};
			\node (b2) at (2,1.25) {$ f_! x\epfs \bar{q}\epfs $};
			\node (b3) at (2,0) {$ y\epfs \bar{f}\epf \bar{q}\epfs $};

			\path[font=\scriptsize,>= angle 90]
			
			(a1) edge [->] node [above] {$ Ex_{\#!}(\Delta)r\epfs $} (b1)
			(a3) edge [->] node [above] {$ \sim $} node [below] {$ y\epfs Ex_{\#!}(\Delta') $} (b3)

			(a1) edge [<-] node [right] {\rotatebox{90}{$ \sim $}} node [left] {\rotatebox{90}{$ p\epfs Ex_{\#!}(\Delta') $}} (a2)
			(a2) edge [double equal sign distance] node [] {$  $} (a3)
			(b1) edge [double equal sign distance] node [] {$  $} (b2)
			(b2) edge [<-] node [left] {\rotatebox{90}{$ \sim $}} node [right] {\rotatebox{90}{$ Ex_{\#!}(\Delta_{XY})\bar{q}\epfs $}} (b3)
			
			(a2) edge [->] node [above] {\rotatebox{20}{$  $}} node [below=-3mm] {\rotatebox{30}{$ Ex_{\#!}(\Delta^{R\mc R\mc X}_{T\mc T \mc Y}) $}} (b1)
			(a3) edge [->] node [left] {\rotatebox{40}{$ \sim $}} node [below=-3mm] {\rotatebox{30}{$ Ex_{\#!}(\Delta^{RX\mc X}_{TY\mc Y}) $}} (b2);
		\end{tikzpicture}
	\end{center}
	Hence we can rewrite $Ex_{\#!}(\Delta)r\epfs $ as:
	\[ y\epfs Ex_{\#!}(\Delta'): y\epfs \bar{p}\epfs \bar{g}\epf \longrightarrow y\epfs \bar{f}\epf \bar{q}\epfs \]
	\noindent Since $Ex_{\#!}(\Delta')$ comes from a cartesian square in ${Sch}$, $ Ex_{\#!}(\Delta)r\epfs $ is indeed an equivalence by \cref{Sec.4.4:_Ex-sharp-!_equiv_schematic_case} and we are done.\\
	
	The proof of other case, where $f$ is a generic lft map and $p$ is assumed to be smooth and representable, is totally analogous and left to the reader.
\end{proof}

Let us consider the following commutative diagram in $\oocatname{ASt}^{NL,lft}$:

\begin{equation}\label{Ayoub_diagram}
	\begin{tikzpicture}[baseline={(0,1.5)}, scale=2]
		
		\node (a0) at (0,2) {$ \mc X $};
		
		\node (a) at (0,1) {$ \mc W_1 $};
		\node (b) at (1, 1) {$ \mc W_3 $};
		\node (c)  at (0,0) {$  \mc Y $};
		\node (d) at (1,0) {$ \mc W_2 $};
		
		\node (d0) at (2,0) {$ \mc Z $};
		
		\node (e) at (0.2,0.75) {$ \ulcorner $};
		

		\path[font=\scriptsize,>= angle 90]

		(a) edge [->] node [above ] {$ \omega_{13} $} (b)
		(a) edge [->] node [left] {$ p_{1}  $} (c)
		(b) edge[->] node [right] {$ \omega_{32} $} (d)
		(c) edge [->] node [below] {$ f_2 $} (d)
		
		(a0) edge [->] node [right] {$ f_3 $} (b)
		(a0) edge [->] node [left] {$ f_1 $} (a)
		
		(d) edge [->] node [below] {$ p_{2} $} (d0)
		(b) edge [->] node [above, right] {$ p_{3} $} (d0);
	\end{tikzpicture}
\end{equation}

\noindent where $\mc W_1\simeq \mc Y\times_{\mc W_2} \mc W_3$.

\begin{lemma}[Composition of Twists]\label{Ayoub_triangle_lemma}
	Consider the diagram \eqref{Ayoub_diagram} and assume that all the maps are lft and that $p_1,p_2,p_3$ are smooth. Moreover assume that either $f_2$ or $\omega_{32}$ is representable. Then we have a natural equivalence:
	\[ \mr{Th}(f_3,p_3)\simeq \mr{Th}(f_2,p_2)\mr{Th}(f_1,p_1) \]
\end{lemma}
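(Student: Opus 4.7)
The plan is to reduce the statement to the base change equivalence $Ex_{\#!}$ of \Cref{lemma_Ex-sharp-shriek_mixed_repr_non_repr_case}. First, the commutativity of \eqref{Ayoub_diagram} gives $f_3 = \omega_{13}\circ f_1$ and $p_3 = p_2 \circ \omega_{32}$. Functoriality of $(-)_!$ inside the correspondence formalism of \Cref{Sect.3.2:_SH*!_Corr_functor} supplies a coherent natural equivalence $f_{3!} \simeq \omega_{13!}\,f_{1!}$; similarly, since $\omega_{32}$ is smooth (implicit in the setup, as otherwise $p_{3\#}$ could not factor through $\omega_{32\#}$), uniqueness of left adjoints gives $p_{3\#} \simeq p_{2\#}\,\omega_{32\#}$. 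Combining these we obtain
\[
\mr{Th}(f_3, p_3) \;=\; p_{3\#}\, f_{3!} \;\simeq\; p_{2\#}\,\omega_{32\#}\,\omega_{13!}\,f_{1!},
\]
whereas unwinding definitions yields $\mr{Th}(f_2, p_2)\,\mr{Th}(f_1, p_1) = p_{2\#}\, f_{2!}\, p_{1\#}\, f_{1!}$.

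It therefore suffices to produce a natural equivalence $\omega_{32\#}\,\omega_{13!} \simeq f_{2!}\,p_{1\#}$ arising from the cartesian square $\mc W_1 \simeq \mc Y\times_{\mc W_2}\mc W_3$; this is precisely the content of the exchange transformation $Ex_{\#!}$ associated to that square. By \Cref{lemma_Ex-sharp-shriek_mixed_repr_non_repr_case} (taking the role of ``$p$''${}=\omega_{32}$, smooth, and ``$f$''${}=f_2$, lft), the exchange is an equivalence: the hypothesis that either $f_2$ or $\omega_{32}$ is representable matches the two clauses of that lemma. Inserting this equivalence into the factorization above yields $\mr{Th}(f_3, p_3)\simeq \mr{Th}(f_2, p_2)\,\mr{Th}(f_1, p_1)$, as desired.

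The main point requiring care is coherence: the equivalences $p_{3\#}\simeq p_{2\#}\omega_{32\#}$, $f_{3!}\simeq\omega_{13!}f_{1!}$, and $Ex_{\#!}$ must assemble into a single natural equivalence of functors rather than a zig-zag of pointwise isomorphisms. The $(-)_!$-coherence comes for free from the symmetric-monoidal correspondence-valued functor of \Cref{Sect.3.2:_SH*!_Corr_functor}; the $(-)_\#$-coherence follows by the adjoint functor theorem from the $2$-functoriality of $(-)^*$. Once these coherences are arranged, the proof collapses to the single invocation of \Cref{lemma_Ex-sharp-shriek_mixed_repr_non_repr_case} described above; this is also the only place where the representability hypothesis on $f_2$ or $\omega_{32}$ is used.
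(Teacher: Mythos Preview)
Your proof is correct and follows essentially the same route as the paper: factor $p_{3\#}f_{3!}\simeq p_{2\#}\omega_{32\#}\omega_{13!}f_{1!}$ using composability of $(-)_\#$ and $(-)_!$, then invoke the exchange $Ex_{\#!}:\omega_{32\#}\omega_{13!}\to f_{2!}p_{1\#}$ from \Cref{lemma_Ex-sharp-shriek_mixed_repr_non_repr_case} on the inner cartesian square, which is an equivalence precisely under the representability hypothesis on $f_2$ or $\omega_{32}$. Your remark that the smoothness of $\omega_{32}$ is an implicit hypothesis (needed for $\omega_{32\#}$ to exist and for the lemma to apply) is a fair observation---the paper uses it without stating it, and in every application (e.g.\ \Cref{Formal_twists_composition}) it is indeed satisfied.
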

\begin{proof}
	The equivalence follows from functorialities and exchange transformations between the functors involved:
	
	\begin{center}
		\begin{tikzpicture}[baseline={(0,1.5)}, scale=2]
			
			\node (a0) at (0,2) {$ \bullet $};
			
			\node (a) at (0,1) {$ \bullet $};
			\node (b) at (1, 1) {$ \bullet $};
			\node (c)  at (0,0) {$ \bullet $};
			\node (d) at (1,0) {$\bullet $};
			
			\node (d0) at (2,0) {$ \bullet $};
			
			\node (e) at (0.48,0.53) {\rotatebox{45}{$ \overset{Ex_{\#!}}{\Leftarrow} $}};
			\node (e1) at (.25,1.45) {$ \circlearrowright $};
			\node (e2) at (1.45,0.25) {$ \circlearrowright $};

			\path[font=\scriptsize,>= angle 90]

			(a) edge [->] node [above ] {$ \omega_{13}\epf $} (b)
			(a) edge [->] node [left] {$ p_{1}\epfs  $} (c)
			(b) edge[->] node [right] {$ \omega_{32}\epfs $} (d)
			(c) edge [->] node [below] {$ f_2\epf $} (d)
			
			(a0) edge [->] node [right] {$ f_3\epf  $} (b)
			(a0) edge [->] node [left] {$ f_1\epf  $} (a)
			
			(d) edge [->] node [below] {$ p_{2}\epfs $} (d0)
			(b) edge [->] node [above, right] {$ p_{3}\epfs $} (d0);
		\end{tikzpicture}
	\end{center}
	Namely we have:
	\[ \mr{Th}(f_2,p_2)\mr{Th}(f_1,p_1) = p_2\epfs f_2\epf p_1\epfs f_1\epf \overset{Ex_{\#!}}{\longleftarrow} p_2\epfs \omega_{32}\epfs \omega_{13}\epf f_1\epf \simeq p_3\epfs f_3\epf = \mr{Th}(f_3,p_3)  \]
	\noindent and by our assumptions $Ex_{\#!}: \omega_{32}\epfs \omega_{13}\epf \overset{}{\rightarrow} f_1\epf p_1\epfs $ is an equivalence, as proved in\Cref{lemma_Ex-sharp-shriek_mixed_repr_non_repr_case}, therefore we can conclude.
	
\end{proof}

\begin{lemma}\label{Formal_twists_composition}
	Let $ g: \mc Y \rightarrow \mc Z $ and $ f: \mc X \rightarrow \mc Y $ be two smooth maps in $\oocatname{ASt}^{\leq 1}$, and assume $f$ is representable. 
	Then:
	\[ \Sigma_{f\circ g} \simeq \Sigma_{f^*g} \Sigma_f \]
	and:
	\[ \Omega_{f\circ g}\simeq \Omega_g\Omega_{g^*f}\]
	
\end{lemma}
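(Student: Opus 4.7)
The plan is to derive the first equivalence as a special case of the composition-of-twists result (\Cref{Ayoub_triangle_lemma}) and obtain the second by passing to right adjoints.

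First, I would assemble the Ayoub-type diagram for the composition $g\circ f:\mc X\to \mc Z$. Set $A_1:=\mc X\times_{\mc Y}\mc X$, $A_3:=\mc X\times_{\mc Z}\mc X$, and $A_2:=\mc Y\times_{\mc Z}\mc X$, with $f_1:=\Delta_f$, $p_1:=\pi_2^{XY}$, $f_3:=\Delta_{g\circ f}$, $p_3:=\pi_2^{XZ}$, $\omega_{13}:A_1\to A_3$ the canonical comparison induced by $g$, $\omega_{32}:=f\times \mr{id}_{\mc X}:A_3\to A_2$, $f_2:=(f,\mr{id}_{\mc X}):\mc X\to A_2$, and $p_2:=\pi_2:A_2\to \mc X$. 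A direct check on $T$-points shows $A_1\simeq \mc X\times_{A_2} A_3$, so the top square in \eqref{Ayoub_diagram} is cartesian, and the obvious identities $\omega_{13}\circ f_1=f_3$, $p_2\circ\omega_{32}=p_3$, $f_2\circ p_1=\omega_{32}\circ\omega_{13}\circ \pi$, etc.\ hold.

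Next, I would identify each of the three Thom functors with the claimed formal twist: $\mr{Th}(f_1,p_1)=\Sigma_f$ and $\mr{Th}(f_3,p_3)=\Sigma_{g\circ f}$ by definition; and $\mr{Th}(f_2,p_2)=\Sigma_{f^*g}$ via the canonical identification $\mc X\times_{\mc Y}(\mc Y\times_{\mc Z}\mc Y)\simeq \mc X\times_{\mc Z}\mc Y \simeq A_2$, matching \Cref{sec6:_definition_pullback_formal_twist} applied to the map $g:\mc Y\to \mc Z$ base-changed along $f:\mc X\to \mc Y$. The hypotheses of \Cref{Ayoub_triangle_lemma} are then straightforward to verify: all eight morphisms are lft since $f$ and $g$ are smooth; the maps $p_1,p_2,p_3$ are smooth as they are base changes of $f$, $g$, and $g\circ f$ respectively; and both $f_2$ and $\omega_{32}$ are base changes of the representable morphism $f$, hence representable (so either of the two alternative representability hypotheses of the lemma is satisfied). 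Applying \Cref{Ayoub_triangle_lemma} yields the first equivalence $\Sigma_{g\circ f}\simeq \Sigma_{f^*g}\Sigma_f$.

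Finally, the $\Omega$-statement follows formally: each $\Sigma_?$ admits $\Omega_?$ as its right adjoint by construction, and the right adjoint of a composite is the composite of the right adjoints in reverse order. Passing to right adjoints in the equivalence above therefore gives $\Omega_{g\circ f}\simeq \Omega_f\Omega_{f^*g}$. The only real technical obstacle is the bookkeeping identification of $\mr{Th}(f_2,p_2)$ with $\Sigma_{f^*g}$ as prescribed by \Cref{sec6:_definition_pullback_formal_twist}; once this is pinned down via the canonical isomorphism above, everything else is an application of the already-established \Cref{Ayoub_triangle_lemma} together with the adjoint-reversal principle.
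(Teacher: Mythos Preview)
Your approach is essentially identical to the paper's: both set up the same Ayoub-type diagram for the diagonals and invoke \Cref{Ayoub_triangle_lemma}, with the paper verifying the cartesian identification $\mc X^2_{\mc Y}\simeq \mc X\times_{A_2}\mc X^2_{\mc Z}$ via a grid of pullback squares rather than your $T$-point check, and then passing to adjoints for the $\Omega$-statement. One small slip: your map $f_2=(f,\mr{id}_{\mc X}):\mc X\to \mc Y\times_{\mc Z}\mc X$ is a base change of $\Delta_g$ (hence representable, since diagonals of algebraic $1$-stacks are representable), not of $f$ itself; but since your $\omega_{32}=f\times\mr{id}_{\mc X}$ \emph{is} a base change of $f$ and \Cref{Ayoub_triangle_lemma} only requires one of the two to be representable, the argument goes through unchanged.
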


\begin{proof}
	The two claims in $(i)$ are duals to each other, so it is enough to prove the first of them. Consider the following diagram:
	\begin{center}
		\begin{tikzpicture}[baseline={(0,1.5)}, scale=2]
			
			\node (a0) at (0,2) {$ \mc Z $};
			
			\node (a) at (0,1) {$ \mc Z^2_{\mc X} $};
			\node (b) at (1, 1) {$ \mc Z^2_{\mc Y} $};
			\node (c)  at (0,0) {$  \mc Z $};
			\node (d) at (1,0) {$ \mc Z\times_{\mc X} \mc X^2_{\mc Y} $};
			
			\node (d0) at (2,0) {$ \mc Z $};
			
			\node (e) at (0.2,0.75) {$ \ulcorner $};
			

			\path[font=\scriptsize,>= angle 90]

			(a) edge [->] node [above ] {$  $} (b)
			(a) edge [->] node [left] {$ p_{g}  $} (c)
			(b) edge[->] node [right] {$  $} (d)
			(c) edge [->] node [below] {$ \Delta_{g^*f} $} (d)
			
			(a0) edge [->] node [right] {$ \Delta_{f\circ g} $} (b)
			(a0) edge [->] node [left] {$ \Delta_g $} (a)
			
			(d) edge [->] node [below] {$ \bar{p}_g $} (d0)
			(b) edge [->] node [above, right] {$ p_{f\circ g} $} (d0);
		\end{tikzpicture}
	\end{center}
	
	The isomorphism $ \mc Z^2_{\mc X}\simeq \mc Z \times_{\mc Z\times_{\mc X}\mc X^2_{\mc Y}} \mc Z^2_{\mc Y} $ follows from the following diagram of cartesian squares:
	
	\begin{center}
		\begin{tikzpicture}[baseline={(0,1.5)}, scale=1.5]
			
			\node (a3) at (0,3) {$ \mc Z_{\mc X}^2  $};
			\node (a2) at (0,2) {$ \mc Z_{\mc Y}^2 $};
			\node (a1) at (0,1) {$ \mc Z\times_{\mc X} \mc X^2_{\mc Y}  $};
			\node (a0) at (0,0) {$\mc Z $};
			
			\node (b3) at (1,3) {$ \mc Z  $};
			\node (b2) at (1,2) {$ \mc Z\times_{\mc X} \mc X^2_{\mc Y} $};
			\node (b1) at (1,1) {$ \mc X^2_{\mc Y} $};
			\node (b0) at (1,0) {$ \mc X  $};

			\node (c2) at (2,2) {$ \mc Z $};
			\node (c1) at (2,1) {$ \mc X $};
			\node (c0) at (2,0) {$ \mc Y $};
			
			\node (t1) at (0.25,2.75) {$ \ulcorner $};
			\node (t2) at (0.25,1.75) {$ \ulcorner $};
			\node (t3) at (0.25,0.75) {$ \ulcorner $};
			\node (t4) at (1.25,1.75) {$ \ulcorner $};
			\node (t5) at (1.25,0.75) {$ \ulcorner $};


			\path[font=\scriptsize,>= angle 90]

			(a3) edge [->] node [above ] {$  $} (b3)
			(a2) edge [->] node [above ] {$  $} (b2)
			(a1) edge [->] node [above ] {$  $} (b1)
			(a0) edge [->] node [above ] {$  $} (b0)
			
			(b2) edge [->] node [above ] {$  $} (c2)
			(b1) edge [->] node [above ] {$  $} (c1)
			(b0) edge [->] node [above ] {$  $} (c0)
			
			(a3) edge [->] node [above ] {$  $} (a2)
			(a2) edge [->] node [above ] {$  $} (a1)
			(a1) edge [->] node [above ] {$  $} (a0)
			
			(b3) edge [->] node [right ] {$ \Delta_{g^*f} $} (b2)
			(b2) edge [->] node [above ] {$  $} (b1)
			(b1) edge [->] node [above ] {$  $} (b0)
			
			(c2) edge [->] node [above ] {$  $} (c1)
			(c1) edge [->] node [above ] {$  $} (c0);
		\end{tikzpicture}
	\end{center}
	Indeed, looking at the top-left-most square and at the outer square given by composing all the three squares on the left part of the diagram, we actually see that we must have $\mc Z^2_{\mc X}\simeq \mc Z \times_{\mc Z\times_{\mc X}\mc X^2_{\mc Y}} \mc Z^2_{\mc Y}$ as we wanted. Moreover $ \Delta_{g^*f} $ and the map $ \mc Z^2_{\mc Y}\rightarrow \mc Z\times_{\mc X} \mc X^2_{\mc Y} $ are representable, since they are pullbacks of $\Delta_f$ and $g$ respectively. 
	
	Then we can apply \cref{Ayoub_triangle_lemma}, recalling the definitions of the formal twists, and obtain:
	
	\[ \Sigma_{g^*f}\Sigma_g=\mr{Th}(\Delta_{g^*f}, \bar{p}_g)\mr{Th}(\Delta_g,p_g)\overset{\text{(\Cref{Ayoub_triangle_lemma})}}{\underset{\sim}{\longrightarrow}} \mr{Th}(\Delta_{f\circ g},p_{f\circ g})=\Sigma_{f\circ g} \]
	
\end{proof}

\subsection{Homotopy Purity}

The goal for this section is to prove homotopy purity for a given representable map between smooth algebraic $NL$-stacks. As in many previous instances, the central idea is again to reduce our assertions about stacks to established results concerning schemes. 
Our stepping stone for this section will be the following:

\begin{lemma}\label{schematic_HP_for_diagonal_after_AGV}
	Let $r: \mc X \rightarrow \mc Y$ be a smooth representable map between NL-stacks. Then $\Delta_r$ satisfies homotopy purity, i.e. we have that the natural maps $ \Pi_1(\Delta_r,p_2) $ and $\Pi_0(\Delta_r,p_2)$ of \Cref{sec.6:_definition_HP} are both equivalences, giving us an identification:
	\[ \Sigma_r \simeq \Sigma^{\mb L_r}  \]
\end{lemma}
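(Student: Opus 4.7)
The proof plan is to reduce the stacky homotopy purity for $\Delta_r$ to the classical schematic Morel-Voevodsky homotopy purity for the diagonal of a smooth morphism of schemes, using NL-descent and conservativity of pullback along an NL-atlas.

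First, I would fix an NL-atlas $y: Y \to \mc Y$ and form $x: X := Y\times_{\mc Y}\mc X \to \mc X$, which is itself an NL-atlas since NL-covers are stable under base change. Write $\bar r: X \to Y$ for the base change of $r$; this is again smooth and representable (now, in fact, a map of schemes/algebraic spaces, which by Proposition \ref{smoothnisquassep} can be refined to a schematic situation). The key observation is that $\Delta_r$ pulls back under $x$ to $\Delta_{\bar r}$, the diagonal of $\bar r$. By \cite{Aranha-Pstragowski}, the deformation space $\mr{Def}_{\Delta_r}$ and the intrinsic normal cone $\mf C_{\Delta_r}$ are defined functorially in $\Delta_r$, so pulling back along $x$ (or along the smooth map $\mc X \to *$) yields $\mr{Def}_{\Delta_{\bar r}}$ and $\mf C_{\Delta_{\bar r}}$, together with the analogous fiber inclusions $\alpha, \beta$. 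In particular, since $r$ is representable, $\Delta_r$ is representable, so both $\mr{Def}_{\Delta_r}$ and $\mf C_{\Delta_r}$ are genuine algebraic stacks over $\mc X$ whose schematic models are given by the corresponding schematic deformation space and normal bundle of $\Delta_{\bar r}$.

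Second, I would verify that the natural transformations $\Pi_1(\Delta_r,p_2)$ and $\Pi_0(\Delta_r,p_2)$ are compatible with pullback along the atlas $x$. By construction, these are assembled from the exchange transformations $Ex^*_!$ (encoded by the six-functor-formalism functor of Proposition \ref{Sect.3.2:_SH*!_Corr_functor}) and $Ex^*_\#$ from Proposition \ref{Sec.5:_smooth_cl_BC}. Both are encoded functorially in the category of correspondences, so pulling back by $x^*$ over all the relevant pairs produces precisely the natural transformations $\Pi_1(\Delta_{\bar r},\bar p_2)$ and $\Pi_0(\Delta_{\bar r},\bar p_2)$ associated to the base-changed diagonal.

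Third, in the schematic world the pair $(X, X\times_Y X)_X$ with closed immersion $\Delta_{\bar r}$ is the diagonal of a smooth morphism of smooth schemes, so $\Delta_{\bar r}$ is a regular closed immersion with conormal bundle $\mb L_{\bar r}$. The schematic $\Pi_0, \Pi_1$ are then equivalences by the classical homotopy purity theorem of Morel–Voevodsky (see also \cite[\S 5.2]{Hoyois_Equiv_Six_Op} and \cite[Theorem 1.4]{Aranha-Pstragowski} for the modern formulation through the deformation space). Combining this with conservativity of $x^*$ (Corollary \ref{Sec.5:_Classical_conservativity_of_pullbacks} applied in the stable setting) yields that $\Pi_0(\Delta_r,p_2)$ and $\Pi_1(\Delta_r,p_2)$ are equivalences, hence $\Sigma_r \simeq \mr{Th}(\mf C_{\Delta_r}, p_0) \simeq \Sigma^{\mb L_r}$ via the Borel $J$-homomorphism.

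The main technical obstacle is the coherent verification that the atlas pullback $x^*$ intertwines the stacky natural transformations $\Pi_i(\Delta_r,p_2)$ with their schematic counterparts; however, this is a formal matter, since each constituent exchange transformation ($Ex^*_!$ from the corner extension of \ref{Sect.3.2:_SH*!_Corr_functor}, $Ex^*_\#$ from smooth base change) is by construction natural under further smooth pullbacks, and the deformation-space construction is itself functorial under smooth base change.
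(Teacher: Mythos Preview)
Your approach is essentially the same as the paper's: reduce to the schematic case via NL-separation (conservativity of pullback along an atlas), then invoke the classical homotopy purity for the diagonal of a smooth morphism. One small point: you assert that $\Delta_{\bar r}$ is a regular \emph{closed} immersion, but for a general smooth morphism of schemes the diagonal is only an immersion (locally closed); the paper handles this by a further Zariski-local reduction to the case where $\bar r$ is separated, so that $\Delta_{\bar r}$ is closed and \cite[Proposition 5.7]{Hoyois_Equiv_Six_Op} applies directly.
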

\begin{proof}
	By NL-separation, we can assume that $r$ is actually a smooth map between schemes. For any map between schemes the diagonal is an immersion by \cite[\href{https://stacks.math.columbia.edu/tag/01KJ}{Tag 01KJ}]{stacks-project}, i.e. it is locally closed. This in particular implies that $\mr{Def}_{\Delta_r}$ is represented by a scheme. Therefore, using the same convention as in \Cref{Notation:_HP}, we have the canonical roof:
	\[ \Sigma_r:=\mr{Th}(\Delta_r,p_2) \overset{\Pi_1(\Delta_r,p_2)}{\longrightarrow} (r_S)\epfs \mr{Th}(\widehat{\Delta_r},\hat{p}_2)r_X^* \overset{\Pi_0(\Delta_r,p_2)}{\longleftarrow} q_{\#}\mr{Th}((\Delta_r)_0,(p_2)_0)=: q_{\#}\Sigma^{\mc N_{\Delta_r}} \]
	\noindent where $p_2: \mc X^2_{\mc Y}\rightarrow \mc Y $ is the projection onto the second coordinate and $ \mc N_{\Delta_r}:=\mb L_{\Delta_r}[-1] $ is the conormal sheaf of $\Delta_r$. Recall that $\mc N_{\Delta_r}\simeq \mb L_r$ using the standard fiber sequence of cotangent complexes for the composition $p_2\circ \Delta_r=Id$. Working Zariski locally, we can assume $r$ is separated, or equivalently that $\Delta_r$ is closed. In this case we already know that $ \Pi_1(\Delta_r,p_2) $ and $ \Pi_0(\Delta_r,p_2) $ are equivalences, as proved e.g. in \cite[Proposition 5.7]{Hoyois_Equiv_Six_Op}, and we are done.
\end{proof}

\begin{proposition}[Representable Purity]\label{Repr_lft_Ayoub_Purity_after_AGV}
	Let $r: \mc X \rightarrow \mc Y$ be a smooth representable map between NL-stacks. Then we have the following natural equivalence:
	
	\[ r_{\#} \overset{\sim}{\longrightarrow} r\epf \Sigma_r\simeq r\epf \Sigma^{\mb L_r}  \]
	
\end{proposition}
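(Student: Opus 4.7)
The plan is to reduce the statement to the known schematic case via NL-descent, exploiting the representability of $r$. Concretely, fix an NL-atlas $y: Y \to \mc Y$ by a scheme, and form the pullback square
\begin{center}
    \begin{tikzpicture}[baseline={(0,0.5)}, scale=1.5]
        \node (a) at (0,1) {$X_Y$};
        \node (b) at (1,1) {$\mc X$};
        \node (c) at (0,0) {$Y$};
        \node (d) at (1,0) {$\mc Y$};
        \node (e) at (0.25,0.75) {$\ulcorner$};
        \path[font=\scriptsize,>=angle 90]
        (a) edge [->] node [above] {$\bar y$} (b)
        (a) edge [->] node [left] {$r_Y$} (c)
        (b) edge [->] node [right] {$r$} (d)
        (c) edge [->] node [below] {$y$} (d);
    \end{tikzpicture}
\end{center}
Since $r$ is representable, $X_Y$ is a scheme (or algebraic space, and then refine further by an NL-atlas to a scheme), and $r_Y$ is a smooth map of schemes.

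First I would show that it suffices to check $y^*\varphi_r$ is an equivalence. This uses the conservativity of the pullback along an NL-atlas, which at the stable level follows from Corollary \ref{Sec.5:_Classical_conservativity_of_pullbacks} together with the description of $\Shext(\mc Y)$ as a limit along the \v Cech nerve of $y$ (Proposition \ref{Shstckdef}). Next I would verify the compatibility of all the functors entering the construction of $\varphi_r$ with pullback along $y$: we have $y^* r\epfs \simeq (r_Y)\epfs \bar y^*$ by smooth base change for the unstable category (Proposition \ref{Sec.5:_smooth_cl_BC}, stabilised as in the proof of \Cref{Sec.5:_cl_and_ext_SH_pointwise_equivalence}), $y^* r_! \simeq (r_Y)_! \bar y^*$ by Lemma \ref{lemma_Ex-sharp-shriek_mixed_repr_non_repr_case} applied to the square above (here $y$ is smooth and $r$ is representable lft), and finally $y^* \Sigma_r \simeq \Sigma_{r_Y}\bar y^*$ by applying Lemma \ref{lemma_Ex-sharp-shriek_mixed_repr_non_repr_case} again to the diagonals, since $\Delta_r$ is representable (it is always an immersion for a representable $r$).

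Once these compatibilities are in place, the construction of $\varphi_{(-)}$ as a composition of unit/counit maps and exchange transformations $Ex_!^*$ is itself natural with respect to these base changes; hence the diagram
\begin{equation*}
    \begin{tikzcd}
        y^* r\epfs \arrow[r,"y^*\varphi_r"] \arrow[d,"\sim"'] & y^* r_! \Sigma_r \arrow[d,"\sim"] \\
        (r_Y)\epfs \bar y^* \arrow[r,"\varphi_{r_Y}\bar y^*"'] & (r_Y)_! \Sigma_{r_Y} \bar y^*
    \end{tikzcd}
\end{equation*}
commutes. Now $\varphi_{r_Y}$ is an equivalence by the classical purity theorem for smooth morphisms of schemes (cf. \cite[Theorem 5.1]{Hoyois_Equiv_Six_Op} or Ayoub's original argument), so $y^*\varphi_r$ is an equivalence and we conclude. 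The identification $\Sigma_r \simeq \Sigma^{\mb L_r}$ is then exactly the content of Lemma \ref{schematic_HP_for_diagonal_after_AGV}.

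The main subtlety I anticipate is the coherence check that $\varphi_r$ really does base change to $\varphi_{r_Y}$ under the three exchange equivalences above; this is where the construction of the $\SH^*_!$ functor through the $\infty$-category of correspondences in Proposition \ref{Sect.3.2:_SH*!_Corr_functor} becomes essential, since compatibility of the various $Ex$ transformations is encoded in the functoriality on $\op{Corr}(\oocatname{ASt}^{\leq 1})^{\otimes}_{lft,\op{all}}$. Everything else is a formal consequence of the schematic case and the previously established lemmas.
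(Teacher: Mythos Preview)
Your proposal is correct and follows essentially the same approach as the paper: reduce to the schematic case via conservativity of $y^*$ along an NL-atlas, then invoke the known purity statement for smooth maps of schemes, and finish with Lemma~\ref{schematic_HP_for_diagonal_after_AGV} for the identification $\Sigma_r\simeq\Sigma^{\mb L_r}$. The only minor difference is that the paper appeals to the argument of \cite[Theorem 4.4.29]{AGV_Rigid6FF} for the schematic step (which handles the locally-of-finite-type case directly via Zariski descent to the separated situation), whereas you cite Hoyois; just make sure your reference covers the non-separated case, or insert the same Zariski reduction.
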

\begin{proof}
	Choosing a NL atlas $y: Y \rightarrow \mc Y$, we can use the conservativity of $y^*$ to reduce ourself to the case where $r$ is actually a smooth map of schemes. Then mimicking the same arguments in the proof of \cite[Theorem 4.4.29]{AGV_Rigid6FF}, one shows that:
	\[ \varphi_r: r_{\#} \longrightarrow r\epf \Sigma_r \]
	\noindent is an equivalence. We then conclude applying \Cref{schematic_HP_for_diagonal_after_AGV}.
\end{proof}

\begin{remark}
	The previous lemma and proposition are an immediate consequence of the same, already known statements in the schematic case. We just reported it here, in this slightly generalised form, for easy reference in what will follow next.
\end{remark}

Before proving the main theorem of this section, we will need a technical lemma. Let $f: X \rightarrow Y$ be a map between smooth $S$-schemes and suppose that $f$ is a global complete intersection  (from here on simply \textit{gci}), i.e. that it factors as a regular closed immersion followed by a smooth map:

\[ X \overset{\iota}{\into} M \overset{\mu}{\rightarrow} Y \]
The map $\mu$ will give rise to natural maps between deformation spaces and normal bundles:
\[ \hat{\pi}: \mr{Def}_{\iota} \rightarrow \mr{Def}_f \]
\[ \pi_0: N_{\iota} \rightarrow N_{f} \]

\begin{lemma}\label{Key_Lemma:_schematic_HP+}
	Let $f$ be the g.c.i. map in $ {Sm}_S$ given by the composition $ X  \overset{\iota}{\into} M \overset{\mu}{\rightarrow}  Y$, for $\iota$ a closed immersion and $\mu$ a smooth map. Let $p,q$ be the structure maps of $X$ and $Y$. Using the same conventions as in \Cref{Notation:_HP}, from the maps:
	
	\begin{center}
		\begin{tikzpicture}[baseline={(0,0.5)}, scale=2.5]
			\node (a) at (0,0.5) {$  X $};
			\node (d) at (0.5,0.5) {$  M $};
			\node (b) at (1,0.5) {$  Y $};
			\node (c) at (0.5,0) {$ S $};
			
			\node (a1) at (2,0.5) {$ \A^1_X $};
			\node (d1) at (2.5,0.5) {$ \mr{Def}_{\iota} $};
			\node (b1) at (3,0.5) {$ \mr{Def}_f $};
			
			\node (c1) at (2.5,0) {$ S $};
			
			\node (a2) at (4,0.5) {$ X $};
			\node (d2) at (4.5,0.5) {$ N_{\iota} $};
			\node (b2) at (5,0.5) {$ N_f $};
			\node (c2) at (4.5,0) {$ S $};
			
			\path[font=\scriptsize,>= angle 90]
			
			(a) edge [closed] node [below ] {$ \iota $} (d)
			(d) edge [->] node [below ] {$ \mu $} (b)
			(a) edge [bend left=30,->] node [above ] {$ f $} (b)
			
			(a) edge [->] node [left] {$q$} (c)
			(d) edge [->] node [left] {$  $} (c)
			(b) edge [->] node [right] {$p$} (c)

			([yshift=-2.5mm]b.east) edge [shorten >=2mm,shorten <=2mm,->] node [below] {$ i_1 $} ([yshift=-2.5mm]a1.west)

			(a1) edge [bend left=30,->] node [above ] {$ \hat{f} $} (b1)
			(a1) edge [->] node [below ] {$ \hat{\iota} $} (d1)
			(d1) edge [->] node [below ] {$ \hat{\pi} $} (b1)
			
			(a1) edge [->] node [left] {$ $} (c1)
			(d1) edge [->] node [left] {$ $} (c1)
			(b1) edge [->] node [right] {$ r_S\circ \hat p $} (c1)
			
			([yshift=-2.5mm]b1.west) edge [shorten >=5mm,shorten <=15mm,<-] node [below=2.5mm, right=2.5mm] {$ i_0 $} ([yshift=-2.5mm]a2.east)

			(a2) edge [bend left=30,->] node [above ] {$ f_0 $} (b2)
			(a2) edge [->] node [below ] {$ \iota_0 $} (d2)
			(d2) edge [->] node [below ] {$ \pi_0 $} (b2)
			
			(a2) edge [->] node [left] {$ q $} (c2)
			(d2) edge [->] node [left] {$  $} (c2)
			(d2) edge [->] node [left] {$  $} (c2)
			(b2) edge [->] node [right] {$ p_0 $} (c2);
			
		\end{tikzpicture}
	\end{center}
	
	\noindent we get a commutative diagram of the form:
	\begin{center}
		\begin{tikzpicture}[baseline={(0,1)}, scale=2]
			
			\node (a) at (0,1) {$  \mr{Th}(\iota,p\circ \mu)\Sigma^{-\iota^*\mb L_{\mu}} $};
			
			\node (b) at (2.5,1) {$ r_{ S}\epfs\mr{Th}(\hat{\iota},\hat{p}\circ \hat\pi)r_{X}^*\Sigma^{-\iota^*\mb L_{\mu}}  $};
			
			\node (c) at (5,1) {$  \mr{Th}(\iota_0,p_0\circ \pi_0)\Sigma^{-\iota^*\mb L_{\mu}} $};
			
			\node (d) at (0,0) {$  \mr{Th}(f,p) $};
			
			\node (e) at (2.5,0) {$ r_{ S}\epfs\mr{Th}(\hat{f},\hat{p})r_{X}^*  $};
			
			\node (f) at (5,0) {$  \mr{Th}(f_0,p_0) $};


			\path[font=\scriptsize,>= angle 90]

			(a) edge [->] node [above] {$ \Pi_1(\iota,p\circ \mu) $} (b)
			(c) edge [->] node [above] {$ \Pi_0(\iota,p\circ \mu) $} (b)
			(d) edge [->] node [below] {$ \Pi_1(f,p) $} (e)
			(f) edge [->] node [below] {$ \Pi_0(f,p) $} (e)
			(a) edge [->] node [left] {\rotatebox{-90}{$ \sim  $}} (d)
			
			(b) edge [->] node [left] {\rotatebox{-90}{$ \sim  $}} (e)
			
			(c) edge [->] node [right] {\rotatebox{-90}{$ \sim  $}} (f);
			
		\end{tikzpicture}
	\end{center}
\end{lemma}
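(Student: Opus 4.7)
The plan is to reduce each column of the diagram to the analogous statement for the closed immersion piece (respectively $\iota$, $\hat\iota$, $\iota_0$), twisted by the autoequivalence $\Sigma^{-\iota^*\mb L_\mu}$, and then to deduce the commutativity of the two squares from the naturality of the exchange transformations used in the definitions of $\Pi_1$ and $\Pi_0$.

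For the left vertical equivalence, I would factor $f\epf \simeq \mu\epf\iota\epf$ and apply \cref{Repr_lft_Ayoub_Purity_after_AGV} to the smooth map $\mu$ to obtain $\mu\epf \simeq \mu\epfs\Sigma^{-\mb L_\mu}$. Since $\Sigma^{-\mb L_\mu}$ is tensoring with an invertible object, the projection formula built into the six-functor formalism of \cref{Sect.3.2:_SH*!_Corr_functor} yields $\Sigma^{-\mb L_\mu}\iota\epf \simeq \iota\epf\Sigma^{-\iota^*\mb L_\mu}$, and combining gives
\[ \mr{Th}(f,p) = p\epfs f\epf \simeq (p\mu)\epfs\iota\epf\Sigma^{-\iota^*\mb L_\mu} = \mr{Th}(\iota, p\mu)\Sigma^{-\iota^*\mb L_\mu}. \]
The same manipulation gives the middle column once we know that $\hat\pi:\mr{Def}_\iota \to \mr{Def}_f$ is smooth (being the deformation of the smooth $\mu$) and that $\hat\iota^*\mb L_{\hat\pi}$ is canonically identified with $r_X^*\iota^*\mb L_\mu$; and it gives the right column after noting that $\pi_0: N_\iota \to [N_\iota/\iota^*T_\mu] = N_f$ is a smooth representable $\iota^*T_\mu$-torsor, with $\iota_0^*\mb L_{\pi_0}\simeq q^*\iota^*\mb L_\mu$. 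In the middle one further uses $r_X^*\Sigma^{-\iota^*\mb L_\mu} \simeq \Sigma^{-r_X^*\iota^*\mb L_\mu}r_X^*$, which comes from symmetric monoidality of $r_X^*$.

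For the commutativity of the two squares, both the horizontal transformations $\Pi_1, \Pi_0$ and the vertical equivalences just constructed are built from the same collection of exchange transformations $Ex_!^*$ and $Ex_\#^*$, together with the projection formula and representable purity. The functoriality of the deformation construction applied to the factorisation $f = \mu\circ\iota$ produces, level by level over $\mb A^1$, cartesian squares compatible with $\alpha$, $\beta$ and the smooth-closed decomposition, so the required coherence reduces to a pasting of these squares in the spirit of \cref{Ayoub_triangle_lemma}.

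The main obstacle is verifying the canonical identifications $\hat\iota^*\mb L_{\hat\pi} \simeq r_X^*\iota^*\mb L_\mu$ and $\iota_0^*\mb L_{\pi_0}\simeq q^*\iota^*\mb L_\mu$. This amounts to showing that the ``smooth part'' $\mu$ deforms trivially along the $\mb A^1$-direction of the deformation space: concretely, I would produce natural projections $\mr{Def}_\iota \to M$ and $\mr{Def}_f \to Y$ fitting into a cartesian square with $\hat\pi$ and $\mu$, and then invoke base change for the cotangent complex. Once this is in place, the rest of the lemma is a formal consequence of the coherences already encoded in the six-functor formalism of \cref{Sect.3.2:_SH*!_Corr_functor}.
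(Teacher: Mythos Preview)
Your overall strategy matches the paper's: factor through the smooth–closed decomposition, apply representable purity (\cref{Repr_lft_Ayoub_Purity_after_AGV}) to $\mu,\hat\pi,\pi_0$, identify the relevant cotangent twists as pullbacks of $\iota^*\mb L_\mu$, and then verify the squares commute by unwinding exchange transformations. Two points deserve more care.

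First, your proposed proof of the cotangent identifications via a cartesian square $\mr{Def}_\iota \simeq \mr{Def}_f\times_Y M$ is not obviously correct: over $t=0$ this would force $N_\iota \simeq N_f\times_Y M$, which is not the relation between the normal cone of $\iota$ and the intrinsic normal cone of $f$. The paper instead obtains $\mb L_{\hat\pi}\simeq \hat\iota^*r_X^*\iota^*\mb L_\mu$ and $\mb L_{\pi_0}\simeq \pi_0^*q_0^*\iota^*\mb L_\mu$ by combining the standard fibre sequences for cotangent complexes of compositions with the flatness of $\mr{Def}_\iota\to\mb A^1_M$ and $\mr{Def}_f\to\mb A^1_Y$. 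This is the step you should replace.

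Second, the commutativity of the two squares is heavier than ``a pasting in the spirit of \cref{Ayoub_triangle_lemma}''. The paper builds an explicit three-dimensional diagram relating $\Pi_1(\iota,\cdot)$ to $\Pi_1(f,\cdot)$ and decomposes it face by face; the vertical equivalences are not just $\varphi_\mu,\varphi_{\hat\pi}$ but also the homotopy-purity identifications $\Sigma_\mu\simeq\Sigma^{\mb L_\mu}$, $\Sigma_{\hat\pi}\simeq\Sigma^{\mb L_{\hat\pi}}$ from \cref{schematic_HP_for_diagonal_after_AGV}, which themselves pass through the \emph{double} deformation spaces $\mr{Def}_{\Delta_\mu}$, $\mr{Def}_{\Delta_{\hat\pi}}$. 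Your sketch does not account for this layer, and without it the squares involving $\Sigma^{\iota^*\mb L_\mu}$ versus $\Sigma_\mu$ do not obviously close up. The remaining fillings (the parallelepipedon from $Ex_!^*$, the winged square from $Ex_\#^*$, and the three small squares from $Ex_{\#!}$ plus \cref{schematic_HP_for_diagonal_after_AGV}) are exactly where the work lies.
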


\begin{proof}
	Let us start by looking at the cotangent complexes of $\mu,\hat\pi,\pi_0$. Let $q_0: N_f \rightarrow X$ be the projection map. Using the standard fiber sequences:
	
	\[ \iota^* \mb L_{\mu} \rightarrow \mb L_{f} \rightarrow \mb L_{\iota} \]
	\[ \pi_0^*\mb L_{q_0} \rightarrow \mb L_{\bigslant{N_{\iota}}{X}} \rightarrow \mb L_{\pi_0} \]
	\[ \hat\pi^* \mb L_{\hat p} \rightarrow \mb L_{\hat p \circ \hat\pi} \rightarrow \mb L_{\hat\pi } \]
	
	\noindent and the fact that $\mr{Def}_{\iota}$ and $\mr{Def}_f $ are flat over $\A^1_M$ and $\A^1_Y $ respectively, one can deduce that:
	\begin{equation}\label{eq_ctg_cplx_HP_proposition}
		\begin{split}
			\mb L_{\hat\pi}\simeq \hat\iota^* r_X^* \iota^*\mb L_{\mu} \\
			\mb L_{\pi_0}\simeq \pi_0^*q_0^* \iota^*\mb L_{\mu}
		\end{split}
	\end{equation}
	
	This in particular tells us that $\pi,\hat\pi,\pi_0$ are all smooth maps. Then by \Cref{Repr_lft_Ayoub_Purity_after_AGV}, we have natural equivalences:
	\begin{equation}\label{eq:_FP_for_HP}
		\varphi_{\mu}: \mu\epfs \overset{\sim}{\longrightarrow} \mu\epf \Sigma_{\mu} \ \ \ \ \ \ \ \ \ \ \ \ \ \ \ \varphi_{\hat \pi}: \hat \pi\epfs \overset{\sim}{\longrightarrow} \hat\pi\epf\Sigma_{\hat\pi}  \ \ \ \ \ \ \ \ \ \ \ \ \ \ \ \varphi_{\pi_0}: \pi_0\epfs  \overset{\sim}{\longrightarrow} \pi_0\epf \Sigma_{\pi_0}
	\end{equation}
	\noindent Denote by $ p_{2,\mu} $ (resp. $p_{2,\hat\pi}$, $p_{2,\pi_0}$) the projection to the second coordinate of the fiber product of $\mu$ (resp. $\hat\pi$, $\pi_0$). By \Cref{schematic_HP_for_diagonal_after_AGV}, we also have identifications:
	
	\begin{equation}\label{eq:_closed_HP_for_repr-lft_HP}
		\begin{split}
			\Pi_0^{-1}(\Delta_{\mu},p_{2,\mu})\Pi_1(\Delta_{\mu},p_{2,\mu}): \Sigma_{\mu} \overset{\sim}{\longrightarrow} \Sigma^{\mb L_{\mu}} \\
			\Pi_0^{-1}(\Delta_{\hat\pi},p_{2,\hat\pi})\Pi_1(\Delta_{\hat\pi},p_{2,\hat\pi}): \Sigma_{\hat\pi} \overset{\sim}{\longrightarrow} \Sigma^{\mb L_{\hat\pi}}\\
			\Pi_0^{-1}(\Delta_{\pi_0},p_{2,\pi_0})\Pi_1(\Delta_{\pi_0},p_{2,\pi_0}):\Sigma_{\pi_0} \overset{\sim}{\longrightarrow} \Sigma^{\mb L_{\pi_0}}
		\end{split}
	\end{equation}
	
	To simplify the notation let us denote from now on by $D_{\iota}$ and $D_f$ the deformation spaces relative to  $\iota$ and $f$ respectively; moreover denote $ SH(-) $ just by $D(-)$.\\
	
	Back to our main task, we will construct and show the commutativity of the square:
	
	\begin{equation}\label{eq:_commutativity_HP_pt1}
		\begin{tikzpicture}[baseline={(0,1)}, scale=2]
			
			\node (a) at (0,1) {$  \mr{Th}(\iota,p\circ \mu)\Sigma^{-\iota^*\mb L_{\mu}} $};
			
			\node (b) at (2.5,1) {$ r_{ S}\epfs\mr{Th}(\hat{\iota},\hat{p}\circ \hat\pi)r_{X}^*\Sigma^{-\iota^*\mb L_{\mu}}  $};
			
			
			\node (d) at (0,0) {$  \mr{Th}(f,p) $};
			
			\node (e) at (2.5,0) {$ r_{ S}\epfs\mr{Th}(\hat{f},\hat{p})r_{X}^*  $};
			


			\path[font=\scriptsize,>= angle 90]

			(a) edge [->] node [above] {$ \Pi_1(\iota,p\circ \mu) $} (b)
			(d) edge [->] node [below] {$ \Pi_1(f,p) $} (e)
			(a) edge [->] node [left] {\rotatebox{-90}{$ \sim  $}} (d)
			
			(b) edge [->] node [left] {\rotatebox{-90}{$ \sim  $}} (e);
			
			
		\end{tikzpicture}
	\end{equation}
	\noindent by filling the following diagram below:
	
	\begin{equation}\label{eq:_HP__BIG_diagram_with_Double_Def}
		\begin{tikzpicture}[baseline={(0,2.5)}, scale=1.25]
			\node (v1) at (0,0) {$  D(\A^1_X) $};
			\node (v4) at (2,1) {$ D(X) $};
			\node (v2) at (3,-1) {$  D(D_f) $};
			\node (v5) at (5,0) {$ D(Y) $};
			\node (v7) at (0,2) {$  D(\A^1_X) $};
			\node (v10) at (2,3) {$  D(X) $};
			\node (v8) at (3,1) {$  D(D_{\iota}) $};
			\node (v11) at (5,2) {$ D(M) $};
			\node (v3) at (6,-2) {$  D(S) $};
			\node (v6) at (8,-1) {$ D(S)$};
			

			\node (v15) at (3.5,3.75) {$ D(X)   $};
			\node (v16) at (6.5,2.75) {$ D(M)   $};
			\node (v13) at (-1.5,1.25) {$ D(\A^1_X) $};
			\node (v14) at (1.5,0.25) {$ D(D_{\iota}) $};

			\path[font=\scriptsize,>= angle 90]
			(v15) edge [->] node [ left ] {$ \Sigma^{\iota^*\mb L_{\mu}} $} (v10)
			(v16) edge [->] node [ above ] {$ \Sigma_{\mu} $} (v11)
			(v15) edge [->] node [ above ] {$ \iota_{!} $} (v16)
			(v16) edge [->] node [ right ] {$ \mu{\#} $} (v5)
			
			
			(v1) edge [->] node [ left=2mm, above=0.1mm ] {$ i_{1,X}^* $} (v4)
			(v4) edge [->] node [ right=0.5mm, above=0.5mm ] {$ f_! $} (v5)
			(v10) edge [double equal sign distance] node [right=2mm, below=1mm] {$  $} (v4)
			(v7) edge [->] node [ above ] {$ i_{1,X}^* $} (v10)
			(v10) edge [->] node [ above ] {$ \iota_! $} (v11)
			(v8) edge [-,line width=1mm, white] node [] {} (v2)
			(v8) edge [-, line width=1mm, white] node [] {} (v11)
			(v16) edge [bend left=10,-,line width=1mm, white] node [] {} (v6)
			(v7) edge [double equal sign distance] node [ left ] {$  $} (v1)
			(v8) edge [->] node [ left=2mm, above=1mm ] {$ \hat \pi_! $} (v2)
			(v11) edge [->] node [ left ] {$ \mu_! $} (v5)
			(v7) edge [->] node [ left, below ] {$ \hat\iota_! $} (v8)
			(v8) edge [->] node [ left=2mm ] {$ i_{1,M}^* $} (v11)
			(v1) edge [->] node [ below=2mm, left=2mm ] {$ \hat f_! $} (v2)
			(v2) edge [->] node [ below ] {$ i_{1,Y}^* $} (v5)
			(v16) edge [bend left=10,->] node [ right=7mm, below=-2mm ] {$ (p\circ \mu)\epfs $} (v6)
			(v5) edge [->] node [ right=2mm, below=1mm ] {$ p\epfs $} (v6)
			(v14) edge [bend left=15,-,line width=1mm, white] node [] {} (v3)
			(v2) edge [->] node [ right=-2mm, below=-0.5mm ] {$ \hat{p}\epfs $} (v3)
			(v14) edge [bend left=15,->] node [ left=0mm, above=2mm ] {$ (\hat p\circ \hat \pi)\epfs $} (v3)
			(v3) edge [double equal sign distance] node [ right=2mm, below=0.1mm ] {$  $} (v6)
			(v13) edge [-,line width=1mm, white] node [] {} (v14)
			(v14) edge [-,line width=1mm, white] node [] {} (v8)
			(v14) edge [-,line width=1mm, white] node [] {} (v2)
			(v10) edge [bend right=30,->] node [above] {$ r_X^* $} (v7)
			(v15) edge [bend right=45,->] node [above] {$ r_X^* $} (v13)
			
			(v13) edge [->] node [ below=2mm,right=-1.5mm] {$ \Sigma^{r_X^*\iota^*\mb L_{\mu}} $} (v7)
			(v13) edge [->] node [ below=2mm, right=-4mm ] {$ \hat\iota_{!} $} (v14)
			(v14) edge [->] node [ below=2mm, right=-1mm ] {$ \Sigma_{\hat\pi} $} (v8)
			(v14) edge [->] node [ below ] {$ \hat \pi\epfs $} (v2);
		\end{tikzpicture}
	\end{equation}
	
	In order to fill \eqref{eq:_HP__BIG_diagram_with_Double_Def}, we will split it into smaller pieces. Let us start from the central rectangular parallelepipedon; one can decompose and fill it as:
	
	\begin{center}
		\begin{tikzpicture}[baseline={(0,0.5)}, scale=2]
			\node (v1) at (0,1) {$ D(M) $};
			\node (v2) at (1,1) {$ D(X) $};
			\node (v3) at (2,1) {$ D(X) $};
			\node (v4) at (2,2) {$ D(X) $};
			\node (v5) at (3,2) {$ D(M) $};
			\node (v6) at (3,1) {$ D(Y) $};
			\node (v7) at (4,1) {$ D(M) $};
			\node (v8) at (4,0) {$ D(D_{\iota}) $};
			\node (v9) at (3,0) {$ D(D_f) $};
			\node (v10) at (3,-1) {$ D(D_{\iota}) $};
			\node (v11) at (2,-1) {$ D(\A^1_X) $};
			\node (v12) at (2,0) {$ D(\A^1_X) $};
			\node (v13) at (1,0) {$ D(\A^1_X) $};
			\node (v14) at (0,0) {$ D(D_{\iota}) $};
			
			\node (c1) at (0.5,0.5) { \rotatebox{45}{$ \overset{\scriptsize Ex_{!}^*}{\Rightarrow} $} };
			\node (c2) at (1.5,0.5) {$ \circlearrowleft $};
			\node (c3) at (2.5,0.5) {\rotatebox{-45}{$ \overset{\scriptsize Ex_{!}^*}{\Leftarrow} $}};
			\node (c4) at (3.5,0.5) {\rotatebox{-45}{$ \overset{\scriptsize Ex_{!}^*}{\Rightarrow} $}};
			
			\node (c5) at (2.5,1.5) { $ \circlearrowleft $};
			\node (c6) at (2.5,-0.5) { $ \circlearrowleft $};

			\path[font=\scriptsize,>= angle 90]
			(v4) edge [->] node [above ] {$ \iota_! $} (v5)
			
			(v1) edge [<-] node [above ] {$ \iota_! $} (v2)
			(v2) edge [double equal sign distance] node [above ] {$  $} (v3)
			(v3) edge [->] node [above ] {$ f_! $} (v6)
			(v6) edge [<-] node [above ] {$ \mu_! $} (v7)
			
			(v14) edge [<-] node [below ] {$ \hat{\iota}_! $} (v13)
			(v13) edge [double equal sign distance] node [below ] {$  $} (v12)
			(v12) edge [->] node [below ] {$ \hat{f}_! $} (v9)
			(v9) edge [<-] node [below ] {$ \hat{\pi}_! $} (v8)
			
			(v11) edge [->] node [below ] {$ \hat{\iota}_! $} (v10)
			
			
			(v4) edge [double equal sign distance] node [left ] {$  $} (v3)
			(v5) edge [->] node [right ] {$ \mu_! $} (v6)
			
			(v1) edge [->] node [left ] {$ i_{1,M}^*  $} (v14)
			(v2) edge [->] node [left ] {$ i_{1,X}^* $} (v13)
			(v3) edge [->] node [left ] {$ i_{1,X}^* $} (v12)
			(v6) edge [->] node [left ] {$ i_{1,Y}^* $} (v9)
			(v7) edge [->] node [right ] {$ i_{1,M}^* $} (v8)
			
			(v12) edge [double equal sign distance] node [left ] {$  $} (v11)
			(v9) edge [<-] node [right ] {$ \hat{\pi}_! $} (v10);
			
		\end{tikzpicture}
	\end{center}
	\noindent The winged square on the floor in \eqref{eq:_HP__BIG_diagram_with_Double_Def}, and adjacent to the parallelepipedon, can be filled as:

	\begin{center}
		\begin{tikzpicture}[baseline={(0,0.5)}, scale=2]
			\node (v3) at (1,1) {$ D(Y) $};
			\node (v4) at (2,1) {$ D(M) $};
			\node (v6) at (1,0) {$ D(S) $};
			\node (v9) at (0,0) {$ D(S) $};
			\node (v11) at (-1,1) {$ D(D_{\iota}) $};
			\node (v12) at (0,1) {$ D(D_f) $};
			
			\node (c3) at (0.5,0.5) {\rotatebox{45}{$ \overset{\scriptsize Ex_{\#}^*}{\Leftarrow} $}};
			\node (c4) at (-0.35,0.65) {$ \circlearrowleft $};
			\node (c4) at (1.35,0.65) {$ \circlearrowleft $};

			\path[font=\scriptsize,>= angle 90]
			(v4) edge [bend left=20,->] node [right ] {$ (p\circ \mu)\epfs $} (v6)
			
			(v3) edge [->] node [left ] {$ p\epfs $} (v6)
			
			(v12) edge [->] node [left ] {$ \hat p\epfs $} (v9)
			
			(v11) edge [bend right=20,->] node [left ] {$ (\hat p\circ \hat \pi)\epfs $} (v9)
			
			
			(v4) edge [->] node [above] {$ \mu\epfs $} (v3)
			
			(v3) edge [<-] node [above] {$ i_{1,Y}^* $} (v12)
			(v6) edge [double equal sign distance] node [left ] {$  $} (v9)
			
			(v12) edge [<-] node [above ] {$ \hat{\pi}\epfs $} (v11);
			
		\end{tikzpicture}
	\end{center}
	The small lateral triangles in diagram \eqref{eq:_HP__BIG_diagram_with_Double_Def} involving $\Sigma_{\pi}$ and $\Sigma_{\hat{\pi}}$ can be decomposed and filled using the natural ambidexterity equivalences $\varphi_{\pi}$ and $\varphi_{\hat\pi}$ of \Cref{Repr_lft_Ayoub_Purity_after_AGV}.\\
	Now we only need to show the commutativity of the last three squares left in \eqref{eq:_HP__BIG_diagram_with_Double_Def}, namely:\\
	\begin{minipage}{0.3\textwidth}
		\begin{equation*}
			\begin{tikzpicture}[baseline={(0,0.5)}, scale=1.5]
				\node (a) at (0,1) {$   D(X) $};
				\node (b) at (1,1) {$  D(M)  $};
				\node (c) at (0,0) {$   D(X)  $};
				\node (d) at (1,0) {$   D(M)  $};
				\node (e) at (0.25,0.75) {$  $};
				\node (e) at (0.5,0.5) {$  $};

				\path[font=\scriptsize,>= angle 90]
				
				(a) edge [->] node [above ] {$ \iota_! $} (b)
				(a) edge [->] node [left] {$ \Sigma^{\iota^*\mb L_{\mu}} $} (c)
				(b) edge [->] node [right] {$ \Sigma_{\mu} $} (d)
				(c) edge [->] node [below] {$ \iota_!  $} (d);
				
			\end{tikzpicture}
		\end{equation*}
	\end{minipage}
	\hfill 
	\begin{minipage}{0.2\textwidth}
		\begin{equation*}
			\begin{tikzpicture}[baseline={(0,0.5)}, scale=1.5]
				\node (a) at (0,1) {$   D(X) $};
				\node (b) at (1,1) {$  D(\A^1_X) $};
				\node (c) at (0,0) {$  D(X) $};
				\node (d) at (1,0) {$  D(\A^1_X) $};
				\node (e) at (0.25,0.75) {$  $};
				\node (e) at (0.65,0.5) {$  $};

				\path[font=\scriptsize,>= angle 90]
				
				(a) edge [->] node [above ] {$ r_X^* $} (b)
				(a) edge [->] node [left] {$ \Sigma^{\iota^*\mb L_{\mu}}  $} (c)
				(b) edge [->] node [right] {$ \Sigma^{r_X^*\iota^*\mb L_{\mu}}  $} (d)
				(c) edge [->] node [below] {$ r_X^* $} (d);
				
			\end{tikzpicture}
		\end{equation*}
	\end{minipage}
	\hfill
	\begin{minipage}{0.35\textwidth}
		\begin{equation}\label{eq:_Last_3_Squares_HP}
			\begin{tikzpicture}[baseline={(0,0.5)}, scale=1.5]
				\node (a) at (0,1) {$   D(A^1_X) $};
				\node (b) at (1,1) {$  D(D_{\iota}) $};
				\node (c) at (0,0) {$ D(\A^1_X) $};
				\node (d) at (1,0) {$ D(D_{\iota})$};
				\node (e) at (0.25,0.75) {$   $};
				\node (e) at (0.65,0.5) {$  $};

				\path[font=\scriptsize,>= angle 90]
				
				(a) edge [->] node [above ] {$ \hat{\iota}\epf $} (b)
				(a) edge [->] node [left] {$ \Sigma^{r_X^*\iota^*\mb L_{\mu}} $} (c)
				(b) edge [->] node [right] {$ \Sigma_{\hat{\mu}} $} (d)
				(c) edge [->] node [below] {$ \iota_! $} (d);
				
			\end{tikzpicture}
		\end{equation}
	\end{minipage}\\
	\noindent For $\bullet \in \set{\mu,\hat{\pi},\pi_0}$, denote by $T_{\bullet}$ the vector bundle associated with $\mb L_{\bullet}$, by $ q_{\bullet} $ the relative projection onto the base, and by $s_{\bullet}$ the relative zero section; let $ q_{\iota^*\mu} $ and $ s_{\iota^*\mu} $ be the projection onto the base and the zero section relative to $\iota^*T_{\mu}$. Recall that we have canonical identifications between the normal bundles of the diagonal $ N_{\Delta_{\bullet}} $ and $ T_{\bullet} $. Using the identifications in \eqref{eq_ctg_cplx_HP_proposition} and the natural equivalences in \eqref{eq:_closed_HP_for_repr-lft_HP}, we can then decompose and fill the squares in \eqref{eq:_Last_3_Squares_HP} as:
	
	\begin{center}
		\begin{minipage}{0.4\textwidth}
			\begin{center}
				\begin{tikzpicture}[baseline={(0,0.5)}, scale=2]
					\node (a1) at (0,1) {$   D(X) $};
					\node (a2) at (1,1) {$  D(M)  $};
					\node (a3) at (3,1) {$  D(M)  $};
					\node (b1) at (0,0) {$   D(\iota^*T_{\mu})  $};
					\node (b2) at (1,0) {$   D(N_{\Delta_{\mu}})  $};
					\node (c1) at (0,-1) {$ D(X) $};
					\node (c2) at (1,-1) {$ D(M) $};
					\node (c3) at (3,-1) {$  D(M)  $};
					
					\node (v0) at (0.5,0.5) {$  \circlearrowleft  $};
					\node (v1) at (0.5,-0.5) {\rotatebox{45}{$ \overset{\scriptsize Ex_{\#!}}{\Leftarrow} $} };
					\node (v2) at (2,0) { \rotatebox{-45}{\scalebox{0.65}{$ \Pi_0^{-1}(\Delta_{\mu},p_{2,\mu})\Pi_1(\Delta_{\mu},p_{2,\mu}) $}}  };
					
					\path[font=\scriptsize,>= angle 90]
					
					(a2) edge [double equal sign distance] node [above ] {$  $} (a3)
					(c2) edge [double equal sign distance] node [above ] {$  $} (c3)
					(a3) edge [->] node [right ] {$ \Sigma_{\pi} $} (c3)
					
					(a1) edge [->] node [above ] {$ \iota_! $} (a2)
					(b1) edge [->] node [above ] {$  $} (b2)
					(a1) edge [->] node [left] {$ (s_{\iota^*\mu})\epf $} (b1)
					(b1) edge [->] node [left] {$ (q_{\iota^*\mu})\epfs $} (c1)
					(a2) edge [->] node [right] {$ s_{\mu}\epf $} (b2)
					(b2) edge [->] node [right] {$ q_{\mu}\epfs $} (c2)
					(c1) edge [->] node [below] {$ \iota_!  $} (c2);
					
				\end{tikzpicture}
			\end{center}
		\end{minipage}
		\hfill
		\begin{minipage}{0.4\textwidth}
			\begin{center}
				\begin{tikzpicture}[baseline={(0,0.5)}, scale=2]
					\node (a1) at (0,1) {$   D(X) $};
					\node (a2) at (1,1) {$  D(\A^1_X)  $};
					\node (b1) at (0,0) {$   D(\iota^*T_{\pi})  $};
					\node (b2) at (1,0) {$   D(r_X^*\iota^*T_{\pi})  $};
					\node (c1) at (0,-1) {$ D(X) $};
					\node (c2) at (1,-1) {$ D(\A^1_X) $};
					
					\node (v0) at (0.5,0.5) {\rotatebox{45}{$ \overset{\scriptsize Ex_{!}^*}{\Rightarrow} $}};
					\node (v1) at (0.5,-0.5) {\rotatebox{45}{$ \overset{\scriptsize Ex_{\#!}}{\Leftarrow} $} };

					\path[font=\scriptsize,>= angle 90]
					
					(a1) edge [->] node [above ] {$ r_X^* $} (a2)
					(b1) edge [->] node [above ] {$  $} (b2)
					(a1) edge [->] node [left] {$ (s_{\iota^*\mu})\epf $} (b1)
					(b1) edge [->] node [left] {$ (q_{\iota^*\mu})\epfs $} (c1)
					(a2) edge [->] node [right] {$  $} (b2)
					(b2) edge [->] node [right] {$  $} (c2)
					(c1) edge [->] node [below] {$ r_X^*  $} (c2);
					
				\end{tikzpicture}
			\end{center}
		\end{minipage}
	\end{center}
	
	\begin{center}
		\begin{tikzpicture}[baseline={(0,0.5)}, scale=2]
			\node (a1) at (0,1) {$   D(\A^1_X) $};
			\node (a2) at (1,1) {$  D(M)  $};
			\node (a3) at (3,1) {$  D(M)  $};
			\node (b1) at (0,0) {$   D(r_X^*\iota^*T_{\pi})  $};
			\node (b2) at (1,0) {$   D(N_{\Delta_{\hat\pi}})  $};
			\node (c1) at (0,-1) {$ D(\A^1_X) $};
			\node (c2) at (1,-1) {$ D(M) $};
			\node (c3) at (3,-1) {$  D(M)  $};
			
			\node (v0) at (0.5,0.5) {$  \circlearrowleft  $};
			\node (v1) at (0.5,-0.5) {\rotatebox{45}{$ \overset{\scriptsize Ex_{\#!}}{\Leftarrow} $} };
			\node (v2) at (2,0) { \rotatebox{-45}{\scalebox{0.65}{$ \Pi_0^{-1}(\Delta_{\hat\pi},p_{2,\hat\pi})\Pi_1(\Delta_{\hat\pi},p_{2,\hat\pi}) $}}  };
			
			\path[font=\scriptsize,>= angle 90]
			
			(a2) edge [double equal sign distance] node [above ] {$  $} (a3)
			(c2) edge [double equal sign distance] node [above ] {$  $} (c3)
			(a3) edge [->] node [right ] {$ \Sigma_{\mu} $} (c3)
			
			(a1) edge [->] node [above ] {$ \hat\iota_! $} (a2)
			(b1) edge [->] node [above ] {$  $} (b2)
			(a1) edge [->] node [left] {$  $} (b1)
			(b1) edge [->] node [left] {$  $} (c1)
			(a2) edge [->] node [right] {$ s_{\hat\pi}\epf $} (b2)
			(b2) edge [->] node [right] {$ q_{\hat\pi}\epfs $} (c2)
			(c1) edge [->] node [below] {$ \hat\iota_!  $} (c2);
			
		\end{tikzpicture}
	\end{center}
	\noindent where the natural transformation $\Pi_0^{-1}(\Delta_{\hat\pi},p_{2,\hat\pi})\Pi_1(\Delta_{\hat\pi},p_{2,\hat\pi})$ actually involved the double deformation space $ \mr{Def}_{\Delta_{\hat\pi}} $.\\
	Putting everything together, our decomposition of \eqref{eq:_HP__BIG_diagram_with_Double_Def} and the naturality of the exchange transformations gives us a way to fill the following commutative diagram:
	
	\begin{center}
		\begin{tikzpicture}[baseline={(0,0)}, scale=2]
			\node (a1) at (0,2.5) {$ \mr{Th}(\iota,p\circ \mu) $};
			\node (a2) at (0,2.25) {\rotatebox{90}{$=$}};
			\node (a3) at (0,2) {$ p\epfs \mu\epfs\iota\epf  $};
			\node (a4) at (0,1) {$ p\epfs\mu\epf \Sigma_{\mu}\iota_! $};
			\node (a5) at (0,0.75) {\rotatebox{90}{$\simeq$}};
			\node (a6) at (0,0.5) {$ p\epfs \mu_!\iota_!\Sigma^{\iota^*\mb L_{\mu}} $};
			\node (a7) at (0,0.25) {\rotatebox{90}{$=$}};
			\node (a8) at (0,0) {$ \mr{Th}(f,p)\Sigma^{\iota^*\mb L_{\mu}} $};
			
			\node (b1) at (3,2.5) {$ r_S\epfs \mr{Th}(\hat\iota,\hat p\circ \hat \pi)r_X^* $};
			\node (b2) at (3,2.25) {\rotatebox{90}{$=$}};
			\node (b3) at (3,2) {$ r_S\epfs \hat  p\epfs \hat \pi\epfs \hat \iota\epf r_X^* $};
			\node (b4) at (3,1) {$ r_S\epfs \hat  p\epfs \hat \pi\epf \Sigma_{\hat\pi} \hat \iota\epf r_X^* $};
			\node (b5) at (3,0.75) {\rotatebox{-90}{$\simeq$}};
			\node (b6) at (3,0.5) {$ r_S\epfs \hat  p\epfs \hat \pi\epf \hat \iota\epf r_X^*\Sigma^{\iota^*\mb L_{\mu}} $};
			\node (b7) at (3,0.25) {\rotatebox{90}{$=$}};
			\node (b8) at (3,0) {$ r_S\epfs \mr{Th}(\hat f, \hat p)r_X^*\Sigma^{\iota^*\mb L_{\mu}} $};
			
			\node (c) at (1.5,1.25) {$ \circlearrowleft $};

			\path[font=\scriptsize,>= angle 90]

			(a1) edge [->] node [above] {$ \Pi_1(\iota,p\circ \mu) $} (b1)
			(a3) edge [->] node [left] {\rotatebox{-90}{$\sim$}} node [right] {$ \varphi_{\mu}$} (a4)
			(b3) edge [->] node [right] {\rotatebox{-90}{$\sim$}}  node [left] {$ \varphi_{\hat \pi}$} (b4)
			(a8) edge [->] node [below] {$ \Pi_1(f,p) $} (b8);
			
		\end{tikzpicture}
	\end{center}
	\noindent Applying $\Sigma^{-\iota^*\mb L_{\mu}}$ to the diagram above, we get a new commutative diagram:
	\begin{center}
		\begin{tikzpicture}[baseline={(0,1)}, scale=2]
			
			\node (a) at (0,1) {$  \mr{Th}(\iota,p\circ \mu)\Sigma^{-\iota^*\mb L_{\mu}} $};
			
			\node (b) at (2.5,1) {$ r_{ S}\epfs\mr{Th}(\hat{\iota},\hat{p}\circ \hat\pi)r_{X}^*\Sigma^{-\iota^*\mb L_{\mu}}  $};
			
			
			\node (d) at (0,0) {$  \mr{Th}(f,p) $};
			
			\node (e) at (2.5,0) {$ r_{ S}\epfs\mr{Th}(\hat{f},\hat{p})r_{X}^*  $};
			


			\path[font=\scriptsize,>= angle 90]

			(a) edge [->] node [above] {$ \Pi_1(\iota,p\circ \mu) $} (b)
			(d) edge [->] node [below] {$ \Pi_1(f,p) $} (e)
			(a) edge [->] node [left] {\rotatebox{-90}{$ \sim  $}} (d)
			
			(b) edge [->] node [left] {\rotatebox{-90}{$ \sim  $}} (e);
			
			
		\end{tikzpicture}
	\end{center}
	\noindent that is exactly what we wanted.\\
	
	\noindent To prove the existence of a commutative diagram of the form:
	
	\begin{center}
		\begin{tikzpicture}[baseline={(0,1)}, scale=2]
			
			\node (b) at (2.5,1) {$ r_{ S}\epfs\mr{Th}(\hat{\iota},\hat{p}\circ \hat\pi)r_{X}^*\Sigma^{-\mb L_{\mu}}  $};
			
			\node (c) at (5,1) {$  \mr{Th}(\iota_0,p_0\circ \pi_0)\Sigma^{-\mb L_{\mu}} $};
			
			\node (e) at (2.5,0) {$ r_{ S}\epfs\mr{Th}(\hat{f},\hat{p})r_{X}^*  $};
			
			\node (f) at (5,0) {$  \mr{Th}(f_0,p_0) $};
			
			\path[font=\scriptsize,>= angle 90]
			
			(c) edge [->] node [above] {$ \Pi_0(\iota,p\circ \mu) $} (b)
			
			(f) edge [->] node [below] {$ \Pi_0(f,p) $} (e)
			
			(b) edge [->] node [left] {\rotatebox{-90}{$ \sim  $}} (e)
			
			(c) edge [->] node [right] {\rotatebox{-90}{$ \sim  $}} (f);
			
		\end{tikzpicture}
	\end{center}
	\noindent one has to replace diagram \eqref{eq:_HP__BIG_diagram_with_Double_Def} with a similar and appropriate diagram involving $N_{\iota}$ and $N_{f}$ and then follow the same step as we did before. To avoid burdening the reader with an even longer proof, we leave the remaining details for them to work out.
\end{proof}

Now we are ready to prove the main theorem of this section:

\begin{theorem}[Homotopy Purity] \label{Thm:_Repr-lft_HP}
	Let $f: \mc X \longrightarrow \mc Y$ be a representable map between $S$-smooth NL stacks, with $p,q$ their structure maps. Then using the same conventions of \Cref{Notation:_HP}, we have that the natural transformations:
	\[ 	\mr{Th}(f,p) \overset{\Pi_1(f,p)}{\longrightarrow} (r_S)\epfs \mr{Th}(\hat{f},\hat{p})r_X^* \overset{\Pi_0(f,p)}{\longleftarrow} q_{\#}\mr{Th}(f_0,\pi_0) \]
	\noindent are both equivalences.
\end{theorem}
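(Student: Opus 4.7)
The plan is to reduce the statement to the well-established homotopy purity for closed immersions between smooth schemes, using \Cref{Key_Lemma:_schematic_HP+} as the bridge from the closed immersion case to the general representable case, and using NL-descent as the bridge from schemes to stacks.

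First, I would use the conservativity of pullback along NL-atlases (\Cref{Sec.5:_Classical_conservativity_of_pullbacks}) together with the naturality of the constructions $\Pi_1$ and $\Pi_0$ in cartesian squares of pairs to reduce the problem to schemes. Concretely, choose an NL-atlas $y \colon Y \to \mc Y$; since $f$ is representable, the base change $X := \mc X \times_{\mc Y} Y \to Y$ is a map of algebraic spaces (reducing further via another NL-cover, we may in fact assume it is a map of schemes). The deformation space $\mr{Def}_{(-)}$ and the normal cone $\mf C_{(-)}$ are compatible with arbitrary base change, so $y^* \mr{Th}(\hat f, \hat p) \simeq \mr{Th}(\hat f_Y, \hat p_Y)$ and similarly for the normal cone piece; using the base-change equivalences $Ex_!^*$ from \Cref{Sect.3.2:_SH*!_Corr_functor} and smooth base change from \Cref{Sec.5:_smooth_cl_BC}, the natural transformations $\Pi_1(f,p)$ and $\Pi_0(f,p)$ pull back to $\Pi_1(f_Y, p_Y)$ and $\Pi_0(f_Y, p_Y)$. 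It then suffices to prove the theorem when $f \colon X \to Y$ is a map of smooth $S$-schemes.

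Next, working Zariski-locally on $X$ and $Y$, we factor $f$ as the composition of its graph $\iota := \Gamma_f \colon X \hookrightarrow X \times_S Y$ and the second projection $\mu \colon X \times_S Y \to Y$. Since $Y/S$ is smooth and in particular has unramified diagonal, $\iota$ is a (regular) closed immersion; and since $X/S$ is smooth, $\mu$ is smooth. Hence $f = \mu \circ \iota$ is a global complete intersection in the sense of \Cref{Key_Lemma:_schematic_HP+}, and we are in a position to apply that lemma. By the classical homotopy purity theorem for closed immersions between smooth schemes (cf. \cite[Theorem 3.2.23]{Morel-Voevodsky} or \cite[Proposition 5.7]{Hoyois_Equiv_Six_Op}), the transformations $\Pi_1(\iota, p\circ \mu)$ and $\Pi_0(\iota, p\circ \mu)$ appearing on the top row of the diagram in \Cref{Key_Lemma:_schematic_HP+} are equivalences. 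Since the vertical arrows in that diagram are marked as equivalences and the squares commute, we conclude that $\Pi_1(f,p)$ and $\Pi_0(f,p)$ are equivalences in the Zariski-local situation.

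The main subtlety is gluing the local statement back to the global scheme-theoretic one and then to the stacky one. For the Zariski step, one uses that $\SH(-)$ satisfies Zariski descent and that all the functors involved (pullbacks, shriek functors, Thom transformations, the $\A^1$-homotopy realized by the deformation space) commute with restriction to a Zariski cover; hence an equivalence of natural transformations can be tested after restricting to an affine Zariski cover on which the graph factorization exists. For the NL-to-scheme step, the only nontrivial point is that the deformation space and the normal cone of a representable map pull back along an NL-atlas in the expected way—this is automatic from the universal property (or the explicit Rees algebra construction of \cite{Aranha-Pstragowski}) since the atlas morphisms are flat. Assembling these, we obtain that $\Pi_0(f,p)$ and $\Pi_1(f,p)$ are equivalences in full generality, which completes the proof.
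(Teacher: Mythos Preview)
Your proposal is correct and follows essentially the same strategy as the paper: reduce to schemes via conservativity of pullback along an NL-atlas, then reduce Zariski-locally to a global complete intersection, and invoke \Cref{Key_Lemma:_schematic_HP+} together with classical homotopy purity for closed immersions. The only cosmetic difference is that you make the gci factorization explicit via the graph $\Gamma_f$, whereas the paper simply cites the fact that an lci map of schemes is Zariski-locally gci without naming a particular factorization.
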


\begin{proof}[Proof of \Cref{Thm:_Repr-lft_HP}]
	Choose an atlas $y: Y \rightarrow \mc Y$ of $\mc Y$ and let $x: X:=Y\times_{\mc Y}\mc X \rightarrow \mc X$ be the atlas on $\mc X$ obtained via the fiber product. Denote by $g$ the natural map $X \rightarrow Y$. By the conservativity of $y^*$, we know that it is enough to prove that:
	\[ y^*\Pi_1(f,p) \ \ \ \ \ \ \ \ y^*\Pi_0(f,p) \]
	\noindent are both equivalences. But one can check that this corresponds to prove that:
	\[ \Pi_1(g,p\circ y)x^* \ \ \ \ \ \ \ \ \Pi_0(g,p\circ y)x^* \]
	\noindent are equivalences. So without loss of generality we can assume that $f$ is just a map between smooth $S$-schemes $ X=\mc X  \rightarrow Y=\mc Y$. By \cite[\href{https://stacks.math.columbia.edu/tag/02FV}{Tag 02FV}]{stacks-project} and \cite[\href{https://stacks.math.columbia.edu/tag/0E9K}{Tag 0E9K}]{stacks-project}, the map $f$ is then automatically lci. Working Zariski locally, we can further assume that the map is actually gci, i.e. that $f$ factors as:
	\[ X \overset{\iota}{\into} M \overset{\mu}{\rightarrow} Y \]
	\noindent for some smooth $Y$-scheme $M$. Now we can apply \Cref{Key_Lemma:_schematic_HP+}, and hence we can find a commutative diagram:
	
	\begin{center}
		\begin{tikzpicture}[baseline={(0,1)}, scale=2]
			
			\node (a) at (0,1) {$  \mr{Th}(\iota,p\circ \mu)\Sigma^{-\mb L_{\mu}} $};
			
			\node (b) at (2.5,1) {$ r_{ S}\epfs\mr{Th}(\hat{\iota},\hat{p}\circ \hat\pi)r_{X}^*\Sigma^{-\mb L_{\mu}}  $};
			
			\node (c) at (5,1) {$  \mr{Th}(\iota_0,p_0\circ \pi_0)\Sigma^{-\mb L_{\mu}} $};
			
			\node (d) at (0,0) {$  \mr{Th}(f,p) $};
			
			\node (e) at (2.5,0) {$ r_{ S}\epfs\mr{Th}(\hat{f},\hat{p})r_{X}^*  $};
			
			\node (f) at (5,0) {$  \mr{Th}(f_0,p_0) $};


			\path[font=\scriptsize,>= angle 90]

			(a) edge [->] node [above] {$ \Pi_1(\iota,p\circ \mu) $} (b)
			(c) edge [->] node [above] {$ \Pi_0(\iota,p\circ \mu) $} (b)
			(d) edge [->] node [below] {$ \Pi_1(f,p) $} (e)
			(f) edge [->] node [below] {$ \Pi_0(f,p) $} (e)
			(a) edge [->] node [left] {\rotatebox{-90}{$ \sim  $}} (d)
			
			(b) edge [->] node [left] {\rotatebox{-90}{$ \sim  $}} (e)
			
			(c) edge [->] node [right] {\rotatebox{-90}{$ \sim  $}} (f);
			
		\end{tikzpicture}
	\end{center}
	\noindent where all the vertical arrows are invertible. But the top row is formed by equivalences too by \cite[Proposition 5.7]{Hoyois_Equiv_Six_Op}. Hence $\Pi_1(f,p) $ and $ \Pi_0(f,p)  $ must be equivalences themselves and we are done.
\end{proof}

	\subsection{Ambidexterity and Purity}
%

By mimicking \cite[Proposition 1.7.3]{Ayoub_6FF_vol1}, we are going to show the following:

\begin{proposition}\label{Thm:_Ayoub_1.7.3+}
	Let:
	\[ \mc X \overset{f}{\longrightarrow} \mc Y \overset{g}{\longrightarrow} \mc Z \]
	\noindent be two composable smooth maps where $f$ and $gf$ are representable. Then we get a commutative diagram of the form:

	\begin{center}
		\begin{tikzpicture}[baseline={(0,0.5)}, scale=2]
			
			\node (a) at (0.75,1) {$  (g f)\epfs $};
			
			\node (c) at (3,1) {$ (g f)_! \Sigma_{gf} $};

			\node (d) at (0.75,0) {$ g\epfs  f\!\epfs $};
			\node (e) at (2,0) {$ g\epf  \Sigma_g f\epfs   $};
			\node (f) at (3,0) {$ g\epf \Sigma_g f_!\Sigma_f  $};


			\path[font=\scriptsize,>= angle 90]

			(a) edge [->] node [above] {$ \varphi_{g f} $} (c)
			
			(d) edge [->] node [below] {$ \varphi_g $} (e)
			(e) edge [->] node [below] {$  \varphi_f $} (f)
			
			(a) edge [double equal sign distance] node [left] {$  $} (d)
			
			(c) edge [<-] node [left] {\rotatebox{-90}{$ \sim  $}} (f);
			
		\end{tikzpicture}
	\end{center}
	\noindent as well as the analogous diagram where the bottom row is replaced by:
	\[ g\epfs  f\epfs\underset{\varphi_f}{\longrightarrow} g\epfs  f_! \Sigma_f \underset{\varphi_g}{\longrightarrow} g_!\Sigma_g f_!\Sigma-f \]
\end{proposition}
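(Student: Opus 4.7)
The plan is to adapt \cite[Proposition 1.7.3]{Ayoub_6FF_vol1} to our stacky setting. I would first unravel the three ambidexterity maps $\varphi_f$, $\varphi_g$, $\varphi_{gf}$ in terms of the exchange transformations $\gamma_h = Ex_{\#!}$ attached to the double fibre product square of a smooth map $h$: by construction $\varphi_h$ is $\gamma_h$ whiskered with $\Delta_{h,!}$, together with the identification $p_{1,!}\Delta_{h,!} \simeq \op{id}$ coming from $p_1 \circ \Delta_h = \op{id}$. Once this is done, the content of the proposition becomes a pasting compatibility between the three $\gamma$-squares.

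The key geometric input is the factorisation
\[ \Delta_{gf}: \X \xrightarrow{\Delta_f} \X\times_{\Y}\X \xrightarrow{\Delta_{f^*g}} \X\times_{\Z}\X, \]
where $\Delta_{f^*g}$ is representable, being a pullback of $\Delta_g$ along $\X\times_{\Z}\X \to \Y\times_{\Z}\Y$. This decomposition slots into \Cref{Ayoub_triangle_lemma}, so \Cref{Formal_twists_composition} produces the equivalence $\Sigma_{gf} \simeq \Sigma_{f^*g}\Sigma_f$. Combined with the base-change equivalence $\Sigma_g f_! \simeq f_! \Sigma_{f^*g}$ provided by \Cref{lemma_Ex-sharp-shriek_mixed_repr_non_repr_case} applied to the cartesian square defining $\Delta_{f^*g}$, one obtains the canonical identification $g_! \Sigma_g f_! \Sigma_f \simeq (gf)_! \Sigma_{gf}$ which is the rightmost vertical arrow of the target diagram.

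With these identifications in hand, the commutativity reduces to showing that the prism obtained by pasting the $\gamma$-squares of $g$ and $\Delta_{f^*g}$ with the one of $f$ collapses onto the $\gamma$-square for $gf$. This is a formal diagram chase built from the naturality of $Ex^*_!$, $Ex^*_{\#}$, and the pasting law of $Ex_{\#!}$ under horizontal and vertical composition. The main obstacle I anticipate is the bookkeeping of this prism: several exchange transformations have to be composed in a coherent order while keeping track of the units and counits of the involved adjunctions, and one has to verify the commutativity of a hexagon relating the three second-projection maps $p_{2,f}$, $p_{2,g}$ and $\bar p_{2,f^*g}$. Throughout the argument, the representability of $f$ and $gf$ is used to guarantee via \Cref{lemma_Ex-sharp-shriek_mixed_repr_non_repr_case} that all appearing $Ex_{\#!}$ are invertible, so that the pasting identifications are canonical and not merely up to a formal zig-zag. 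Finally, the variant with $\varphi_f$ applied before $\varphi_g$ follows by the dual factorisation of $\Delta_{gf}$ through $\Y\times_{\Z}\X$, together with the entirely analogous chase performed in the opposite order; the symmetry is manifest at the level of iterated diagonals, so no substantially new argument is required.
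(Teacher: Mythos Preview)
Your proposal is correct and follows essentially the same route as the paper: unwind each $\varphi_h$ as an $Ex_{\#!}$ applied to the diagonal, factor $\Delta_{gf}$ through $\Delta_f$ and the induced map $\mc X^2_{\mc Y}\to \mc X^2_{\mc Z}$, invoke \Cref{Formal_twists_composition} and \Cref{lemma_Ex-sharp-shriek_mixed_repr_non_repr_case} for the right-hand identification, and then paste the resulting cartesian squares using naturality of $Ex_{\#!}$. One notational caveat: what you call $\Delta_{f^*g}$ (the map $\mc X^2_{\mc Y}\to \mc X^2_{\mc Z}$, a pullback of $\Delta_g$ along $(f,f)$) is denoted $\widetilde{\Delta}$ in the paper, while the paper reserves $\Delta_{f^*g}$ for the map $\mc X \to \mc Y\times_{\mc Z}\mc X$ appearing in \Cref{sec6:_definition_pullback_formal_twist}; both maps appear in the cube the paper uses, so keeping them straight is exactly the bookkeeping you anticipate.
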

\begin{proof}
	It is not difficult to see that with slight modifications we can adapt the proof of \cite[1.7.3]{Ayoub_6FF_vol1} to our case. We will show what modifications are in order just for the reader convenience, but we make no claim of originality in this proof.\\
	Let $ p_{f,i}, p_{g,i}, p_{gf,i} $ be the projections onto the first and second coordinates for $i=1,2$ on $\mc X^2_{\mc Y}, \mc Y_{\mc Z}^2, \mc X^2_{\mc Z}$ respectively. Similarly, we will denote by $p_{f^*g,i}$ the map obtained as a pullback from $p_{g,i}$ along $f$.
	The natural transformation $\varphi_{g f}$ is defined as:
	
	\[ (g f)\epfs = (g f)\epfs(p_{gf,1})\epf \Delta_{gf}\epf \overset{Ex_{\#!}}{\longrightarrow} (gf)_!\Sigma_{gf} \]
	
	Now consider the following collection of cartesian squares:
	
	\begin{center}
		\begin{tikzpicture}[baseline={(0,1.5)}, scale=2]

			\node (a2) at (0,2) {$ \mc X_{\mc Z}^2 $};
			\node (a1) at (0,1) {$ \mc X\times_{\mc Z} \mc Y  $};
			\node (a0) at (0,0) {$\mc X $};

			\node (b2) at (1,2) {$ \mc Y\times_{\mc Z} \mc X $};
			\node (b1) at (1,1) {$ \mc Y^2_{\mc Z} $};
			\node (b0) at (1,0) {$ \mc Y  $};

			\node (c2) at (2,2) {$ \mc X $};
			\node (c1) at (2,1) {$ \mc Y $};
			\node (c0) at (2,0) {$ \mc Z $};

			\node (t1) at (0.25,1.75) {$ \ulcorner $};
			\node (t3) at (0.25,0.75) {$ \ulcorner $};
			\node (t2) at (1.25,1.75) {$ \ulcorner $};
			\node (t4) at (1.25,0.75) {$ \ulcorner $};

			\node (T1) at (0.5,1.5) {$ \Delta_2 $};
			\node (T3) at (0.5,0.5) {$ \Delta_1 $};
			\node (T2) at (1.5,1.5) {$ \Delta_3 $};
			\node (T4) at (1.5,0.5) {$ \Delta_4 $};


			\path[font=\scriptsize,>= angle 90]

			
			(a2) edge [->] node [above ] {$ v_2 $} (b2)
			(a1) edge [->] node [above ] {$ r_2 $} (b1)
			(a0) edge [->] node [below ] {$ f $} (b0)
			
			(b2) edge [->] node [above ] {$ p_{f^*g,2} $} (c2)
			(b1) edge [->] node [above ] {$ p_{g,2} $} (c1)
			(b0) edge [->] node [below ] {$ g $} (c0)
			
			
			(a2) edge [->] node [left ] {$ v_1 $} (a1)
			(a1) edge [->] node [left ] {$ q_1 $} (a0)

			(b2) edge [->] node [left ] {$ r_1 $} (b1)
			(b1) edge [->] node [left ] {$ p_{g,1} $} (b0)
			
			(c2) edge [->] node [right ] {$ f $} (c1)
			(c1) edge [->] node [right ] {$ g $} (c0);
		\end{tikzpicture}
	\end{center}
	
	Let $\Delta_{Big}$ be the outer cartesian square. The naturality of $Ex_{\#!}$ will fill the following diagram:
	
	\begin{center}
		\begin{tikzpicture}[baseline={(0,0.5)}, scale=2.5]
			
			\node (a) at (0,1) {$ (gf)\epfs $};
			\node (b) at (4,1) {$ (gf)\epf \Sigma_{gf} $};
			
			\node (c1) at (0,0) {$ g\epfs f\epfs q_1\epf v_1\epf \Delta_{gf}\epf $};
			\node (c2) at (1,-1) {$ g\epfs (p_{g,1})\epf r_2\epfs v_1\epf \Delta_{gf}\epf  $};
			\node (c3) at (2,0) {$ g\epfs (p_{g,1})\epf r_1\epf v_2\epfs \Delta_{gf}\epf  $};
			\node (c4) at (3,-1) {$ g\epf (p_{g,2})\epfs  r_1\epf v_2\epfs \Delta_{gf}\epf  $};
			\node (c5) at (4,0) {$ g\epf f\epf  q_2\epfs v_2\epfs \Delta_{gf}\epf  $};

			\path[font=\scriptsize,>= angle 90]

			(a) edge [->] node [above] {$ Ex_{\#|}(\Delta_{Big}) $} (b)
			(c1) edge [->] node [left] {$ Ex_{\#!}(\Delta_1) $} (c2)
			(c2) edge [->] node [left] {$ Ex_{\#!}(\Delta_2) $} (c3)
			(c3) edge [->] node [right] {$ Ex_{\#!}(\Delta_3)  $} (c4)
			(c4) edge [->] node [right ] {$Ex_{\#!}(\Delta_4)  $} (c5)
			
			(a) edge [double equal sign distance] node [above] {$  $} (c1)
			(b) edge [double equal sign distance] node [above] {$  $} (c5);
			
		\end{tikzpicture}
	\end{center}
	Said it differently, denoting by $D(-)$ the functor $SH(-)$, we have that the composition represented by the diagram:
	
	\begin{center}
		\begin{tikzpicture}[baseline={(0,-1)}, scale=2.5]

			\node (a) at (0,1) {$ D\left(\mc X\times_{\mc Z} \mc X\right) $};
			\node (b) at (1, 1) {$ D\left(\mc X\right) $};
			\node (c)  at (0,0) {$  D\left(\mc X\right) $};
			\node (d) at (1,0) {$ D\left(\mc Z\right) $};
			\node (e) at (0.2,0.75) {$  $};
			\node (f) at (-0.75,1.5) {$ \mc X $};
			\node (g) at (0.5,0.5) {\rotatebox{45}{$ \overset{Ex_{\#!}}{\Rightarrow} $}};

			\path[font=\scriptsize,>= angle 90]

			(a) edge [->] node [above ] {$ (p_{gf,2})\epfs $} (b)
			(a) edge [->] node [left] {$ (p_{gf,1})\epf  $} (c)
			(b) edge[->] node [right] {$ (gf)_! $} (d)
			(c) edge [->] node [below] {$ (gf)\epfs $} (d)
			(f) edge [dashed, bend right=-30,,->] node [below] {$ \Sigma_{gf} $} (b)
			(f) edge [bend left=-30, double equal sign distance] node [below] {$  $} (c)
			(f) edge [->] node [above, right] {$ \Delta_{gf}\epf  $} (a);
		\end{tikzpicture}
	\end{center}
	\noindent can be decomposed as:
	\begin{equation}\label{eq_formal_purity_diagram_1}
		\begin{tikzpicture}[baseline={(0,2.5)}, scale=2.5]
			
			\node (d0) at (-0.5,2.5) {$ D\left(\mc X\right) $};
			
			\node (a2) at (0,2) {$ D\left(\mc X_{\mc Z}^2\right) $};
			\node (a1) at (0,1) {$ D\left(\mc X\times_{\mc Z} \mc Y\right)  $};
			\node (a0) at (0,0) {$ D\left(\mc X\right) $};

			\node (b2) at (1,2) {$ D\left(\mc Y\times_{\mc Z} \mc X\right) $};
			\node (b1) at (1,1) {$ D\left(\mc Y^2_{\mc Z}\right) $};
			\node (b0) at (1,0) {$ D\left(\mc Y\right)  $};

			\node (c2) at (2,2) {$ D\left(\mc X\right) $};
			\node (c1) at (2,1) {$ D\left(\mc Y\right) $};
			\node (c0) at (2,0) {$ D\left(\mc Z\right) $};
			
			\node (s1) at (0.5,1.5) {\rotatebox{45}{$ \overset{Ex_{\#!}}{\Rightarrow} $}};
			\node (s2) at (1.5,1.5) {\rotatebox{45}{$ \overset{Ex_{\#!}}{\Rightarrow} $}};
			\node (s3) at (0.5,0.5) {\rotatebox{45}{$ \overset{Ex_{\#!}}{\Rightarrow} $}};
			\node (s4) at (1.5,0.5) {\rotatebox{45}{$ \overset{Ex_{\#!}}{\Rightarrow} $}};
			

			\path[font=\scriptsize,>= angle 90]

			(d0) edge [dashed, bend left=30,->] node [above ] {$ \Sigma_{gf} $} (c2)
			(d0) edge [->] node [above, right ] {$ \Delta_{gf}\epf  $} (a2)
			(d0) edge [bend right=30,double equal sign distance] node [above ] {$  $} (a0)
			
			
			(a2) edge [->] node [above ] {$ v_2\epfs $} (b2)
			(a1) edge [->] node [above ] {$ r_2\epfs $} (b1)
			(a0) edge [->] node [above ] {$ f\epfs $} (b0)
			
			(b2) edge [->] node [above ] {$ \left(p_{f^*g,2}\right)\epfs $} (c2)
			(b1) edge [->] node [above ] {$ p_{g,2}\epfs $} (c1)
			(b0) edge [->] node [above ] {$ g\epfs $} (c0)
			
			
			(a2) edge [->] node [left ] {$ v_1\epf $} (a1)
			(a1) edge [->] node [left ] {$ q_1\epf $} (a0)

			(b2) edge [->] node [left ] {$ r_1\epf $} (b1)
			(b1) edge [->] node [left ] {$ p_{g,1}\epf $} (b0)
			
			(c2) edge [->] node [right ] {$ f\epf $} (c1)
			(c1) edge [->] node [right ] {$ g\epf $} (c0);
		\end{tikzpicture}
	\end{equation}
	\noindent Now notice that we can decompose $\Delta_{gf}$ as a composite of $\Delta_f$ plus another map, given by the universal property of $\mc X^2_{\mc Z}$, that we will denote as $\widetilde{\Delta}$:
	\[ \mc X \overset{\Delta_f}{\longrightarrow} \mc X^2_{\mc Y} \overset{\widetilde{\Delta}}{\longrightarrow} \mc X^2_{\mc Z} \]
	the map $\widetilde{\Delta}$ fits in a cube made of cartesian squares of the form:
	
	\begin{center}
		\begin{tikzpicture}[baseline={(0,1.5)}, scale=1]

			\node (v1) at (0,0) {$ \mc X $};
			\node (v2) at (2,0) {$ \mc Y $};
			\node (v3) at (1,-1) {$ \mc X\times_{\mc Z}\mc Y $};
			\node (v4) at (3,-1) {$ \mc Y^2_{\mc Z} $};
			
			\node (v5) at (0,2) {$ \mc X^2_{\mc Y} $};
			\node (v6) at (2,2) {$ \mc X $};
			\node (v7) at (1,1) {$ \mc X^2_{\mc Z} $};
			\node (v8) at (3,1) {$ \mc Y\times_{\mc Z}\mc X $};

			\path[font=\scriptsize,>= angle 90]
			
			(v1) edge [->] node [ right=2mm, below=0.1mm ] {$f$} (v2)
			(v2) edge [->] node [ right=0.5mm, above=0.5mm ] {$\Delta_g$} (v4)
			
			(v6) edge [->] node [right=2mm, below=1mm] {$ f $} (v2)
			
			(v5) edge [->] node [ above ] {$ p_{f,2} $} (v6)
			(v6) edge [->] node [ right ] {$ \Delta_{f^*g} $} (v8)
			
			(v7) edge [-,line width=1mm, white] node [] {} (v3)
			(v7) edge [-, line width=1mm, white] node [] {} (v8)

			(v5) edge [->] node [ left ] {$ p_{f,1} $} (v1)
			(v7) edge [->] node [ left=2mm, above=1mm ] {$ v_1 $} (v3)
			(v8) edge [->] node [ left ] {$ r_1 $} (v4)
			
			(v5) edge [->] node [ left, below ] {$ \widetilde{\Delta} $} (v7)
			(v7) edge [->] node [ below=2mm, left=0.01mm ] {$ v_2 $} (v8)
			
			(v1) edge [->] node [ left=0.5mm ] {$ \Delta_{f^*g}' $} (v3)
			(v3) edge [->] node [ below ] {$ r_2 $} (v4);

		\end{tikzpicture}
	\end{center}
	
	Using the cube above, we can further decompose our diagram \eqref{eq_formal_purity_diagram_1} as:
	
	\begin{equation}\label{eq_formal_purity_diagram_2}
		\begin{tikzpicture}[baseline={(0,1)}, scale=1.75]
			
			\node (v0) at (-2,3) {$ D\left(\mc X\right) $};
			
			\node (v1) at (-0.5,-0) {$ D\left(\mc X\right) $};
			\node (v2) at (1.5,-0) {$ D\left(\mc Y\right) $};
			\node (v3) at (1,-1) {$ D\left(\mc X\times_{\mc Z}\mc Y\right) $};
			\node (v4) at (3,-1) {$ D\left(\mc Y^2_{\mc Z}\right) $};
			
			\node (v5) at (-0.5,2) {$ D\left(\mc X^2_{\mc Y}\right) $};
			\node (v6) at (1.5,2) {$ D\left(\mc X\right) $};
			\node (v7) at (1,1) {$ D\left(\mc X^2_{\mc Z}\right) $};
			\node (v8) at (3,1) {$ D\left(\mc Y\times_{\mc Z}\mc X\right) $};
			
			\node (v9) at (5,1) {$ D\left(\mc X\right) $};
			\node (v10) at (5,-1) {$ D\left(\mc Y \right) $};
			\node (v11) at (5,-3) {$ D\left(\mc Z\right) $};
			\node (v12) at (3,-3) {$ D\left( \mc Y\right) $};
			\node (v13) at (1,-3) {$ D\left(\mc X\right) $};

			\node (c1) at (0.5,1) {\rotatebox{45}{$ \scriptsize{\overset{Ex_{\#!}}{\Rightarrow}} $}};
			\node (c2) at (1.25,1.5) {\rotatebox{60}{$ \scriptsize{\overset{Ex_{\#!}}{\Rightarrow}} $}};
			\node (c3) at (2,0.15) {\rotatebox{45}{$ \scriptsize{\overset{Ex_{\#!}}{\Rightarrow}} $}};
			\node (c4) at (1.25,-0.5) {\rotatebox{60}{$ \scriptsize{\overset{Ex_{\#!}}{\Rightarrow}} $}};
			\node (c5) at (4,0.15) {\rotatebox{45}{$ \scriptsize{\overset{Ex_{\#!}}{\Rightarrow}} $}};
			\node (c6) at (2,-2) {\rotatebox{45}{$ \scriptsize{\overset{Ex_{\#!}}{\Rightarrow}} $}};
			\node (c7) at (4,-2) {\rotatebox{45}{$ \scriptsize{\overset{Ex_{\#!}}{\Rightarrow}} $}};

			\path[font=\scriptsize,>= angle 90]
			
			(v1) edge [->] node [ right=2mm, below=0.1mm ] {$f\epfs $} (v2)
			(v2) edge [->] node [ right=0.5mm, above=0.5mm ] {$\Delta_g\epf $} (v4)
			
			(v6) edge [->] node [left=2mm, above=1mm] {$ f \epf $} (v2)
			
			(v5) edge [->] node [ above ] {$ \left(p_{f,2}\right)\epfs $} (v6)
			(v6) edge [->] node [ right ] {$ \left(\Delta_{f^*g}\right)\epf  $} (v8)
			
			(v2) edge [double equal sign distance] node [  ] {$  $} (v12)
			(v2) edge [dashed, ->] node [ above=2mm, right=-1mm ] {$ \Sigma_g $} (v10)
			
			(v7) edge [-,line width=1mm, white] node [] {} (v3)
			(v7) edge [-, line width=1mm, white] node [] {} (v8)
			(v3) edge [-,line width=1mm, white] node [] {} (v4)
			(v8) edge [-, line width=1mm, white] node [] {} (v4)

			(v5) edge [->] node [ left ] {$ \left(p_{f,1}\right)\epf $} (v1)
			(v7) edge [->] node [ left=2mm, above=1mm ] {$ v_1\epf $} (v3)
			(v8) edge [->] node [ left ] {$ r_1\epf $} (v4)
			
			(v5) edge [->] node [ left, below ] {$ \widetilde{\Delta}\epf $} (v7)
			(v7) edge [->] node [ above=2mm, right=0.01mm ] {$ \left(v_2\right)\epfs $} (v8)
			
			(v1) edge [->] node [ left=0.5mm, below=1mm ] {$ \left(\Delta_{f^*g}' \right)\epf $} (v3)
			(v3) edge [->] node [ above=2mm, right=-1mm ] {$ r_2\epfs $} (v4)
			
			(v0) edge [->] node [ left=1mm, below=1mm ] {$ \Delta_f\epf $} (v5)
			(v0) edge [dashed,->] node [ above ] {$ \Sigma_f $} (v6)
			(v0) edge [dashed, ->, bend left=30] node [ above ] {$ \Sigma_{gf} $} (v9)
			(v0) edge [double equal sign distance, bend right=30] node [  ] {$  $} (v13)
			(v6) edge [dashed, ->] node [ above ] {$ \Sigma_{f^*g} $} (v9)
			
			(v4) edge [->] node [ below ] {$ (p_{g,2})\epfs $} (v10)
			(v8) edge [->] node [ above=2mm, left=-6mm ] {$ \left(p_{f^*g,2}\right)\epfs $} (v9)
			(v12) edge [->] node [  ] {$  $} (v11)
			(v13) edge [->] node [  ] {$  $} (v12)
			
			(v3) edge [->] node [  ] {$  $} (v13)
			(v4) edge [->] node [  ] {$  $} (v12)
			(v9) edge [->] node [  ] {$  $} (v10)
			(v10) edge [->] node [  ] {$  $} (v11)
			
			(v0) edge [double equal sign distance] node [  ] {$  $} (v1)
			(v1) edge [double equal sign distance] node [  ] {$  $} (v13);

		\end{tikzpicture}
	\end{equation}
	\noindent Notice that the maps involved in the cube inside diagram \eqref{eq_formal_purity_diagram_2}:
	
	\begin{center}
		\begin{tikzpicture}[baseline={(0,1.5)}, scale=1.75]

			\node (v1) at (-0.5,-0) {$ D\left(\mc X\right) $};
			\node (v2) at (1.5,-0) {$ D\left(\mc Y\right) $};
			\node (v3) at (1,-1) {$ D\left(\mc X\times_{\mc Z}\mc Y\right) $};
			\node (v4) at (3,-1) {$ D\left(\mc Y^2_{\mc Z}\right) $};
			
			\node (v5) at (-0.5,2) {$ D\left(\mc X^2_{\mc Y}\right) $};
			\node (v6) at (1.5,2) {$ D\left(\mc X\right) $};
			\node (v7) at (1,1) {$ D\left(\mc X^2_{\mc Z}\right) $};
			\node (v8) at (3,1) {$ D\left(\mc Y\times_{\mc Z}\mc X\right) $};

			\node (c1) at (0.5,1) {\rotatebox{45}{$ \scriptsize{\overset{Ex_{\#!}}{\Rightarrow}} $}};
			\node (c2) at (1.25,1.5) {\rotatebox{60}{$ \scriptsize{\overset{Ex_{\#!}}{\Rightarrow}} $}};
			\node (c3) at (2,0.15) {\rotatebox{45}{$ \scriptsize{\overset{Ex_{\#!}}{\Rightarrow}} $}};
			\node (c4) at (1.25,-0.5) {\rotatebox{60}{$ \scriptsize{\overset{Ex_{\#!}}{\Rightarrow}} $}};

			\path[font=\scriptsize,>= angle 90]
			
			(v1) edge [->] node [ right=2mm, below=0.1mm ] {$f\epfs $} (v2)
			(v2) edge [->] node [ right=0.5mm, above=0.5mm ] {$\Delta_g\epf $} (v4)
			
			(v6) edge [->] node [left=2mm, above=1mm] {$ f \epf $} (v2)
			
			(v5) edge [->] node [ above ] {$ \left(p_{f,2}\right)\epfs $} (v6)
			(v6) edge [->] node [ right ] {$ \left(\Delta_{f^*g}\right)\epf  $} (v8)

			(v7) edge [-,line width=1mm, white] node [] {} (v3)
			(v7) edge [-, line width=1mm, white] node [] {} (v8)
			(v3) edge [-,line width=1mm, white] node [] {} (v4)
			(v8) edge [-, line width=1mm, white] node [] {} (v4)

			(v5) edge [->] node [ left ] {$ \left(p_{f,1}\right)\epf $} (v1)
			(v7) edge [->] node [ left=2mm, above=1mm ] {$ v_1\epf $} (v3)
			(v8) edge [->] node [ left ] {$ r_1\epf $} (v4)
			
			(v5) edge [->] node [ left, below ] {$ \widetilde{\Delta}\epf $} (v7)
			(v7) edge [->] node [ above=2mm, right=0.01mm ] {$ \left(v_2\right)\epfs $} (v8)
			
			(v1) edge [->] node [ left=0.5mm, below=1mm ] {$ \left(\Delta_{f^*g}' \right)\epf $} (v3)
			(v3) edge [->] node [ above=2mm, right=-1mm ] {$ r_2\epfs $} (v4);
			
		\end{tikzpicture}
	\end{center}

	\noindent  are all representable, hence all the exchange transformations on its faces are actually equivalences.
	The diagram \eqref{eq_formal_purity_diagram_2} can be slightly simplified as:
	\begin{equation}\label{eq:_final_complete_diagram_formal_purity}
		\begin{tikzpicture}[baseline={(2,1.65)}, scale=1.75]
			
			\node (v0) at (-2,3) {$ D\left(\mc X\right) $};
			
			\node (v1) at (-0.5,-0) {$ D\left(\mc X\right) $};
			\node (v2) at (1.5,-0) {$ D\left(\mc Y\right) $};
			
			\node (v4) at (3,-1) {$ D\left(\mc Y^2_{\mc Z}\right) $};
			
			\node (v5) at (-0.5,2) {$ D\left(\mc X^2_{\mc Y}\right) $};
			\node (v6) at (1.5,2) {$ D\left(\mc X\right) $};
			\node (v7) at (1,1) {$ D\left(\mc X^2_{\mc Z}\right) $};
			\node (v8) at (3,1) {$ D\left(\mc Y\times_{\mc Z}\mc X\right) $};
			
			\node (v9) at (5,1) {$ D\left(\mc X\right) $};
			\node (v10) at (5,-1) {$ D\left(\mc Y \right) $};
			\node (v11) at (5,-3) {$ D\left(\mc Z\right) $};
			\node (v12) at (3,-3) {$ D\left( \mc Y\right) $};
			\node (v13) at (1,-3) {$ D\left(\mc X\right) $};

			\node (c1) at (0.5,1) {\rotatebox{45}{$ \scriptsize{\overset{Ex_{\#!}}{\Rightarrow}} $}};
			\node (c2) at (1.25,1.5) {\rotatebox{60}{$ \scriptsize{\overset{Ex_{\#!}}{\Rightarrow}} $}};

			\node (c5) at (4,0.15) {\rotatebox{45}{$ \scriptsize{\overset{Ex_{\#!}}{\Rightarrow}} $}};
			
			\node (c7) at (4,-2) {\rotatebox{45}{$ \scriptsize{\overset{Ex_{\#!}}{\Rightarrow}} $}};

			\path[font=\scriptsize,>= angle 90]
			
			(v1) edge [->] node [ right=2mm, below=0.1mm ] {$f\epfs $} (v2)
			(v2) edge [->] node [ right=0.5mm, above=0.5mm ] {$\Delta_g\epf $} (v4)
			
			(v6) edge [->] node [left=2mm, above=1mm] {$ f \epf $} (v2)
			
			(v5) edge [->] node [ above ] {$ \left(p_{f,2}\right)\epfs $} (v6)
			(v6) edge [->] node [ right ] {$ \left(\Delta_{f^*g}\right)\epf  $} (v8)
			
			(v2) edge [double equal sign distance] node [  ] {$  $} (v12)
			(v2) edge [dashed, ->] node [ above=2mm, right=-1mm ] {$ \Sigma_g $} (v10)
			
			
			(v7) edge [-, line width=1mm, white] node [] {} (v8)
			
			(v8) edge [-, line width=1mm, white] node [] {} (v4)

			(v5) edge [->] node [ left ] {$ \left(p_{f,1}\right)\epf $} (v1)
			
			(v8) edge [->] node [ left ] {$ r_1\epf $} (v4)
			
			(v5) edge [->] node [ left, below ] {$ \widetilde{\Delta}\epf $} (v7)
			(v7) edge [->] node [ above=2mm, right=0.01mm ] {$ \left(v_2\right)\epfs $} (v8)

			(v0) edge [->] node [ left=1mm, below=1mm ] {$ \Delta_f\epf $} (v5)
			(v0) edge [dashed,->] node [ above ] {$ \Sigma_f $} (v6)
			(v0) edge [dashed, ->, bend left=30] node [ above ] {$ \Sigma_{gf} $} (v9)
			
			(v6) edge [dashed, ->] node [ above ] {$ \Sigma_{f^*g} $} (v9)
			
			(v4) edge [->] node [ below ] {$ (p_{g,2})\epfs $} (v10)
			(v8) edge [->] node [ above=2mm, left=-6mm ] {$ \left(p_{f^*g,2}\right)\epfs $} (v9)
			(v12) edge [->] node [ below ] {$ g\epfs $} (v11)
			(v13) edge [->] node [ below ] {$ f\epfs $} (v12)
			
			
			(v4) edge [->] node [  ] {$  $} (v12)
			(v9) edge [->] node [ right ] {$ f_! $} (v10)
			(v10) edge [->] node [ right ] {$ g_! $} (v11)
			
			(v0) edge [double equal sign distance] node [  ] {$  $} (v1)
			(v1) edge [double equal sign distance] node [  ] {$  $} (v13);

		\end{tikzpicture}
	\end{equation}
	\noindent This last diagram encodes exactly the functoriality we were looking for:
	
	\begin{center}
		\begin{tikzpicture}[baseline={(0,0.5)}, scale=2]
			
			\node (a) at (0.75,1) {$  (g f)\epfs $};
			
			\node (c) at (3,1) {$ (g f)_! \Sigma_{gf} $};

			\node (d) at (0.75,0) {$ g\epfs  f\!\epfs $};
			\node (e) at (2,0) {$ g\epf  \Sigma_g f\epfs   $};
			\node (f) at (3,0) {$ g\epf \Sigma_g f_!\Sigma_f  $};


			\path[font=\scriptsize,>= angle 90]

			(a) edge [->] node [above] {$ \varphi_{g f} $} (c)
			
			(d) edge [->] node [below] {$ \varphi_g $} (e)
			(e) edge [->] node [below] {$  \varphi_f $} (f)
			
			(a) edge [double equal sign distance] node [left] {$  $} (d)
			
			(c) edge [->] node [left] {\rotatebox{-90}{$ \sim $}} (f);
			
		\end{tikzpicture}
	\end{center}
	
	\noindent Indeed from the upper triangular face of \eqref{eq:_final_complete_diagram_formal_purity}, as we did in \cref{Formal_twists_composition}, we get the equivalence:
	\[ \Sigma_{gf}\overset{\sim}{\longrightarrow} \Sigma_{f^*g}\Sigma_f \]
	
	\noindent From \Cref{lemma_Ex-sharp-shriek_mixed_repr_non_repr_case} and from the subdiagram of \eqref{eq:_final_complete_diagram_formal_purity} given by:
	
	\begin{center}
		\begin{tikzpicture}[baseline={(2,3)}, scale=1.5]

			\node (v2) at (1.5,-0) {$ D\left(\mc Y\right) $};
			
			\node (v4) at (3,-1) {$ D\left(\mc Y^2_{\mc Z}\right) $};

			\node (v6) at (1.5,2) {$ D\left(\mc X\right) $};
			
			\node (v8) at (3,1) {$ D\left(\mc Y\times_{\mc Z}\mc X\right) $};
			
			\node (v9) at (5,1) {$ D\left(\mc X\right) $};
			\node (v10) at (5,-1) {$ D\left(\mc Y \right) $};

			\node (c5) at (4,0.15) {\rotatebox{45}{$ \scriptsize{\overset{Ex_{\#!}}{\Rightarrow}} $}};

			\path[font=\scriptsize,>= angle 90]

			(v2) edge [->] node [ right=0.5mm, above=0.5mm ] {$\Delta_g\epf $} (v4)
			
			(v6) edge [->] node [left=2mm, above=1mm] {$ f \epf $} (v2)

			(v6) edge [->] node [ right ] {$ \left(\Delta_{f^*g}\right)\epf  $} (v8)
			
			(v2) edge [dashed, ->] node [ above ] {$ \Sigma_{g} $} (v10)

			(v8) edge [-, line width=1mm, white] node [] {} (v4)


			(v8) edge [->] node [ left ] {$ r_1\epf $} (v4)


			(v6) edge [dashed, ->] node [ above ] {$ \Sigma_{f^*g} $} (v9)
			
			(v4) edge [->] node [ below ] {$ (p_{g,2})\epfs $} (v10)
			(v8) edge [->] node [ above=2mm, left=-6mm ] {$ \left(p_{f^*g,2}\right)\epfs $} (v9)
			(v9) edge [->] node [ right ] {$ f_! $} (v10);

		\end{tikzpicture}
	\end{center}
	
	\noindent we get that:
	\[ \Sigma_g f\epf \overset{\sim}{\longrightarrow}f\epf \Sigma_{f^*g} \]
	\noindent The naturality of exchange transformations $Ex_{\#!}$ in diagram \eqref{eq:_final_complete_diagram_formal_purity} will then fill the following:
	
	\begin{center}
		\begin{tikzpicture}[baseline={(0,0.5)}, scale=2]
			
			\node (a) at (0.75,2) {$  (g f)\epfs $};
			
			\node (c) at (3,2) {$ (g f)_! \Sigma_{gf} $};
			
			\node (c1) at (3,1) {$ (g f)_! \Sigma_{f^*g}\Sigma_{f} $};

			\node (d) at (0.75,0) {$ g\epfs  f\!\epfs $};
			\node (e) at (2,0) {$ g\epf  \Sigma_g f\epfs   $};
			\node (f) at (3,0) {$ g\epf \Sigma_g f_!\Sigma_f  $};


			\path[font=\scriptsize,>= angle 90]

			(a) edge [->] node [above] {$ \varphi_{g f} $} node [below] {$ \sim $}  (c)
			
			(d) edge [->] node [below] {$ \varphi_g $} (e)
			(e) edge [->] node [below] {$  \varphi_f $} node [above] {$ \sim $} (f)
			
			(a) edge [double equal sign distance] node [left] {\rotatebox{-90}{$  $}} (d)
			
			(c) edge [->] node [left] {\rotatebox{-90}{$ \sim $}} (c1)
			(c1) edge [<-] node [left] {\rotatebox{-90}{$ \sim $}} (f);
			
		\end{tikzpicture}
	\end{center}
	\noindent where $\varphi_f, \varphi_{gf}$ are equivalence by \Cref{Repr_lft_Ayoub_Purity_after_AGV}, and this concludes the proof.
\end{proof}

\begin{theorem}[Non-Representable Ambidexterity]\label{Thm:_stacky_Ambidexterity}
	Let $f: \mc X \longrightarrow \mc Y$ be a smooth map between NL-stacks. Then the following natural transformation:
	\[ \varphi_f: f\epfs \overset{}{\longrightarrow} f_!\Sigma_f \]
	\noindent is an equivalence.
\end{theorem}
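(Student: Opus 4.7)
The plan is to reduce the general, possibly non-representable, statement to the representable case established in \cref{Repr_lft_Ayoub_Purity_after_AGV}, via two applications of conservativity along NL-atlases combined with the composition formula \cref{Thm:_Ayoub_1.7.3+} for $\varphi$.

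First, I would reduce to the case where $\mc Y = Y$ is a scheme. Choose an NL-atlas $y: Y \to \mc Y$ with $Y$ a scheme, and form the cartesian square with $f': \mc X_Y \to Y$ the pullback of $f$ along $y$ and $y': \mc X_Y \to \mc X$ the other projection. Smooth base change for $(-)\epfs$ together with base change for $(-)\epf$ coming from the functor \cref{Sect.3.2:_SH*!_Corr_functor} yield a compatibility $y^* \varphi_f \simeq \varphi_{f'} \cdot (y')^*$, and by conservativity of $y^*$ (the stable analogue of \cref{Sec.5:_Classical_conservativity_of_pullbacks}) it suffices to handle $f'$. I may therefore assume $\mc Y = Y$ is a scheme.

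Next I would choose an NL-atlas $x: X \to \mc X$ with $X$ a scheme and apply \cref{Thm:_Ayoub_1.7.3+} to the composition $X \xrightarrow{x} \mc X \xrightarrow{f} Y$. Both $x$, as an NL-atlas, and $fx$, as a morphism of schemes, are smooth representable, so the hypotheses of the proposition are satisfied and $\varphi_{fx}$ is identified with the composite $(\varphi_f \cdot x\epf \Sigma_x) \circ (\varphi_x \cdot f\epfs)$, up to the equivalence $\Sigma_{fx} \simeq \Sigma_{x^*f} \Sigma_x$ of \cref{Formal_twists_composition}. Since $\varphi_{fx}$ and $\varphi_x$ are equivalences by representable purity, two-out-of-three forces the whiskering $\varphi_f \cdot x\epf \Sigma_x \simeq \varphi_f \cdot x\epfs$ to be an equivalence; equivalently, $(\varphi_f)_{x\epfs A}$ is an equivalence for every $A \in \SH(X)$.

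To conclude, I would observe that both $f\epfs$ and $f\epf \Sigma_f$ are colimit-preserving functors $\SH(\mc X) \to \SH(Y)$; here $\Sigma_f = (p_2)\epfs (\Delta_f)\epf$ is a composite of left adjoints, using crucially that $\Delta_f$ is representable even when $f$ is not. Consequently $\varphi_f$ is an equivalence as soon as it is so on a generating family, and the essential image of $x\epfs$ generates $\SH(\mc X)$ under colimits via the \v{C}ech resolution along $x$, exactly as in the proof of \cref{Sec.5:_NL_separation}. The main technical point I anticipate as an obstacle is the bookkeeping in Step 1 to verify compatibility of $\Sigma_f$ with $y^*$, which requires chaining the base-change equivalences for $(p_2)\epfs$ (smooth base change) and $(\Delta_f)\epf$ (base change for the extended $!$-pushforward, via \cref{lemma_Ex-sharp-shriek_mixed_repr_non_repr_case}); once this is in place, everything follows cleanly from the formalism.
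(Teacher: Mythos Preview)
Your approach is essentially the paper's: compose with an NL-atlas $x:X\to\mc X$, apply the composition formula \cref{Thm:_Ayoub_1.7.3+} to the pair $(x,f)$, and deduce that $\varphi_f x\epfs$ is an equivalence from representable purity for $x$ and $fx$.

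Two minor simplifications the paper makes. First, your Step~1 is unnecessary: once $X$ is a scheme, the composite $fx:X\to\mc Y$ is automatically representable regardless of whether $\mc Y$ is a scheme (cf.\ \cite[Corollary 3.2.3]{Alper_Book}), so the hypotheses of \cref{Thm:_Ayoub_1.7.3+} are already met without pulling back along $y$; this spares you the base-change bookkeeping you flagged as an obstacle. Second, for the passage ``$\varphi_f x\epfs$ equivalence $\Rightarrow$ $\varphi_f$ equivalence'', the paper argues more briskly by passing to right adjoints via $(\op{Pr}^L)^{\op{op}}\simeq\op{Pr}^R$ \cite[5.5.3.4]{HTT} and invoking conservativity of $x^*$ directly, rather than arguing via colimit-generation by the image of $x\epfs$; both arguments are correct and ultimately equivalent.
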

\begin{proof}
	Let $x: X \rightarrow \mc X$ be a NL-atlas, where $X$ can be chosen to be a scheme. Then $g:=fx$ is representable since the source lives in ${Sch}$ (cf. \cite[Corollary 3.2.3]{Alper_Book}).  By \cite[5.5.3.4]{HTT} and by the conservativity of $x^*$, it is easy to see that $\varphi_f$ is an equivalence if and only if:
	\[ \varphi_f x\epfs: f\epfs x\epfs \longrightarrow f_! \Sigma_f x\epfs \]
	\noindent is an equivalence. But by \Cref{Thm:_Ayoub_1.7.3+}, $\varphi_f x\epfs $ sits in a commutative diagram where all the other natural transformations are equivalences and therefore it must be an equivalence itself, and we are done.
\end{proof}
And finally we are now able to prove our main result:
\begin{corollary}[Non-Representable Purity]\label{Sec.4:_Stacky_Purity}
	Let $f: \mc X \longrightarrow \mc Y$ be a smooth map between NL-stacks. Then:
	\[ \varphi_f: f\epfs \overset{}{\longrightarrow} f_!\Sigma^{\mb L_f} \]
	\noindent is an equivalence.
\end{corollary}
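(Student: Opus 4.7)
The plan is to reduce non-representable purity to the non-representable ambidexterity already in hand. By \Cref{Thm:_stacky_Ambidexterity}, the natural map $\varphi_f : f\epfs \xrightarrow{\sim} f_!\Sigma_f$ is an equivalence for every smooth $f$, where $\Sigma_f := p_2\epfs \Delta_f\epf$. It therefore suffices to exhibit a natural equivalence $\Sigma_f \simeq \Sigma^{\mb L_f}$.

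The key observation is that although $f$ itself is not assumed representable, its diagonal $\Delta_f : \mc X \to \mc X \times_{\mc Y} \mc X$ \emph{is} representable by algebraic spaces, since this holds for any morphism of algebraic $1$-stacks. I would apply the homotopy purity theorem \Cref{Thm:_Repr-lft_HP} to $\Delta_f$ inside the pair $(\mc X, \mc X^2_{\mc Y})_{\mc X}$ with structure maps $q = \mathrm{id}_{\mc X}$ and $p = p_2$. Both $\mc X$ and $\mc X^2_{\mc Y}$ are smooth over $\mc X$ — the former via the identity, the latter because $p_2$ is the pullback of the smooth map $f$ — and $\Delta_f$ is representable, so the hypotheses of \Cref{Thm:_Repr-lft_HP} are met. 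The theorem then produces the homotopy purity roof
\[
\Sigma_f \;=\; \mr{Th}(\Delta_f,p_2)\;\xrightarrow[\sim]{\Pi_1(\Delta_f,p_2)}\;(r_{\mc X})\epfs\mr{Th}(\widehat{\Delta_f},\hat{p}_2)r_{\mc X}^*\;\xleftarrow[\sim]{\Pi_0(\Delta_f,p_2)}\;q_{\#}\Sigma^{\mc N_{\Delta_f}} \;=\; \Sigma^{\mc N_{\Delta_f}},
\]
where the last identification uses $q = \mathrm{id}_{\mc X}$.

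To close the argument I would identify the conormal $\mc N_{\Delta_f}:=\mb L_{\Delta_f}[-1]$ with $\mb L_f$. Smoothness of $p_2$ gives $\mb L_{p_2}\simeq p_2^*\mb L_f$, and the standard cotangent-complex fiber sequence attached to $p_2 \circ \Delta_f = \mathrm{id}_{\mc X}$ then yields $\mb L_{\Delta_f}\simeq \Delta_f^*\mb L_{p_2}[1]\simeq \mb L_f[1]$, whence $\mc N_{\Delta_f}\simeq \mb L_f$. Naturality of the Borel $J$-homomorphism translates this into $\Sigma^{\mc N_{\Delta_f}}\simeq \Sigma^{\mb L_f}$, and composing with ambidexterity produces the desired $\varphi_f: f\epfs\xrightarrow{\sim} f_!\Sigma^{\mb L_f}$.

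The most delicate step is confirming that \Cref{Thm:_Repr-lft_HP} genuinely applies to $\Delta_f$ in the non-representable setting: $\Delta_f$ is representable by algebraic spaces, but the ambient stack $\mc X^2_{\mc Y}$ is genuinely stacky, so one should double-check that the reduction-to-schemes argument inside the proof of \Cref{Thm:_Repr-lft_HP} — which proceeds via NL-atlases of the target of a representable morphism together with NL-conservativity — does not covertly require the target to be schematic. Only representability of the morphism in question is actually used, so no extra work should be required, and the corollary follows.
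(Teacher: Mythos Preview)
Your proposal is correct and follows essentially the same route as the paper: reduce to identifying $\Sigma_f\simeq\Sigma^{\mb L_f}$ via ambidexterity, then establish homotopy purity for the representable map $\Delta_f$ over the base $\mc X$. The only cosmetic difference is that the paper makes the NL-atlas reduction explicit---it pulls the roof back along an atlas $x:X\to\mc X$, applies \Cref{Thm:_Repr-lft_HP} to $\Delta_{x^*f}$, and then invokes conservativity of $x^*$---whereas you invoke \Cref{Thm:_Repr-lft_HP} directly for $\Delta_f$; this is equally valid, since that theorem's own proof already performs exactly that atlas reduction internally, and your closing paragraph correctly identifies and dispatches this point.
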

\begin{proof}
	By \Cref{Thm:_stacky_Ambidexterity}, the only part left to prove is the identification of the formal twist $\Sigma_f$ with $\Sigma^{\mb L_f}$. Let $\Delta_f$ be the diagonal of $f$ and let $p_2: \mc X^2_{\mc Y}\rightarrow \mc X$ be the second projection. As we have seen at the beginning in \eqref{eq:_Homotopy_Purity_Roof}, unravelling the definitions, we have a roof of maps:
	\begin{equation}\label{eq:_final_HP_roof_diagonal}
		\Sigma_{f}=\mr{Th}(\Delta_f,p_2) \overset{\Pi_1(\Delta_{f},p_2)}{\longrightarrow} (r_S)\epfs \mr{Th}(\hat{\Delta_f},\hat{p_2})r_X^* \overset{\Pi_0(\Delta_{f},p_2)}{\longleftarrow} q_{\#}\Sigma^{\mb L_f}
	\end{equation}
	Let $x: X \rightarrow \mc X$ be any NL-atlas for $\mc X$. Let $ \Delta_{x^*f} $ and $\pi_2$ be the pullbacks along $x$ of $\Delta_f$ and $p_2$ respectively. Then it is not difficult to see that applying $x^*$ to \eqref{eq:_final_HP_roof_diagonal} we get:
	
	\[ \Sigma_{x^*f}:=\mr{Th}(\Delta_{x^*f},\pi_2) \overset{\Pi_1(\Delta_{x^*f},\pi_2)}{\longrightarrow} (r_S)\epfs \mr{Th}(\hat{\Delta_{x^*f}},\hat{\pi_2})r_X^* \overset{\Pi_0(\Delta_{x^*f},\pi_2)}{\longleftarrow} q_{\#}x_{\#}\Sigma^{x^*\mb L_f} \]
	
	\noindent By \Cref{Thm:_Repr-lft_HP} both $\Pi_1(\Delta_{x^*f},\pi_2)$ and $\Pi_0(\Delta_{x^*f},\pi_2)$ must be equivalences. But by the conservativity of $x^*$, this implies that the maps $\Pi_i(\Delta_{f},p_2)$ in \eqref{eq:_final_HP_roof_diagonal} are also equivalences and hence:
	
	\[ \Sigma_f \simeq \Sigma^{\mb L_f} \]
	\noindent as we wanted.
	
\end{proof}

Let us finish now with a recap of what we proved up to now for the stable motivic category of algebraic stacks:

\begin{theorem}\label{Final_recap_thm_1-alg_stk}
    We have a functor:
	\[ \mr{SH}^*_!: Corr\left( \oocatname{ASt}^{\leq 1} \right)_{lft,all} \longrightarrow \oocatname{Pr}_{stb}^{\mr L} \]
	\noindent extending the analogous functor defined on schemes. This functor encodes the following data:
	
	\begin{enumerate}
		\item For every $ \mc X $ algebraic stack, we have the tensor- hom adjunction in $ \mr{SH}(\mc X) $:
		\[ -\otimes - \dashv \iMap_{\mr{SH}(\mc X)}(-,-) \]
		\item For any map $ f: \mc X \longrightarrow \mc Y $ in $ \oocatname{ASt}^{\leq 1} $, we have a pair of adjoint functors:
		\[ f^*\dashv f^* \]
		\noindent and if $ f $ is smooth we also have:
		\[ f_{\#}\dashv f^* \dashv f_* \]
		\item For $ f: \mc X \longrightarrow \mc Y $ a lft map in $ \oocatname{ASt}^{\leq 1} $, we have another pair of adjoint functors:
		\[ f_!\dashv f^! \]
        \item Given a cartesian diagram in $ \oocatname{ASt}^{\leq 1} $: 
		\begin{center}
		\begin{tikzpicture}[baseline={(0,-1)}, scale=1.25]
			\node (a) at (0,1) {$ \mc W $};
			\node (b) at (1, 1) {$ \mc Y $};
			\node (c)  at (0,0) {$  \mc Z$};
			\node (d) at (1,0) {$ \mc X $};
			\node (e) at (0.2,0.75) {$ \ulcorner $};
			\node (f) at (0.5,0.5) {$ \Delta $};

			\path[font=\scriptsize,>= angle 90]
			
			(a) edge [->] node [above ] {$ g $} (b)
			(a) edge [->] node [left] {$ q $} (c)
			(b) edge[->] node [right] {$ p $} (d)
			(c) edge [->] node [below] {$ f $} (d);
		\end{tikzpicture}
	\end{center}
        \noindent where $p$ is lft, we have the base change equivalence:
        \[ p_!f^*\overset{Ex^*_!}{\simeq} g^*q_! \]
        \item For any lft map $f$, the projection formula holds:
        \[ f_!(-)\otimes - \simeq f_!( - \otimes f^*(-))  \]
        \item For any smooth map $f$, the smooth projection formula holds:
        \[ f\epfs(-)\otimes - \simeq f\epfs( - \otimes f^*(-))  \]
	\end{enumerate}
These functors satisfy the usual compatibilities given by exchange transformations:
$$ Ex^*_!,Ex^!_*, Ex^*_{\#}, Ex^*_*,Ex_{\# *}, Ex_{!*}, Ex_{!\#}, Ex^{*!} $$
\noindent like in \Cref{App:_Exchange_Transf}, and they give rise to localisation fiber sequences like in \cite[Proposition 4.2.1]{Chowdhury}. Moreover, for a given smooth map $f$, we have the following natural purity equivalence:
\[ f\epfs \longrightarrow f_!\Sigma^{\mb L_f} \]
\end{theorem}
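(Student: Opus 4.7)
The plan is to assemble this as a recap: each of items (1)--(6) is an immediate consequence of an earlier construction or theorem, and the proof is essentially a dictionary. The central input is the functor $\mr{SH}^*_!$ on the correspondence $\infty$-category, whose existence is \Cref{Sect.3.2:_SH*!_Corr_functor}. That was obtained in two steps from the schematic formalism: first from $(\op{Sch}, lft)$ to representable-lft morphisms of NL-stacks using NL-sheafhood of $\Shext$ (Proposition \ref{Shstckdef}), and then from the representable case to arbitrary lft maps using that every lft morphism of NL-stacks admits an lft NL-atlas. As explained in the appendix, once this functor exists it automatically packages the adjunctions $f^* \dashv f_*$ (arbitrary $f$) and $f_! \dashv f^!$ (lft $f$), the base change equivalence $Ex^*_!$, the lft projection formula, and the full list of compatibilities among the exchange transformations $Ex^*_!, Ex^!_*, Ex^*_*, Ex^{*!}$, etc.

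The handful of items that are not purely outputs of $\mr{SH}^*_!$ I would dispatch individually. Closedness of the symmetric monoidal structure, and hence the tensor/internal-hom adjunction in (1), comes from the presentability of $\Shext(\mc X)$ recalled after Proposition \ref{Shstckdef}. For smooth $f$, the additional left adjoint $f\epfs$ to $f^*$ stated in (2) is visible already at the unstable level on $\Hcl$ by extension from representables, and survives the $\mb P^1$-stabilisation because the tensor product commutes with the defining filtered colimit. The smooth projection formula in (6) is precisely \Cref{smooth_projection_formula} at the unstable level and passes to $\SH$ by the same stabilisation argument; alternatively, once the last purity statement is in place, one can deduce (6) from the lft projection formula of (5) by transporting along $f_!\Sigma^{\mb L_f}\simeq f\epfs$.

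Finally, the concluding purity equivalence $\varphi_f$ is \Cref{Sec.4:_Stacky_Purity}, which repackages \Cref{Thm:_stacky_Ambidexterity} (non-representable ambidexterity, obtained from the representable case \Cref{Repr_lft_Ayoub_Purity_after_AGV} by atlas conservativity) with the identification $\Sigma_f \simeq \Sigma^{\mb L_f}$ coming from homotopy purity for the diagonal, \Cref{Thm:_Repr-lft_HP}. For the localisation fiber sequences alluded to at the end, one runs the argument of \cite[Proposition 4.2.1]{Chowdhury} verbatim in the representable case and extends it to the non-representable setting by picking an NL-atlas and invoking conservativity of atlas pullbacks together with smooth base change. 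The only genuine obstacle had to be faced long before this recap, in constructing $\mr{SH}^*_!$ on the correspondence category and in proving purity; once those are in place, this theorem is pure bookkeeping.
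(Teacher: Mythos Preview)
Your proposal is correct and matches the paper's own proof, which is likewise a brief recap pointing to \Cref{Sect.3.2:_SH*!_Corr_functor} for the functor and the packaged properties in Appendix~\ref{App.A.2:_6FF}, to the construction of $\SH_{cl}$ for $f\epfs$, to \Cref{Sec.5:_smooth_cl_BC} and \Cref{smooth_projection_formula} for smooth base change and the smooth projection formula, to \Cref{App:_Exchange_Transf} for the exchange transformations, and to \Cref{Sec.4:_Stacky_Purity} for purity. Your added detail (the two-step extension, the decomposition of purity into ambidexterity plus homotopy purity for the diagonal) is accurate elaboration rather than a different route.
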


\begin{proof}
    The functor $ \SH^*_! $ was constructed in \cref{Sect.3.2:_SH*!_Corr_functor}, and, as explained in Appendix \ref{App.A.2:_6FF}, it encodes the existence of the six functors, and the projection formula in (5). The existence of $f\epfs$, for a given smooth map $f$, follows from our construction of $\SH_{cl}$. The smooth base change and smooth projection formula were proved in \cref{Sec.5:_smooth_cl_BC} and \cref{smooth_projection_formula}, respectively. The exchange transformations were recorded in \cref{App:_Exchange_Transf}, and purity was proved in \cref{Sec.4:_Stacky_Purity}.
\end{proof}

\section{Final Remarks}\label{Applications}
\subsection{Higher Derived Algebraic Stacks}
As mentioned in the introduction, all the material presented here can easily be adapted to work for higher derived algebraic stacks. We will give here a brief overview of the key ingredients needed to extend by induction all the material previously presented. Since we were already quite detailed in what we did before, we will now give details only on the less trivial parts of the extension of our theorems to higher derived algebraic stacks. The reader can fill the gaps replacing in the arguments we gave \textquotedblleft \textit{representable} \textquotedblright by \textquotedblleft $(n-1)$-representable\textquotedblright  and \textquotedblleft \textit{algebraic stack}\textquotedblright  by \textquotedblleft $n$-algebraic (derived) stack\textquotedblright, and use induction where necessary. All the \textit{underived} classical results we used for motivic spaces and for the motivic homotopy category have their derived counterparts statements worked out in \cite{Adeel_PhD}. But as we will see below, one can easily reduce to the underived case taking truncations.

 We will assume a basic knowledge of derived geometry as developed in \cite{SAG, HAG-II}. Introduction to the subject with all the relevant definitions can also be found in \cite{Khan-Intro-DAG, Hekking-Rydh-Savvas}.\\

We will denote the category of $n$-algebraic derived stacks as $\oocatname{dASt}^{\leq n}$. We have a natural inclusion of $n$-algebraic stacks into derived ones $\oocatname{ASt}^{\leq n}$. Such inclusion admits a right adjoint given by the truncation:

\[ \begin{array}{ccccc}
	\iota: & \oocatname{ASt}^{\leq n}& \rightleftarrows & \oocatname{dASt}^{\leq n}& :(-)_{cl}\\
	& \mc X_{cl} & \leftarrow & \mc X &
\end{array} \]

We will denote by $\times^{h}$ the homotopy pullback in $\oocatname{dASt}^{\leq n}$. Let us first start with a derived version of our \Cref{NL-ASt=ASt}:

\begin{theorem}
	Any $n$-algebraic derived stack $\mc X\in \oocatname{dASt}^{\leq n}$ admits a NL-cover.
\end{theorem}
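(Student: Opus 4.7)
The plan is to reduce the derived statement to its classical counterpart via the truncation functor $(-)_{cl}: \oocatname{dASt}^{\leq n} \to \oocatname{ASt}^{\leq n}$, exploiting the fact that the $NL$-property of a smooth morphism only depends on its classical truncation.

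First I would observe: a smooth morphism $\tilde x: \tilde{\mc X} \to \mc X$ in $\oocatname{dASt}^{\leq n}$ admits Nisnevich-local sections if and only if its classical truncation $\tilde x_{cl}: \tilde{\mc X}_{cl} \to \mc X_{cl}$ does. Indeed, admitting Nisnevich-local sections can be tested on field-valued points $\Sp K \to \mc X$; since $\Sp K$ is classical, such maps factor uniquely through $\mc X_{cl}$, and the base change $\tilde{\mc X} \times^{h}_{\mc X} \Sp K$ coincides with $\tilde{\mc X}_{cl} \times^{h}_{\mc X_{cl}} \Sp K$ by flatness of smooth morphisms and of $\Sp K$. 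Hence the $NL$-condition transfers cleanly through truncation.

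Next I would apply the classical theory to $\mc X_{cl} \in \oocatname{ASt}^{\leq n}$. Strictly speaking, \Cref{NL-ASt=ASt} is stated for classical $1$-stacks, so one should first extend it to all classical $n$-algebraic stacks by induction on $n$. The inductive step uses the same étale frame bundle construction employed in the proof of \cref{S2:_Alper_LMB}: given a smooth atlas $U \to \mc Y$ with $U$ an $(n-1)$-stack, one replaces $U$ with $V_d(U/\mc Y)$ and applies the inductive hypothesis to the $(n-1)$-stack $\mr{\Acute{E}T}^d(U/\mc Y)$ to eventually produce a scheme-valued $NL$-atlas. Applied to $\mc X_{cl}$, this yields an $NL$-atlas $x: X \to \mc X_{cl}$ with $X$ a scheme.

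Finally, I would lift $x$ to a smooth morphism $\tilde x: \tilde X \to \mc X$ where $\tilde X$ is a derived scheme with classical truncation $X$. This is the key derived input: smooth morphisms from (derived) schemes to $\mc X$ correspond, via classical truncation, to smooth morphisms from schemes to $\mc X_{cl}$. This follows from the infinitesimal lifting property of smooth morphisms along square-zero extensions, iterated along the Postnikov tower of $\mc X$ over $\mc X_{cl}$ (\cite{Khan-Intro-DAG,SAG}). The resulting $\tilde x$ is automatically smooth with $\tilde x_{cl} = x$, hence it is an $NL$-atlas by the first observation. The main subtlety is formalizing the derived smoothness lifting, but this is standard derived deformation theory; the rest of the argument is essentially bookkeeping.
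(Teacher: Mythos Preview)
Your route differs from the paper's. The paper does not reduce to $\mc X_{cl}$ and then lift; it reruns the $V_d$-construction of \Cref{S2:_Alper_LMB} directly in the derived world, by induction on the algebraicity degree. Given a smooth atlas $p:U\to\mc X$ and a point $x\in\mc X(\sk)$, the fibre $U_x=U\times^h_{\mc X}\Sp\sk$ is automatically classical (being smooth over a field), so one locates a finite separable point as in the underived proof, forms $V_d(U/\mc X)$ as a derived object, and checks it is represented by a derived scheme by verifying that its classical truncation is one.

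Your Step~3 is where a gap sits. The ``infinitesimal lifting property of smooth morphisms'' is formal smoothness: a smooth map $f$ lets you lift maps \emph{into its source} along a square-zero thickening of their domain. What you need instead is that a smooth morphism $X\to Y_0$ can be \emph{deformed} over a thickening $Y_0\hookrightarrow Y$ of its \emph{target}. That is a separate deformation problem whose obstruction lives in $\op{Ext}^2_{X}(\mb L_{X/Y_0},\pi^*\mc I)$, and this group need not vanish for an arbitrary scheme $X$; the asserted ``correspondence'' between smooth maps to $\mc X$ and to $\mc X_{cl}$ is in any case too strong. The repair is easy: take the classical NL-atlas $X$ to be a disjoint union of affines, so that the obstruction groups along the Postnikov tower all vanish and a smooth lift $\tilde X\to\mc X$ exists (this is indeed standard, but it is deformation theory, not formal smoothness). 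With that adjustment your argument works and is conceptually clean, trading the paper's hands-on construction for a reduction-to-classical step plus a genuine derived-deformation input.
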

\begin{proof}
	Notice that one can define NL-maps and atlases as done in the underived case. As in the classical case, we can interchange the NL-topology and NL-atlases with smooth-Nis ones: one can reduce to the truncations of our derived object to check that the two definition agree. We will proceed in steps as in the proof of \Cref{S2:_Alper_LMB}.\\
	
	Let us first suppose that $\mc X=X $ is a derived algebraic space. Let us consider an étale cover $p: U \rightarrow X$ where $U$ is a disjoint union of affine derived schemes and let us suppose that we are given a $x \in X(\sk)$. We want to find a smooth-Nis cover, so we want to lift $x$ to some schematic cover of $X$. Denote by $U_x$ the following fiber product:
	\begin{equation}\label{Alper_LMB_diagram}
		\begin{tikzpicture}[baseline={(0,0)}, scale=1.5]
			\node (a) at (0,1) {$ U_x $};
			\node (b) at (1, 1) {$  U $};
			\node (c)  at (0,0) {$ \spec(\sk) $};
			\node (d) at (1,0) {$\mc X$};
			
			\node (e) at (0.25,0.75) {$\ulcorner $};
			
			\path[font=\scriptsize,>= angle 90]
			
			(a) edge [->] node [above ] {$  $} (b)
			(a) edge [->] node [left] {$  $} (c)
			(b) edge [->] node [right] {$  $} (d)
			(c) edge [->] node [below] {$ x $} (d);
			
		\end{tikzpicture}
	\end{equation}

	Now we have that $U_x$ is smooth (actually \'etale) over $\spec(\sk)$ and hence it is a classical (underived) scheme. Then by \cite[\href{https://stacks.math.columbia.edu/tag/056U}{Tag 056U}]{stacks-project}, there exists a closed point $ u: \spec(\kappa(u))\into U_x $ such that $\bigslant{\kappa(u)}{\sk}$ is a finite separable extension of some degree d. Let $\left(U/ X\right)^d$ be the $d$\textit{th}-fiber product equipped with the permuting action of the symmetric group $\mf{S}_d$. Notice that $\left(U/ X\right)^d$ coincides with the homotopical $d$\textit{th}-fiber product since $p$ is in particular flat. Then, as in the proof of \Cref{S2:_Alper_LMB}, the point $u \in U_x(\kappa(u))$ gives us a lift of $x$:
	
	\begin{center}
		\begin{tikzpicture}[baseline={(0,0)}, scale=1.5]
			\node (a) at (1,0.7) {$  \mr{\Acute{E}T}^d\!\!\left(U/ X\right) $};
			\node (b) at (1, 0) {$   X  $};
			\node (c)  at (0,0) {$\spec(\sk) $};

			\path[font=\scriptsize,>= angle 90]
			
			(a) edge [->] node [right ] {$  $} (b)
			(c) edge [->] node [below] {$ x $} (b)
			(c) edge [dashed, ->] node [left] {$  $} (a);
			
		\end{tikzpicture}
	\end{center}
	
	 Choosing a faithful representation $\mf S_d \sseq GL_n$, we can rewrite $\mr{\Acute{E}T}^d\!\!\left(U/ X\right)\simeq \left[\bigslant{V_d(\bigslant{U}{ X})}{GL_n}\right]$, where $V_d(\bigslant{U}{ X}):= (\bigslant{U}{ X})^d\times^{h,\mf S_d} GL_n $. Since $GL_n$ is special, the point $v \in \mr{\Acute{E}T}^d\!\!\left(U/ X\right)$ admits a lifting to $V_d(\bigslant{U}{ X})$:
	\begin{center}
		\begin{tikzpicture}[baseline={(0,0)}, scale=1.5]
			\node (a) at (1,0.7) {$ V_d(\bigslant{U }{ X})$};
			\node (b) at (1, 0) {$   X  $};
			\node (c)  at (0,0) {$\spec(\sk) $};

			\path[font=\scriptsize,>= angle 90]
			
			(a) edge [->] node [right ] {$  $} (b)
			(c) edge [->] node [below] {$ x $} (b)
			(c) edge [dashed, ->] node [left] {$  $} (a);
			
		\end{tikzpicture}
	\end{center}
	
	One can check that $V_d(\bigslant{U }{ X})$ is actually a scheme. Indeed, $V_d(\bigslant{U }{ X})$ is a scheme if and only if its classical truncation is a scheme. But since $\mf S_d$ is étale, we have that:
		\begin{center}
		\begin{tikzpicture}[baseline={(0,0)}, scale=2]
			\node (a) at (0,1) {$ (U^d_X)_{cl}\times GL_n $};
			\node (b) at (1.5, 1) {$  U^d_X\times^{h} GL_n $};
			\node (c)  at (0,0) {$ V_d(\bigslant{U }{\mc X})_{cl} $};
			\node (d) at (1.5,0) {$ V_d(\bigslant{U }{\mc X}) $};
			
			\node (e) at (0.25,0.75) {$\ulcorner $};
			
			\path[font=\scriptsize,>= angle 90]
			
			(a) edge [closed] node [above ] {$  $} (b)
			(a) edge [->] node [left] {$  $} (c)
			(b) edge [->] node [right] {$  $} (d)
			(c) edge [closed] node [below] {$ $} (d);
			
		\end{tikzpicture}
	\end{center}
	\noindent is a homotopy pullback. Using the fact that $ (U^d_X)_{cl}\times GL_n \rightarrow V_d(\bigslant{U }{\mc X})_{cl} $ is a $\mf S_d$-torsor, we must have that:
	\[ V_d(\bigslant{U }{\mc X})_{cl}\simeq (U^d_X)_{cl}\times^{\mf S_d} GL_n  \]
	\noindent By \cite[\href{https://stacks.math.columbia.edu/tag/07S6}{Tag 07S6}]{stacks-project}, we get that $V_d(\bigslant{U }{\mc X})_{cl}$ is represented by a scheme and hence $ V_d(\bigslant{U }{\mc X}) $ is actually a derived scheme as we wanted. Now to get the smooth-Nis atlas, one considers the family $ \set{V_n(\bigslant{U }{ X})}_{n \in \mb N} $\\
	
	For the second step, consider $\mc X \in \oocatname{dASt}^{\leq 1}$ a $1$-algebraic derived stack. Take any smooth cover $p: U \rightarrow \mc X$ and, given any point $x\in \mc X(\sk)$, consider the same diagram as in \eqref{Alper_LMB_diagram}. The fiber product $U_x$ will now be a derived algebraic space, smooth over $\sk$. But by the first step of the proof, we can assume that $U_x$ is actually a derived scheme (up to taking further coverings). So, as we did before we can find an element in $ V_d(\bigslant{U }{ \mc X})(\sk) $ lifting $x\in \mc X(\sk)$. But now $ V_d(\bigslant{U }{\mc  X}) $ is an algebraic space and hence we can further reduce by step one to the case where $V_d(\bigslant{U }{\mc  X})$ is actually represented by a scheme. And again one concludes considering the family $\set{V_n(\bigslant{U }{ \mc X})}_{n \in \mb N}$.\\
	
	For a general $n$-algebraic derived stack $\oocatname{dASt}^{\leq n}$ with a given $\sk$-point $x \in \mc X(\sk)$, by induction on $n$, one repeats the same argument as in step two in order to lift $x$ to $ V_d(\bigslant{U }{\mc  X}) $, that is now $(n-1)$-algebraic and derived. So by induction one can suppose that $V_d(\bigslant{U }{\mc  X})$ is actually a derived scheme and conclude as in all the previous steps.
\end{proof}

One can construct motivic spaces (and the associated stable motivic homotopy category) for $\mc X \in \oocatname{dASt}^{\leq n}$ considering $\A^1$-invariant, NL-sheaves on the category $\mr{Lis}_{\bigslant{}{\mc X}}$ of $\mc X$-smooth $n$-algebraic derived stacks, mimicking what we did in Section 3. By induction on the algebraicity degree $n$, one can show, as we did for $1$-algebraic stacks, that this motivic categories satisfy NL-descent and hence they coincide with their Kan extensions along NL-\v Cech covers. As for derived schemes, $\mc H(\mc X)$ and $\SH(\mc X)$ will be nil-invariant:

\begin{proposition}[Nil-invariance]
	Let $\mc X\in \oocatname{dASt}^{\leq n}$. Then we have natural equivalences of $\infty$-categories:
	\[ \mc H(\mc X_{cl})\simeq \mc H(\mc X) \]
	\[ \SH(\mc X_{cl})\simeq \SH(\mc X) \]
	\noindent induced by the closed immersion $\mc X_{cl }\into \mc X$.
\end{proposition}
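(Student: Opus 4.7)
The plan is to prove this by induction on the level of algebraicity $n$, using NL-descent to reduce to the schematic case, where nil-invariance is already known (see \cite{Adeel_PhD}, or argue via the Kan extension construction of $\mc H$ and $\SH$ from derived affine schemes, for which nil-invariance is a classical result). For the base case $n=0$ of derived schemes, the equivalences $\H(X_{cl})\simeq \H(X)$ and $\SH(X_{cl})\simeq \SH(X)$ are known.

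For the inductive step, assume $n\geq 1$ and that nil-invariance holds for all $k$-algebraic derived stacks with $k<n$. Given $\mc X \in \oocatname{dASt}^{\leq n}$, pick a NL-atlas $x: U \to \mc X$ where $U$ is an $(n-1)$-algebraic derived stack. The first key observation is that the truncation $x_{cl}: U_{cl} \to \mc X_{cl}$ is again a NL-atlas: indeed, NL-coverings are detected on field-valued points, and for any field $\sk$ we have $\mc X(\sk)\simeq \mc X_{cl}(\sk)$ (since $\spec(\sk)$ is $0$-truncated and truncation is right adjoint to the inclusion $\iota$). The second key observation is that since $(-)_{cl}$ is a right adjoint, it preserves (homotopy) limits, so at each level of the \v{C}ech nerve we have canonical identifications $(U^{n}_{\mc X})_{cl}\simeq (U_{cl})^{n}_{\mc X_{cl}}$, where on the right the fiber product is the usual fiber product of classical algebraic stacks.

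Now by NL-descent (valid for both $\H$ and $\SH$ on $n$-algebraic derived stacks by construction, exactly as in \cref{Shstckdef} and \cref{Sec.5:_cl_and_ext_SH_pointwise_equivalence}), we can write
\begin{equation*}
\H(\mc X)\simeq \lim_{[n]\in \Delta}\H\left(U^{n}_{\mc X}\right), \qquad \H(\mc X_{cl})\simeq \lim_{[n]\in \Delta}\H\left((U_{cl})^{n}_{\mc X_{cl}}\right),
\end{equation*}
and similarly for $\SH$. The closed immersions $\iota_n: (U^{n}_{\mc X})_{cl}\into U^{n}_{\mc X}$ assemble into a natural transformation of cosimplicial diagrams that, level-wise, induces pullback maps $\iota_n^*: \H(U^{n}_{\mc X})\to \H((U^{n}_{\mc X})_{cl})$. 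By the inductive hypothesis applied to each $U^{n}_{\mc X}$ (which is $(n-1)$-algebraic, being a pullback of $(n-1)$-algebraic stacks along a representable map), these level-wise maps are equivalences. Passing to the limit in $\op{Pr}^L$ gives the desired equivalence $\H(\mc X)\simeq \H(\mc X_{cl})$, and naturality in $\mc X$ follows from the naturality of the whole construction.

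The main obstacle is the careful bookkeeping to make sure that truncation really does interchange with the formation of the \v{C}ech nerve and that the inductive step applies to every level $U^{n}_{\mc X}$. The first point is essentially formal since truncation is a right adjoint, and the second holds because $n$-representability is preserved by pullback. The case of $\SH$ then follows from the case of $\H$ by the same stabilisation argument used in the proof of \cref{Sec.5:_cl_and_ext_SH_pointwise_equivalence}: $\SH(-)$ is computed as a sequential limit of $\Omega_{\mb P^1}$ applied to $\H(-)_*$, and all these operations commute with pullback along closed immersions.
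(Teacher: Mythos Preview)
Your proposal is correct and follows essentially the same approach as the paper: induction on the algebraicity level, using NL-descent to express $\H(\mc X)$ (resp.\ $\H(\mc X_{cl})$) as a limit over the \v{C}ech nerve of an atlas, and then invoking the inductive hypothesis levelwise to reduce to the known schematic case from \cite{Adeel_PhD}. You are more explicit than the paper about two bookkeeping points (that $x_{cl}$ is still an NL-atlas, and that truncation, being a right adjoint, commutes with the formation of the \v{C}ech nerve), which the paper uses implicitly. The only genuine difference is in the treatment of $\SH$: the paper simply says the same descent argument works verbatim for $\SH(-)$, whereas you deduce the $\SH$ case from the $\H$ case via the stabilisation description $\SH(-)\simeq \lim_{\mb N^{op}}\Omega_{\mb P^1}\H(-)_*$. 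Both routes are valid; yours is slightly more efficient since it avoids repeating the induction, while the paper's is more direct.
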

\begin{proof}
	It is a simple consequence of induction and NL-descent. Let us do the case $n=0$: suppose that $\mc X=X$ is a derived algebraic space. Then using NL-descent, we can see that $ \H(X) $ and $\H(X_{cl})$ (resp. $ \SH(X) $ and $\SH(X_{cl})$) are Kan extended from $\H(-)$ on derived schemes and hence they must coincide by \cite[Corollary 1.3.5]{Adeel_PhD} and by uniqueness of Kan extensions. In other words, if $U \rightarrow X$ we get a chain of equivalences:
	\[ \H(X)\simeq \lim_{n \in \Delta} \H(U^n_X)\simeq \lim_{n \in \Delta} \H(U^n_{cl, X_{cl}})\simeq \H(X_{cl}) \]
	\noindent where the middle equivalence is a consequence of \cite[Corollary 1.3.5]{Adeel_PhD} (or of a very special case of \cite[Corollary 5.9]{Khan-Ravi_Generalised_Coh_Stacks}).\\
	Now suppose we know our claim for all $k$-algebraic derived stacks such that $k<n$. If $\mc X$ is $n$-algebraic, we can find a NL-atlas $U\rightarrow \mc X$ that will be authomatically $(n-1)$-representable. Then as we did before, we get by induction hypothesis a chain of equivalences:
	\[ \H(\mc  X)\simeq \lim_{n \in \Delta} \H(U^n_{\mc X})\simeq \lim_{n \in \Delta} \H(U^n_{cl, \mc X_{cl}})\simeq \H(\mc X_{cl}) \]
	\noindent and hence our claim. The same argument clearly works also for $\SH(-)$.
\end{proof}

In the end we get that:

\begin{theorem}
	We have a functor:
	\[ \mr{SH}^*_!: Corr\left( \oocatname{dASt}^{\leq n} \right)_{lft,all} \longrightarrow \oocatname{Pr}_{stb}^{\mr L} \]
	\noindent extending the analogous functor defined on derived schemes, together with a factorisation of $SH^*_!$:
	\[ Corr\left( \oocatname{dASt}^{\leq n} \right)_{lft,all} \longrightarrow Corr\left( \oocatname{ASt}^{\leq n} \right)_{lft,all} \stackrel{\restrict{SH^*_!}{\oocatname{ASt}^{\leq n}}}{\longrightarrow} \oocatname{Pr}_{stb}^{\mr L} \]
	\noindent induced by the truncation functor. This new functors will have all the analogous properties as in \Cref{Final_recap_thm_1-alg_stk}.
\end{theorem}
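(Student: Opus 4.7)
The plan is to mimic the strategy developed for $1$-algebraic stacks in \Cref{Sect.3.2:_SH*!_Corr_functor} and \Cref{Final_recap_thm_1-alg_stk}, proceeding by induction on the algebraicity degree $n$ and relying crucially on the nil-invariance proposition just proved. The base case of derived schemes, together with the corresponding projection formula and base change on derived schemes, is available, e.g., from \cite{Adeel_PhD}.

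For the inductive step, I would iterate the two-step extension procedure used in \Cref{Sect.3.2:_SH*!_Corr_functor}. First, a $\Ca$-side Kan-extension lemma takes the functor from $(n-1)$-algebraic derived stacks with lft morphisms to $n$-algebraic derived stacks with $(n-1)$-representable lft morphisms; the necessary NL-descent of $\SH^{\otimes*}$ on derived stacks follows by the same arguments as in \Cref{Preliminaries,Comp_Thm}, and the derived analogue of \Cref{NL-ASt=ASt} (proved just above) provides NL-atlases whose structure map is $(n-1)$-representable. Second, an $\E$-side extension lemma enlarges the class of allowed $!$-morphisms from $(n-1)$-representable to arbitrary lft morphisms, using that $\SH^*_!$ satisfies codescent along representable NL-covers (which follows from codescent on schemes by the construction).

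Next, the factorisation through $\op{Corr}(\oocatname{ASt}^{\leq n})_{lft,\op{all}}$ would be obtained as follows. The truncation functor $(-)_{cl}\colon \oocatname{dASt}^{\leq n}\to \oocatname{ASt}^{\leq n}$ preserves lft morphisms and NL-covers, hence induces a functor of correspondence $\infty$-categories. The nil-invariance proposition, together with the uniqueness of Kan extensions along NL-covers, would then promote the pointwise equivalences $\SH(\mc X)\simeq \SH(\mc X_{cl})$ to a natural equivalence between $\SH^*_!$ on $\op{Corr}(\oocatname{dASt}^{\leq n})_{lft,\op{all}}$ and the composition $(\restrict{\SH^*_!}{\oocatname{ASt}^{\leq n}})\circ (-)_{cl}$: both are Kan-extended from the same schematic input and agree on derived schemes by nil-invariance.

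Finally, all properties in \Cref{Final_recap_thm_1-alg_stk} transfer formally: base change and the $!$-projection formula are encoded in the correspondence functor itself; smooth base change and the smooth projection formula come from the unstable description of $f^*_{cl}$ and $f_{\#,cl}$ on representable presheaves, exactly as in \Cref{Sec.5:_smooth_cl_BC} and \Cref{smooth_projection_formula}; and non-representable purity is obtained by the same inductive argument as in \Cref{Sec.4:_Stacky_Purity}, reducing via NL-atlases to the $(n-1)$-representable case and, eventually, to schematic purity. The hardest step I expect is the homotopy-purity input at each inductive level: the deformation-to-the-normal-cone argument of \Cref{Key_Lemma:_schematic_HP+} relies on scheme-theoretic deformation spaces that must be replaced by their derived analogues from \cite{Aranha-Pstragowski}, and one has to verify that these remain sufficiently functorial to drive the induction. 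Thanks to nil-invariance, however, this will largely reduce to verifying the statements on classical truncations, where everything has already been done in \Cref{Thm:_Repr-lft_HP} and \Cref{Thm:_stacky_Ambidexterity}.
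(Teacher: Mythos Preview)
Your overall strategy---nil-invariance to reduce to the underived case, then induction on the algebraicity degree $n$ via the two-step extension (\cref{extendingsixfunctorCside} and \cref{extendingsixfunctorEside}), with the factorisation through truncation coming from uniqueness of Kan extensions---matches the paper's proof exactly.

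The one place where you are too optimistic is the homotopy-purity input. You write that, thanks to nil-invariance, this ``will largely reduce to verifying the statements on classical truncations, where everything has already been done in \Cref{Thm:_Repr-lft_HP} and \Cref{Thm:_stacky_Ambidexterity}.'' That is not quite right: \Cref{Thm:_Repr-lft_HP} only treats $0$-representable maps, while for an $n$-algebraic stack the diagonal of a smooth map is $(n-1)$-representable. Nil-invariance alone does not bridge this gap. The paper singles out precisely this point and proves a separate result (\Cref{s5:_htpy_purity_n-algebraic}) establishing homotopy purity for $(n-1)$-representable maps between smooth $n$-stacks. The key new ingredient there is \Cref{Relative_ctg_Def}, showing that the Thom twist $\Sigma^{\mb L_\delta}$ associated with the comparison map $\delta\colon \mr{Def}_{\omega}\to \mr{Def}_g$ of deformation spaces is trivial; this is what allows one to replace the role of closed immersions in \Cref{Key_Lemma:_schematic_HP+} by $(n-2)$-representable maps and run the induction $\mr{HP}(n-2)\wedge \mr{P}(n-1)\Rightarrow \mr{HP}(n-1)$. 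So your sketch has the right skeleton, but this step needs its own argument rather than a reduction to the $1$-algebraic case.
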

\begin{proof}
	Using nil-invariance, we can already suppose to work with underived $n$-algebraic stacks. Then one proceeds by induction on $n$, using the same argument we already gave in the previous sections of this paper, to show all the relevant properties. There is one exception to this: the homotopy purity identification for $f$ smooth of $\Sigma_f$ with $\Sigma^{\mb L_f}$ follows from \Cref{s5:_htpy_purity_n-algebraic} instead.
\end{proof}

\begin{theorem}\label{s5:_htpy_purity_n-algebraic}
	Let $n>1$ be an integer and let $g: \mc X \rightarrow \mc Y$ be a $(n-1)$-representable map of derived $n$-algebraic stacks, with $\mc X, \mc Y$ smooth over some base $\mc S\in \oocatname{dASt}^{\leq n}$. Denote by $q,p$ the structure maps of $\mc X$ and $\mc Y$ respectively. Then we have that:
	\[ \mr{Th}(g,p):=p\epfs g_!\simeq q\epfs \Sigma^{\mc N_g}  \]
	\noindent where $\mc N_g:= \mb L_g[-1]$ is the conormal complex.
\end{theorem}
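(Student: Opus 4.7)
The plan is to argue by induction on the algebraicity level $n$, with the base case $n=1$ following directly from the results of Section~4. When $g$ is representable, \Cref{Sec.4:_Stacky_Purity} supplies $p\epfs\simeq p_!\Sigma^{\mb L_p}$ and $q\epfs\simeq q_!\Sigma^{\mb L_q}$; combining these with the projection formula (\Cref{Final_recap_thm_1-alg_stk}(5)), the identity $q_!=p_!g_!$, and the K-theoretic identity $[g^*\mb L_p]-[\mb L_q]=-[\mb L_g]=[\mc N_g]$ extracted from the cotangent fibre sequence $g^*\mb L_p\to\mb L_q\to\mb L_g$, directly gives
\[ p\epfs g_!\simeq p_!\Sigma^{\mb L_p}g_!\simeq p_!g_!\Sigma^{g^*\mb L_p}=q_!\Sigma^{g^*\mb L_p}\simeq q\epfs\Sigma^{\mc N_g}. \]
By nil-invariance we may restrict attention to underived $n$-algebraic stacks throughout.

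For the inductive step $n\geq2$, assume the statement (and the consequent purity identification $\Sigma_f\simeq\Sigma^{\mb L_f}$ for smooth maps $f$) is known for $k$-algebraic stacks with $k<n$. Choose an NL-atlas $y:Y\to\mc Y$ with $Y$ an $(n-1)$-algebraic derived stack, and form the pullback $\mc X_Y:=\mc X\times_{\mc Y}Y$ with $\bar g:\mc X_Y\to Y$ and $\bar y:\mc X_Y\to\mc X$. Since $g$ is $(n-1)$-representable and $Y$ is $(n-1)$-algebraic, $\mc X_Y$ is $(n-1)$-algebraic as well, so $\bar g$ is a map between $(n-1)$-algebraic stacks smooth over $\mc S$; as such, its representability is automatically at most $(n-2)$, placing it within the inductive hypothesis. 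The inductive hypothesis then yields an equivalence $\bar p\epfs\bar g_!\simeq\bar q\epfs\Sigma^{\mc N_{\bar g}}$, where $\bar p:Y\to\mc S$ and $\bar q:\mc X_Y\to\mc S$ are the structure maps.

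To lift this to an equivalence over $\mc Y$, test on objects $E=\bar y\epfs F$ for $F\in\SH(\mc X_Y)$. Using smooth base change ($Ex^*_!$) one has $g_!\bar y\epfs\simeq y\epfs\bar g_!$, and combining this with the smooth projection formula (\Cref{smooth_projection_formula}) together with the compatibility $\bar y^*\mc N_g\simeq\mc N_{\bar g}$ (cotangent complexes being stable under smooth pullback), one verifies
\[ p\epfs g_!\bar y\epfs F\simeq\bar p\epfs\bar g_!F,\qquad q\epfs\Sigma^{\mc N_g}\bar y\epfs F\simeq\bar q\epfs\Sigma^{\mc N_{\bar g}}F, \]
which are naturally identified by the inductive hypothesis. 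Since objects of the form $\bar y\epfs F$ generate $\SH(\mc X)$ under colimits by NL-descent, and every functor appearing above preserves colimits, the pointwise equivalences assemble into the desired global equivalence $p\epfs g_!\simeq q\epfs\Sigma^{\mc N_g}$. The main obstacle will be checking that these equivalences glue coherently across the entire Čech nerve of the atlas --- this requires working with the full simplicial descent datum and carefully verifying naturality, exactly in the spirit of the proofs of \Cref{Thm:_stacky_Ambidexterity} and \Cref{Sec.4:_Stacky_Purity}. Finally, once \Cref{s5:_htpy_purity_n-algebraic} is established for $n$-algebraic stacks, the purity identification for a smooth map $f$ is recovered by applying it to the diagonal $\Delta_f$ (which is $(n-1)$-representable, with $\mc N_{\Delta_f}\simeq\mb L_f$ via the cotangent triangle of $\op{id}=p_2\circ\Delta_f$); this yields $\Sigma_f\simeq\Sigma^{\mb L_f}$ and closes the induction.
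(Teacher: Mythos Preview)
Your inductive step contains a genuine gap: the assertion that the pulled-back map $\bar g:\mc X_Y\to Y$ is ``automatically at most $(n-2)$-representable'' is false. Representability is stable under base change, so $\bar g$ remains $(n-1)$-representable, and the fact that source and target happen to be $(n-1)$-algebraic does not lower the representability degree. (Counterexample already at $n=2$: $BG\to\spec k$ is a map between $1$-algebraic stacks that is $1$-representable but not $0$-representable.) Consequently $\bar g$ does \emph{not} fall within the inductive hypothesis at level $n-1$, which concerns $(n-2)$-representable maps, and your reduction breaks down.

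The paper avoids this by reducing in a different direction. Rather than pulling $g$ back along an atlas of $\mc Y$, it first uses NL-separation to make $\mc Y$ and $\mc S$ schemes, then chooses an atlas $x:X\to\mc X$ with $X$ a scheme and \emph{composes}, setting $\omega:=g\circ x$. This produces a schematic map $\omega:X\to Y$ (so \Cref{Key_Lemma:_schematic_HP+} applies directly) rather than a lower-level stacky map. The comparison between $g$ and $\omega$ is mediated by the smooth map $\delta:\mr{Def}_\omega\to\mr{Def}_g$ on deformation spaces; the inductive purity hypothesis for the $(n-1)$-representable smooth maps $x$ and $\delta$ (together with the key \Cref{Relative_ctg_Def}, which shows $\Sigma^{\mb L_\delta}\simeq\mr{Id}$) then lets one identify the specific deformation-space transformations $\Pi_i(g,p)x_!$ with $\Pi_i(\omega,p)$, which are equivalences by the schematic case. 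Conservativity of $x^!$ finishes. Note that the paper works throughout with the concrete natural transformations $\Pi_0,\Pi_1$ coming from deformation to the normal cone; this supplies the global comparison map whose absence you yourself flag as ``the main obstacle'' in your descent sketch.
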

\begin{remark}
	Recall that the main ingredient of our proof of homotopy purity for 0-representable (underived) maps was \Cref{Key_Lemma:_schematic_HP+}. There we used purity for 0-representable maps and homotopy purity for closed immersions. But closed immersions are affine maps, that can be thought as $(-1)$-representable maps. If HP(k) stands for \textquotedblleft homotopy purity for $k$-representable maps\textquotedblright, A(h) stands for \textquotedblleft ambidexterity for $h$-representable maps\textquotedblright and P(l) stands for \textquotedblleft  purity for $l$-representable maps\textquotedblright, then our proof of purity for 1-algebraic stacks can be summarised as:
	\[ (\mr{HP}(-1)\wedge \mr{P}(0)\Rightarrow \mr{HP}(0))\wedge (A(0)\Rightarrow A(1)) \Rightarrow P(1) \]
	The induction strategy then is to get the following implications:
	\[  (\mr{HP}(n-2)\wedge \mr{P}(n-1)\Rightarrow \mr{HP}(n-1))\wedge (A(n-1)\Rightarrow A(n)) \Rightarrow P(n) \]
	The upcoming proof shows exactly that:
	\[ \mr{HP}(n-2)\wedge \mr{P}(n-1)\Rightarrow \mr{HP}(n-1) \]
\end{remark}
\begin{proof}
	We will go by induction on $n$, using purity: we already know purity for $n=0,1$ by nil-invariance and \Cref{Sec.4:_Stacky_Purity} and we will assume by induction hypothesis to know that purity holds for smooth $(n-1)$-representable maps (and that homotopy purity holds for $(n-2)$-representable maps between smooth algebraic stacks).\\
	
	By nil-invariance we can reduce ourselves to the case where $\mc X,\mc Y,\mc S$ are all underived. By NL-separation, we can assume that $\mc X$ is $(n-1)$-algebraic and that $\mc Y=Y, \mc S=S$ are schemes. Let us consider $x: X \rightarrow \mc X$ a NL atlas and consider the following diagrams coming from the functoriality of the deformation space construction in \cite{Aranha-Pstragowski}:
		\begin{center}
		\begin{tikzpicture}[baseline={(0,0.5)}, scale=2.5]
			\node (a) at (0.5,1) {$  X $};
			\node (d) at (0.5,0.5) {$  \mc X $};
			\node (b) at (1,0.5) {$  Y $};
			\node (c) at (0.5,0) {$ S $};
			\node (e) at (1,1) {$  Y $};
			
			\node (f) at (0.75,0.75) {$  \circlearrowleft $};
			
			\node (a1) at (2.5,1) {$ \A^1_X $};
			\node (d1) at (2.5,0.5) {$ \A^1_{\mc X} $};
			\node (b1) at (3,0.5) {$ \mr{Def}_g $};
			
			\node (c1) at (2.5,0) {$ S $};
			\node (e1) at (3,1) {$  \mr{Def}_{\omega} $};
			
			\node (f1) at (2.75,0.75) {$  \circlearrowleft $};

			\node (a2) at (4.5,1) {$ X $};
			\node (d2) at (4.5,0.5) {$ \mc X $};
			\node (b2) at (5,0.5) {$ N_g $};
			\node (c2) at (4.5,0) {$ S $};
			\node (e2) at (5,1) {$  N_{\omega}  $};
			
			\node (f2) at (4.75,0.75) {$  \circlearrowleft $};
			
			\path[font=\scriptsize,>= angle 90]
			
			(a) edge [->] node [left ] {$ x $} (d)
			(d) edge [->] node [below ] {$ g $} (b)
			(a) edge [->] node [above ] {$ \omega $} (e)
			(e) edge [double equal sign distance] node [above ] {$  $} (b)
			
			(d) edge [->] node [left] {$ q $} (c)
			(b) edge [->] node [right=2mm, below=-0.5mm] {$p$} (c)

			([yshift=0mm]b.east) edge [shorten >=8mm,shorten <=8mm,->] node [below] {$ i_1 $} ([yshift=0mm]d1.west)

			(a1) edge [->] node [above ] {$ \hat{\omega} $} (e1)
			(e1) edge [->] node [right ] {$ \delta $} (b1)
			(a1) edge [->] node [left ] {$ \xi $} (d1)
			(d1) edge [->] node [below ] {$ \hat{g} $} (b1)
			
			(d1) edge [->] node [left] {$ $} (c1)
			(b1) edge [->] node [right] {$ r_S\circ \hat p $} (c1)
			
			([yshift=0mm]b1.west) edge [shorten >=11mm,shorten <=18mm,<-] node [below=2.5mm, right=2.5mm] {$ i_0 $} ([yshift=0mm]d2.east)

			(a2) edge [->] node [above ] {$ \omega_0 $} (e2)
			(e2) edge [->] node [right ] {$ \gamma $} (b2)
			(a2) edge [->] node [left ] {$ x $} (d2)
			(d2) edge [->] node [below ] {$ g_0 $} (b2)
			
			(d2) edge [->] node [left] {$  $} (c2)
			(d2) edge [->] node [left] {$  $} (c2)
			(b2) edge [->] node [right] {$ p_0 $} (c2);
			
		\end{tikzpicture}
	\end{center}
	
	Our claim will follow once we show that the natural transformations (defined as in Section 4):
	\begin{equation}\label{S5:_HP_higher_eq1}
		p\epfs g_!=:\mr{Th}(g,p) \overset{\Pi_1(g,p)}{\longrightarrow} (r_S)\epfs \mr{Th}(\hat{g},\hat{p})r_{\mc X}^* \overset{\Pi_0(f,p)}{\longleftarrow} \mr{Th}(g_0,p_0)\simeq q\epfs \Sigma^{\mc N_g}
	\end{equation}
	\noindent are both equivalences. Using the conservativity of $x^!$ and by \cite[5.5.3.4]{HTT}, one can show that $\Pi_1(g,p), \Pi_0(g,p)$ are equivalences if and only if the following natural transformations are equivalences:
	\begin{equation}\label{S5:_HP_higher_eq2}
		p\epfs \omega_!\simeq\mr{Th}(g,p)x_! \overset{\Pi_1(g,p)x_!}{\longrightarrow} (r_S)\epfs \mr{Th}(\hat{g},\hat{p})r_{\mc X}^*x_! \overset{\Pi_0(f,p)x_!}{\longleftarrow} q_{\#}\mr{Th}(g_0,p_0)x_!\simeq q\epfs \Sigma^{\mc N_g}x_!
	\end{equation}
	By induction hypothesis, we know that purity holds for $x$:
	\[ x_!\simeq x\epfs \Sigma^{-\mb L_{x}}\simeq  x\epfs \Sigma^{\mc N_{x}} \]
	 \noindent So we can rewrite the RHS of \eqref{S5:_HP_higher_eq2} as:
	\[ q\epfs \Sigma^{\mc N_g}x_!\simeq q\epfs \Sigma^{\mc N_g}x\epfs \Sigma^{\mc N_x}\simeq q\epfs x\epfs \Sigma^{x^*\mc N_g}\Sigma^{\mc N_x}\simeq q\epfs x\epfs \Sigma^{\mc N_{\omega}}    \]
	\noindent where the last equivalence follows from the functoriality of the Borel J-homomorphism and from the standard fiber sequence in $\mr{K}(X)$:
	\[ x^*\mc N_g \rightarrow \mc N_{\omega} \rightarrow \mc N_x \]
	Let us denote, for later use, by $\Phi_x$ the composition of equivalences we showed before:
	\[ \Phi_x: q\epfs \Sigma^{\mc N_g}x_!\overset{\sim}{\underset{\varphi_x}{\longrightarrow}} q\epfs \Sigma^{\mc N_g}x\epfs \Sigma^{\mc N_x}\simeq q\epfs x\epfs \Sigma^{x^*\mc N_g}\Sigma^{\mc N_x}\simeq q\epfs x\epfs \Sigma^{\mc N_{\omega}}  \]
	From the diagrams:\\
	\begin{minipage}{0.5\textwidth}
		\begin{center}
			\begin{tikzpicture}[baseline={(0,1)}, scale=1.25]
				
				\node (a) at (0,1) {$  \A^1_X $};
				
				\node (b) at (1,1) {$ X  $};
				
				
				\node (d) at (0,0) {$  \A^1_{\mc X} $};
				
				\node (e) at (1,0) {$ \mc X  $};
				

				\node (g) at (0.25,0.75) {$ \ulcorner $};

				\path[font=\scriptsize,>= angle 90]

				(a) edge [->] node [above] {$ r_X $} (b)
				(d) edge [->] node [below] {$ r_{\mc X} $} (e)
				(a) edge [->] node [left] {$\xi$} (d)
				
				(b) edge [->] node [right] {$x$} (e);
				
				
			\end{tikzpicture}
		\end{center}
	\end{minipage}
	\begin{minipage}{0.2\textwidth}
		\begin{center}
			\begin{tikzpicture}[baseline={(0,1)}, scale=1.25]
				
				\node (a) at (0,1) {$  \A^1_X $};
				
				\node (b) at (1,1) {$ \mr{Def}_{\omega} $};
				
				
				\node (d) at (0,0) {$  \A^1_{\mc X} $};
				
				\node (e) at (1,0) {$ \mr{Def}_g  $};
				

				\node (g) at (0.5,0.5) {$ \circlearrowleft $};

				\path[font=\scriptsize,>= angle 90]

				(a) edge [->] node [above] {$ \hat{\omega} $} (b)
				(d) edge [->] node [below] {$ \hat{g} $} (e)
				(a) edge [->] node [left] {$\xi$} (d)
				
				(b) edge [->] node [right] {$ \delta $} (e);
				
				
			\end{tikzpicture}
		\end{center}
	\end{minipage}\\
	\noindent we get that the middle term in \eqref{S5:_HP_higher_eq2} becomes:
	\[ r_S\epfs \hat{p}\epfs \hat{g}_!r_{\mc X}^*x_! \stackrel{Ex^*_!}{\simeq} r_S\epfs \hat{p}\epfs \hat{g}_!\xi_!r_{X}^*\simeq  r_S\epfs \hat{p}\epfs \delta_! \hat{\omega}_! r_{X}^* \]
	\noindent Notice that $\delta: \mr{Def}_{\omega}\rightarrow \mr{Def}_g$ is a $(n-1)$-representable, smooth surjection (cf. \cite{Aranha-Pstragowski}). Hence, by induction hypothesis, we can use purity and get:
	\[ \delta_!\Sigma^{\mb L_{\delta}}\simeq \delta\epfs \]
	But by \Cref{Relative_ctg_Def}, we actually have that $\delta_!\simeq\delta\epfs $. Hence the middle term in \eqref{S5:_HP_higher_eq2} becomes:
	\[ r_S\epfs \hat{p}\epfs \hat{g}_!r_{\mc X}^*x_! \simeq  r_S\epfs \hat{p}\epfs \delta\epfs \hat{\omega}_! r_{X}^*=: r_S\epfs \mr{Th}(\hat{\omega}, \delta\circ \hat{p}) r_X^* \]
	\noindent We will denote by $ \Phi_{\delta} $ the composition of equivalences we just showed:
	\[ \Phi_{\delta}: r_S\epfs \hat{p}\epfs \hat{g}_!r_{\mc X}^*x_! \stackrel{Ex^*_!}{\simeq} r_S\epfs \hat{p}\epfs \hat{g}_!\xi_!r_{X}^*\simeq  r_S\epfs \hat{p}\epfs \delta_! \hat{\omega}_! r_{X}^* \stackrel{\sim}{\underset{\varphi_{\delta}}{\longrightarrow}} r_S\epfs \hat{p}\epfs \delta\epfs \hat{\omega}_! r_{X}^*=: r_S\epfs \mr{Th}(\hat{\omega}, \delta\circ \hat{p}) r_X^* \]
	With similar arguments to the ones used in the proof of \Cref{Key_Lemma:_schematic_HP+}, one can show that \eqref{S5:_HP_higher_eq2} and $ \Phi_x, \Phi_{\delta}$ fit into a commutative diagram of the following form:
		\begin{center}
		\begin{tikzpicture}[baseline={(0,1)}, scale=2]
			
			\node (a) at (0,1) {$  \mr{Th}(g,p)x_!$};
			
			\node (b) at (2.5,1) {$ r_{ S}\epfs\mr{Th}(\hat{g},\hat{p})r_{X}^*x_! $};
			
			\node (c) at (5,1) {$  \mr{Th}(g_0,p_0)x_!\simeq q\epfs \Sigma^{\mc N_{g}}x_! $};
			
			\node (d) at (0,0) {$  \mr{Th}(\omega,p) $};
			
			\node (e) at (2.5,0) {$ r_{ S}\epfs\mr{Th}(\hat{\omega},\delta\circ \hat{p})r_{X}^*  $};
			
			\node (f) at (5,0) {$  \mr{Th}(\omega_0,\gamma\circ p_0)\simeq q\epfs x\epfs \Sigma^{\mc N_{\omega}}  $};


			\path[font=\scriptsize,>= angle 90]

			(a) edge [->] node [above] {$ \Pi_1(g,p)x_! $} (b)
			(c) edge [->] node [above] {$ \Pi_0(g,p)x_! $} (b)
			(d) edge [->] node [below] {$ \Pi_1(\omega,p) $} (e)
			(f) edge [->] node [below] {$ \Pi_0(\omega,p) $} (e)
			(a) edge [double equal sign distance] node [left] {}  (d)
			
			(b) edge [->] node [left] {\rotatebox{-90}{$ \sim  $}} node [right] {$\Phi_{\delta}$} (e)
			
			(c) edge [->] node [left] {\rotatebox{-90}{$ \sim  $}} node [right] {$\Phi_x$}(f);
			
		\end{tikzpicture}
	\end{center}
	\noindent But this implies that we can identify the natural transformations in \eqref{S5:_HP_higher_eq2} with $ \Pi_1(\omega,p) $ and $\Pi_1(\omega,p)$ that we know are equivalences by \Cref{Key_Lemma:_schematic_HP+}, and hence we are done.
	
\end{proof}

\begin{lemma}\label{Relative_ctg_Def}
	Let $g: \mc X\rightarrow Y$ be a map between $S$-smooth $(n-1)$-algebraic stacks, such that $S,Y$ are represented by schemes, and let $x: X \rightarrow \mc X$ be a NL-atlas with $X\in \op{Sch}$. Let $\delta: \mr{Def}_{\omega}\rightarrow \mr{Def}_g$ be the map induced by the following commutative diagram:
		\begin{center}
		\begin{tikzpicture}[baseline={(0,1)}, scale=1.25]
			
			\node (a) at (0,1) {$ X $};
			
			\node (b) at (1,1) {$ Y  $};

			\node (d) at (0,0) {$  \mc X $};
			
			\node (e) at (1,0) {$ Y  $};

			\node (g) at (0.25,0.75) {$  $};

			\path[font=\scriptsize,>= angle 90]

			(a) edge [->] node [above] {$ \omega $} (b)
			(d) edge [->] node [below] {$g $} (e)
			(b) edge [double equal sign distance] node [left] {$$} (e)
			
			(a) edge [->] node [right] {$x$} (d);

		\end{tikzpicture}
	\end{center}
	\noindent Then we have a natural identification:
	\[ \Sigma^{\mb L_{\delta}}\simeq Id_{\SH(\mr{Def}_{\omega})} \]
\end{lemma}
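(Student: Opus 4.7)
The claim is equivalent to showing that $[\mb L_\delta]$ vanishes in $K_0(\mr{Def}_\omega)$: indeed, the Thom transformation $\Sigma^{(-)}$ factors through the Borel $J$-homomorphism $J_{\mathrm{Bor}}:\mr K\to\mr{Pic}(\SH^{\triangleleft})$ which sends fiber sequences of perfect complexes to tensor products of invertible objects, and $J_{\mathrm{Bor}}(0)\simeq\mbbm 1$. So the plan is to compute $[\mb L_\delta]$ stratum by stratum and then globalize.

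I would begin by exploiting the stratification of $\mr{Def}_\omega$ into the open $\mb G_m$-locus and the closed fiber over $0\in\A^1$. By the Aranha-Pstr\k{a}gowski construction, both $\mr{Def}_g$ and $\mr{Def}_\omega$ are flat over $\A^1_Y$ and canonically identified with $\mb G_m\times Y$ on the open locus, so $\delta|_{\mb G_m\times Y}$ is the identity and $\mb L_\delta|_{\mb G_m\times Y}\simeq 0$. On the closed stratum, $\delta|_0$ is the natural map $\gamma:\mf C_\omega\to\mf C_g$ between intrinsic normal cones, so the problem reduces to showing $[\mb L_\gamma]=0$ in $K_0(\mf C_\omega)$.

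To handle the closed stratum, I would factor $g$ Zariski-locally as $\mc X\xhookrightarrow{\iota} M\xrightarrow{\mu} Y$ with $\iota$ a regular closed immersion and $\mu$ smooth, pull back $x$ to obtain a compatible factorization of $\omega$, and use the description $\mf C_f=[\mf C_\iota/\iota^*T_\mu]$. Decomposing $\gamma$ as $\gamma_1\circ\gamma_2$, where $\gamma_1:\mf C_g\times_{\mc X}X\to\mf C_g$ is the base change of the NL-atlas $x$ along $\mf C_g\to\mc X$, and $\gamma_2:\mf C_\omega\to\mf C_g\times_{\mc X}X$ is induced by the composition, one computes using the tangent triangle $x^*\mb L_g\to\mb L_\omega\to\mb L_x$ and the cotangent of a classifying stack $\mb L_{BV/-}\simeq V^\vee[1]$: the base-change piece contributes $q_0^*[\mb L_x]$ and the relative-classifying piece contributes $-q_0^*[\mb L_x]$ (the sign coming from the shift at $B(-)$), yielding the cancellation $[\mb L_\gamma]=q_0^*[\mb L_x]-q_0^*[\mb L_x]=0$ in $K_0(\mf C_\omega)$.

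The last step is to globalize from stratum-wise vanishing to $\mr{Def}_\omega$ itself. For this I would invoke $cdh$-descent for $\mr{Pic}(\SH^{\triangleleft})$: the pair $(\mb G_m\times Y,\mf C_\omega)$ fits into an abstract blow-up square providing Mayer-Vietoris, and $\Sigma^{\mb L_\delta}$ is detected by its restrictions, both of which are equivalent to the identity by the two preceding computations. Alternatively, the natural $\mb G_m$-action on $\mr{Def}_\omega$ with fixed locus $\mf C_\omega$ combined with $\A^1$-homotopy invariance of $K_0$ forces $[\mb L_\delta]$ to be determined by its restriction to $\mf C_\omega$, which vanishes. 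The main obstacle throughout is the signs and bookkeeping in the closed-stratum cancellation, which is delicate because it requires navigating the Zariski-local factorization of $g$ and a careful use of the classifying-stack cotangent formula; a cleaner variant, to be used as backup, directly exhibits $\gamma$ as a $BT_x$-bundle over a base-change of $x$, so that the additivity of $J_{\mathrm{Bor}}$ along the tangent fiber sequence $T_x\to T_\omega\to x^*T_g$ trivializes $\Sigma^{\mb L_\gamma}$ tautologically.
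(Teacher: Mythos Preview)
Your overall strategy—stratify $\mr{Def}_\omega$ by the open $\mb G_m$-locus and the closed fiber, compute $\Sigma^{\mb L_\delta}$ on each stratum, then globalize—is exactly the paper's strategy. But two of your steps have genuine gaps.

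\textbf{Closed stratum.} You propose to factor $g:\mc X\to Y$ Zariski-locally as a regular closed immersion followed by a smooth map and then use the Behrend--Fantechi presentation of the intrinsic normal cone. But $\mc X$ is an $(n-1)$-algebraic \emph{stack}, and such a global factorization need not exist; the presentation $[\mf C_\iota/\iota^*T_\mu]$ is available only after passing to an atlas, at which point you are no longer describing $\mf C_g$ directly. The paper avoids this entirely: it works with the projections $\psi_\omega:N_\omega\to X$, $\psi_g:N_g\to\mc X$ and writes down a $3\times 3$ grid of fiber sequences of cotangent complexes whose bottom row is
\[
\psi_\omega^*\mb L_x \longrightarrow \mb L_\gamma \longrightarrow \psi_\omega^*\mc N_x,
\]
so that $[\mb L_\gamma]=[\psi_\omega^*\mb L_x]+[\psi_\omega^*\mc N_x]=[\psi_\omega^*\mb L_x]-[\psi_\omega^*\mb L_x]=0$ in $\mr K_0$, hence $\Sigma^{\mb L_\gamma}\simeq Id$. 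No factorization of $g$ is needed.

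\textbf{Globalization.} This is the more serious gap. The pair $(\mb G_{m,Y}\hookrightarrow\mr{Def}_\omega\hookleftarrow N_\omega)$ is an open/closed decomposition, \emph{not} an abstract blow-up square, so cdh-descent does not apply. Your fallback via the $\mb G_m$-action and $\A^1$-invariance of $K_0$ is too vague: stratum-wise vanishing of a K-theory class does not imply global vanishing without a localization sequence, and even then there are boundary contributions. The paper instead works entirely in $\SH$: it applies the endofunctor $\Sigma^{\mb L_\delta}$ to the localization fiber sequence
\[
\nu_{\omega!}\nu_\omega^! \longrightarrow Id \longrightarrow j_{\omega*}j_\omega^*,
\]
commutes $\Sigma^{\mb L_\delta}$ past $\nu_{\omega!}$ and $j_{\omega*}$ (via the projection formula, since $\Sigma^{\mb L_\delta}$ is tensoring with an invertible object), and obtains the \emph{same} fiber sequence because $\Sigma^{\nu_\omega^*\mb L_\delta}\simeq\Sigma^{\mb L_\gamma}\simeq Id$ and $\Sigma^{j_\omega^*\mb L_\delta}\simeq Id$. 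Hence $\Sigma^{\mb L_\delta}\simeq Id$. This is the step you are missing: the gluing happens at the level of the six-functor formalism, not at the level of $K_0$ or $\mr{Pic}$.

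A minor point: your opening sentence claims the statement is \emph{equivalent} to $[\mb L_\delta]=0$ in $K_0$. It is only sufficient; the Borel $J$-homomorphism is not injective.
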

\begin{proof}
	Let us recall that by construction we have the following cartesian diagram:
	
		\begin{center}
		\begin{tikzpicture}[baseline={(0,0.5)}, scale=2]
			
			\node (a) at (1,1) {$  N_{\omega} $};
			\node (b) at (2,1) {$ \mr{Def}_{\omega} $};
			\node (c) at (3,1) {$ \mb G_{m,Y} $};

			\node (d) at (1,0) {$ N_g $};
			\node (e) at (2,0) {$ \mr{Def}_g $};
			\node (f) at (3,0) {$ \mb G_{m,Y} $};

			\node (g) at (1.25,0.75) {$ \ulcorner $};
			\node (h) at (2.75,0.75) {$ \urcorner $};

			\path[font=\scriptsize,>= angle 90]

			(a) edge [closed] node [above] {$ \nu_{\omega} $} (b)
			(c) edge [open'] node [above] {$ j_{\omega}  $} (b)
			(d) edge [closed] node [below] {$ \nu_g $} (e)
			(f) edge [open'] node [below] {$  j_g $} (e)
			
			(a) edge [->] node [left] {$  \gamma $} (d)
			(b) edge [->] node [left] {$ \delta $} (e)
			(c) edge [double equal sign distance] node [left] {$  $} (f);
			
		\end{tikzpicture}
	\end{center}
	
	\noindent Since $\delta: \mr{Def}_{\omega}\rightarrow \mr{Def}_{g}$ is a smooth surjection, in particular we get that the squares are actually homotopy cartesian. This implies that:
	\[ \nu_{\omega}^*\mb L_{\delta}\simeq \mb L_{\gamma} \quad \quad j_{\omega}^*\mb L_{\delta}\simeq 0 \]
	
	Denote by $\psi_{\omega}: N_{\omega}\rightarrow X$ and $\psi_{g}: N_g \rightarrow \mc X$ the projection maps. Then denoting $\mc N_{\bullet}:=\mb L_{\bullet}[-1]$ and looking at the standard fiber sequences of cotangent complexes, we get:
	\begin{equation}\label{HP_twist_lemma_eq}
		\begin{split}
			\psi_{\omega}^*\mb L_{\omega}\rightarrow \mb L_{\bigslant{N_{\omega}}{Y}} \rightarrow \psi_{\omega}^*\mc N_{\omega}\\
			\psi_{g}^*\mb L_{g}\rightarrow \mb L_{\bigslant{N_{g}}{Y}} \rightarrow \psi_{g}^*\mc N_{g}
		\end{split}
	\end{equation}

	The commutativity of the following diagram:
		\begin{center}
		\begin{tikzpicture}[baseline={(0,1)}, scale=1.25]
			
			\node (a) at (0,1) {$ N_{\omega} $};
			
			\node (b) at (1,1) {$ X  $};
			
			
			\node (d) at (0,0) {$ N_g $};
			
			\node (e) at (1,0) {$ \mc X  $};
			

			\node (g) at (0.5,0.5) {$ \circlearrowleft  $};

			\path[font=\scriptsize,>= angle 90]

			(a) edge [->] node [above] {$ \psi_{\omega} $} (b)
			(d) edge [->] node [below] {$ \psi_{g} $} (e)
			(a) edge [->] node [left] {$ \gamma $} (d)
			
			(b) edge [->] node [right] {$x$} (e);
			
			
		\end{tikzpicture}
	\end{center}
	\noindent tells us that $\gamma^*\psi_g^*\simeq \psi_{\omega}^*x^* $. Therefore the fiber sequences in \eqref{HP_twist_lemma_eq} fit into a diagram of fiber sequences of the form:
	
		\begin{center}
		\begin{tikzpicture}[baseline={(0,1.5)}, scale=2]

			\node (a2) at (0,1.5) {$ \psi_{\omega}^*x^*\mb L_g $};
			\node (a1) at (0,0.75) {$ \psi_{\omega}^*\mb L_{\omega}  $};
			\node (a0) at (0,0) {$ \psi_{\omega}^* \mb L_x $};

			\node (b2) at (1,1.5) {$ \gamma^* \mb L_{\bigslant{N_g}{Y}} $};
			\node (b1) at (1,0.75) {$ \mb L_{\bigslant{N_{\omega}}{Y}} $};
			\node (b0) at (1,0) {$ \mb L_{\gamma}  $};

			\node (c2) at (2,1.5) {$ \psi_{\omega}^*x^*\mc N_g $};
			\node (c1) at (2,0.75) {$ \psi_{\omega}^* \mc N_{\omega}$};
			\node (c0) at (2,0) {$ \psi_{\omega}^*\mc N_x $};

			\node (t1) at (0.25,1.75) {$  $};
			\node (t3) at (0.25,0.75) {$  $};
			\node (t2) at (1.25,1.75) {$  $};
			\node (t4) at (1.25,0.75) {$  $};

			\node (T1) at (0.5,1.5) {$  $};
			\node (T3) at (0.5,0.5) {$  $};
			\node (T2) at (1.5,1.5) {$  $};
			\node (T4) at (1.5,0.5) {$  $};


			\path[font=\scriptsize,>= angle 90]

			
			(a2) edge [->] node [above ] {$  $} (b2)
			(a1) edge [->] node [above ] {$  $} (b1)
			(a0) edge [->] node [below ] {$  $} (b0)
			
			(b2) edge [->] node [above ] {$  $} (c2)
			(b1) edge [->] node [above ] {$  $} (c1)
			(b0) edge [->] node [below ] {$ $} (c0)
			
			
			(a2) edge [->] node [left ] {$  $} (a1)
			(a1) edge [->] node [left ] {$  $} (a0)

			(b2) edge [->] node [left ] {$  $} (b1)
			(b1) edge [->] node [left ] {$  $} (b0)
			
			(c2) edge [->] node [right ] {$  $} (c1)
			(c1) edge [->] node [right ] {$  $} (c0);
		\end{tikzpicture}
	\end{center}
	\noindent from which, looking at the last row, we deduce that:
	\[ \Sigma^{\nu_{\omega}^*\mb L_{\delta}}\simeq \Sigma^{\mb L_{\gamma}}\simeq Id \]
	But by \cite[Theorem 4.5.1(6)]{Chowdhury}, we have that:
	\[ \nu_{\omega}\epf \nu_{\omega}^! \rightarrow Id \rightarrow j_{\omega}{}_*j_{\omega}^* \]
	\noindent is a fiber sequence. Applying $\Sigma^{\mb L_{\delta}}$ to this we get another fiber sequence:
	\[ \nu_{\omega}\epf \nu_{\omega}^!\Sigma^{\mb L_{\delta}} \rightarrow \Sigma^{\mb L_{\delta}} \rightarrow j_{\omega}{}_*j_{\omega}^*\Sigma^{\mb L_{\delta}} \]
	\noindent But the LHS and the RHS are respectively equivalent to:
	\[ \nu_{\omega}\epf \nu_{\omega}^!\Sigma^{\mb L_{\delta}}\simeq \nu_{\omega}\epf \Sigma^{\nu_{\omega}^*\mb L_{\delta}}\nu_{\omega}^!\simeq \nu_{\omega}\epf \Sigma^{\mb L_{\gamma}}\nu_{\omega}^! \simeq \nu_{\omega}\epf \nu_{\omega}^!  \]
	\[ j_{\omega}{}_*j_{\omega}^*\Sigma^{\mb L_{\delta}}\simeq j_{\omega}{}_*\Sigma^{j_{\omega}^*\mb L_{\delta}}j_{\omega}^*\simeq  j_{\omega}{}_*j_{\omega}^*  \]
	\noindent since $ \Sigma^{\mb L_{\gamma}}\simeq Id $ and $ \Sigma^{j_{\omega}^*\mb L_{\delta}}\simeq Id $. Again by \cite[Theorem 4.5.1(6)]{Chowdhury}, this implies that:
	\begin{align*}
		\Sigma^{\mb L_{\delta}}&\simeq \op{Fib}\left(  j_{\omega}{}_*\Sigma^{j_{\omega}^*\mb L_{\delta}}j_{\omega}^* \rightarrow  \nu_{\omega}\epf \Sigma^{\mb L_{\gamma}}\nu_{\omega}^![+1]  \right)\simeq \\
		&\simeq \op{Fib}\left( j_{\omega}{}_*j_{\omega}^*\rightarrow \nu_{\omega}\epf \nu_{\omega}^![+1] \right)\simeq \\
		&\simeq Id
	\end{align*}
	\noindent exactly as we wanted.
\end{proof}

\subsection{Universality after Drew-Gallauer}

Let us notice that the notion of (complete) coefficient system of \cite[Definition 7.5,7.7]{Drew-Gallauer} makes completely sense replacing schemes with derived schemes. Moreover, by definition a coefficient system on derived schemes will satisfy the localisation property and this will force nil-invariance by \cite[Corollary 5.9]{Khan-Ravi_Generalised_Coh_Stacks}. So a coefficient system from derived schemes:
\[ D^*: \op{dSch}^{op}\longrightarrow \op{CAlg}(\op{Cat}_{\infty}^{st})  \]
\noindent uniquely corresponds to the coefficient system induced by the truncation functor $(-)_{cl}: \op{dSch}\rightarrow \op{Sch}$:
\[ D^*: \op{Sch}^{op}\longrightarrow \op{CAlg}(\op{Cat}_{\infty}^{st})   \]

\begin{proposition}\label{S5:_Universality_SH}
	Let:
	\[ D^*: \left(\oocatname{dASt}^{\leq n}\right)^{op}\longrightarrow \op{CAlg}(\op{Cat}_{\infty}^{st}) \]
	\noindent be a functor that satisfies NL-descent and such that its restriction to derived schemes is a complete coefficient system in the sense of \cite{Drew-Gallauer}. Then there exists a unique (up to contractible choice) map:
	\[ R_D: \SH^* \longrightarrow D^* \in \op{Fun}\left(\left(\oocatname{dASt}^{\leq n}\right)^{op}, \op{CAlg}(\op{Cat}_{\infty}^{st})\right)^{\Delta^1} \]
	\noindent that commutes with arbitrary pullbacks, with \#-functoriality, tensors and inner-hom's, and with exceptional functors arising from smooth maps.
\end{proposition}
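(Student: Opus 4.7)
The plan is to reduce everything to the schematic universality statement of Drew--Gallauer and then propagate the comparison map along NL-descent. More precisely, the starting point is \cite[Theorem 7.14]{Drew-Gallauer} (or rather its evident derived analogue, using nil-invariance as observed just before the statement): on derived schemes, the stable motivic homotopy functor $\SH^*$ is the initial complete coefficient system, so the restriction $D^*|_{\op{dSch}}$ receives a unique (up to contractible choice) comparison morphism
\[ R_D^{sch}: \restrict{\SH^*}{\op{dSch}^{op}} \longrightarrow \restrict{D^*}{\op{dSch}^{op}} \]
\noindent in $\op{Fun}(\op{dSch}^{op},\oocatname{CAlg}(\op{Cat}_{\infty}^{st}))$ which commutes with arbitrary pullbacks, $\#$-functoriality along smooth maps, tensor products, and inner-homs.

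Next, I would invoke NL-descent on both sides. By \cref{Shstckdef} (suitably derived, as recorded in the previous subsection) $\SH^*$ is a NL-sheaf on $\oocatname{dASt}^{\leq n}$, and by hypothesis so is $D^*$. Applying \cite[Lemma C.3]{Hoyois_Quad_Lefschetz} along the inclusion $u: \op{dSch}\hookrightarrow\oocatname{dASt}^{\leq n}$, both $\SH^*$ and $D^*$ are right Kan extensions of their respective restrictions to derived schemes. By the universal property of the right Kan extension, the natural transformation $R_D^{sch}$ extends to a unique (up to contractible choice) natural transformation
\[ R_D: \SH^* \longrightarrow D^* \]
\noindent of functors $\left(\oocatname{dASt}^{\leq n}\right)^{op}\to \oocatname{CAlg}(\op{Cat}_{\infty}^{st})$.

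It remains to check that $R_D$ preserves the extra structures inherited from the stacky enhancement, namely $\#$-functoriality along (possibly non-representable) smooth maps and compatibility with the internal hom. This follows formally: given a smooth map $f:\mc X \to \mc Y$ of NL-stacks, choose a NL-atlas $y:Y\to \mc Y$ with base change $f':X\to Y$; by NL-separation (\cref{Sec.5:_NL_separation}, which passes to the stable and derived setting by the same argument) and the \v Cech description of $\SH^*(\mc X)$, the natural Beck--Chevalley map $f\epfs R_D(\mc X)\to R_D(\mc Y)f\epfs$ can be tested after pulling back to the atlas, where it reduces to the corresponding schematic compatibility provided by Drew--Gallauer. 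The same argument, applied levelwise on the \v Cech nerve, gives compatibility with $\otimes$ and $\iMap$.

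The main obstacle I anticipate is not the existence part, which is formal, but the care needed in working with the $\infty$-categorical data of the natural transformation: one really needs $R_D$ to be coherent as a morphism in $\op{Fun}\!\left(\Delta^1, \op{Fun}\!\left(\left(\oocatname{dASt}^{\leq n}\right)^{op},\oocatname{CAlg}(\op{Cat}_{\infty}^{st})\right)\right)$, rather than merely pointwise. This is handled by noting that both the source and target of $R_D^{sch}$ already live in the appropriate functor $\infty$-category and that right Kan extension is itself a functor $\op{Fun}(\op{dSch}^{op},\cdot)\to \op{Fun}(\left(\oocatname{dASt}^{\leq n}\right)^{op},\cdot)$, so applying it to $R_D^{sch}$ viewed as a $\Delta^1$-diagram yields exactly the desired coherent map; the uniqueness statement follows from the same universal property. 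Preservation of exceptional functors for smooth maps then follows by combining the non-representable purity equivalence of \cref{Sec.4:_Stacky_Purity}, which identifies $f\epf$ with $f\epfs \Sigma^{-\mb L_f}$ for smooth $f$, with the already-established compatibility of $R_D$ with $\#$-pushforwards and Thom twists (the latter being functorial in $K$-theory via $J_{Bor}$, hence preserved by any map of coefficient systems).
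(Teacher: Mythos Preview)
Your proposal is correct and follows essentially the same route as the paper: obtain $R_D$ on derived schemes from \cite[Theorem 7.14]{Drew-Gallauer} via nil-invariance, then right Kan extend to $\oocatname{dASt}^{\leq n}$ using that both sides are NL-sheaves. The paper's proof is terser on the compatibility checks, simply invoking nil-invariance together with the schematic case from \cite{Ayoub-Betti}, whereas you spell out the atlas reduction via NL-separation and make explicit the use of purity (\cref{Sec.4:_Stacky_Purity}) to handle the exceptional functors for smooth maps; both amount to the same argument.
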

\begin{proof}
	It is a very easy consequence of \cite[Theorem 7.14]{Drew-Gallauer}. Indeed, by \textit{loc. cit.}, nil-invariance and by hypothesis we get a unique (up to contractible choice) functor:
	 \[ R_D: \SH^* \longrightarrow D^* \in \op{Fun}\left(\op{dSch}^{op}, \op{CAlg}(\op{Cat}_{\infty}^{st})\right)^{\Delta^1} \]
	 By NL-descent, we know that both $SH^*$ and $D^*$ are Kan extended from derived schemes to $\oocatname{dASt}^{\leq n}$, hence we get a unique (up to contractible choice) functor:
	 \[ R_D: \SH^* \longrightarrow D^* \in \op{Fun}\left(\left(\oocatname{dASt}^{\leq n}\right)^{op}, \op{CAlg}(\op{Cat}_{\infty}^{st})\right)^{\Delta^1} \]
	 The commutativity of $R_D$ with the various functors follows from nil-invariance together with the schematic case proved in \cite{Ayoub-Betti}.
\end{proof}

\begin{remark}
	Let us notice that our proof of purity for algebraic stacks did not really rely on the particular six functor formalism given by $\SH$. Indeed, any complete coefficient system can be extended via NL-descent to algebraic stacks and all the formal property used in our proofs will be satisfied also by the extension of the complete coefficient system. In particular, one will get purity for smooth maps of algebraic stacks in any complete coefficient system extended via NL-descent. Another way to see this is to use \Cref{S5:_Universality_SH}, since for smooth maps the functor $R_D$ behaves well with the six functors.
\end{remark}

\subsection{Grothendieck-Verdier Duality}

We will conclude with an easy consequence of the existence of exceptional functors for non representable maps: we have now a good set up to study a motivic version of Grothendieck-Verdier duality for algebraic stacks.

\begin{definition}
	Let $\mc X$ be a $n$-algebraic derived stack. We will say that $\mb E \in \SH(\X)$ is \textit{compact} if for all NL-covers $x: X \rightarrow \mc X$ we have that $x^!\mb E \in \SH(X)$ is compact. We will denote the subcategory of compact objects as $\SH_c(\X)$.
\end{definition}
\begin{remark}
	One can show, using Nis descent via distinguished squares on the level of schemes, that $\mb E \in \SH(\X)$ is compact if and only if there exists a NL-atlas such that $\mb E \in \SH_c(X)$.
\end{remark}

\begin{definition}
	Let $\X \in \oocatname{dASt}^{\leq n}_{\bigslant{}{S}}$ and denote by $\pi_{\X}: \X \rightarrow S$ the structure map. 
	\begin{enumerate}
		\item The \textit{dualising sheaf} of $\X$ is defined as:
		\[ \omega_{\bigslant{\X}{S}}:=\pi_{\X}^!\mbbm 1_S \]
		\item Define the \textit{dualising functor} $\mb D_{\bigslant{\X}{S}}$ as:
		\[ \mb D_{\bigslant{\X}{S}}(-):=\iMap(-, \omega_{\bigslant{\X}{S}}) \]
	\end{enumerate}

\end{definition}

\begin{proposition}\label{S5:_comm_Duality}
	Let $f: \mc X \rightarrow \Y \in \oocatname{dASt}^{\leq n}_{\bigslant{}{S}}$ a map between $n$-algebraic derived stacks. Then we have that:
	\[ \mb D_{\bigslant{\X}{S}} \circ f^* \simeq f^! \circ \mb D_{\bigslant{\Y}{S}} \]
\end{proposition}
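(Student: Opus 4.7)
The plan is to prove the statement purely formally from the six-functor package collected in \Cref{Final_recap_thm_1-alg_stk}, mirroring the schematic argument. Throughout I assume $f$ is lft (so that $f_!$ and $f^!$ are available; otherwise the statement is vacuous).

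First I would dispose of the dualising sheaves. Denoting the structure maps by $\pi_\X: \X \to S$ and $\pi_\Y: \Y \to S$, one has $\pi_\X = \pi_\Y \circ f$. The pseudofunctoriality of $(-)^!$ encoded in the functor $\SH^*_!$ on $\op{Corr}(\oocatname{dASt}^{\leq n})_{\op{lft,all}}$ (built in \Cref{Sect.3.2:_SH*!_Corr_functor}) then gives
\[ \omega_{\X/S} = \pi_\X^! \mbbm 1_S \simeq f^! \pi_\Y^! \mbbm 1_S = f^! \omega_{\Y/S}. \]
After this reduction, the claim becomes the standard natural equivalence $\iMap(f^*E, f^! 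F) \simeq f^! \iMap(E, F)$ for all $E, F \in \SH(\Y)$.

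Next I would prove this equivalence by Yoneda. For any $G \in \SH(\X)$, chaining together the adjunction $(f_! \dashv f^!)$, the tensor-hom adjunction, and the projection formula from \Cref{Final_recap_thm_1-alg_stk}(5), one obtains
\begin{align*}
\op{Map}(G, f^! \iMap(E,F))
&\simeq \op{Map}(f_! G, \iMap(E,F)) \\
&\simeq \op{Map}(f_! G \otimes E, F) \\
&\simeq \op{Map}(f_!(G \otimes f^* E), F) \\
&\simeq \op{Map}(G \otimes f^* E, f^! F) \\
&\simeq \op{Map}(G, \iMap(f^* E, f^! F)),
\end{align*}
where the middle equivalence is the crucial use of the projection formula $f_!(G \otimes f^*E) \simeq f_! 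G \otimes E$. By the Yoneda lemma these equivalences are natural in $G$ and descend to the desired identification $f^! \iMap(E, F) \simeq \iMap(f^*E, f^! F)$.

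Combining the two ingredients yields
\[ \mb D_{\X/S}(f^* E) = \iMap(f^* E, \omega_{\X/S}) \simeq \iMap(f^* E, f^! \omega_{\Y/S}) \simeq f^! \iMap(E, \omega_{\Y/S}) = f^! \mb D_{\Y/S}(E), \]
which is the statement. There is no real obstacle here beyond ensuring that the projection formula and the $(f_! \dashv f^!)$ adjunction genuinely hold for possibly non-representable lft maps of NL-stacks; both are precisely the content of \Cref{Final_recap_thm_1-alg_stk} together with \Cref{Sect.3.2:_SH*!_Corr_functor}, so the classical schematic argument goes through unchanged. The point of the proposition is therefore less to do a new computation than to record that, once the non-representable exceptional functors and their projection formula are in place, Grothendieck--Verdier compatibility with pullback is immediate.
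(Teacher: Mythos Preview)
Your proof is correct and follows essentially the same approach as the paper: both reduce to the internal-Hom identity $\iMap(f^*-,f^!-)\simeq f^!\iMap(-,-)$ coming from the projection formula, combined with $\omega_{\X/S}\simeq f^!\omega_{\Y/S}$. The only difference is that you spell out the Yoneda argument for that identity explicitly, whereas the paper simply asserts it as an equivalent reformulation of the projection formula.
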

\begin{proof}
	This is an easy consequence of the projection formula $f_!(-\otimes f^*-)\simeq f_!-\otimes - $. Indeed, the projection formula can be equivalently rewritten as:
	\[ \iMap(f^*-,f^!-)\simeq f^!\iMap(-,-) \]
	But denoting by $\pi_{\bullet}$ the structure maps of $\X$ and $\Y$, this implies that:
	\begin{align*}
		\mb D_{\bigslant{\X}{S}}\circ f^*&=\iMap(f^*-, \pi_{\X}^!\mbbm 1_S)=\\
		&=\iMap(f^*-,f^!\pi_{\Y}^!\mbbm 1_S )\simeq\\
		&\simeq f^!\iMap(-, \omega_{\bigslant{Y}{S}})=\\
		&=f^!\circ \mb D_{\bigslant{\Y}{S}}
	\end{align*}
\noindent as we wanted.
\end{proof}

\begin{proposition}
	Let $\sk$ be a field with exponential characteristic  $\op{expchar}(\sk)=e$ and let $\X \in \oocatname{dASt}^{\leq n}_{\bigslant{}{S}}$, for $S$ a lft $\sk$-scheme. Then the natural map:
	\[ Id \longrightarrow \mb D_{\bigslant{\X}{S}}\circ \mb D_{\bigslant{\X}{S}} \]
	\noindent is an equivalence in $\SH_c(\X)\left[\dfrac{1}{e}\right]$.
\end{proposition}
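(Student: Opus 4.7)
The plan is to use an NL-atlas to reduce the biduality statement to the already-known schematic case, where it is due to Cisinski--Déglise, Bondarko, and others for (derived) schemes locally of finite type over a field, after inverting the exponential characteristic. Let $\eta_{\mb E}: \mb E \to \mb D_{\X/S}\mb D_{\X/S}\mb E$ be the canonical biduality map; the goal is to show $\eta_{\mb E}$ is an equivalence in $\SH_c(\X)[1/e]$.

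First, I would pick an NL-atlas $x: X \to \X$ with $X$ a (derived) scheme. By the nil-invariance proved earlier, I can further assume $X$ is classical. The map $x^!$ is conservative: indeed, by the purity equivalence of \Cref{Sec.4:_Stacky_Purity} (adapted to derived $n$-algebraic stacks via \Cref{s5:_htpy_purity_n-algebraic}) we have $x^! \simeq \Sigma^{\mb L_x} x^*$, and $x^*$ is conservative by \Cref{Sec.5:_Classical_conservativity_of_pullbacks}. Hence it suffices to check that $x^! \eta_{\mb E}$ is an equivalence in $\SH_c(X)[1/e]$.

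Next, I would build a natural identification
\begin{equation*}
    x^! \mb D_{\X/S}\mb D_{\X/S} \mb E \;\simeq\; \mb D_{X/S}\mb D_{X/S}\, x^! \mb E,
\end{equation*}
under which $x^! \eta_{\mb E}$ corresponds to the biduality map $\eta_{x^! \mb E}$ on $X$. This goes in two steps: applying \Cref{S5:_comm_Duality} to $x$ yields $x^! \circ \mb D_{\X/S} \simeq \mb D_{X/S} \circ x^*$, so the outer $\mb D$ unwinds as $x^! \mb D_{\X/S}\mb D_{\X/S}\mb E \simeq \mb D_{X/S}(x^* \mb D_{\X/S}\mb E)$; then, using purity to rewrite $x^* \simeq \Sigma^{-\mb L_x} x^!$ and reapplying \Cref{S5:_comm_Duality} gives $x^* \mb D_{\X/S}\mb E \simeq \Sigma^{-\mb L_x} \mb D_{X/S}\, x^* \mb E$, and a second $\mb D_{X/S}$ swaps the Thom twist to its inverse and absorbs it into $\Sigma^{\mb L_x} x^* \simeq x^!$. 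Checking that this composite of canonical equivalences actually intertwines the two biduality units is the main technical hurdle: it amounts to a naturality chase in the $(-\otimes -) \dashv \iMap(-,-)$ adjunction, exploiting that $\omega_{X/S} \simeq x^! \omega_{\X/S}$ by definition and that Thom twists are invertible endofunctors compatible with internal hom.

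Finally, since $\mb E$ is compact in $\SH(\X)$, the object $x^! \mb E$ lies in $\SH_c(X)$ by definition. The composition $X \to \X \to S \to \Sp(\sk)$ exhibits $X$ as locally of finite type over $\sk$ (the atlas $x$ is smooth, $\X \to S$ is lft, and $S$ is lft over $\sk$ by hypothesis), so the schematic biduality theorem of Cisinski--Déglise applies and $\eta_{x^! \mb E}$ is an equivalence in $\SH_c(X)[1/e]$. Combined with conservativity of $x^!$, this yields the result.
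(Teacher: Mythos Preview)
Your proposal is correct and follows essentially the same route as the paper: reduce via conservativity of $x^!$ for an NL-atlas $x:X\to\mc X$, use \Cref{S5:_comm_Duality} (together with purity) to identify $x^!\mb D_{\X/S}\mb D_{\X/S}\simeq \mb D_{X/S}\mb D_{X/S}x^!$, and then invoke the known schematic biduality. The paper's own proof is more telegraphic---it simply asserts conservativity of $x^!$ and the commutation with $\mb D\mb D$ by citing \Cref{S5:_comm_Duality}, and quotes \cite[Theorem 3.1.1]{Elmanto-Khan} rather than Cisinski--D\'eglise for the schematic input---whereas you have spelled out the purity manipulations and flagged the compatibility of biduality units as the genuine check, which the paper leaves implicit.
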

\begin{proof}
	Let $x: X\rightarrow \mc X$ be a NL-atlas. By conservativity of $x^!$, we have that:
	\[ Id \longrightarrow \mb D_{\bigslant{\X}{S}}\circ \mb D_{\bigslant{\X}{S}} \]
	\noindent is an equivalence if and only if:
	\[ x^! \longrightarrow x^!\mb D_{\bigslant{\X}{S}}\circ \mb D_{\bigslant{\X}{S}} \]
	\noindent is an equivalence. But for any $\mb E \in \SH_c(\X)\left[ \dfrac{1}{e} \right]$, we have that:
	\begin{equation}\label{eq:_GV_Duality}
		x^!\mb E \longrightarrow x^!\mb D_{\bigslant{\X}{S}}\circ \mb D_{\bigslant{\X}{S}} (\mb E)\simeq \mb D_{\bigslant{X}{S}}\circ \mb D_{\bigslant{X}{S}} (x^!\mb E )
	\end{equation}
	\noindent by \Cref{S5:_comm_Duality}. But by definition $x^!\mb E$ is constructible in $\SH(X)$ and hence \eqref{eq:_GV_Duality} is an equivalence by \cite[Theorem 3.1.1]{Elmanto-Khan}, and we are done.
\end{proof}

\newpage
\begin{appendices}\label{Appendices}

    \section{Six-functor formalisms.}
    In this section, we recall the notion and prove the relevant lemmas, propositions and  theorems that we need in order to extend the six-functor formalisms from schemes to algebraic stacks.  The main references for this section are \cite[A.5]{mann2022padic} and \cite[Section 6.1]{liu2017enhanced}.
    \subsection{The $\infty$-category of correspondences.}
    \begin{definition}\cite[Definition A.5.1]{mann2022padic}
        A \textit{geometric setup} is a pair $(\Ca,\E)$ where $\Ca$ is an $\infty$-category and $E$ is a homotopy class of edges in $\Ca$ satisfying
        \begin{enumerate}
            \item $E$ contains all isomorphism and is stable under compositions,

           \item Pullbacks of $E$ exist and remain in $E$.
        \end{enumerate}
    \end{definition}
    \begin{definition}\cite[Definition A.5.2]{mann2022padic}
        \begin{enumerate}
            \item Let $C(\Delta^n) \subset \Delta^n \times (\Delta^n)^{op}$ be the full-subcategory spanned by $([i],[j])$ with $i \le j$.
            \item An edge in $C(\Delta^n)$ is \textit{vertical} (resp. horizontal) if the projection to the second (first) factor is degenerate.
            \item A square in $C(\Delta^n)$ is \textit{exact} if it is both a pullback and a pushout square.
            \item For any simplicial set $K$, we define 
            \begin{equation*}
                C(K): = \op{colim}_{(n,\sigma) \in \Delta_{/K}}C(\Delta^n).
            \end{equation*}
            Thus $C$ can be visualized as an endofunctor on the category of simplicial sets. It admits a right adjoint 
            \begin{equation*}
                B: \sset \to \sset
            \end{equation*}
            defined by 
            \begin{equation*}
                K \mapsto B(K) =\{B(K)_n:= \op{Hom}_{\sset}(C(\Delta^n),K) \}.
            \end{equation*}
            \item Given a geometric setup $(\Ca,\E)$, define 
            \begin{equation*}
                \op{Corr}(\Ca)_{E,\op{all}} \subset B(\Ca)
            \end{equation*}
            to be the sub-simplicial set whose $n$ simplices are maps $C(\Delta^n) \to \Ca$ which sends 
            \begin{enumerate}
                \item vertical edges to $E$,
                \item exact squares to pullback squares.
            \end{enumerate}
        \end{enumerate}
    \end{definition}
    \begin{remark}
     For every $n \ge 1$, we have maps \begin{equation}
       \gamma_n : (\Delta^n)^{op} \to C(\Delta^n)  \quad ,\quad \gamma_n' : \Delta^n \to C(\Delta^n)
     \end{equation}
     given by the inclusion of the top row and the right most columns. These map induces maps :
     \begin{equation}\label{Cboundarymaps}
         \gamma_n : (\Lambda^n_0)^{op} \to C(\partial\Delta^n) \quad;\quad \gamma_n' : \Lambda^n_0 \to C(\Delta^n)
     \end{equation}
    \end{remark}
    \begin{remark}
Let $(\Ca,\E)$ be a geometric setup. Then the lower simplices of $\op{Corr}(\Ca)_{\E,\op{all}}$ look like as follows:
     \begin{itemize}
         \item The $0$-simplices are objects of $\Ca$.
         \item A $1$-simplex i.e an edge between $X_0$ and $X_1$ where $X_0,X_1 \in \Ca$ is a diagram of the form :
         \begin{equation*}
             \begin{tikzcd}
                 X_0 & X_{01}\arrow[l] \arrow[d,"f"] \\
                 {} & X_1
             \end{tikzcd}
         \end{equation*}
         where $f \in E$.
         \item A $2$-simplex in $\op{Corr}(\Ca)_{\E,\op{all}}$ looks like as follows:
         \begin{equation*}
             \begin{tikzcd}
                 X_0 & X_{01} \arrow[l] \arrow[d,"f"]  \arrow[dr, phantom, "\square"]  & X_{02} \arrow[l] \arrow[d,"g"] \\
                 {} & X_{11}  & X_{12} \arrow[d,"h"] \arrow[l] \\
                 {} & {} & X_{22}
             \end{tikzcd}
         \end{equation*}
         where $f,g,h\in E$ and $\square$ is a pullback square.
     \end{itemize}
     Notice that edges in $Corr(\Ca)_{\E,\op{all}}$ can be composed. In particular if we have two $1$ -simplices 
     \begin{equation*}
         \begin{tikzcd}
             X & X' \arrow[l] \arrow[d,"f"] \\
             {} & Y
         \end{tikzcd}
         ~~~~~~\text{and}~~~~~~
         \begin{tikzcd}
             Y & Y'\arrow[l] \arrow[d,"h"] \\
             {} & Z,
         \end{tikzcd}
     \end{equation*}
     we can compose these two $1$-simplices to form a $1$-simplex which is the outer roof of the following $2$-simplex
     \begin{equation*}
         \begin{tikzcd}
             X & X'\arrow[l] \arrow[d,"f"] & X' \times_{Y} Y' \arrow[l] \arrow[d,"g"] \\
             {} & Y & Y'\arrow[l] \arrow[d,"h"] \\
             {} & {} & Z.
         \end{tikzcd}
     \end{equation*}
     This is possible because we can take pullback of edges in $\E$ as $(\Ca,\E)$ is a geometric setup. 
    \end{remark}
    The above remark shows  that $\op{Corr}(\Ca)_{\E,\op{all}}$ admits lift along inclusions $\Lambda^2_1 \to \Delta^2$. This leads us to the following claim that $\op{Corr}(\Ca)_{\E,\op{all}}$ is an $\infty$-category:
    \begin{proposition}
        Let $(\Ca,\E)$ be a geometric setup. Then the simplicial set $\op{Corr}(\Ca)_{\E,\op{all}}$ is an $\infty$-category.
    \end{proposition}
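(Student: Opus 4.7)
To show that $\op{Corr}(\Ca)_{\E,\op{all}}$ is an $\infty$-category, I must verify that every inner horn $\Lambda^n_i \to \op{Corr}(\Ca)_{\E,\op{all}}$, with $0 < i < n$, admits a filler to $\Delta^n$. By adjunction between $C$ and $B$, together with the definition of $\op{Corr}(\Ca)_{\E,\op{all}}$ as a sub-simplicial set of $B(\Ca)$ cut out by the two conditions (vertical edges land in $\E$, exact squares go to pullback squares), such a horn corresponds to a functor $F\colon C(\Lambda^n_i) \to \Ca$ that sends vertical edges to $\E$ and exact squares of $C(\Lambda^n_i)$ to pullback squares in $\Ca$; the task is to extend $F$ to a functor $\widetilde F\colon C(\Delta^n) \to \Ca$ with the same two properties.

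The plan is to first unpack the combinatorics of $C(\Delta^n)$ and describe precisely which objects and morphisms are missing from $C(\Lambda^n_i)$. Concretely, the object $([0], [n])$ together with certain edges and squares involving it and the excluded vertex $i$ are the only pieces not already determined by $F$. I will then construct $\widetilde F$ by defining its value on the missing objects via iterated pullbacks: for each pair $(a,b)$ with $a \le b$ and $(a,b) \notin C(\Lambda^n_i)$, set $\widetilde F(a,b)$ to be the pullback of the corresponding diagram in $\Ca$, which exists because all the edges involved are in $\E$ and $\E$ is closed under pullbacks. The fact that $F$ already sends vertical edges in $C(\Lambda^n_i)$ to $\E$ guarantees that these pullbacks make sense; stability of $\E$ under pullback guarantees that $\widetilde F$ continues to send vertical edges to $\E$.

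The main step is to verify that $\widetilde F$ still sends every exact square of $C(\Delta^n)$ to a pullback square. This will follow from a pasting argument: any exact square in $C(\Delta^n)$ that was not already present in $C(\Lambda^n_i)$ decomposes, by interpolating through the object $([0],[n])$ (or, more generally, the object indexed by the missing vertex $i$), into pasted squares each of which is either already sent to a pullback by $F$ or becomes a pullback by the defining construction of $\widetilde F$. Standard pasting for pullbacks then concludes that the whole square goes to a pullback.

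The hard part, and the one requiring the most care, is the bookkeeping that identifies exactly which exact squares of $C(\Delta^n)$ are \emph{not} in $C(\Lambda^n_i)$ and showing that each of them admits such a decomposition through already-specified pullbacks. This is purely combinatorial but tedious; the inner condition $0 < i < n$ is essential, since it is precisely what guarantees both the top row $([0], \bullet)$ and the rightmost column $(\bullet, [n])$ of $C(\Delta^n)$ are already entirely present in $C(\Lambda^n_i)$, and therefore that all the pullback data we need to form $\widetilde F$ and to check the exact-square condition is actually available. Once the combinatorics is set up, the rest is a routine application of the fact that $(\Ca,\E)$ is a geometric setup.
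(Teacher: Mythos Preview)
Your direct horn-filling plan has a real gap in the analysis of $C(\Lambda^n_i)$ for $n\ge 3$. Run the computation: the vertex $(a,b)$ of $C(\Delta^n)$ lies in $C(d_j\Delta^{n-1})$ iff $j\notin\{a,b\}$, so it lies in $C(\Lambda^n_i)$ iff there exists some $j\ne i$ with $j\notin\{a,b\}$. For $n\ge 3$ this is satisfied by \emph{every} pair $(a,b)$, including $(0,n)$; there are no missing objects at all. Consequently your step ``define $\widetilde F(a,b)$ as a pullback on the missing objects'' is vacuous for $n\ge 3$, and you have said nothing about how to extend $F$ over the higher-dimensional cells of $C(\Delta^n)$ that are genuinely absent from $C(\Lambda^n_i)$. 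That extension problem amounts to showing the inclusion $C(\Lambda^n_i)\hookrightarrow C(\Delta^n)$ is inner anodyne (or something of comparable strength), which is a nontrivial combinatorial fact you never invoke. Your proposal is also internally inconsistent: you first say $(0,n)$ is the missing object, and later assert that the entire top row and rightmost column are present; for $n=2$ the first statement is correct and the second false, while for $n\ge 3$ it is the other way around.

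The paper avoids all of this by not attacking arbitrary inner horns. It uses Joyal's criterion \cite[Corollary~2.3.2.2]{HTT}: it suffices to prove that the restriction $\op{Fun}(\Delta^2,\op{Corr}(\Ca)_{\E,\op{all}})\to\op{Fun}(\Lambda^2_1,\op{Corr}(\Ca)_{\E,\op{all}})$ is a trivial Kan fibration. That reduces everything to $n=2$, where one observes $C(\Delta^2)=C(\Lambda^2_1)^{\triangleleft}$ and identifies the two functor categories with (a) functors $C(\Lambda^2_1)\to\Ca$ admitting a limit, and (b) functors $C(\Delta^2)\to\Ca$ that are right Kan extended from $C(\Lambda^2_1)$; the geometric-setup hypotheses guarantee both descriptions hold for the relevant subcategories, and then \cite[Proposition~4.3.2.15]{HTT} gives the trivial fibration. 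Your $n=2$ idea (fill in $(0,2)$ by a pullback) is morally the same, but the Kan-extension packaging is what makes it rigorous at the level of simplicial sets, and the reduction to $n=2$ is what lets you skip the hard combinatorics you did not actually carry out.
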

    \begin{proof}
        By \cite[Corollary 2.3.2.2]{HTT}, it is enough to show that the map 
        \begin{equation*}
            \op{Fun}(\Delta^2,\op{Corr}(\Ca)_{\E,\op{all}}) \to \op{Fun} (\Lambda^2_1,\op{Corr}(\Ca)_{\E,\op{all}})
        \end{equation*}
        is a trivial fibration of simplicial sets.  Let $\K': = \op{Fun}(\Delta^2,\op{Corr}(\Ca)_{\E,\op{all}})$ and $\K:=\op{Fun} (\Lambda^2_1,\op{Corr}(\Ca)_{\E,\op{all}})$. We have obvious inclusions $\K' \subset \op{Fun}(C(\Delta^2),\Ca)$ and $\K \subset \op{Fun}(C(\Lambda^2_1),\Ca)$.  By definition of the functor $C$, we see that 
        \begin{equation*}
            C(\Delta^2) =C(\Lambda^2_1)^{\triangleleft}.
        \end{equation*}
        Now we notice the following observations :
        \begin{itemize}
            \item  The simplicial set $\K$ is the full subcategory of $\op{Fun}(C(\Lambda^2_1),\Ca)$ spanned by functors $F_0: C(\Lambda^2_1) \to \Ca$ which admit a limit i.e. $F_0$ can be extended to $C(\Delta^2)=C(\Lambda^2_1)^{\triangleleft} \to \Ca$. This is because as the vertical edges are spanned in $E$ and $(\Ca,\E)$ is a geometric setup.
            \item The simplicial set $\K'$ is the full subcategory of $\op{Fun}(C(\Delta^2),\Ca)$ spanned by functors $F: C(\Delta^2) \to \Ca$ which are right Kan extended from $F|_{C(\Lambda^2_1)}$. This is also true because $(\Ca,\E)$ is a geometric setup and exact squares get mapped to pullback squares in the definition of $\op{Corr}(\Ca)_{\E,\op{all}}$.
        \end{itemize}
   
    Thus one can apply the dual version of \cite[Proposition 4.3.2.15]{HTT} to $\K,\K'$ where $\Ca_0= C(\Lambda^2_1),\Ca = C(\Delta^2), \D = \Ca$ and $\D'=\Delta^0$. Thus we get that the map $\K' \to \K$ is a trivial fibration which completes our proof.
 
    \end{proof}

    Before upgrading the notion of correspondences in operadic version, we introduce some new notations on how the Correspondences behave with functor categories. This shall play a key role in extending six-functor formalisms from one geometric setup to another.
\begin{notation}
\begin{itemize}

    \item     Let $(\Ca,\E)$ be a geometric setup and $K$ be a simplicial set.  Then the functor category $(\op{Fun}(K,\Ca),\tilde{\E})$ is a geometric setup where $\tilde{\E}$ is set of edges $\Delta^1 \times K \to \Ca$ such that for all $k \in K$, the edge $\Delta^1 \times\{k\} \to \Ca$ is in $E$. For any set of edges $P$ stable under pullbacks and compositions in $\Ca$, let $\op{Fun}^P(K,\Ca)$ is full subcategory spanned by functors $f: K \to \Ca$ such that $f: K \to \Ca_P \to \Ca$.

\item Let $\op{Corr}^{\op{all-cart}}(\op{Fun}^{\E}(K,\Ca))_{\tilde{\E},\op{all}}$ be the full subcategory spanned by edges 
\begin{equation}\label{Corrfunctorialmap}
    \begin{tikzcd}
        f &  \arrow[l,"\sigma"] f' \arrow[d,"\tau"] \\
        {} & g
    \end{tikzcd}
\end{equation}
where $\tau \in \tilde{E}$ and for all $\gamma: \Delta^1 \to K$, the square $\sigma_{\gamma} : \Delta^1 \times \Delta^1 \to \Ca$ is cartesian. 
\item Using the notations from above, we have an canonical functor \[ \alpha_{K,\Ca,\E} : \op{Corr}^{\op{all-cart}}(\op{Fun}^{\E}(K,\Ca))_{\tilde{\E},\op{all}} \to \op{Fun}(K,\op{Corr}(\Ca)_{\E,\op{all}}) \]
As $C$ is a colimit preserving functor, it is enough to define the map for $K=\Delta^m$ where $m \ge 0$.
On the level of $n$-simplices, the map is defined as follows:

Let $\sigma_n$ be an $n$-simplex of $\op{Corr}^{\op{all-cart}}(\op{Fun}^{\E}(\Delta^m,\Ca))_{\tilde{\E},\op{all}}$, which is a map $C(\Delta^n) \times \Delta^m \to \Ca$. 
To obtain an $n$-simplex of RHS, we need to define a map $C(\Delta^n) \times C(\Delta^m) \to \Ca$. The map is defined by the following composition 
\begin{equation*}
    C(\Delta^n) \times C(\Delta^m) \to C(\Delta^n) \times \Delta^m \xrightarrow{\sigma_n}\Ca
\end{equation*}
the conditions of the the simplex $\sigma_n$ ensures that the morphism $C(\Delta^n) \times C(\Delta^m) \to \Ca$ can be upgraded to the a morphism $\Delta^n \times \Delta^m \to \op{Corr}(\Ca)$.

\item There is dual map 
   \begin{equation}\label{Corrfunctorialmapdual}
       \alpha'_{K,\Ca,\E} : \op{Corr}^{\op{\E-cart}}(\op{Fun}(K,\Ca))_{\tilde{E},\op{all}} \to \op{Fun}(K^{\op{op}},\op{Corr}(\Ca)_{\E,\op{all}}) .
   \end{equation}
\end{itemize}
\end{notation}
\begin{remark}
    The $\infty$-category $\op{Corr}(\Ca)_{\E,\op{all}}$ admits the following canonical maps :
    \begin{equation*}
        \Ca_{\E} \to \op{Corr}(\Ca)_{\E,\op{all}} \quad,\quad \Ca^{op} \to \op{Corr}(\Ca)_{\E,\op{all}}
    \end{equation*}
\end{remark}

\begin{proposition}\label{preservingcoproductsCor}
    Let $(\Ca,\E)$ be a geometric setup where $\Ca$ admits finite products. Then the map :
    \begin{equation*}
        \Ca^{op} \to \CrrCpEal
    \end{equation*}
    preserves finite coproducts.
\end{proposition}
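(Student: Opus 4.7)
The plan is to verify the universal property of finite coproducts directly, reducing the problem to the two basic cases (the empty coproduct and binary coproducts), with the general finite case following by induction.

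For the empty coproduct, the initial object of $\Ca^{op}$ is the terminal object $\ast$ of $\Ca$. I would show that the image $F(\ast)=\ast$ is initial in $\op{Corr}(\Ca)_{\E,\op{all}}$, equivalently that $\op{Map}_{\op{Corr}(\Ca)_{\E,\op{all}}}(\ast, Z)$ is contractible for every $Z$. Using the simplicial description of mapping spaces via the functors $C$ and $B$, this reduces to analyzing $C(\Delta^n)$-shaped diagrams in $\Ca$ whose initial vertex is $\ast$; the universal property of $\ast$ collapses the space of such diagrams.

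For binary coproducts, given $X, Y \in \Ca$, the coproduct in $\Ca^{op}$ is the product $X \times Y$ in $\Ca$. Its structure maps are the images $F(\pi_X)$ and $F(\pi_Y)$ of the two projections, represented by the roofs $X \xleftarrow{\pi_X} X \times Y \xrightarrow{\op{id}} X \times Y$ and $Y \xleftarrow{\pi_Y} X \times Y \xrightarrow{\op{id}} X \times Y$. I would verify that these roofs exhibit $X \times Y$ as a coproduct in $\op{Corr}(\Ca)_{\E,\op{all}}$ by showing that the restriction map
\[
\op{Map}_{\op{Corr}}(X \times Y, Z) \longrightarrow \op{Map}_{\op{Corr}}(X, Z) \times \op{Map}_{\op{Corr}}(Y, Z)
\]
is an equivalence for every $Z$. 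The inverse construction should use the universal property of the product in $\Ca$: compatible data of spans with appropriate middle objects assembles into a single span to $X \times Y$ via the pairing $(\,\cdot\,, \,\cdot\,)\colon V \to X \times Y$.

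The main obstacle is carrying out these verifications at the level of Kan complexes rather than merely $\pi_0$, and ensuring that the simplicial formalism via $C$ and $B$ is used coherently. I would proceed by induction on the simplicial dimension $n$, constructing the inverse map level-by-level from the universal property of products in $\Ca$ and checking compatibility with face and degeneracy maps; the functorial assembly maps $\alpha_{K,\Ca,\E}$ introduced in the preceding notation provide the necessary bridge between span data parametrised by $K$ and $K$-indexed diagrams in $\op{Corr}(\Ca)_{\E,\op{all}}$. Finally, the extension to arbitrary finite coproducts follows by iterating the binary case and invoking the associativity and commutativity of products in $\Ca$.
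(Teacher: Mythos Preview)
Your plan is on the right track and shares the paper's core idea: exhibit $X\times Y$ (with the roofs $X\leftarrow X\times Y\xrightarrow{\op{id}} X\times Y$ and $Y\leftarrow X\times Y\xrightarrow{\op{id}} X\times Y$) as a coproduct in $\op{Corr}(\Ca)_{\E,\op{all}}$ by appealing to the universal property of the product in $\Ca$. Where you diverge from the paper is in the packaging of the higher coherences. Rather than attacking the map of mapping spaces $\op{Map}(X\times Y,Z)\to\op{Map}(X,Z)\times\op{Map}(Y,Z)$ directly, the paper uses Lurie's characterisation of colimit cones: a cone $(\Delta^0\amalg\Delta^0)^{\triangleright}\to\op{Corr}(\Ca)_{\E,\op{all}}$ is a colimit iff every map $(\Delta^0\amalg\Delta^0)\ast\partial\Delta^{n-1}\to\op{Corr}(\Ca)_{\E,\op{all}}$ compatible with it extends over $(\Delta^0\amalg\Delta^0)\ast\Delta^{n-1}$. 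Unwinding the $(C\dashv B)$ adjunction, this becomes a concrete extension problem in $\Ca$: extend a diagram $C(\Lambda^n_0)\amalg_{C(\partial\Delta^{n-1})}C(\Lambda^n_0)\to\Ca$ to one on $C(\Delta^n)\amalg_{C(\Delta^{n-1})}C(\Delta^n)$. The key move is to take products along the top rows of the two copies of $C(\Lambda^n_0)$, invoke the limit property of $X\times Y$ in $\Ca$ to fill the missing top row, and then propagate downward using an isofibration lifting lemma (the edges between the first two rows are identities, so lifts exist).

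Your suggestion to use the assembly maps $\alpha_{K,\Ca,\E}$ is misplaced here: those maps go from $\op{Corr}$ of a functor category to functors into $\op{Corr}$, and do not give you an inverse to the restriction map on mapping spaces. The part of your outline that remains genuinely open is exactly the step you flag as ``the main obstacle'': how to build the inverse at the level of Kan complexes. The paper's lifting-problem formulation sidesteps this by never writing down an inverse map; it only solves extension problems one $n$ at a time in $\Ca$, where the product universal property and an isofibration argument suffice. If you want to make your approach work, you should translate it into that language rather than trying to construct an explicit homotopy inverse.
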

\begin{proof}
    Let $X,Y \in \CrrCpEal$, we claim that the object $X \times Y$ with maps 
    \begin{equation*}
        \begin{tikzcd}
            X & \arrow[l] X \times Y \arrow[d,"\op{id}"] \\ {} & X \times Y
        \end{tikzcd}
        \quad ; \quad
        \begin{tikzcd}
            Y & \arrow[l] X \times Y \arrow[d,"\op{id}"] \\ {} & X \times Y
        \end{tikzcd}
    \end{equation*}
    makes $X \times Y$ as a coproduct of $X$ and $Y$.

    Let $F: \Delta^0 \coprod \Delta^0 \to \CrrCpEal$ be the diagram given by $X$ and $Y$. Let $F': (\Delta^0 \coprod \Delta^0)^{\triangleright} = \Lambda^2_0 \to \CrrCpEal$ be the diagram given by the two morphisms above. In order to show this is a colimit diagram, we need to show the following :
    \begin{itemize}
        \item         For~$n \ge 2$ and $H: (\Delta^0 \coprod \Delta^0) \boldsymbol{*}\partial\Delta^{n-1} \to \CrrCpEal$ be a diagram such that $H|_{[0]}=F'$. Then  $H~\text{extends to}~H':(\Delta^0 \coprod\Delta^0) \boldsymbol{*} \Delta^{n-1} \to \CrrCpEal$.
    \end{itemize}
The morphism $H$ is a map $(H_1,H_2):C(\Lambda^n_0) \coprod_{C(\partial\Delta^n)} C(\Lambda^n_0) \to \Ca$. Let $\{X_{i1}\}_{i=1}^m, \{X_{2i}\}_{i=1}^n$ be images of $H_1$ and $H_2$ along the first row of $C(\Lambda^n_0)$. Taking products $\{X_{i1} \times X_{i2}\}_{i=1}^n$, the morphism $H_1,H_2$ amalgamate to a morphism :
\begin{equation*}
    H_3: C(\Lambda^n_0)\coprod_K C(\Lambda^n_0) \to \Ca
\end{equation*}
where $K: = C(\Lambda^n_0)/\{(0,0)\}$. Here $H_3(0,i)$ for $i=1,2.\cdots,n$ is $X_{i1} \times X_{i2}$. For $n=2$ a pictorial description of $H_3$ is the following diagram :
\begin{equation}\label{nequals2diagram}
		\begin{tikzpicture}[baseline={(0,3)}, scale=2]
			
			\node (a1) at (0,2) {$ Y $};
			\node (a2) at (0,1.5) {$ X $};
			
			\node (b1) at (1,1.5) {$ X\times Y $};
			\node (b2) at (1,1) {$ X\times Y $};
			
			\node (c1) at (2,1.5) {$ X_{11}\times X_{22} $};
			\node (c2) at (2,1) {$  $};
			\node (c3) at (2,0.5) {$ Z $};
			
			\path[font=\scriptsize,>= angle 90]
			
			(b1) edge [->] node [above ] {$  $} (a1)
			(b1) edge [->] node [above ] {$  $} (a2)
			(b1) edge [double equal sign distance] node [above ] {$  $} (b2)
			
			(c1) edge [bend right=30,-, line width=1mm, white] node [] {} (a2)
			
			(c1) edge [bend right=30,->] node [above ] {$  $} (a1)
			(c1) edge [bend right=30,->] node [above ] {$  $} (a2)
			(c1) edge [bend left=30,->] node [above ] {$  $} (c3);
		\end{tikzpicture}
\end{equation}

Restricting the morphism to the first row we get a map $h_3 : (\Lambda^n_0)^{op} \coprod_{(\partial\Delta^n)^{op}} (\Lambda^n_0)^{op}  \cong (\Delta^0 \coprod \Delta^0)\boldsymbol{*}\partial \Delta^n \to \Ca$ where restricting to $0$ vertex is the diagram $ X \leftarrow X \times Y \rightarrow Y$. As this is a limit diagram in $\Ca$, this  extends to a diagram \[h_3': (\Delta^0 \coprod \Delta^0)\boldsymbol{*} \Delta^n \to \Ca \] 

Amalgamating $h_3'$ with $H_3$, we see that $H_3$ extends to a morphism 
\begin{equation*}
    H_3' : C(\Lambda^n_0)' \coprod_{K}C(\Lambda^n_0)' \to \Ca
\end{equation*}
where the $'$ denotes including the first row of $C(\Delta^n)$ to the simplicial set. The rest of the proof follows from the following claim :
\begin{claim}
    The morphism $H_3'$ extends to a morphism \[H' : C(\Delta^n) \coprod_{C(\Delta^{n-1})} C(\Delta^n) \to \Ca.\]
\end{claim}
\begin{proof}[Proof of the claim]
We prove the claim in the following steps:
\begin{itemize}
    \item \textbf{Case n=2:} For $n=2$, we see that \cref{nequals2diagram}
 can be extended to the following diagram:
 
 \begin{equation*}
 	\begin{tikzpicture}[baseline={(0,3)}, scale=2]
 		
 		\node (a1) at (0,2) {$ Y $};
 		\node (a2) at (0,1.5) {$ X $};
 		
 		\node (b1) at (1,1.5) {$ X\times Y $};
 		\node (b2) at (1,1) {$ X\times Y $};
 		
 		\node (c1) at (2.25,1.5) {$ X_{11}\times X_{22} $};
 		\node (c2) at (2.25,1) {$ X_{11}\times X_{12} $};
 		\node (c3) at (2.25,0.25) {$ Z $};
 		
 		\path[font=\scriptsize,>= angle 90]
 		
 		(b1) edge [->] node [above ] {$  $} (a1)
 		(b1) edge [->] node [above ] {$  $} (a2)
 		(b1) edge [double equal sign distance] node [above ] {$  $} (b2)
 		(c1) edge [double equal sign distance] node [above ] {$  $} (c2)
 		(c2) edge [->] node [above ] {$  $} (b2)
 		(c1) edge [->] node [above ] {$  $} (b1)
 		
 		(c1) edge [bend right=30,-, line width=1mm, white] node [] {} (a2)
 		
 		(c1) edge [bend right=30,->] node [above ] {$  $} (a1)
 		(c1) edge [bend right=30,->] node [above ] {$  $} (a2)
 		(c2) edge [->] node [above ] {$  $} (c3);
 	\end{tikzpicture}
 \end{equation*}
 
 which is the morphism $H'$ for $n=2$. For the rest of proof, hence we assume $n >2$.
 \item  We claim the  morphism $H_3'$ when restricted to $\widetilde{h_3}:=H_3':C(\partial\Delta^{n-1}) \subset K' \to \Ca $ can be extended to $C(\Delta^{n-1})$. Notice that the first row of $C(\partial\Delta^{n-1})$ is $\Lambda^n_0$. As we know that $H_3'$ is a morphism already is a map from each of $C(\Lambda^n_0)'$. Using \cref{isofibrationfunclift} twice, we see that $\widetilde{h_3}$ lifts to 
 \begin{equation*}
     \widetilde{h_3}' : C(\partial\Delta^{n-1})' \to \Ca
 \end{equation*}
 \item As $H_3'$ is derived from a morphism in $\CrrCpEal$, we see that the vertical edged between the first two rows are all identities (as the squares are cartesian). As equivalences are coCartesian morphisms from $\Ca \to \Delta^0$, we see that $\widetilde{h_3}'$ extends to a morphism 
 \begin{equation*}
     \widetilde{h_3}'' : C(\partial\Delta^{n-1)})'' \to \Ca
 \end{equation*}
 where $''$ means adjoining the top row and the right most vertical column.
 \item The morphism $\widetilde{h_3}''$ now extends to the whole simplex $C(\Delta^{n-1})$. The case $n=3$ can figured out pictorially. For higher $n$, we see that that commutativity of each exact squares follows from the existence of $C(\Delta^m)$ for $m <n-1$.
 \item The similar arguments also allow us to extend each of $C(\Lambda^n_0)''$ to $C(\Delta^n)$. This completes the claim that $H_3'$ extends to 
 \begin{equation*}
     H' : C(\Delta^n) \coprod_{C(\Delta^{n-1})} C(\Delta^n) \to \Ca.
 \end{equation*}
 
 \end{itemize}
    
\end{proof}
   \textit{Back to the main proof :} Once we have the existence of $H'$, the fact that all exact squares are pullback squares follows easily that lower simplices map to $\CrrCpEal$. Thus this shows $H'$ is indeed a morphism:
  \begin{equation*}
      H: (\Delta^0 \coprod \Delta^0)\boldsymbol{*} \Delta^{n-1} \to \CrrCpEal.
  \end{equation*}
  This completes the proof of the proposition.
\end{proof}
\begin{notation}
    Let $(\Ca,\E)$ be a  geometric setup. We define a geometric setup on the $\infty$-category $(\Ca^{\op{op}})^{\coprod,\op{op}}$. We write an edge $f$ of $\Ccc$ in the form $\{Y_j\}_{1\le j \le n} \to \{X_i\}_{1 \le i \le m}$ lying over $\alpha : [m] \to [n]$. We define two sets of $\E^{+},\E^{-}$ as follows:
    \begin{itemize}
        \item $\E^+$ consists of $f$ such that the induced age $Y_{\alpha(i)} \to X_i$ belongs to $\E$ for every $i \in \alpha^{-1}(\langle n \rangle ^0)$,
        \item $\E^-$ is subset of $\E^+$ where $\alpha$ is degenerate.
    \end{itemize}
    We shall denote :
    \begin{equation*}
       \op{Corr}(\Ca)^{\otimes}_{\E,\op{all}} := \op{Corr}(\Ccc)_{\E^{-},\op{all}}
    \end{equation*}
\end{notation}

The following lemma address which edges in $\Ccc$ are coCartesian. This is essentially \cite[Lemma 6.1.4]{liu2017enhanced}. We provide the proof for the sake of completeness.

\begin{lemma}\label{coCaredginCor}
    Let $f$ be an edge in $\CrrCopEal$ of the form :
    \begin{equation*}
    \begin{tikzcd}
               \{X_j\}_{1\le j \le m}  & \arrow[l,"f_1"] \{Y_i\}_{1 \le i \le n} \arrow[d,"f_2"] \\
        {} & \{Z_i\}_{1 \le i \le n} 
    \end{tikzcd}
 \end{equation*}
 lying over $\alpha : \langle m \rangle \to \langle n \rangle$, then $f$ is p-coCartesian if and only if: 
 \begin{enumerate}
     \item for every $1 \le i \le n$, the morphism $Y_i \to Z_i$ is an isomorphism,
     \item for every $1 \le i \le n$, the morphism $Y_i \to  X_j$ where $\alpha(j)=i$ exhibits $Y_i \cong \prod_{j,\alpha(j)=i } X_j$. 
 \end{enumerate}
\end{lemma}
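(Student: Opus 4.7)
The plan is to verify both implications of the equivalence, leveraging the standard theory of $p$-coCartesian edges (\cite[Proposition 2.4.1.5]{HTT}) together with the explicit simplicial description of $\op{Corr}(\Ccc)_{\E^-,\op{all}}$.

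For the sufficiency direction, I would directly check the lifting property of $p$-coCartesian edges. Given $n \geq 2$ and a diagram $\sigma_0 : \Lambda^n_0 \to \CrrCopEal$ whose leading edge $\Delta^{\{0,1\}} \to \CrrCopEal$ is $f$, together with an extension $\tau : \Delta^n \to N(\op{Fin}_*)$ of $p \circ \sigma_0$, one must produce a filler $\sigma : \Delta^n \to \CrrCopEal$ lying over $\tau$. Unwinding the definitions, this amounts to extending a functor $C(\Lambda^n_0) \to \Ccc$ to $C(\Delta^n) \to \Ccc$ compatibly with $\tau$, sending vertical edges to $\E^-$ and exact squares to pullbacks. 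Condition $(1)$ (that each $Y_i \to Z_i$ is an isomorphism) together with the stability of $\E^-$ under equivalences propagates the right-leg isomorphisms along the entire horn, reducing the problem to the left leg. Condition $(2)$ is exactly the operadic $p$-coCartesian condition for the underlying fibration $\Ccc \to N(\op{Fin}_*)$ (cf. \cite[Proposition 2.4.3.9]{HA}): any collection of arrows out of the individual $X_j$'s in $\Ccc$ assembles into a single arrow out of $Y_i \cong \prod_{\alpha(j) = i} X_j$ via the universal property of the product. The required exact-square-to-pullback-square condition then follows automatically because the right legs in the vertical direction are isomorphisms.

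For the necessity direction, I would use the essential uniqueness of $p$-coCartesian lifts. Given $\{X_j\}$ and $\alpha : \langle m \rangle \to \langle n \rangle$, I construct an explicit edge $\tilde f$ of the prescribed form by setting $\tilde Y_i = \tilde Z_i := \prod_{\alpha(j) = i} X_j$ (which exists as a product in $\Ccc$ because it corresponds to the operadic coproduct in $\Ca^{\op{op}}$, i.e. a product in $\Ca$, and always exists levelwise in the operadic category), with right leg the identity and left leg given by the product projections. By the sufficiency direction just proved, $\tilde f$ is $p$-coCartesian. Since any two $p$-coCartesian lifts of $\alpha$ with the same source $\{X_j\}$ are equivalent via a $p$-vertical equivalence, the resulting equivalence $f \simeq \tilde f$ identifies the targets $\{Z_i\}$ with $\{\tilde Z_i\} = \{\prod_j X_j\}$ and forces the middle terms $Y_i$ to satisfy both $(1)$ and $(2)$ componentwise.

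The principal obstacle lies in the bookkeeping of the sufficiency step: one must carefully verify that the extensions along the horn preserve the condition that exact squares in $C(\Delta^n)$ map to pullback squares in $\Ccc$, and that the vertical edges of the filler continue to lie in $\E^-$. This is where the compatibility between the operadic product structure and the $\E^-$ side of the correspondence category is truly tested, and handling the higher simplices ($n > 2$) requires an inductive argument on the levels of the diagram $C(\Delta^n)$ analogous to the one employed in the proof of \Cref{preservingcoproductsCor} above.
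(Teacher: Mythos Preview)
Your proposal is essentially correct and identifies the key ingredients, but its organization differs from the paper's in an instructive way.

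For sufficiency, the paper proceeds by first reducing via the inert/active factorization in $N(\op{Fin}_*)$: inert edges are handled because $f$ becomes an equivalence in the fiber (invoking \cite[Proposition 2.4.3.3]{HA}), and for an active $\alpha:\langle m\rangle \to \langle 1\rangle$ the paper \emph{directly cites} \cref{preservingcoproductsCor} to know that $Y\cong\prod_j X_j$ is a coproduct in $\CrrCpEal$, so the horn filling reduces to the universal property of a colimit. You instead propose to redo the inductive filling argument from scratch ``analogous to \cref{preservingcoproductsCor}''; this works, but it duplicates effort. The paper's approach is cleaner precisely because it isolates the hard combinatorics into that preparatory proposition and then invokes it.

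For necessity, the approaches genuinely diverge. You argue by essential uniqueness of $p$-coCartesian lifts: having already established sufficiency, you build the explicit lift $\tilde f$ with $\tilde Y_i = \tilde Z_i = \prod_{\alpha(j)=i} X_j$ and conclude $f\simeq \tilde f$. This is efficient and conceptually clean. The paper instead argues necessity first and more concretely: it uses the $n=2,3$ lifting problems to manufacture an explicit inverse to $q:Y\to Z$ (proving condition (1)), and then shows that any such coCartesian $f$ already lies in the image of $(\Ca^{\op{op}})^{\coprod}\hookrightarrow \CrrCopEal$, so coCartesianness descends to that sub-operad and \cite[Remark 2.4.3.4]{HA} gives condition (2). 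Your uniqueness argument is shorter but logically requires sufficiency first; the paper's direct argument is independent and arguably more informative about \emph{why} the conditions are forced.
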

\begin{proof}
We prove the two implications as follows:
\begin{itemize}
    \item     \textbf{Proving the conditions provided that the morphism is coCartesian:} let $f$ be a morphism which is $p$-coCartesian. We prove the two conditions as follows:
    \begin{enumerate}
        \item \textbf{Proving $Y_j \to Z_j$ is an isomorphism:} As every morphism in $\CrrCopEal$ factorises as an active and inert. It is enough to show that if $f$ is of the following form:
        \begin{equation*}
            \begin{tikzcd}
                X &  Y \arrow[l] \arrow[d,"q"] \\
                {} & Z
            \end{tikzcd}
    \end{equation*}
    which is $p$-coCartesian , then $q$ is an isomorphism. Recall that a $f$ is $p$-coCartesian implies that the following diagram 
    \begin{equation*}
        \begin{tikzcd}
            \Delta^{\{0,1\}} \arrow[dr,"f"] \arrow[d] & {} \\
            \Lambda^n_0 \arrow[r,"\sigma"] \arrow[d] & \CrrCopEal \arrow[d,"p"] \\
            \Delta^n \arrow[r] \arrow[ur,dotted,"\sigma'"] & N(\op{Fin}_*)
         \end{tikzcd}
    \end{equation*}
    admits a solution. For $n=2$, and for two $1$-simplices given by $f$ and the morphism :
    \begin{equation*}
        \begin{tikzcd}
            X & Y \arrow[l] \arrow[d,"\op{id}"] \\
            {} & Y
        \end{tikzcd}
    \end{equation*}
    which form a morphism $\Lambda^2_0 \to \CrrCopEal$, the lifting problem gives us the following $2$ simplex $\tau$
    \begin{equation*}
        \begin{tikzcd}
            X & Y \arrow[d,"q"] \arrow[l] & Y \arrow[l,"\op{id}"] \arrow[d,"h_1"] \\
            {} & Z  & Y'\arrow[l,"g_1"] \arrow[d,"h_2"] \\
            {} & {} & Y
        \end{tikzcd}
    \end{equation*}

    We shall use this simplex while using the lifting problem for $n=3$. We construct a morphism $\sigma: \Lambda^3_0 \to \CrrCopEal$ given by $3$ 2-simplices : $\tau$: and the other two given as follows:
    \begin{equation*}
        \begin{tikzcd}
            X & Y \arrow[l] \arrow[d,"q"] & Y \arrow[l,"\op{id}"] \arrow[d,"q"]  \\
            {} & Z  & Z \arrow[l,"\op{id}"] \arrow[d,"\op{id}"] \\
            {} & {} & Z
        \end{tikzcd}
        \quad; \quad
        \begin{tikzcd}
            X & Y \arrow[l] \arrow[d,"\op{id}"] & Y \arrow[l,"\op{id}"] \arrow[d,"\op{id}"] \\
            {} & Y  & Y \arrow[l,"\op{id}"] \arrow[d,"q"] \\
            {} & {} & Z.
        \end{tikzcd}
    \end{equation*}
    The morphism $\sigma$ extends  to $\sigma'$ given by the $3$-simplex

    \begin{equation*}
        \begin{tikzcd}
            X & Y \arrow[l] \arrow[d,"q"] & Y \arrow[l,"\op{id}"] \arrow[d,"h_1"]  & Y \arrow[d,"q"] \arrow[l,"\op{id}"] \\
            {} & Z  & Y'\arrow[l,"g_1"] \arrow[d,"h_2"] & Z \arrow[l,"g_1"] \arrow[d,"q'"] \\
            {} & {} & Y & Y \arrow[l,"\op{id}"] \arrow[d,"q"] \\
            {} & {} & {} & Z
            {} & 
        \end{tikzcd}
    \end{equation*}
    The last vertical arrow shows that $q \circ q' = \op{id}_Z$ and $q' \circ q = \op{id}_Y$. This shows that $q$ is an isomorphism.
    \item In this case as the previous one, it is enough to show that if $f$ is of the form:
    \begin{equation*}
    \begin{tikzcd}
                \{X_j\}_{1 \le j \le m} & \arrow[d,"\op{id}"] Y \arrow[l,"\{f_j\}"] \\
                {} & Y
    \end{tikzcd}
    \end{equation*}
    is coCartesian, then $f_j$'s make $Y$ as a product of $\prod_{j=1}^m X_j$. We notice that $f$ lies in the image of the inclusion map :
    \[(\Ca^{\op{op}})^{\coprod} \to \CrrCopEal.\] For every $n \ge 2$, let us consider the lifting problem 
    
 \begin{equation*}
        \begin{tikzcd}
            \Delta^{\{0,1\}} \arrow[dr,"f"] \arrow[d] & {} \\
            \Lambda^n_0 \arrow[r,"\sigma"] \arrow[d] & (\Ca^{\op{op}})^{\coprod} \arrow[r] \arrow[d,"p'"] & \CrrCopEal \arrow[dl,"p"] \\
            \Delta^n \arrow[r,"\tau"] \arrow[urr,dotted,"\sigma'"] & N(\op{Fin}_*)
         \end{tikzcd}
    \end{equation*}
    where :
    \begin{enumerate}
        \item $\beta : \{X_j\}_{1 \le j \le m} \leftarrow Y \leftarrow Y_2 \cdots Y_n$.
        \item $\tau : \langle m \rangle \to \langle 1 \rangle \cdots \langle 1 \rangle$.
    \end{enumerate}
    Now as $\sigma'$ lifts to $\CrrCpEal$ and the squares are pullback squares. This shows that $\sigma'$ does factors via $\sigma'' : \Delta^n \to (\Ca^{\op{op}})^{\coprod}$. This implies that $f$ is already coCartesian in $(\Ca^{\op{op}})^{\coprod}$. By \cite[Remark 2.4.3.4]{HA}, this gives $Y$ is product of $X_j$'s.
    \end{enumerate}

    \item \textbf{Proving that $f$ is coCartesian:} As stated before, we would like to have a solution to the lifting problem :
    
    \begin{equation*}
        \begin{tikzcd}
            \Delta^{\{0,1\}} \arrow[dr,"f"] \arrow[d] & {} \\
            \Lambda^n_0 \arrow[r,"\sigma"] \arrow[d] & \CrrCopEal \arrow[d,"p"] \\
            \Delta^n \arrow[r,"\tau"] \arrow[ur,dotted,"\sigma'"] & N(\op{Fin}_*)
         \end{tikzcd}
    \end{equation*}
    where $\tau$ is given by $\langle k_0 \rangle \xrightarrow{\alpha} \langle k_1 \rangle \cdots \langle k_n \rangle$.
    As coCartesian morphisms are stable under compositions and the maps in $\op{Fin}_*$ admit a factorization system, we will show the lifting problem only for the following two cases :
    \begin{enumerate}
        \item \textbf{$\alpha$ is inert:} In that case $f$ is an equivalence in $\op{Corr}(\Ca)_{\E,\op{all}}$. Mimicking the arguments in \cite[Proposition 2.4.3.3]{HA}, we see that $f$ is $p$-coCartesian. 
    \item \textbf{$\alpha:\langle n \rangle \to \langle 1 \rangle$ active:} As $k_1 =1$, it is also enough to consider the case when $k_i=1$ for $i=2,\cdots ,n$. The morphism $\sigma$ induces for every $1\le j \le m$ the diagrams :
    \begin{equation*}
        \begin{tikzcd}
            \Delta^{\{0,1\}} \arrow[dr,"f_j"] \arrow[d] & {} \\
            \Delta^0  \boldsymbol{*} \partial\Delta^{n-1} \cong \Lambda^n_0 \arrow[r,"\sigma_j"] 
 &\op{Corr}(\Ca)_{\E,\op{all}}
        \end{tikzcd}
    \end{equation*}
    induce a diagram :
    \begin{equation*}
        h: \coprod_{j=1}^m\Delta^0 \boldsymbol{*}\partial\Delta^{n-1} \to \CrrCpEal
    \end{equation*}
    where $h|_0 : : (\coprod_{j=1}^m\Delta^0)^{\triangleleft} \to \op{Corr}(\Ca)_{\E,\op{all}}$. As $Y$ is a product of $X_j$'s and $\Ca$ admits finite products implies $\op{Corr}(\Ca)$ admits finite coproducts along the inclusion $\Ca^{op} \subset \CrrCpEal$ (\cref{preservingcoproductsCor}) . Thus $h|_0$ is a limit diagram and hence extends to 
    \begin{equation*}
        h' : \coprod_{j=1}^m \Delta^0 \boldsymbol{*} \Delta^{n-1} \to \CrrCpEal. 
    \end{equation*}
    The morphism $h'$ is indeed the desired  $\sigma' : \Delta^n \to \CrrCopEal$.
    
    \end{enumerate}
\end{itemize}

\end{proof}
\begin{proposition}
    Let $(\Ca,\E)$ be an $\infty$-category such that $\Ca$ admits finite products. Then 
    \begin{equation*}
        p : \CrrCopEal \to N(\op{Fin}_*)
    \end{equation*}
    is a coCartesian symmetric monoidal $\infty$-category whose underlying $\infty$-category is $\op{Corr}(\Ca)_{\E,\op{all}}$.
\end{proposition}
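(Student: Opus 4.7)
The plan is to verify three things in turn: (a) that $p$ is a coCartesian fibration; (b) that it satisfies the Segal condition with respect to the inert maps in $N(\op{Fin}_*)$, so that it defines a symmetric monoidal $\infty$-category; (c) that the tensor product induced on the underlying $\infty$-category $\op{Corr}(\Ca)_{\E,\op{all}}$ agrees with the coproduct, so that the symmetric monoidal structure is coCartesian in the sense of \cite[\S 2.4.3]{HA}. The identification of the underlying $\infty$-category is then tautological, since by construction the fiber of $p$ over $\langle 1\rangle$ is exactly $\op{Corr}(\Ca)_{\E,\op{all}}$.

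For (a), the existence of coCartesian lifts is the content of \Cref{coCaredginCor}: given an object $\{X_j\}_{1\le j\le m}$ over $\langle m\rangle$ and a map $\alpha:\langle m\rangle\to\langle n\rangle$ in $N(\op{Fin}_*)$, one defines $Y_i:=\prod_{\alpha(j)=i}X_j$ (using that $\Ca$ has finite products) and takes the edge whose left leg is the product projection $Y_i\to X_j$ and whose right leg is the identity to $\{Y_i\}$. The lemma proved earlier says that this edge is $p$-coCartesian. Once this is established, to conclude that $p$ is a coCartesian fibration one must verify the inner Kan lifting property; this follows from the fact that $\CrrCopEal$ is an $\infty$-category (proved exactly as for $\op{Corr}(\Ca)_{\E,\op{all}}$) together with the characterization of coCartesian edges.

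For (b), the Segal condition requires that the inert projections $\rho^i:\langle n\rangle\to\langle 1\rangle$ induce an equivalence of fibers $\CrrCopEal_{\langle n\rangle}\overset{\sim}{\to}\prod_{i=1}^{n}\op{Corr}(\Ca)_{\E,\op{all}}$. Unwinding the definitions, the fiber over $\langle n\rangle$ consists of $n$-tuples of objects of $\Ca$ together with correspondences performed componentwise, because $E^-$ restricts to the degenerate maps in $N(\op{Fin}_*)$; the projection to each factor is then evidently an equivalence. This part is essentially formal once (a) is in place.

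For (c), by \cite[Proposition 2.4.3.9]{HA} a symmetric monoidal $\infty$-category is coCartesian if and only if the induced tensor product on its underlying category agrees with the coproduct. The active map $\langle 2\rangle\to\langle 1\rangle$, combined with the coCartesian lifts constructed in (a), identifies the tensor $X\otimes Y$ with the image of $X\times Y\in\Ca$ under the canonical functor $\Ca^{\op{op}}\to\op{Corr}(\Ca)_{\E,\op{all}}$; by \Cref{preservingcoproductsCor} this image is precisely the coproduct $X\amalg Y$ in $\op{Corr}(\Ca)_{\E,\op{all}}$. A similar argument using higher-arity active maps shows that all the structural constraints of the symmetric monoidal structure match those of the coCartesian symmetric monoidal structure, completing the proof.

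The main technical obstacle is step (a): one has to check not just the existence of coCartesian lifts for edges but also for higher simplices, i.e.\ that the lifting problem against $\Lambda^n_0\subseteq\Delta^n$ admits a solution for all $n\ge 2$. This is exactly the content of the second half of \Cref{coCaredginCor}, where the active case is reduced to the already-proved coproduct-preservation of $\Ca^{\op{op}}\to\op{Corr}(\Ca)_{\E,\op{all}}$ in \Cref{preservingcoproductsCor}, and the inert case follows from the general fact that equivalences are coCartesian. Once this is granted, steps (b), (c) and the identification of the underlying category are immediate.
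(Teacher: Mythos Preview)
Your proof is correct and follows the same approach as the paper: invoke \Cref{coCaredginCor} to produce $p$-coCartesian lifts (hence $p$ is a coCartesian fibration, using that $N(\op{Fin}_*)$ is the nerve of a $1$-category so the inner fibration condition is automatic once $\CrrCopEal$ is an $\infty$-category), and then observe that the fiber over $\langle n\rangle$ is $\op{Corr}(\Ca)_{\E,\op{all}}^{\times n}$. Your step~(c), verifying that the resulting monoidal structure is coCartesian in the sense of coproducts via \Cref{preservingcoproductsCor}, makes explicit something the paper leaves implicit; the paper's proof simply stops after the Segal condition.
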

\begin{proof}
   From \cref{coCaredginCor}, we see that for any morphism in $N(\op{Fin}_*)$  and an element $(X_1,X_2,\cdots,X_n) \in \CrrCopEal$, we can construct a coCartesian edge. Thus the morphism $p$ is a coCartesian fibration. It follows from that the constructions that on every fiber $\langle n \rangle \in \op{Fin}_*$, we see that the fiber is $\CrrCpEal^n$. 
\end{proof}
\begin{remark}
\begin{enumerate}
    \item  

    The first map also admits a lifting in the operadic level, i.e we have a morphism of $\infty$-operads :
    \begin{equation*}
        (\Ca^{\op{op}})^{\coprod} \to \CrrCopEal
    \end{equation*}
    \item The maps $\alpha_{\Ca,\E,K}$ and $\alpha_{K,\Ca,\E}'$ from \cref{Corrfunctorialmap} and \cref{Corrfunctorialmapdual} also upgrade to the operadic level :
    \begin{equation}\label{Corroperadfunctorial}
        \alpha_{K,\Ca,\E}^{\otimes}: \op{Corr}^{\op{all-cart}}(\op{Fun}(K,\Ca))^{\otimes}_{\tilde{E},\op{all}}  \to \op{Fun}(K,\op{Corr}(\Ca)^{\otimes}_{\E,\op{all}})  \end{equation} and
        \begin{equation}\label{Corroperadfunctorialdual}
            \alpha_{K,\Ca,\E}^{'\otimes}: \op{Corr}^{\E-\op{cart}}(\op{Fun}(K,\Ca))^{\otimes}_{\tilde{E},\op{all}}  \to \op{Fun}(K^{\op{op}},\op{Corr}(\Ca)^{\otimes}_{\E,\op{all}}) 
        \end{equation}
        respectively.
\end{enumerate}
   
\end{remark}

\subsection{Six-functor formalisms.}\label{App.A.2:_6FF}

\begin{definition}\cite[Definition A.5.6]{mann2022padic}
    Let $(\Ca,\E)$ be a geometric setup where $\Ca$ admits finite products. Then a \textit{pre-6-functor formalism} is a morphism of $\infty$-operads :
    \begin{equation*}
        \D_{(\Ca,\E)} : \CrrCopEal \to \op{Cat}^{\otimes}_{\infty}.
    \end{equation*}
    Given a pre-6-functor formalism, we introduce the following notations :
    \begin{enumerate}
        \item Restricting to the sub-operad $\Ccc$, we get a functor :
        \begin{equation*}
            \D^{*\otimes} : \Ca^{\op{op},\coprod} \to \op{Cat}^{\otimes}_{\infty}
        \end{equation*}
        This is equivalent to the functor 
        \begin{equation*}
            \D^{*\otimes} : \Ca^{\op{op}} \to \op{CAlg}(\op{Cat}_{\infty}).
        \end{equation*} 
        \item As $\D(X):=\D_{(\Ca,\E)}(X)$ is symmetric monoidal for every $X \in \Ca$, we get a tensor product structure :
        \begin{equation*}
            - \otimes - : \D(X) \times \D(X) \to \D(X).
        \end{equation*}
        \item Using the inclusion $\Ca_{\E} \to \CrrCpEal$, we get the following functor :
        \begin{equation*}
           \D_!: \Ca_{\E} \to \op{Cat}_{\infty}.
        \end{equation*}
    \end{enumerate}
\end{definition}

\begin{definition}\cite[Definition A.5.7]{mann2022padic}
    Let $(\Ca,\E)$ be a geometric setup such that $\Ca$ admits finite products. A \textit{6-functor formalism} is a pre-6-functor formalism $\D_{(\Ca,\E)}: \CrrCopEal \to \op{Cat}^{\otimes}_{\infty}$ such that 
    \begin{enumerate}
        \item For all $X\in \Ca$, $\D(X)$ is closed. The internal Hom  functor which is  the right adjoint of the tensor operation shall be denoted by  $\op{Hom}(-,-)$ .
        '\item For $f: X \to Y$ in $\Ca$, the morphism $f^*=\D^*(f)$ admits a right adjoint $f_* : \D(X) \to \D(Y)$. We denote by 
        \begin{equation*}
            \D_* : \Ca \to \op{Cat}_{\infty}
        \end{equation*}
        by the associated functor $X \mapsto \D(X), f \mapsto f_*$.
        \item For $f: X \to Y$ in $\Ca_{\E}$, the morphism $f_! : \D_!(f)$ admits a right adjoint $f^! : \D(Y) \to \D(X)$. We denote by
        \begin{equation*}
            \D^! : \Ca_{\E}^{op} \to \op{Cat}_{\infty}
        \end{equation*}
        the associated functor $X \mapsto \D(X), f \mapsto f^!$
    \end{enumerate}
\end{definition}
In the rest of this subsection, we describe how the a six functor formalism encodes properties like projection formula and base change.
\begin{enumerate}
    \item \textbf{Projection formula:} Let $f: X \to Y$ be a morphism in $E$, we consider the diagram :
    \begin{equation*}
        \begin{tikzcd}
            (X,Y) \arrow[r,"\sigma"] \arrow[d,"\tau"] & (Y,Y) \arrow[d,"\tau'"] \\
            X \arrow[r,"f"] & Y
        \end{tikzcd}
    \end{equation*}
    where :
    \begin{itemize}
        \item $\sigma':$\begin{equation*}
             \begin{tikzcd}
                (X,Y) & \arrow[l," \op{id}"] (X,Y) \arrow[d,"f_1"] \\
                {} & (Y,Y).
            \end{tikzcd}
        \end{equation*}
        where $f_1 = (f,\op{id})$.
        \item $\tau:$ \begin{equation*}
             \begin{tikzcd}
                (X,Y) & \arrow[l,"f_2"]  X \arrow[d,"\op{id}"] \\
               {} & X.
                \end{tikzcd}
        \end{equation*}
        where $f_2=(\op{id},f)$
        \item  $\tau':$\begin{equation*}
             \begin{tikzcd}
                (Y,Y) & Y \arrow[d,"\op{id}"] \arrow[l,"f_3"] \\
                {} & Y.
            \end{tikzcd}
        \end{equation*}
        where $f_3 = (\op{id},\op{id})$.
    \end{itemize}

This gives a morphism $\Delta^1 \times \Delta^1 \to \CrrCopEal$.  Applying $\D_{(\Ca,\E)}$, we get the following commutative square in $\op{Cat}_{\infty}$:
\begin{equation*}
    \begin{tikzcd}
        \D(X) \times \D(Y) \arrow[r,"f_! \times \op{id}"] \arrow[d, "\op{id} \otimes f^* "] & \D(Y) \times \D(Y) \arrow[d,"-\otimes-"]\\
        \D(X) \arrow[r,"f_!"] & \D(Y).
    \end{tikzcd}
\end{equation*}
which is the projection formula :
\begin{equation*}
    f_!((-) \otimes f^*(-)) \cong f_!(-) \otimes (-).
\end{equation*}
\item \textbf{Base change :} 
Let 
\begin{equation*}
    \begin{tikzcd}
        X' \arrow[d,"p'"] \arrow[r,"q'"] & X \arrow[d,"p"] \\
        Y' \arrow[r,"q"] & Y
    \end{tikzcd}
\end{equation*}
be a cartesian square with $p',p \in E$. We have a commutative square in $\CrrCopEal$ 
\begin{equation*}
    \begin{tikzcd}
        X \arrow[r] \arrow[d] & X' \arrow[d] \\
        Y\arrow[r,] & Y'
    \end{tikzcd}
\end{equation*}
which is comprised of two $2$-simplices
\begin{itemize}
    \item $\sigma_1:$
    \begin{equation*}
        \begin{tikzcd}
            X & \arrow[d,"p"] \arrow[l,"\op{id}"] X & X' \arrow[l,"q'"] \arrow[d,"p'"] \\
            {} & Y & Y'\arrow[l,"q"] \arrow[d,"\op{id}"] \\
            {} & {} & Y'.
         \end{tikzcd}
    \end{equation*}
    \item $\sigma_2':$
    \begin{equation*}
        \begin{tikzcd}
            X & X' \arrow[l,"q'"] \arrow[d,"\op{id}"] & X' \arrow[d,"\op{id}"] \arrow[l,"\op{id}"] \\
            {} & X' & X'\arrow[l,"\op{id}"]\arrow[d,"p'"]\\
            {} & {} & Y'.
        \end{tikzcd}
    \end{equation*}
    Applying $\D_{(\Ca,\E)}$ to the square, we get the commutative square :
    \begin{equation*}
        \begin{tikzcd}
            \D(X) \arrow[r,"q^{'*}"] \arrow[d,"p_!"] & \D(X') \arrow[d,"p'_!"]\\
            \D(Y) \arrow[r,"q^*"] & \D(Y')
        \end{tikzcd}
    \end{equation*}
    in $\op{Cat}_{\infty}$. Spelling this out, we get the base change equivalence $Ex^*_!$:
    \begin{equation}\label{App.:_Base_Change_Ex*_!}
    	q^*p_!\stackrel{Ex^*_!}{\simeq} p_!'q^{*'}
    \end{equation}
    
\end{itemize}
\end{enumerate}
\subsection{Extending Six-functor formalisms.}
In this subsection, we extend six functor formalism from smaller geometric setups to larger geometric setups. Similar extension results have been proven in DESCENT algorithms of Liu-Zheng (\cite[Section 4]{liu2017enhanced}) and also proved by Mann in his thesis (\cite[Lemma A.5.11,Proposition A.5.14]{mann2022padic}). We provide proofs of these results using the theory of localizations (in particular \cref{localizationcriterion}).

\subsection*{Extension along nice geometric pairs.}
\begin{definition}
An inclusion of two marked $\infty$-categories $(\Ca,\mathcal{S},\E) \subset (\Ca',\mathcal{S'},\E')$ is a \textit{nice geometric pair} if the following conditions hold :
    \begin{enumerate}
        \item Each of the four pairs $(\Ca,\S),(\Ca,\E),(\Ca,\S')$ and $(\Ca',\E')$ are geometric setups.
        \item $\mathcal{S}' \cap \Ca_1 =\mathcal{S}$.
        \item For  $X' \in \Ca'$, there exists a morphism $x : X \to X'$ called an \textit{atlas} such that $X \in \Ca$ and for every $Y \to X'$ where $Y \in \Ca$, the base change $Y \times_{X'} X \to Y$ lies in $\mathcal{S}$.
        \item For every $f : X' \to Y'$ in $\E'$ and for every atlas $y : Y \to Y'$, the base change morphism $X' \times_{Y'}Y \to Y$ is in $\E.$
    \end{enumerate}
    \end{definition}
    \begin{example}
        Let $\Ca= \op{Sch}$ be the category of schemes and $\Ca'= \op{Algst}$ be the $(2,1)$-category of algebraic stacks. Considering $\mathcal{S}$ and $\mathcal{S}'$ as smooth surjections of schemes and algebraic stacks respectively along with $\E$ and $\E'$ be representable morphisms of locally of finite type of schemes and algebraic stacks respectively, we see that $(\Ca,\mathcal{S},\E) \subset (\Ca',\mathcal{S}',\E')$ is a nice geometric pair.
    \end{example}
\begin{proposition}\label{extendingsixfunctorCside}
   Let $(\Ca,\mathcal{S},\E) \subset (\Ca',\mathcal{S}',\E')$ be a nice geometric pair. Let 
   \begin{equation}
       \D_{( \Ca,\E )} : \op{Corr}(\Ca)^{\otimes}_{\E,\op{all}} \to \op{Cat}^{\otimes}_{\infty}
   \end{equation}
   be a six-functor formalism such that the functor :
    \begin{equation*}
        \D^{*\otimes} : \C^{\op{op}} \to\op{CAlg}(\op{Pr}^L)
    \end{equation*}
    satisfies descent for $\S$-\v{C}ech-covers.
Then $\D_{(\Ca,\E)}$ can be extended to a six-functor formalism :
\begin{equation}
    \D_{(\Ca',\E')} : \op{Corr}(\Ca)^{\otimes}_{\E',\op{all}} \to \op{Cat}^{\otimes}_{\infty}.
\end{equation}
\end{proposition}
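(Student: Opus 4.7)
The plan is to extend $\D_{(\Ca,\E)}$ by right Kan extension along the inclusion of correspondence categories, using the $\mathcal{S}$-Čech descent hypothesis to compute the extension explicitly on objects via atlases. First I would extend the pullback functor. For each $X' \in \Ca'$, the nice geometric pair condition (3) provides an atlas $x: X \to X'$ whose Čech nerve $X^{\bullet}_{X'}$ lives entirely in $\Ca$, so one may set
\[ \D'(X') := \lim_{n \in \Delta} \D(X^n_{X'}). \]
The $\mathcal{S}$-Čech descent of $\D^{*\otimes}$ makes this independent of atlas choice (two atlases being comparable after passing to a common refinement), and functoriality in maps of $\Ca'$ follows by choosing compatible atlases. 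This yields $\D'^{*\otimes}: (\Ca')^{\op{op}} \to \op{CAlg}(\op{Pr}^L)$, which by uniqueness of Kan extensions now satisfies descent along $\mathcal{S}'$-Čech covers.

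Next I would extend the $!$-functoriality. For $f: X' \to Y'$ in $\E'$ and an atlas $y: Y \to Y'$, condition (4) guarantees that for the pullback $X'_Y := X' \times_{Y'} Y$, the induced map $f_Y: X'_Y \to Y$ lies in $\E$; iterating this construction over the Čech nerve $Y^{\bullet}$ of $y$, one obtains a levelwise $\E$-map of simplicial objects $f^{\bullet}: X'^{\bullet}_Y \to Y^{\bullet}$ in $\Ca$. Choosing further atlases of $X'^{\bullet}_Y$ if necessary (again inside $\Ca$ by condition (3)), one may apply $\D_!$ at each level and pass to the limit, using the base change compatibilities already encoded in $\D_{(\Ca,\E)}$, to define $f_!$ as a morphism $\D'(X') \to \D'(Y')$.

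The third and most delicate step is to assemble these extensions into a morphism of $\infty$-operads on correspondence categories. The cleanest approach is to invoke the localization criterion (\Cref{localizationcriterion}): one builds an auxiliary simplicial category of \emph{atlased correspondences}, whose objects are pairs $(X', x^{\bullet})$ with $X' \in \Ca'$ and $x^{\bullet}$ an $\mathcal{S}$-Čech atlas, and whose morphisms are correspondences in $\Ca$ between the atlases, equipped with a structure making $\op{Corr}(\Ca')^{\otimes}_{\E',\op{all}}$ a localization at the maps that refine an atlas. The original $\D_{(\Ca,\E)}$ extends tautologically to this auxiliary category, and the $\mathcal{S}$-Čech descent hypothesis says it inverts the maps being localized, giving the desired factorization through $\op{Corr}(\Ca')^{\otimes}_{\E',\op{all}}$.

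The main obstacle is operadic coherence. Extending the $*$- and $!$-pieces object-wise is a formal consequence of Kan extension and Čech descent, but the correspondence category encodes infinitely many base-change squares and projection-formula-type compatibilities as higher simplices, all of which must be preserved under the extension. Controlling this would consume most of the work, and it is precisely where the localization reformulation pays off: it reduces the higher coherences to the single condition that $\D^{*\otimes}$ satisfies $\mathcal{S}$-Čech descent, which holds by hypothesis, so that once the auxiliary category of atlased correspondences is set up and its universal property established, the extension and all its compatibilities follow automatically.
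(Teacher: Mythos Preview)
Your proposal is correct and follows essentially the same approach as the paper: both build an auxiliary correspondence category of atlased objects (the paper calls this $\op{Corr}^{\E\text{-cart}}(\op{Cov}(\Ca))^{\otimes}_{\tilde{\E},\op{all}}$), verify via \Cref{localizationcriterion} that $\op{Corr}(\Ca')^{\otimes}_{\E',\op{all}}$ is its localization at atlas-refinement maps, and then use $\mathcal{S}$-\v{C}ech descent of $\D^{*\otimes}$ to show the levelwise-limit functor inverts those maps and hence descends. Your first two paragraphs are really an informal description of what the extension looks like pointwise; the actual argument is your third paragraph, and the paper proceeds directly to that step without the preliminary Kan-extension framing.
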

\begin{proof}
    We denote by $\op{Cov}(\Ca)$ the full subcategory of $\op{Fun}(N(\Delta)^{op},\Ca)$ spanned by \v{C}ech nerves of the atlases of elements $\Ca'$.An element of $\op{Cov}(\Ca)$ is given by $(x: X \to X',X')$. We have a canonical morphism 
    \begin{equation*}
        p : \CrrCovopEal \to \op{Corr}(\Ca')^{\otimes}_{\E',\op{all}}
    \end{equation*}
    Let $R$ be a collection of morphisms of the form: 
    \begin{equation*}
        \begin{tikzcd}
            (x_1 : X_1 \to X_1', X_1;\cdots; x_n : X_n \to X_n', X_n') \arrow[r,"f"] \arrow[d,"\op{id}"] &(y_1: Y_1 \to X_1', X_1';\cdots ; y_n : Y_n \to X_n', X_n') \\
            (x_1 : X_1 \to X_1', X_1;\cdots; x_n : X_n \to X_n', X_n') & {}.
        \end{tikzcd}
    \end{equation*} where $f=(f_i)_{i=1}^n$  and $f_i$ is a morphism of \v{C}ech nerves of $X_i$ for all $i$. The morphism $p$ sends $R$ to an equivalences. We have the following claim :
    \begin{claim}\label{claimforCorrCC'}
        The morphism $p$ is a localization of $\CrrCovopEal$ along $R$.
    \end{claim}
\begin{proof}[Proof of \cref{claimforCorrCC'}]
    It is enough to check the conditions of \cref{localizationcriterion}. First of all, the existence of atlases imply that $p$ is surjective on $n$-simplices. \\
    Second of all, if $(x: X \to X',X')$ and $(y: Y \to X',X')$ are two objects over $X'$, then we have the product of these two objects namely$( z: X \times _{X'} Y \to X',X')$.
Hence the conditions of \cref{localizationcriterion} are verified proving the claim.

\end{proof} 
We construct a morphism $\phi_{\Ca\Ca'} :\CrrCovopEal \to \op{Cat}_{\infty}^{\otimes}$ as follows:
\begin{itemize}
    \item The map $\alpha^{'\otimes}_{(\Delta_+)^{\op{op}},\Ca',\E'}$ (\cref{Corroperadfunctorialdual} provides a morphism :
    \begin{equation*}
        \phi_1 : \CrrCovopEal \xrightarrow{\alpha'_{(\Delta_+)^{\op{op}},\Ca',\E'}} \op{Fun}(N(\Delta_+),\op{Corr}(\Ca')^{\otimes}_{\E',\op{all}}) \xrightarrow{\op{res}} \op{Fun}(N(\Delta),\op{Corr}(\Ca)^{\otimes}_{\E,\op{all}})
    \end{equation*}
    where the second map is just restricting the augmented simplicial objects to the simplicial object. The simplicial object maps to $\CrrCpEal$ follows from the conditions in the proposition.
    \item The map $\D_{\Ca,\E}$ induces a functor :
    \begin{equation*}
       \phi_2: \op{Fun}(N(\Delta),\CrrCopEal) \to \op{Fun}(N(\Delta),\op{Cat}_{\infty}^{\otimes})
    \end{equation*}
    \item Taking limits of simplicial diagrams and using the theory of Kan extensions (\cite[Proposition 4.3.2.15]{HTT}, we have a limit functor :
    \begin{equation*}
        \phi_3 : \op{Fun}(N(\Delta),\op{Cat}_{\infty}^{\otimes}) \to \op{Cat}_{\infty}^{\otimes}
    \end{equation*}
    \item We define :
    \begin{equation*}
        \phi_{\Ca\Ca'}:= \phi_3 \circ \phi_2 \circ \phi_1.
    \end{equation*}
    In particular the morphism $\phi$ sends an object $(x_1: X_1 \to X_1', X_1';\cdots;x_n : X_n \to X_n',X_n')$ to the element $\Pi_{i=1}^n \op{lim}_{\bb \in \Delta} \D(X_{i,\bb})$.
\end{itemize}

We want to use that fact that as $p$ is a localization along $R$, then $\phi_{\Ca\Ca'}$ descends to a functor $\D_{\Ca'.\E'}$. For this we show the following claim :
\begin{claim}
    The functor $\phi_{\Ca\Ca'}$ sends $R$ to equivalences.
\end{claim}
\begin{proof}[Proof of the claim]
    Let $\X:= (X' \to X, X)$ and $\Y = (Y' \to X,X)$ be  two objects in $\op{Cov}(\Ca)$ and $f$ be a morphism in $\op{Cov}(\Ca)$ which induces an element of $R$. Let $\X \times \Y := (X' \times_{X} Y' \to X,X)$ be the product of $\X$ and $\Y$. We have the following diagram in $\op{Cov}(\Ca)$ :
    
    	\begin{center}
    	\begin{tikzpicture}[baseline={(0,1)}, scale=2]

    		\node (a) at (0,1) {$ \mc X\times \mc Y $};
    		\node (b) at (1, 1) {$ \mc Y $};
    		\node (c)  at (0,0.5) {$  \mc X $};
    		\node (e) at (0.2,0.75) {$  $};
    		\node (f) at (-0.75,1.5) {$ \mc X $};
    		\node (g) at (0.5,0.5) {$  $};

    		\path[font=\scriptsize,>= angle 90]

    		(a) edge [->] node [above ] {$ p_2 $} (b)
    		(a) edge [->] node [left] {$ p_1 $} (c)

    		(f) edge [bend right=-30,->] node [below] {$ f $} (b)
    		(f) edge [bend left=-30, double equal sign distance] node [below] {$  $} (c)
    		(f) edge [ ->] node [above] {$  $} (a);
    	\end{tikzpicture}
    \end{center}
    
    \noindent which induces a diagram in $\CrrCovEopEal$ via the map:
    \[\op{Cov}(\Ca)^{\op{op}\coprod} \to \CrrCovopEal\]

As $\D^{*\otimes}$ satisfies descent along $\mathcal{S}$-\v{C}ech covers, we see that $\phi_{\Ca\Ca'}$ sends $p_1$ and $p_2$ to 
 equivalences. Using the two out of three property of equivalences, we see that $\phi_{\Ca\Ca'}$ sends $f$ to equivalences.
 \end{proof}
 \textit{Back to the proposition:} As $p'$ is a localization along $R$ and $\phi_{\Ca\Ca'}$ sends $R$ to equivalences, we get that $\phi_{\Ca\Ca'}$ descends to a map :
 \begin{equation*}
     \D_{(\Ca',\E')} : \op{Corr}(\Ca')^{\otimes}_{\E',\op{all}} \to \op{Cat}^{\otimes}_{\infty}.
 \end{equation*}
 
 \end{proof}
 \subsection*{Extension along exceptional pairs.}

 \begin{definition}
 An inclusion of $2$-marked $\infty$-categories $(\Ca,\mathcal{S},\E) \subset (\Ca,\mathcal{S},\E')$ is an \textit{exceptional pair} if the following conditions are satisfied :
    \begin{enumerate}
        \item The pairs $(\Ca,\S),(\Ca,\E),(\Ca,\E')$ are geometric setups.
        \item $\S \subset \E$.
        \item For every $f: X \to Y$ in $\E'$, there exists a morphism of augmented simplicial objects 
    \begin{equation*}
        f_{\bb} : X_{\bb} \to Y_{\bb}
        \end{equation*}
        where 
        \begin{itemize}
            \item $f_{-1}=f$
            \item $f_n \in E$ for $n \ge 0$,
            \item $X_{\bb} \to X$ and $Y_{\bb} \to Y$ are $\mathcal{S}$-hypercovers.
        \end{itemize}
        \end{enumerate}
        \end{definition}
        \begin{example}
         Let $\Ca$ be the category of schemes and $\E$ be the collection of morphisms which are separated and of finite type. Let $\mathcal{S}$ be the collection of Zariski covers. It follows from the definition that any morphism $f: X \to Y$ which is locally of finite type  admits a morphism of augmeneted simplicial objects $f_{\bb} : X_{\bb} \to Y_{\bb}$ where $X_{\bb}$ and $Y_{\bb}$ are Zariski hypercovers and $f_n$ for $n \ge 0$ is separated and of finite type. Thus letting $\E'$ to be the locally of finite type morphisms shows that $(\Ca,\mathcal{S},\E) \subset (\Ca,\mathcal{S}',\E')$ is an exceptional pair.
        \end{example}
 \begin{proposition}\label{extendingsixfunctorEside}
   Let $(\Ca,\mathcal{S},\E) \subset (\Ca,\mathcal{S},\E')$ be an exceptional pair. Let 
   \begin{equation}
       \D_{( \Ca,\E )} : \op{Corr}(\Ca)^{\otimes}_{\E,\op{all}} \to \op{Cat}^{\otimes}_{\infty}
   \end{equation}
   be a six-functor formalism such that the functor
    \begin{equation*}
        \D_! : \C_E \to \op{Pr}^L
    \end{equation*}
    satisfies codescent for $\mathcal{S}$-hypercovers.
  
Then $\D_{(\Ca,\E)}$ can be extended to a six-functor formalism :
\begin{equation*}
    \D_{(\Ca,\E')} : \op{Corr}(\Ca)^{\otimes}_{\E',\op{all}} \to \op{Cat}^{\otimes}_{\infty}.
\end{equation*}
\end{proposition}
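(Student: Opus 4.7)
The strategy parallels that of Proposition \ref{extendingsixfunctorCside} but dualises from descent to codescent, replacing \v{C}ech nerves by hypercovers and using the shriek-functoriality instead of the $*$-functoriality. The plan is to construct a localisation of correspondence categories of hypercovers, produce a functor from it using the given six-functor formalism on $\E$ followed by a colimit along the hypercover, and then descend along the localisation.

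First I would define $\op{Cov}_{\mathcal{S},\E}(\Ca) \subset \op{Fun}(N(\Delta_+)^{op},\Ca)$ to be the full subcategory spanned by augmented simplicial objects $X_{\bullet} \to X_{-1}$ which are $\mathcal{S}$-hypercovers and whose face-like maps $X_n \to X_{-1}$ for $n \ge 0$ lie in $\E$. Condition (3) of the definition of an exceptional pair guarantees that every morphism $f \in \E'$ admits such a hypercover refinement. Following the construction of equation \eqref{Corroperadfunctorial} applied to $N(\Delta_+)$, one then obtains a canonical map of $\infty$-operads
\[ p': \op{Corr}^{\op{all-cart}}(\op{Cov}_{\mathcal{S},\E}(\Ca))^{\otimes}_{\tilde{\E},\op{all}} \longrightarrow \op{Corr}(\Ca)^{\otimes}_{\E',\op{all}}. \]
The next step is to show that $p'$ is a localisation at the class $R'$ of morphisms of hypercovers over a fixed base (i.e.\ morphisms $(X_{\bullet} \to X) \to (Y_{\bullet} \to X)$ which are the identity on the base). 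Surjectivity of $p'$ on $n$-simplices comes again from condition (3), while the second criterion of Lemma \ref{localizationcriterion} follows from the existence of levelwise products of two hypercovers over a fixed base; this is the analog of Claim \ref{claimforCorrCC'}.

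I would then build the auxiliary functor
\[ \phi_{\E,\E'}: \op{Corr}^{\op{all-cart}}(\op{Cov}_{\mathcal{S},\E}(\Ca))^{\otimes}_{\tilde{\E},\op{all}} \longrightarrow \op{Cat}_{\infty}^{\otimes} \]
as the composite of three pieces: (i) restriction to the non-augmented part, which lands in $\op{Fun}(N(\Delta)^{op},\op{Corr}(\Ca)^{\otimes}_{\E,\op{all}})$ precisely because the face maps of our hypercovers are in $\E$; (ii) pointwise application of $\D_{(\Ca,\E)}$; (iii) the colimit functor $\op{Fun}(N(\Delta)^{op},\op{Cat}_{\infty}^{\otimes}) \to \op{Cat}_{\infty}^{\otimes}$, which exists because the target admits all small colimits. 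Informally, $\phi_{\E,\E'}$ sends $(X_{\bullet} \to X)$ to $\colim_{n \in \Delta^{op}} \D(X_n)$, and a correspondence to the induced functor between the colimits.

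The crucial step, and the main obstacle, is to check that $\phi_{\E,\E'}$ inverts $R'$. For a morphism $f:(X_{\bullet} \to X) \to (Y_{\bullet} \to X)$ over a fixed base, the hypothesis that $\D_{!}$ satisfies codescent along $\mathcal{S}$-hypercovers identifies both $\colim_{n}\D(X_n)$ and $\colim_{n}\D(Y_n)$ with $\D(X)$, and the two-out-of-three property for equivalences gives that $\phi_{\E,\E'}(f)$ is an equivalence. The delicate aspect, which is where the book-keeping becomes heavy, is that one must perform this descent argument at the operadic level (in $\op{Cat}_{\infty}^{\otimes}$, or equivalently within $\op{CAlg}(\op{Pr}^L)$), so that the colimit is compatible with the symmetric monoidal structure and with the projection-formula and base-change compatibilities built into the $\op{Corr}$-functoriality. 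Once this is in place, the universal property of the localisation $p'$ yields a unique (up to contractible choice) extension
\[ \D_{(\Ca,\E')}: \op{Corr}(\Ca)^{\otimes}_{\E',\op{all}} \longrightarrow \op{Cat}_{\infty}^{\otimes} \]
restricting to $\D_{(\Ca,\E)}$ along the canonical inclusion, which is the six-functor formalism we were after.
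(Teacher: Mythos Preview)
Your proposal is correct and follows essentially the same route as the paper: form a correspondence category of $\mathcal{S}$-hypercovers, show via \cref{localizationcriterion} that evaluation at the augmentation is a localisation onto $\op{Corr}(\Ca)^{\otimes}_{\E',\op{all}}$, build the auxiliary functor as the composite of restriction to $N(\Delta)^{op}$, pointwise $\D_{(\Ca,\E)}$, and colimit, and then descend using the codescent hypothesis. The only small sharpening worth making explicit is that your class $R'$ must consist of correspondences whose $\tilde{\E}$-leg (not the horizontal leg) is the refinement of hypercovers over a fixed base, since it is the shriek direction that is governed by codescent; your informal description is compatible with this but does not pin it down.
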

\begin{proof}
    The proof of this proposition follows the similar ideas of the proof of the \cref{extendingsixfunctorCside}.\\

    Let $\op{Cov}_{\E}(\Ca)$ be the full-subcategory of $\op{Fun}(N(\Delta)^{op},\Ca)$ spanned by simplicial objects which are $\mathcal{S}$-hypercovers and spanned by $1$-simplices $f_{\bb} : X _{\bb} \to Y_{\bb}$ where if $f_{-1} \in \E'$, then $f_n \in \E$ for all $n \ge 0$.\\

    We have a canonical morphism :
    \begin{equation*}
        p_{\E} : \CrrCovEopEal \to \op{Corr}(\Ca)^{\otimes}_{\E',\op{all}}
    \end{equation*}
Let $R_{\E}$ be the collection of morphisms of the form :
\begin{equation*}
        \begin{tikzcd}
            (x_1 : X_1 \to X_1', X_1;\cdots; x_n : X_n \to X_n', X_n') \arrow[d,"f"] \arrow[r,"\op{id}"] &(x_1 : X_1 \to X_1', X_1;\cdots; x_n : X_n \to X_n', X_n')  \\
            (y_1: Y_1 \to X_1', X_1';\cdots ; y_n : Y_n \to X_n', X_n') & {}.
        \end{tikzcd}
    \end{equation*}
    where $f=(f_i)_{i=1}^n$ is a morphism of \v{C}ech nerves between $\mathcal{S}$-hypercovers. Mimicking the ideas from the previous proposition and using \cref{localizationcriterion}, we see that $p_{\E}$ is a localization along $R_{\E}$. \\

    We construct a morphism 
    \begin{equation*}
        \phi_{\E\E'} : \CrrCovEopEal \to \op{Cat}^{\otimes}_{\infty}
    \end{equation*}
    as follows:
    \begin{itemize}
        \item The map $\alpha_{(\Delta_+)^{op},\Ca,\E'}$(\cref{Corrfunctorialmap}) induces a morphism :
        \begin{equation*}
            \phi_{1\E} : \CrrCovEopEal \to \op{Fun}(N(\Delta_+)^{\op{op}},\op{Corr}(\Ca)^{\otimes}_{\E',\op{all}}) \xrightarrow{\op{res}} \op{Fun}(N(\Delta)^{\op{op}},\CrrCopEal)
        \end{equation*}
        where the second map is restriction to $\CrrCopEal$ as morphisms in $\E'$ induce a morphism of augmented simplicial objects $f_{\bb} : X_{\bb} \to Y_{\bb}$ where $f_n \in \E$ for $n \ge 0$.
        \item The map $\D_{(\Ca,\E)}$ induces a functor :
        \begin{equation*}
            \phi_{2\E} : \op{Fun}(N(\Delta)^{\op{op}},\CrrCopEal) \to \op{Fun}(N(\Delta)^{\op{op}},\op{Cat}^{\otimes}_{\infty}).
        \end{equation*}
        \item Using the theory of Kan extensions (\cite[Proposition 4.3.2.15]{HTT}), we get a morphism :
        \begin{equation*}
            \phi_{3\E} : \op{Fun}(N(\Delta_+)^{\op{op}}, \op{Cat}^{\otimes}_{\infty}) \to \op{Cat}^{\otimes}_{\infty}.
        \end{equation*}
        \item We define :
        \begin{equation*}
            \phi_{\E\E'} := \phi_{3\E} \circ \phi_{2\E} \circ \phi_{1\E}
        \end{equation*}
        In particular, we see that $\phi_{\E\E'}(X_{\bb} \to X) \cong  \op{colim}_{\bb \in \Delta} \D_!(X_{\bb})$. In particular for any map $f: X \to Y$ in $\E'$ and $f_{\bb}: X_{\bb} \to Y_{\bb}$ a morphism of augmented simplicial objects where $f_n \in E$ , we see that \[\phi_{\E\E'}(f_{\bb}) \cong \op{colim}_{\bb \in \Delta}\D_!(f). \]
    \end{itemize}
    Again mimicking the ideas in the proof of \cref{extendingsixfunctorCside} and using that $\D_!$ has codescent along $\mathcal{S}$-hypercovers, we see that $\phi_{\E\E'}$ sends $R_{\E}$ to equivalences. As $p_{\E}$ is a localization and $\phi_{\E\E'}$ sends $R_{\E}$ to equivalences, $\phi_{\E\E'}$ descends to a morphism :
    \begin{equation*}
        \D_{(\Ca,\E')} : \op{Corr}(\Ca)^{\otimes}_{\E',\op{all}} \to \op{Cat}_{\infty}^{\otimes},
    \end{equation*}

\end{proof}
\section{A criterion regarding localizations.}
In this section, we prove a proposition under what conditions a morphism of $\infty$-categories is a localization.
Let us recall the notion of Dwyer-Kan localizations.

\begin{definition}\cite[Definition 2.4.2]{Land_introductionQC}
Let $\Ca$ be an $\infty$-category and let $S \subset \Ca_1$  be a set of morphisms. A functor $\Ca \to \Ca[S^{-1}]$ is a \textit{Dwyer-Kan localization} of $\Ca$ along $S$ if for every auxiliary $\infty$-category $\D$ the functor :
\begin{equation*}
    \op{Fun}(\Ca[S^{-1}],\D) \to \op{Fun}(\Ca,\D)
\end{equation*}
    is fully-faithful and its essential image is all such functors $\Ca \to \D$ which sends $S$ to equivalences.
\end{definition}
\begin{remark}
    By \cite[Lemma 2.4.6]{Land_introductionQC}, localization exists along all morphisms of $\Ca$.
\end{remark}
\begin{proposition}\label{localizationcriterion}
    Let $\Ca$ be an $\infty$-category and $R$ be a set of morphisms in $\Ca$. Let $p : \Ca \to \D$ which sends $R$ to equivalences. Suppose we have the following conditions: 

\begin{itemize}
    \item $p$ is surjective on $n$-simplices for $n \ge 0$.
    \item For every $d \in \D$, the $\infty$-category $\Ca_d$ admits products. In particular $x,y \in \Ca_d$, the projection maps $x \times y \to x , x \times y \to y$ are in $R$
\end{itemize}

    Then $p$ is a Dwyer-Kan localization along $R$.
\end{proposition}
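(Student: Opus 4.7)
The plan is to verify the universal property of the Dwyer–Kan localization directly. Since $p$ sends $R$ to equivalences by hypothesis, $p$ factors (uniquely up to contractible choice) as $p = q\circ L$ where $L\colon \Ca\to \Ca[R^{-1}]$ is the universal localization functor and $q\colon \Ca[R^{-1}]\to \D$ is the induced comparison. It therefore suffices to show that $q$ is an equivalence of $\infty$-categories, and the theorem reduces to verifying essential surjectivity and fully faithfulness of $q$.

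Essential surjectivity of $q$ is immediate from the surjectivity of $p$ on $0$-simplices: given $d\in \D$, pick $x\in \Ca$ with $p(x)=d$, then $q(L(x))=d$. For fully faithfulness I would exploit the ``product of lifts'' trick suggested by the second hypothesis. Concretely, given $d_0,d_1\in \D$ and lifts $x_i$ of $d_i$, any two lifts of an edge $d_0\to d_1$ in $\D$ to $\Ca$ live in a category of lifts over which one can form products in each fiber; the projections are in $R$ and so become equivalences in $\Ca[R^{-1}]$. This produces zigzags between any two lifts and, iterating the product construction, assembles into higher coherences. The slogan is that for each $n$-simplex $\sigma$ of $\D$, the space of lifts of $\sigma$ to $\Ca[R^{-1}]$ is contractible — the two hypotheses are exactly what is needed so that the fibers of $p$, after inverting $R\cap\Ca_d$, become contractible Kan complexes (since any two objects in a fiber become canonically equivalent, and parallel edges are identified via iterated products).

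The cleanest implementation is to translate the hypotheses to the world of marked simplicial sets and then appeal to a fiberwise recognition criterion. Regard the map as a morphism $(\Ca, R)\to (\D, \mathrm{eq})$ of marked simplicial sets, so that being a localization is equivalent to this being a Cartesian equivalence (in the sense of \cite[\S 3.1]{HTT}). Surjectivity on all $n$-simplices delivers the required lifting against $\partial\Delta^n\hookrightarrow\Delta^n$, and the finite product condition in each fiber lets one contract any two filled horns using the canonical span $x\leftarrow x\times y\to y$ with legs in $R$; this provides the marked trivial fibration property. The main obstacle is the bookkeeping needed to produce these contractions simplicially coherently, i.e.\ to promote the pointwise ``$x\times y$ relates $x$ to $y$'' argument to a genuine contraction of the mapping spaces $\mathrm{Map}_{\Ca[R^{-1}]}(L(x),L(y))\to \mathrm{Map}_\D(p(x),p(y))$. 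Once that is done, $q$ is a trivial fibration, hence an equivalence, and $p$ exhibits $\D$ as $\Ca[R^{-1}]$.
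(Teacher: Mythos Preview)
Your overall strategy matches the paper's: factor $p$ through $L\colon\Ca\to\Ca[R^{-1}]$ and show the induced $q\colon\Ca[R^{-1}]\to\D$ is a trivial fibration, using products in fibers to relate different lifts. But the proposal stops exactly where the work begins. Two points deserve attention.

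First, your sentence ``surjectivity on all $n$-simplices delivers the required lifting against $\partial\Delta^n\hookrightarrow\Delta^n$'' is not correct as stated. Surjectivity gives you \emph{some} lift $\tau'\colon\Delta^m\to\Ca$ of a given $\tau\colon\Delta^m\to\D$, but the trivial fibration condition asks for a lift extending a \emph{prescribed} boundary $\sigma\colon\partial\Delta^m\to\Ca[R^{-1}]$. The restriction $\sigma':=\tau'|_{\partial\Delta^m}$ need not agree with $\sigma$, and the whole problem is to pass from $\sigma'$ to $\sigma$.

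Second, the paper carries out precisely the ``bookkeeping'' you defer. The mechanism is not an iterated product construction climbing through dimensions, but a single two-step homotopy: with vertices $X_i$ of $\sigma'$ and $Y_i$ of $\sigma$, form $Z_i:=X_i\times Y_i$ in the fiber, so the projections $Z_i\to X_i$ and $Z_i\to Y_i$ become equivalences in $\Ca[R^{-1}]$. One then needs a small but crucial lemma: given a map $\Delta^1\times\partial\Delta^m\amalg_{\{0\}\times\partial\Delta^m}\{0\}\times\Delta^m\to\E$ in which each $\Delta^1\times\{k\}$ is an equivalence, it extends to $\Delta^1\times\Delta^m\to\E$. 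This follows because the restriction $\op{Fun}(\Delta^m,\E)\to\op{Fun}(\partial\Delta^m,\E)$ is an isofibration. Applying it once transports $\tau'$ along the $X_i\leftarrow Z_i$ equivalences to an $m$-simplex $\sigma''$ with vertices $Z_i$; applying it again transports $\sigma$ along $Y_i\leftarrow Z_i$ to land on the same $\sigma''$ at its boundary, and the second application produces the desired filler extending $\sigma$. Your marked-simplicial-set reformulation is fine but does not avoid this step; the isofibration lemma is what replaces your ``simplicially coherent contraction''.
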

\begin{proof}
    As $p$ sends $R$ to equivalences, it factorizes to a map \[p':\Ca[R^{-1}] \to \D. \]
The goal is to show $p'$ is a categorical equivalence. As trivial fibrations are categorical equivalences (\cite[Proposition 2.2.12]{Land_introductionQC}),we show that $p'$ is a trivial fibration i.e for $m \ge 0$, the diagram below admits a solution:
\begin{equation*}
    \begin{tikzcd}
        \partial\Delta^m \arrow[r,"\sigma"] \arrow[d,hookrightarrow] & \Ca[R^{-1}] \arrow[d,"p'"] \\
        \Delta^m \arrow[ur,dotted,] \arrow[r,"\tau"] & \D
    \end{tikzcd}
\end{equation*}
The case $m =0$ follows as map $p'$ is surjective on objects. Thus we assume $m >0$.
We know that $\tau$ lifts to $\tau' : \Delta^n \to \Ca$ which even restricts to $ \sigma':\partial\Delta^n \to \Ca$. Let $X_0,X_1,....X_n$ be the vertices of $\sigma'$ and $Y_0,Y_1,Y_2,...Y_n$ be vertices of $\sigma$. By condition of the proposition, we have vertices $Z_i := X _i \times Y_i$ with maps $p_i: Z_i \to X_i, q_i : Z_i 
\to Y_i$ are in $R$.
We make the following two observations :
\begin{enumerate}
\item The vertices $X_i,Z_i$ when realized in $\Ca[R^{-1}]$ along with $\sigma'$ amalgamate to define a morphism 
\begin{equation}
    h_1 ; \Delta^1 \times (\coprod_{j=0}^m \Delta^0) \times_{[1] \times (\coprod_{j=0}^m \Delta^0)} \Delta^m \to \Ca[R^{-1}]
\end{equation}
where for all $0\le k \le n$, $h_1|_{\Delta^1 \times [k]}$ is an equivalence.
\item The vertices $Y_i,Z_i$ along with $\sigma$ amalgamate to define a morphism :
\begin{equation}
    h_2 : \Delta^1 \times(\coprod_{j=0}^m \Delta^0) \times_{[1] \times (\coprod_{j=0}^m\Delta^0)} \partial\Delta^m \to \Ca[R^{-1}]
\end{equation}
where for all $0\le k \le n$, $h_2|_{\Delta^1 \times [k]}$ is an equivalence

 \end{enumerate}
The proof follows from the following claim : 
\begin{claim}\label{isofibrationfunclift}
		Let $\E$ be an $\infty$-category and $n \ge 1$.  Let \[h: \Delta^1 \times \partial\Delta^n \coprod_{\{0\} \times \partial\Delta^n} \{0\} \times\Delta^n \to \E \] be a morphism such that $h|_{\Delta^1 \times [k]}: \Delta^1 \to \D $ is an equivalence for all $ 0 \le k \le n$.  Then there exists a morphism $h': \Delta^n \times \Delta^1 \to \D$ such that the diagram 
		\begin{equation*}
			\begin{tikzcd}
				\Delta^1 \times \partial\Delta^n \coprod_{\{0\} \times \partial\Delta^n} \{0\} \times\Delta^n \arrow[r,"f"] \arrow[d,hookrightarrow] & \E  \\
				\Delta^n \times \Delta^1 \arrow[ur,"h'"] & {}
			\end{tikzcd}
		\end{equation*}
		commutes.
	\end{claim}
	\begin{proof}[Proof of claim]
		The morphism $h$ gives us the following commutative diagram 
		\begin{equation*}
			\begin{tikzcd}
				\{0\} \arrow[r] \arrow[d, hookrightarrow] & \op{Fun}(\Delta^n,\E) \arrow[d,"p"] \\
				\Delta^1 \arrow[r,"g'"] & \op{Fun}(\partial\Delta^n,\E)
			\end{tikzcd}
		\end{equation*}
		where $g'$ is an equivalence. As $i: \partial \Delta^n \to \Delta^n$ is bijective on $0$ simplices applying \cite[Proposition 2.2.5]{Land_introductionQC} gives us that $p$ is an isofibration. Thus there exists a morphism $h': \Delta^1 \to \op{Fun}(\Delta^n,\E)$ which extends $h$. This completes the proof.
	\end{proof}
 \textit{Back to proving the proposition:} The idea is to apply the  \cref{isofibrationfunclift} two times as follows:
 \begin{enumerate}
     \item Applying proposiion to $h_1$ starting from all $n=1$-subsimplices to $n=m$ extends $h_1$ to the following morphism : 
\begin{equation}
    h_1': \Delta^1 \times \Delta^m \to \Ca[R^{-1}]
\end{equation}
In particular the $Z_i$'s are now vetices of an $m$-simplex $\sigma'': h_1'|_{\Delta^1 \times [0]} : \Delta^m \to \Ca[R^{-1}]$.
     \item Applying the proposition with the amalgamation of $h_2$ and $\sigma''$ extends $h_2$ to the following morphism: 
     \begin{equation}
         h_2' : \Delta^1 \times \Delta^m \to \Ca[R^{-1}]
     \end{equation}
     In particular $h_2'|_{[1] \times \Delta^m}: \Delta^m \to \Ca[R^{-1}]$ extends $\sigma$  solving the lifting problem. 
 \end{enumerate}
	This completes the proof that $p'$ is a trivial fibration and the proof of the proposition is complete. 
\end{proof}
\section{Exchange Transformations}
For future reference, let us record here all the available exchange transformations we have for the motivic homotopy category of $NL$-stacks. The natural transformations of \textit{base change} $ Ex_!^* $, \textit{smooth base change}  $ Ex_{\#}^* $, and \textit{proper base change} $ Ex_*^* $ were constructed respectively in \cite[Theorem 4.5.1(3), Proposition 4.1.2, Proposition 4.1.5]{Chowdhury}. We already upgraded $Ex^*_!$ and $Ex^*_{\#}$ to the non-representable case by \cref{Sect.3.2:_SH*!_Corr_functor} and \cref{Sec.5:_smooth_cl_BC}.


But we are still missing some natural transformations from our list, so let us state and prove the following:

\begin{proposition}\label{App:_Exchange_Transf}
	Consider the following cartesian diagram in $ \oocatname{ASt}^{\leq 1} $: 
		\begin{center}
		\begin{tikzpicture}[baseline={(0,-1)}, scale=2]
			\node (a) at (0,1) {$ \mc W $};
			\node (b) at (1, 1) {$ \mc Y $};
			\node (c)  at (0,0) {$  \mc Z$};
			\node (d) at (1,0) {$ \mc X $};
			\node (e) at (0.2,0.75) {$ \ulcorner $};
			\node (f) at (0.5,0.5) {$ \Delta $};

			\path[font=\scriptsize,>= angle 90]
			
			(a) edge [->] node [above ] {$ g $} (b)
			(a) edge [->] node [left] {$ q $} (c)
			(b) edge[->] node [right] {$ p $} (d)
			(c) edge [->] node [below] {$ f $} (d);
		\end{tikzpicture}
	\end{center}
\begin{enumerate}
    \item [\textit{(i)}] If $ p $ is lft, then we have an exchange equivalence:
    \[ Ex^*_!: p_!f^*\simeq g^*q_! \]
    \item [\textit{(ii)}] If $ p $ is lft, then we have an exchange equivalence:
    \[ Ex^!_*: p^!f_*\simeq g_*q^! \]

    \item [\textit{(iii)}] If $ p $ is smooth, then we have an exchange equivalence:
    \[ Ex^*_{\#}: p\epfs f^*\simeq g^*q\epfs \]
    
    \item [\textit{(iv)}] We have an exchange equivalence:
    \[ Ex^*_*: p^*f_*\longrightarrow g_*q^* \]
    \noindent that is an equivalence if $p$ is representable and proper.
    
	\item [\textit{(v)}]  Suppose $ f $ is smooth and $ p $ is representable and proper. Then we have a natural exchange transformation:
    \[ Ex_{\#*}: f_{\#}q_* \longrightarrow p_*g_{\#} \]
    \noindent and moreover this is an equivalence.
    \item [\textit{(vi)}] If $ p $ is lft, then we have a natural transformation:
    \[ Ex_{!*}: p_!g_*\longrightarrow f_*q_! \]
    \noindent If $ f $ is representable and proper the above transformation is an equivalence.
    \item [\textit{(vii)}] If $ p $ is lft and $ f $ is smooth, we have a natural  exchange transformation:
    \[ Ex_{!\#}: f\epfs q\epf \longrightarrow p\epf g\epfs \]
    \noindent  and moreover this is an equivalence.
    \item [\textit{(viii)}] If $ p $ is lft, then we have a natural transformation:
    \[ Ex^{*!}: g^*p^!\longrightarrow q^!f^* \]
    \noindent If $ f $ is smooth the above transformation is an equivalence.

\end{enumerate}
\end{proposition}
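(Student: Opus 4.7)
The plan is to use the functor $\SH^*_!$ of \cref{Sect.3.2:_SH*!_Corr_functor} and the smooth base change equivalence of \cref{Sec.5:_smooth_cl_BC} as the two ``generating'' inputs, and to derive all the remaining exchange transformations from them by formal adjunction arguments, with the exception of (vii) which requires a genuine reduction to the already treated mixed case.

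First, (i) is a direct consequence of the fact that $\SH^*_!$ is a functor out of $\op{Corr}(\oocatname{ASt}^{\leq 1})_{lft,all}^{\otimes}$: a cartesian square $\Delta$ in $\oocatname{ASt}^{\leq 1}$ with $p$ lft gives rise to a pair of composable morphisms in the correspondence category whose two possible decompositions produce the desired equivalence $Ex^*_!: p_!f^* \simeq g^*q_!$, exactly as explained in Appendix \ref{App.A.2:_6FF}. Part (iii) has already been established as \cref{Sec.5:_smooth_cl_BC}. Parts (ii) and (viii) are then obtained by passing to right adjoints under the equivalence $(\op{Pr}^L)^{\op{op}}\simeq \op{Pr}^R$ (cf.\ \cite[Corollary 5.5.3.4]{HTT}): applying this to (i) along the adjoint pairs $(p_!,p^!)$ and $(f^*,f_*)$ yields (ii); applying it to (i) along $(g^*,g_*)$ and $(q_!,q^!)$ yields the natural transformation $Ex^{*!}$ of (viii), and when $f$ is smooth the additional adjunction $(f_{\#},f^*)$ combined with (vii) forces $Ex^{*!}$ to be invertible.

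Next, parts (iv), (v) and (vi) are constructed via the usual unit/counit recipe. For (iv), $Ex^*_*$ is the composite
\[ p^*f_* \xrightarrow{\eta_g} g_*g^*p^*f_* \simeq g_*q^*f^*f_* \xrightarrow{\varepsilon_f} g_*q^* ; \]
when $p$ is representable and proper we invoke the identification $p_*\simeq p_!$ of \cite[Proposition 4.1.5]{Chowdhury} to reduce to (i). For (v), one builds $Ex_{\#*}$ as
\[ f\epfs q_* \xrightarrow{\eta_p} p_*p^*f\epfs q_* \xrightarrow{Ex_\#^*(\text{iii})} p_*g\epfs q^*q_* \xrightarrow{\varepsilon_q} p_*g\epfs, \]
which is invertible because $p$ representable and proper makes both $p_*=p_!$ and $Ex^*_\#$ of (iii) available. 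Part (vi) $Ex_{!*}: p_!g_* \to f_*q_!$ is analogously constructed from (i) via the units and counits of $(p_!,p^!)$ and $(f^*,f_*)$, and becomes invertible when $f$ is representable and proper, again by reducing $f_*$ to $f_!$ and applying (i).

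The main obstacle, and the only genuinely non-formal step, will be (vii). The representable/non-representable mixed versions of the corresponding statement have already been handled in \cref{lemma_Ex-sharp-shriek_mixed_repr_non_repr_case}, where either $f$ was assumed representable lft (and $p$ arbitrary smooth) or $p$ was assumed representable smooth (and $f$ arbitrary lft). The strategy for the fully non-representable case is to choose NL-atlases $x\colon X\to \mc X$ and $y\colon Y\to \mc Y$, form the resulting cube of cartesian squares
\[ \begin{tikzpicture}[baseline={(0,0)}, scale=1.1]
    \node (v1) at (0,0) {$\mc W$};
    \node (v2) at (2,0) {$\mc Y$};
    \node (v3) at (1,-1) {$\mc Z$};
    \node (v4) at (3,-1) {$\mc X$};
    \node (v5) at (0,2) {$W$};
    \node (v6) at (2,2) {$Y$};
    \node (v7) at (1,1) {$Z$};
    \node (v8) at (3,1) {$X$};
    \path[font=\scriptsize,>= angle 90]
    (v1) edge [->] (v2) (v2) edge [->] (v4)
    (v6) edge [->] (v2) (v5) edge [->] (v6) (v6) edge [->] (v8)
    (v7) edge [-,line width=1mm,white] (v3)
    (v7) edge [-,line width=1mm,white] (v8)
    (v5) edge [->] (v1) (v7) edge [->] (v3) (v8) edge [->] (v4)
    (v5) edge [->] (v7) (v7) edge [->] (v8)
    (v1) edge [->] (v3) (v3) edge [->] (v4);
\end{tikzpicture} \]
and decompose the natural transformation $Ex_{!\#}(\Delta)$ against the conservative pullbacks coming from the atlases, exactly as was done in the proof of \cref{lemma_Ex-sharp-shriek_mixed_repr_non_repr_case}. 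Conservativity of $y\epfs$ (and of $x^*$), together with the already established mixed cases of the lemma applied to the intermediate faces of the cube, will reduce the claim to the schematic equivalence recorded in \cref{Sec.4.4:_Ex-sharp-!_equiv_schematic_case}. The bookkeeping for the pasting of exchange transformations is the only technical difficulty; once the diagram is properly set up, the equivalence follows from two applications of the mixed case.
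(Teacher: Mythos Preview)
For (i)–(iv), (vi) and (viii) your sketch agrees with the paper. The real divergence is (vii), and here your cube does not work as written. If you choose atlases $x\colon X\to\mc X$ and $y\colon Y\to\mc Y$ and ask the side faces of the cube to be cartesian, then $Y$ is forced to be $\mc Y\times_{\mc X}X$, which is \emph{not} a scheme when $p$ is non-representable; so the top face is still a square of stacks and neither mixed hypothesis of \cref{lemma_Ex-sharp-shriek_mixed_repr_non_repr_case} applies to it. (Your phrase ``conservativity of $y\epfs$'' also does not type-check: $Ex_{!\#}(\Delta)$ lives over $\SH(\mc W)\to\SH(\mc X)$, not over $\SH(\mc Y)$.) The paper sidesteps this entirely: it invokes the non-representable purity equivalence $f\epfs\simeq f_!\Sigma^{\mb L_f}$ of \cref{Thm:_stacky_Ambidexterity} (and likewise for $g$), so that after a Thom twist and the projection formula $Ex_{!\#}$ reduces to the composability isomorphism $f_!q_!\simeq p_!g_!$. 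If you want to salvage a direct argument, the correct move is to pull back along an atlas $z\colon Z\to\mc Z$: then $fz\colon Z\to\mc X$ has schematic source, hence is representable, the base-changed square is a genuine mixed case, the side square built from $z$ and $q$ is the other mixed case, and one concludes via the ``$(-)\epfs$ detects equivalences'' trick from the proof of \cref{Thm:_stacky_Ambidexterity} applied to the induced atlas $Z\times_{\mc Z}\mc W\to\mc W$.

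A smaller gap: in (v) the sentence ``invertible because $p_*=p_!$ and $Ex^*_{\#}$ is available'' is not a proof---the unit and counit appearing in your definition of $Ex_{\#*}$ are not themselves equivalences, and $Ex^*_{\#}$ only handles the middle step. The paper instead runs a conservativity-and-pasting reduction (along an atlas of $\mc X$) to the schematic case \cite[Proposition~6.12]{Hoyois_Equiv_Six_Op}. One can alternatively identify $Ex_{\#*}$ with $Ex_{\#!}$ under $p_!\simeq p_*$, $q_!\simeq q_*$ and then quote the representable case of \cref{lemma_Ex-sharp-shriek_mixed_repr_non_repr_case}, but that identification of natural transformations must be spelled out rather than asserted.
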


\begin{proof}
    The base change transformation $ Ex_!^* $ follows directly from the construction of the functor $\SH^*_!$ in \Cref{Sect.3.2:_SH*!_Corr_functor}. The exchange transformation in $ (ii) $ is obtained via adjunctions from $ Ex_!^* $.\\

	\noindent The construction of the natural transformations is exactly like in \cite[\S 6]{Hoyois_Equiv_Six_Op} and in \cite[\S 2.4]{Cisinski_2019}. 
    \noindent For $(iii)$ we already constructed and proved everything in \cref{Sec.5:_smooth_cl_BC}.
 
    \noindent For $(iv)$, the exchange transformation $Ex^*_*$ is given by the composite:
   \[ f^*p_* \overset{\eta^*_*(q)}{\longrightarrow} f^*p_*g_*g^* \simeq f^*f_*q_*g^* \overset{\varepsilon^*_*(f)}{\longrightarrow} q_*g^* \]
    For $ (v) $, $ Ex_{\#*} $ is given by the following composition:
	\[ Ex_{\#*}: f\epfs q_* \stackrel{\eta_*^*(p)}{\longrightarrow} p_*p^*f\epfs q_* \stackrel{Ex_{\#}^*}{\simeq} p_*g\epfs q^*q_* \stackrel{\epsilon^*_*(q)}{\longrightarrow} p_*g\epfs \]
	For $ (vi) $ and $ (viii) $, the exchange transformations are given by:
	\[ Ex_{!*}: p\epf g_* \stackrel{\eta_*^*(f)}{\longrightarrow} f_*f^*p\epf g_* \stackrel{Ex_{!}^*}{\simeq} f_*q\epf g^*g_* \stackrel{\epsilon^*_*(g)}{\longrightarrow} f_*q\epf  \]
	\[ Ex^{*!}: g^*p^! \stackrel{\eta_*^*(f)}{\longrightarrow} g^*p^!f_*f^* \stackrel{Ex^{!}_*}{\simeq}  g^*g_*q^!f^* \stackrel{\epsilon^*_*(g)}{\longrightarrow} q^!f^* \]
	For $ (vii) $, as we have seen multiple time in Section 4, the exchange transformation $ Ex_{!\#} $ is given by the composition:
	\[ Ex_{!\#}: f\epfs q\epf \stackrel{\epsilon_{\#}^*(g)}{\longrightarrow} f\epfs q\epf g^*g\epfs\stackrel{Ex^*_!}{\simeq} f\epfs f^*p\epf g\epfs \stackrel{\eta_{\#}^*(f)}{\longrightarrow} p\epf g\epfs \]
	
	Now we only need to prove that the exchange transformations are equivalences (under the appropriate hypothesis).

	Consider the following diagram:
	
\begin{equation}\label{ch1:_cube_Smooth-Proper_BC}
		\begin{tikzpicture}[baseline=(current  bounding  box), scale=0.35]
			
			\matrix (m) [matrix of math nodes, row sep=2. em,
			column sep=2.5 em]{
				W & &  Y &\\
				& Z & & X & \\
				\mc W & &   \mc Y &\\ 
				& \mc Z & &  \phantom{c}\mc X \\};
			\path[-stealth]
			(m-1-1) edge [->] node [above] {$ \tilde{g} $} (m-1-3) edge[->] node [left=2mm, below=-1mm] {$ \tilde{q} $} (m-2-2)
			edge [->] node [left] {$ w $}  (m-3-1)
			(m-1-3) edge [->] node [below] {\phantom{ciao}$ y $} (m-3-3) edge  [->]  node [right=1mm, above=0mm] {$ \tilde{p} $} (m-2-4)
			(m-2-2) edge [-,line width=6pt,draw=white] (m-2-4) edge [->] node [above=3mm, left=1mm] {$\tilde{f} $}  (m-2-4) edge [right hook->] (m-4-2)
			(m-3-1) edge [->] node [above] {\phantom{ciao} $ g $} (m-3-3)
			edge [->] node [left=2mm, below=-1] {$ q $}  (m-4-2)
			(m-4-2) edge [->] node [below] {$ f $}  (m-4-4)
			(m-3-3) edge [->] node [right=1mm, above=0mm] {$ p$}  (m-4-4)
			(m-2-2) edge [-,line width=6pt,draw=white] (m-4-2)
			(m-2-2) edge [->] node [above left] {$ z $} (m-4-2)
			(m-2-4)edge [->] node [right] {$ x $} (m-4-4);
		\end{tikzpicture}
\end{equation}
	\noindent where every square is cartesian and the vertical maps $ x,y,w,z $ are NL-atlases of $ \mc X, \mc Y, \mc W, \mc Z $ respectively.  The induced maps between the atlases will be denoted by $ \tilde{f}, \tilde{g}, \tilde{p}, \tilde{q} $.
	
\begin{enumerate}
    \item [(i)$+$(ii)] The fact that $ Ex_*^! $ is an equivalence follows by adjunction from the fact that $ Ex_!^*$ is an equivalence too as a consequence of the construction of the functor $\SH^*_!$ in \Cref{Sect.3.2:_SH*!_Corr_functor}.
    \item [(iii)] The claim was already proved in \cref{Sec.5:_smooth_cl_BC}.
    \item [(iv)] If we assume that $p$ is representable and proper, it is not hard to see that $p_*=p_!$. Then we can reduce to the schematic case using arguments similar to the ones in the proof of \Cref{lemma_Ex-sharp-shriek_mixed_repr_non_repr_case} and conclude for example by \cite[6.10]{Hoyois_Equiv_Six_Op}.
	\item [(v)]	Suppose that $f$ is smooth and $p$ is representable and proper. Let us start proving that $ Ex_{\#*} $ is an equivalence. First assume that $ f $ and $ p $ are both representable. By the conservativity of $ x^* $, proving that:
	\begin{equation}\label{ch1:_smooth-proper-BC}
		Ex_{\#*}: f\epfs q_* \stackrel{\eta_*^*(p)}{\longrightarrow} p_*p^*f\epfs q_* \stackrel{Ex_{\#}^*}{\simeq} p_*g\epfs q^*q_* \stackrel{\epsilon^*_*(q)}{\longrightarrow} p_*g\epfs 
	\end{equation}
\noindent is an equivalence, is the same as proving that:
\begin{equation}\label{ch1:_smooth-proper-BC-2}
	x^*Ex_{\#*}: x^*f\epfs q_* \stackrel{x^*\eta_*^*(p)}{\longrightarrow} x^*p_*p^*f\epfs q_* \stackrel{x^*Ex_{\#}^*}{\simeq} x^*p_*g\epfs q^*q_* \stackrel{x^*\epsilon^*_*(q)}{\longrightarrow} x^*p_*g\epfs 
\end{equation}

\noindent is an equivalence. But we can rewrite $ x^*Ex_{\#*} $ as:

\begin{center}
	\begin{tikzpicture}[scale=1.85]
		\node (1) at (0,0) {$ x^*f{}_{\#}q_* $};
		\node (2) at (0,-0.5) {\rotatebox{-90}{$ \simeq $}}; 
		\node (2.2) at (0.25,-0.52) {$ \scriptstyle Ex_{\#}^* $};
		\node (3) at (0,-1) {$ \tilde{f}{}_{\#}z^*q_*$};
		\node  (4) at (0,-1.5) {\rotatebox{-90}{$ \simeq $}};
		\node  (4.2) at (0.25,-1.52) { $ \scriptstyle Ex^*_*$};
		\node  (5) at (0,-2) { $ \tilde{f}\epfs \tilde{q}_* w^* $};
		\node (6) at (2,0) {$x^*p_*p^*f{}_{\#}q_*$};
		
		\node (7) at (2,-0.5) {\rotatebox{-90}{$ \simeq $}}; 
		\node (7.2) at (2.25,-0.52) {$ \scriptstyle Ex_{*}^* $};
		
		\node (8) at (2,-1) {$\tilde{p}_*y^*p^*f{}_{\#}q_*$};
		\node  (9) at (2,-1.5) {\rotatebox{-90}{$ \simeq $}};
		\node (10) at (2,-2) {$\tilde{p}_*\tilde{p}^* x^* f{}_{\#} q_*$};
		\node (11) at (2,-2.5) {\rotatebox{-90}{$ \simeq $}};
		\node (11.2) at (2.25,-2.52) {$ \scriptstyle Ex_{\#}^* $};
		\node  (12) at (2, -3) {$\tilde{p}_*\tilde{p}^*\tilde{f}{}_{\#} z^*q_*$};
		\node  (13) at (2,-3.5) {\rotatebox{-90}{$ \simeq $}};
		\node  (13.2) at (2.25,-3.52) {$ \scriptstyle  Ex_*^* $};
		
		\node (14) at (2,-4) {$\tilde{p}_*\tilde{p}^* \tilde{f}{}_{\#} \tilde{q}_* w^*$};
		\node (15) at (3.5,0) {$x^* p_*g{}_{\#} q^*q_*$};
		\node (16) at (3.5,-0.5) {\rotatebox{-90}{$ \simeq $}};
		\node (16.2) at (3.75,-0.52) { $ \scriptstyle Ex_{*}^* $};
		\node (17) at (3.5,-1) {$\tilde{p}_*y^*g{}_{\#} q^*q_*$};
		\node (18) at (3.5,-1.5) {\rotatebox{-90}{$ \simeq $}};
		\node (18.2) at (3.75,-1.52) {$ \scriptstyle Ex_{\#}^* $};
		\node (19) at (3.5,-2) {$\tilde{p_*} \tilde{g}{}_{\#} w^* q^*q_*$};
		\node (20) at (3.5,-2.5) {\rotatebox{-90}{$ \simeq $}};
		\node (21) at (3.5,-3) {$\tilde{p}_* \tilde{g}{}_{\#} \tilde{q}^* z^* q_*$};
		\node (22) at (3.5,-3.5) {\rotatebox{-90}{$ \simeq $}};
		\node (22.2) at (3.75,-3.52) {$ \scriptstyle Ex_{*}^* $};
		
		\node (23) at (3.5,-4) {$\tilde{p}_* \tilde{g}{}_{\#} \tilde{q}^* \tilde{q_*}w^*$};
		\node (24) at (5.5,0) {$x^*p_*g{}_{\#}$};
		\node (25) at (5.5,-0.5) {\rotatebox{-90}{$ \simeq $}};
		\node (25) at (5.75,-0.52) {$ \scriptstyle  Ex_{*}^* $};
		\node (26) at (5.5,-1) {$\tilde{p}_*y^* g{}_{\#}$};
		\node (27) at (5.5,-1.5) {\rotatebox{-90}{$ \simeq $}};
		\node (27.2) at (5.75,-1.52) {$ \scriptstyle Ex_{\#}^* $};
		\node (28) at (5.5,-2) {$\tilde{p}_* \tilde{g}{}_{\#} w^*$};
		\node (29) at (5.5,-4) {$\tilde{p}_* \tilde{g}{}_{\#} w^*$};
		
		\node (30) at (2.8,0.1) {$ \stackrel{Ex_{\#}^*}{\simeq}$};
		\node (31) at (2.8,-0.9) {$ \stackrel{Ex_{\#}^*}{\simeq}$};
		\node (33) at (2.8,-2.9) {$ \stackrel{Ex_{\#}^*}{\simeq}$};
		\node (34) at (2.8,-3.9) {$ \stackrel{Ex_{\#}^*}{\simeq}$};

		\path[font=\scriptsize,>= angle 90]
		
		(1) edge [->] node [above ] {$ \epsilon_*^*(p) $} (6)
		(1) edge [->] node [left ] {$ \epsilon_*^*(\tilde{p})  $} (10)
		(3) edge [->] node [left] {$ \epsilon_*^*(\tilde{p}) $} (12)
		(5) edge[->] node [left] {$ \epsilon_*^*(\tilde{p}) $} (14)
		(15) edge [->] node [below] {$ \eta_*^*(q) $} (24)
		(17) edge [->] node [below] {$ \eta_*^*(q) $} (26)
		(19) edge[->] node [below] {$ \eta_*^*(q) $} (28)
		(23) edge [->] node [below] {$ \eta_*^*(\tilde{q}) $} (29)
		(28) edge [-, double distance=2pt] node [left] {$  $} (29);
\end{tikzpicture}
\end{center}
\noindent We can fill each cell of the diagram by the naturality of the adjunctions and the exchange transformations we already have. Hence looking at the bottom row of the big diagram above, $ x^*Ex_{\#*} $ in \eqref{ch1:_smooth-proper-BC-2} becomes:
\begin{equation}\label{ch1:_smooth-proper-BC-3}
	x^*Ex_{\#*}: \tilde{f}\epfs \tilde{q}_* w^* \stackrel{\epsilon_*^*(\tilde{p})}{\longrightarrow} \tilde{p}_*\tilde{p}^* \tilde{f}{}_{\#} \tilde{q}_* w^* \stackrel{Ex_{\#}^*}{\simeq} \tilde{p}_* \tilde{g}{}_{\#} \tilde{q}^* \tilde{q_*}w^* \stackrel{\eta_*^*(\tilde{q})}{\longrightarrow} \tilde{p}_*\tilde{g}\epfs w^* 
\end{equation}

But we already know, from \cite[Proposition 6.12]{Hoyois_Equiv_Six_Op}, that the smooth-proper exchange transformation (induced by the top square made by the atlases in \eqref{ch1:_cube_Smooth-Proper_BC}):
\[ Ex_{\#*}: \tilde{f}\epfs \tilde{q}_*  \stackrel{\epsilon_*^*(\tilde{p})}{\longrightarrow} \tilde{p}_*\tilde{p}^* \tilde{f}{}_{\#} \tilde{q}_*  \stackrel{Ex_{\#}^*}{\simeq} \tilde{p}_* \tilde{g}{}_{\#} \tilde{q}^* \tilde{q_*} \stackrel{\eta_*^*(\tilde{q})}{\longrightarrow} \tilde{p}_*\tilde{g}\epfs   \]
\noindent is an equivalence. Thus \eqref{ch1:_smooth-proper-BC-3} is an equivalence too and, by conservativity of $ x^* $, we get that the exchange transformation:
\[ 	Ex_{\#*}: f\epfs q_* \stackrel{\eta_*^*(p)}{\longrightarrow} p_*p^*f\epfs q_* \stackrel{Ex_{\#}^*}{\simeq} p_*g\epfs q^*q_* \stackrel{\epsilon^*_*(q)}{\longrightarrow} p_*g\epfs  \]
\noindent is an equivalence as well.\\

To extend the result to the case when $f$ is not representable, one can proceed mimicking the same argument given for a similar statement in \Cref{lemma_Ex-sharp-shriek_mixed_repr_non_repr_case}. Details are left to the reader.

    \item [(vi)] 
        Suppose that $ p $ is lft and $ f $ is proper and representable. Then it is not hard to show that we have $f_!=f_*$, and the exchange transformation $Ex_{!*}$ becomes $Ex_{!!}$, i.e. simply the exchange transformation witnessing the composability of exceptional functors, that is clearly an equivalence and we are done.


\item [(vii)] 
Assume that $ f $ is smooth (and hence $g$ is smooth too). Then via the purity theorem in \Cref{Thm:_stacky_Ambidexterity}, we can reduce ourselves to the same statement for the natural transformation $Ex_{!,!}$, witnessing the functoriality of the exceptional functor. But clearly $Ex_{!,!}$ is an equivalence and hence we are done.
\item [(viii)] The claim follows by adjunction from the statement in $(vii)$ that we just proved.

\end{enumerate}

\end{proof}

\end{appendices}

\bibliography{bibliography}
\end{document}